\documentclass[11pt]{article}
\usepackage{amsmath,amssymb,amsthm,mathtools,xcolor}
\usepackage{algorithm, algpseudocode}
\usepackage{paralist,tocloft}

%=====================Style File============================================

\usepackage{verbatim}
\usepackage{bbm}
\usepackage{hyperref}
\hypersetup{
	colorlinks,
	citecolor=black,
	filecolor=black,
	linkcolor=black,
	urlcolor=black,
	linktoc=all
}

% page margin preferences
\topmargin -1.5cm        % read Lamport p.163
\oddsidemargin -0.04cm   % read Lamport p.163
\evensidemargin -0.04cm  % same as oddsidemargin but for left-hand pages
\textwidth 16.59cm
\textheight 21.94cm
\parskip 7.2pt           % sets spacing between paragraphs
\parindent 0pt		 % sets leading space for paragraphs

% environments
\theoremstyle{plain}
\newtheorem{theorem}{Theorem}[section]
\newtheorem{lemma}[theorem]{Lemma}
\newtheorem{claim}[theorem]{Claim}

\newtheorem{corollary}[theorem]{Corollary}
\newtheorem{observation}[theorem]{Observation}

\theoremstyle{definition}
\newtheorem{definition}[theorem]{Definition}
\newtheorem*{remark}{Remark}

% non-italicized environments
\theoremstyle{definition}%\newtheorem{definition}[theorem]{Definition}

\theoremstyle{plain}

% probability

\newcommand{\Var}{\operatorname{{\bf Var}}}

\newcommand{\Ex}{\mathop{{\bf E}\/}}
\renewcommand{\Pr}{\operatorname{{\bf Pr}}}
\newcommand{\Prx}{\mathop{{\bf Pr}\/}}

% mathrm terms

\newcommand{\supp}{\mathrm{supp}}

\newcommand{\balpha}{{\boldsymbol{\alpha}} }

% mathbf terms

% number systems

% short forms
\newcommand{\eps}{\varepsilon}
\newcommand{\la}{\langle}
\newcommand{\ra}{\rangle}

\newcommand{\wh}[1]{\widehat{#1}}

% Fourier stuff
\renewcommand{\hat}{\wh}

% functions

% problems

% Overline
\newcommand{\ol}[1]{\overline{#1}}

% BOLD

\newcommand{\bA}{\mathbf{A}}

\newcommand{\bG}{\mathbf{G}}
\newcommand{\bH}{\mathbf{H}}

\newcommand{\bM}{\mathbf{M}}

\newcommand{\bR}{\mathbf{R}}

\newcommand{\bX}{\mathbf{X}}
\newcommand{\bY}{\mathbf{Y}}
\newcommand{\bZ}{\mathbf{Z}}

 % \bf taken
\newcommand{\ba}{\boldsymbol{a}}
\newcommand{\bb}{\boldsymbol{b}}

\newcommand{\bg}{\boldsymbol{g}}

\newcommand{\bk}{\boldsymbol{k}}

 % \br taken
\newcommand{\bs}{\boldsymbol{s}}

\newcommand{\eqdef}{\stackrel{\rm def}{=}}

% Calligraphy

\newcommand{\calA}{\mathcal{A}}
\newcommand{\calB}{\mathcal{B}}
\newcommand{\calC}{\mathcal{C}}
\newcommand{\calD}{\mathcal{D}}
\newcommand{\calE}{\mathcal{E}}
\newcommand{\calF}{\mathcal{F}}
\newcommand{\calG}{\mathcal{G}}
\newcommand{\calH}{\mathcal{H}}
\newcommand{\calI}{\mathcal{I}}

\newcommand{\calK}{\mathcal{K}}
\newcommand{\calL}{\mathcal{L}}
\newcommand{\calM}{\mathcal{M}}

\newcommand{\calP}{\mathcal{P}}
\newcommand{\calQ}{\mathcal{Q}}
\newcommand{\calR}{\mathcal{R}}
\newcommand{\calS}{\mathcal{S}}
\newcommand{\calT}{\mathcal{T}}

\newcommand{\calW}{\mathcal{W}}

\newcommand{\barHU}{\overline{\calH_U}}

% Miscellaneous

%\newcommand{\dotprod}[2]{ \left\langle #1,\xspace #2 \right\rangle }
% Coloring

%====================Style File End==========================================

%\usepackage{geometry}
%\usepackage{marginnote}
%\title{Limits of Ordered Graphs and Images}

\def\FullBox{\hbox{\vrule width 8pt height 8pt depth 0pt}}
\newcommand{\QED}{\;\;\;\FullBox}
\renewenvironment{proof}{\noindent{\bf Proof:~}}{\hfill\QED}
\newenvironment{proofof}[1]{\noindent{\bf Proof of {#1}:~}}{\hfill\(\QED\)}

\newcommand{\shift}[1]{\mathrm{Shift}({#1})}
\newcommand{\seq}[2]{\{#1\}_{#2 \in \mathbb{N}}}
\newcommand{\csdist}{d_{\triangle}}
\newcommand{\barH}{\overline{\calH}}
\newcommand{\barP}{\overline{\calP}}

\makeatletter
\newtheorem*{rep@theorem}{\rep@title}
\newcommand{\newreptheorem}[2]{%
	\newenvironment{rep#1}[1]{%
		\def\rep@title{#2 \ref{##1}}%
		\begin{rep@theorem}}%
		{\end{rep@theorem}}}
\makeatother
\newreptheorem{theorem}{Theorem}

\newcommand{\indi}{\boldsymbol{1}}
%\newcommand{\cenorm}[1]{\|{#1}\|_{\widetilde{\square}}}

%Commands for visible comments

\newcommand{\onote}[1]{\textcolor{blue}{\bf{Omri: #1}}}

\def\authornameAL{Amit Levi}
\def\authoraffiAL{University of Waterloo, Canada. Email: \href{mailto: amit.levi@uwaterloo.ca}{amit.levi@uwaterloo.ca}. Research supported by the David R. Cheriton Graduate Scholarship. Part of this work was done while the author was visiting the Technion.}
\def\authornameEF{Eldar Fischer}
\def\authoraffiEF{Technion - Israel Institute of Technology, Israel. Email: \href{mailto:eldar@cs.technion.ac.il}{eldar@cs.technion.ac.il}.}
\def\authornameOBE{Omri Ben-Eliezer}
\def\authoraffiOBE{Massachusetts Institute of Technology, USA. Email: \href{mailto: omrib@mit.edu}{omrib@mit.edu}.
	Part of this work was done while the author was at Tel Aviv University and later at Harvard University.}
\def\authornameYY{Yuichi Yoshida}
\def\authoraffiYY{National Institute of Informatics (NII), Japan. Email: \href{mailto: yyoshida@nii.ac.jp}{yyoshida@nii.ac.jp}. Research supported by JSPS KAKENHI Grant Number JP17H04676.}

\title{Ordered Graph Limits and Their Applications}
\author{
	\authornameOBE\thanks{\authoraffiOBE}
	\and
	\authornameEF\thanks{\authoraffiEF}
	\and
	\authornameAL\thanks{\authoraffiAL}
	\and
	\authornameYY\thanks{\authoraffiYY}
}
\date{}
\begin{document}

	\maketitle
	\begin{abstract}
	The emerging theory of graph limits exhibits an analytic perspective on graphs, showing that many important concepts and tools in graph theory and its applications can be described more naturally (and sometimes proved more easily) in analytic language.
	We extend the theory of graph limits to the ordered setting, presenting a limit object for dense vertex-ordered graphs, which we call an \emph{orderon}. As a special case, this yields limit objects for matrices whose rows and columns are ordered, and for dynamic graphs that expand (via vertex insertions) over time.

	Along the way, we devise an ordered locality-preserving variant of the cut distance between ordered graphs, showing that two graphs are close with respect to this distance if and only if they are similar in terms of their ordered subgraph frequencies. We show that the space of orderons is compact with respect to this distance notion, which is key to a successful analysis of combinatorial objects through their limits. For the proof we combine techniques used in the unordered setting with several new techniques specifically designed to overcome the challenges arising in the ordered setting.

	We derive several applications of the ordered limit theory in extremal combinatorics, sampling, and property testing in ordered graphs. In particular, we prove a new ordered analogue of the well-known result by Alon and Stav [RS\&A'08] on the furthest graph from a hereditary property; this is the first known result of this type in the ordered setting.
	Unlike the unordered regime, here the Erd\H{o}s–R\'enyi random graph $\bG(n, p)$ with an ordering over the vertices is \emph{not} always asymptotically the furthest from the property for some $p$. However, using our ordered limit theory, we show that random graphs generated by a stochastic block model, where the blocks are consecutive in the vertex ordering, are (approximately) the furthest.
	Additionally, we describe an alternative analytic proof of the ordered graph removal lemma [Alon et al., FOCS'17].
	\end{abstract}

\thispagestyle{empty}

\newpage
\thispagestyle{empty}
\setcounter{tocdepth}{2}
\tableofcontents
\thispagestyle{empty}
\newpage
\setcounter{page}{1}
\newpage
	\section{Introduction}\label{sec:intro}

	Large graphs appear in many applications across all scientific areas. Naturally, it is interesting to try to understand their structure and behavior: When can we say that two graphs are similar (even if they do not have the same size)? How can the convergence of graph sequences be defined? What properties of a large graph can we capture by taking a small sample from it?

	The theory of graph limits addresses such questions from an analytic point of view.
	The investigation of convergent sequences of dense graphs was started to address three seemingly unrelated questions asked in different fields: statistical physics, theory of networks and the Internet, and quasi-randomness. A comprehensive series of papers~\cite{borgs2006graph, BCLSV_discrete06, LS06, Freedman2007reflection, LS07, BCLSV_advances08, BCL10, LS10, BCLSV_annals12} laid the infrastructure for a rigorous study of the theory of dense graph limits, demonstrating various applications in many areas of mathematics and computer science. The book of Lov\'asz on graph limits~\cite{Lov12} presents these results in a unified form.

	A sequence $\{G_n\}_{n=1}^{\infty}$ of finite graphs, whose number of vertices tends to infinity as $n \to \infty$, is considered \emph{convergent}\footnote{In unordered graphs, this is also called \emph{convergence from the left}; see the discussion on~\cite{BCLSV_advances08}.} if the frequency\footnote{The frequency of $F$ in $G$ is roughly defined as the ratio of induced subgraphs of $G$ isomorphic to $F$ among all induced subgraphs of $G$ on $|F|$ vertices.} of any fixed graph $F$ as a subgraph in $G_n$ converges as $n \to \infty$.
	The limit object of a convergent sequence of (unordered) graphs in the dense setting, called a \emph{graphon}, is a measurable symmetric function $W \colon [0,1]^2 \to [0,1]$, and it was proved in~\cite{LS06} that, indeed, for any convergent sequence $\{G_n\}$ of graphs there exists a graphon serving as the limit of $G_n$ in terms of subgraph frequencies.
	Apart from their role in the theory of graph limits, graphons are useful in probability theory, as they give rise to exchangeable random graph models; see e.g.\@~\cite{DiaconisJanson08, OrbanzRoy15}.
	An analytic theory of convergence has been established for many other types of discrete structures. These include sparse graphs, for which many different (and sometimes incomparable) notions of limits exist---see e.g.~\cite{BorgsChayes17, BCG17} for two recent papers citing and discussing many of the works in this field; permutations, first developed in~\cite{HKMRS13} and further investigated in several other works; partial orders~\cite{Janson11}; and high dimensional functions over finite fields~\cite{Yoshida16}. The limit theory of dense graphs has also been extended to hypergraphs, see~\cite{ElekSzegedy12,Zhao15} and the references within.

	In this work we extend the theory of dense graph limits to the ordered setting, establishing a limit theory for vertex-ordered graphs in the dense setting, and presenting several applications of this theory. An \emph{ordered graph} is a symmetric function $G \colon [n]^2 \to \{0,1\}$. $G$ is \emph{simple} if $G(x,x) = 0$ for any $x$.
	A \emph{weighted ordered graph} is a symmetric function $F \colon [n]^2 \to [0,1]$.
	Unlike the unordered setting, where $G,G' \colon [n]^2 \to \Sigma$ are considered isomorphic if there is a permutation $\pi$ over $[n]$ so that $G(x,y) = G'(\pi(x), \pi(y))$ for any $x \neq y \in [n]$, in the ordered setting, the automorphism group of a graph $G$ is trivial: $G$ is only isomorphic to itself through the identity function.

	For simplicity, we consider in the following only graphs (without edge colors).
	All results here can be generalized in a relatively straightforward manner to edge-colored graph-like ordered structures, where pairs of vertices may have one of $r \geq 2$ colors (the definition above corresponds to the case $r=2$). This is done by replacing the range $[0,1]$ with the $(r-1)$-dimensional simplex (corresponding to the set of all possible distributions over $[r]$).

	Two interesting special cases of two-dimensional ordered structures for which our results naturally yield a limit object are \emph{images}, i.e., ordered matrices, and \emph{dynamic graphs} with vertex insertions. Specifically, (binary) $m \times n$ images can be viewed as ordered bipartite graphs $I \colon [m] \times [n] \to \{0,1\}$, and our results can be adapted to get a bipartite ordered limit object for them as long as $m = \Theta(n)$. Meanwhile, a dynamic graph with vertex insertions can be viewed as a sequence $\{G_i\}_{i=1}^{\infty}$ of ordered graphs, where $G_{i+1}$ is the result of adding a vertex to $G_i$ and connecting it to the previous vertices according to some prescribed rule.
	It is natural to view such dynamic graphs that evolve with time as ordered ones, as the time parameter induces a natural ordering. Thus, our work gives, for example, a limit object for time-series where there are pairwise relations between events occurring at different times.

	As we shall see in Subsection~\ref{subsec:main_results}, the main results proved in this paper are, in a sense, natural extensions of results in the unordered setting. However, proving them requires machinery that is heavier than that used in the unordered setting: the tools used in the unordered setting are not rich enough to overcome the subtleties materializing in the ordered setting. In particular, the limit object we use in the ordered setting---which we call an \emph{orderon}---has a $4$-dimensional structure that is more complicated than the analogous $2$-dimensional structure of the graphon, the limit object for the unordered setting.
	The tools required to establish the ordered theory are described next.

	\subsection{Main ingredients}\label{subsec:main_ingredients}

	Let us start by considering a simple yet elusive sequence of ordered graphs, which has the makings of convergence. The \emph{odd-clique} ordered graph $H_n$ on $2n$ vertices is defined by setting $H_n(i,j) = 1$, i.e., having an edge between vertices $i$ and $j$, if and only if $i \neq j$ and $i,j$ are both odd, and otherwise setting $H_n(i, j) = 0$.
	In this subsection we closely inspect this sequence to demonstrate the challenges arising while trying to establish a theory for ordered graphs, and the solutions we propose for them.
	First, let us define the notions of subgraph frequency and convergence.

	The (induced) frequency $t(F, G)$ of a simple ordered graph $F$ on $k$ vertices in an ordered graph $G$ with $n$ vertices is the probability that, if one picks $k$ vertices of $G$ uniformly and independently (repetitions are allowed) and reorders them as $x_1 \leq \cdots \leq x_k$, $F$ is isomorphic to the induced ordered subgraph of $G$ over $x_1, \ldots, x_k$. (The latter is defined as the ordered graph $H$ on $k$ vertices satisfying $H(i, j) = G(x_i, x_j)$ for any $i,j \in [k]$.)  A sequence $\{G_n\}_{n=1}^{\infty}$ of ordered graphs is \emph{convergent} if $|V(G_n)| \to \infty$ as $n \to \infty$, and the frequency $t(F, G_n)$ of any simple ordered graph $F$ converges as $n \to \infty$. Observe that the odd-clique sequence $\{H_n\}$ is indeed convergent: The frequency of the empty $k$-vertex graph in $H_n$ tends to $(k+1)2^{-k}$ as $n \to \infty$, the frequency of any non-empty $k$-vertex ordered graph containing only a clique and a (possibly empty) set of isolated vertices tends to $2^{-k}$, and the frequency of any other graph in $H_n$ is $0$.\footnote{To see why the sum of frequencies is $1$, note that for $k \geq \ell \geq 2$, the number of $k$-vertex ordered graphs consisting of an $\ell$-vertex clique and $k-\ell$ isolated vertices is $\binom{k}{\ell}$.}

	In light of previous works on the unordered theory of convergence, we look for a limit object for ordered graphs that has the following features.
	\begin{description}
		\item[Representation of finite ordered graphs] The limit object should have a natural and consistent representation for finite ordered graphs. As is the situation with graphons, we allow graphs $H$ and $G$ to have the same representation when one is a blowup\footnote{A graph $G$ on $nt$ vertices is an ordered $t$-blowup of $H$ on $n$ vertices if $G(x,y) = H(\lceil x/t \rceil, \lceil y/t \rceil)$ for any $x$ and $y$.} of the other.
		\item[Usable distance notion] Working directly with the definition of convergence in terms of subgraph frequencies is difficult. The limit object we seek should be endowed with a metric, like the cut distance for unordered graphs (see discussion below), that should be easier to work with and must have the following property: A sequence of ordered graph is convergent (in terms of frequencies) if and only if it is Cauchy in the metric.
		\item[Completeness and compactness] The space of limit objects must be complete with respect to the metric: Cauchy sequences should converge in this metric space. Combined with the previous requirements, this will ensure that any convergent sequence of ordered graphs has a limit (in terms of ordered frequencies), as desired. It is even better if the space is compact, as compactness is essentially an ``ultimately strong'' version of Szemer\'edi's regularity lemma~\cite{Szemeredi78}, and will help to develop applications of the theory in other areas.
	\end{description}
	Additionally, we would like the limit object to be as simple as possible, without unnecessary over-representation.
	In the unordered setting, the metric used is the \emph{cut distance}, introduced by Frieze and Kannan~\cite{FriezeKannan1996, FriezeKannan1999} and defined as follows. First, we define the \emph{cut norm} $\|W\|_{\square}$ of a function $W \colon [0,1]^2 \to \mathbb{R}$ as the supremum of $|\int_{S \times T} W(x,y)dxdy|$ over all measurable subsets $S, T \subseteq [0,1]$. The \emph{cut distance} between graphons $W$ and $W'$ is the infimum of $\|W^{\phi} - W'\|_{\square}$ over all measure-preserving bijections $\phi \colon [0,1] \to [0,1]$, where $W^\phi(x,y) \eqdef W(\phi(x), \phi(y))$.

	For the ordered setting, we look for a similar metric; the cut distance itself does not suit us, as measure-preserving bijections do not preserve ordered subgraph frequencies in general. A first intuition is then to try graphons as the limit object, endowed with the metric $d_{\square}(W, W') \eqdef \| W - W' \|_{\square}$. However, this metric does not satisfy the second requirement: the odd-clique sequence is convergent, yet it is not Cauchy in $d_{\square}$, since $d_{\square}(H_n, H_{2n}) = 1/2$ for any $n$.
	Seeing that $d_{\square}$ seems ``too strict'' as a metric and does not capture the similarities between large odd-clique graphs well, it might make sense to use a slightly more ``flexible'' metric, which allows for measure-preserving bijections, as long as they do not move any of the points too far from its original location. In view of this, we define the \emph{cut-shift distance} between two graphons $W, W'$ as
	\begin{align}
	\label{eq:cut_shift}
	\csdist(W,W')\eqdef\inf_{f}\left(\shift{f}+\|W^f-W'\|_{\square}\right),
	\end{align}
	where $f \colon [0,1] \to [0,1]$ is a measure-preserving bijection, $\shift{f} = \sup_{x \in [0,1]} |f(x)-x|$, and $W^f(x, y) = W(f(x), f(y))$ for any $x,y \in [0,1]$. As we show in this paper (Theorem~\ref{thm:convergence_equiv} below), the cut-shift distance settles the second requirement: a sequence of ordered graphs is convergent \emph{if and only if} it is Cauchy in the cut-shift distance.

	Consider now graphons as a limit object, coupled with the cut-shift distance as a metric. Do graphons satisfy the third requirement? In particular, does there exist a graphon whose ordered subgraph frequencies are equal to the limit frequencies for the odd-clique sequence? The answers to both of these questions are negative: it can be shown that such a graphon cannot exist in view of Lebesgue's density theorem, which states that there is no measurable subset of $[0,1]$ whose density in every interval $(a,b)$ is $(b-a)/2$ (see e.g.\@ Theorem 2.5.1 in the book of Franks on Lebesgue measure~\cite{Franks09}).
	Thus, we need a somewhat richer ordered limit object that will allow us to ``bypass'' the consequences of Lebesgue's density theorem. Consider for a moment the graphon representations of the odd clique graphs. In these graphons, the domain $[0, 1]$ can be partitioned into increasingly narrow intervals that alternately represent odd and even vertices. Intuitively, it seems that our limit object needs to be able to contain infinitesimal odd and even intervals at any given location, leading us to the following limit object candidate, which we call an \emph{orderon}.

	An orderon is a symmetric measurable function $W \colon ([0,1]^2)^2 \to [0,1]$ viewed, intuitively and loosely speaking, as follows. In each point $(x, a) \in [0, 1]^2$, corresponding to an infinitesimal ``vertex'' of the orderon, the first coordinate, $x$, represents a location in the linear order of $[0,1]$. Each set $\{x\} \times [0,1]$ can thus be viewed as an infinitesimal probability space of vertices that have the same location in the linear order. The role of the second coordinate is to allow ``variability'' (in terms of probability) of the infinitesimal ``vertex'' occupying this point in the order. The definition of the frequency $t(F, W)$ of a simple ordered graph $F = ([k], E)$ in an orderon $W$ is a natural extension of frequency in graphons. First, define the random variable $\bG(k, W)$ as follows: Pick $k$ points in $[0,1]^2$ uniformly and independently, order them according to the first coordinate as $(x_1, a_1), \ldots, (x_k, a_k)$ with $x_1 \leq \cdots \leq x_k$, and then return a $k$-vertex graph $G$, in which the edge between each pair of vertices $i$ and $j$ exists with probability $W((x_i, a_i), (x_j, a_j))$, independently of other edges. The frequency $t(F, W)$ is defined as the probability that the graph generated according to $\bG(k, W)$ is isomorphic to $F$.

	Consider the orderon $W$ satisfying $W((x, a), (y,b)) = 1$ if and only if $a,b \leq 1/2$, and otherwise $W((x, a), (y, b)) = 0$. $W$ now emerges as a natural limit object for the odd-clique sequence: one can verify that the subgraph frequencies in it are as desired.

	The cut-shift distance for orderons is defined similarly to~\eqref{eq:cut_shift}, except that $f$ is now a measure-preserving bijection from $[0,1]^2$ to $[0,1]^2$ and $\shift{f} = \sup_{(x,a) \in [0,1]^2} |\pi_1(f(x,a)) - x|$, where $\pi_1(y, b) \eqdef y$ is the projection to the first coordinate.

	\subsection{Main results}\label{subsec:main_results}

	Let $\calW$ denote the space of orderons endowed with the cut-shift distance.
	In view of Lemma~\ref{lem:pseudo_metric} below, $\csdist$ is a pseudo-metric for $\calW$. By identifying $W,U\in\calW$ whenever $\csdist(W,U)=0$, we get a metric space $\widetilde{\calW}$. The following result is the main component for the viability of our limit object, settling the third requirement above.
	\begin{theorem}\label{thm:Compactness}
		The space $\widetilde{\calW}$ is compact with respect to $\csdist$.
	\end{theorem}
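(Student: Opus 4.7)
The plan is to follow the general Lovász--Szegedy framework for graphon compactness, but with substantial adjustments to respect the ordered structure. Given a sequence $\{W_n\} \subseteq \calW$, the goal is to extract a subsequence that converges in $\csdist$ to some orderon $W$. The two main ingredients are (i) an \emph{ordered weak regularity lemma} producing structured step-orderon approximations whose ``order coordinate'' partition consists of consecutive intervals; and (ii) a diagonal/martingale extraction that aligns these approximations across $n$ with only a small shift.

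First, I would prove an ordered analogue of the Frieze--Kannan weak regularity lemma: for every $\eps>0$ and every $W\in\calW$, there is a step orderon $W^{(\eps)}$ that is $\eps$-close to $W$ in the cut norm and whose underlying partition of $[0,1]^2$ is of the form $\{I_i \times J_{i,j}\}_{i\in[k],\,j\in[\ell]}$, where $I_1 < I_2 < \cdots < I_k$ are consecutive intervals of $[0,1]$ (the order coordinate) and, within each ``slab'' $I_i\times[0,1]$, the $J_{i,j}$ form a measurable partition of $[0,1]$ (the variability coordinate). The construction runs the standard energy-increment argument, but every refinement step is constrained so that splits of the order coordinate subdivide intervals into sub-intervals; splits of the variability coordinate inside a slab can be arbitrary measurable sets. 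An energy bound of the usual type will cap $k,\ell$ by a function of $\eps$.

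Second, I would apply this lemma to each $W_n$ with $\eps = 1/m$ for every $m\in\N$, obtaining step orderons $W_n^{(m)}$. For each fixed $m$, the combinatorial data of $W_n^{(m)}$ lives in a compact finite-dimensional space (the combinatorial type of the partition, the interval endpoints, the measures of the sets $J_{i,j}$, and the cell values), so a standard pigeonhole/diagonal argument produces a subsequence along which, for every $m$, the types stabilize and all numerical parameters converge as $n\to\infty$. The resulting pointwise limits $U^{(m)}$ form a coherent sequence of step orderons (coherent because a $(1/m)$-regular partition can be refined by a $(1/m')$-regular one for $m'>m$, up to a small loss absorbed into a Cauchy error term), and a martingale-type argument on the refining partitions yields an orderon $W$ with $\csdist(U^{(m)},W)\to 0$. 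By the triangle inequality in $\csdist$, the extracted subsequence converges to $W$.

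The main obstacle is controlling the cut-\emph{shift} distance rather than the plain cut distance, because in the ordered setting one cannot freely shuffle mass along the order coordinate. Concretely, to conclude $\csdist(W_n^{(m)}, U^{(m)})\to 0$, one must exhibit a measure-preserving bijection $f$ of $[0,1]^2$ that matches the partitions of $W_n^{(m)}$ and $U^{(m)}$ with simultaneously small $\shift{f}$ and small cut-norm error. The only leverage is that the interval partitions of the order coordinate have combinatorially identified endpoints whose coordinates converge: this forces $f$ to be close to the identity in the first coordinate (small shift), while leaving full freedom to permute arbitrarily within each slab in the variability coordinate (enough freedom to match the $J_{i,j}$ sets and zero out the cut-norm error on the step structure). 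Verifying that this delicate balance can be realized uniformly across the refinement levels $m$---and in particular that the energy-increment regularity lemma can be enforced under the interval constraint without blowing up $k$ uncontrollably---is where I expect the bulk of the technical work to lie.
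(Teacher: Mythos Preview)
Your high-level architecture---regularize each $W_n$ at every scale, diagonalize over the resulting finite-dimensional data, and assemble a limit by a martingale argument---is exactly the paper's. The divergence, and the gap, is in your Step~(i). You assert a weak regularity lemma whose steps have the product form $I_i\times J_{i,j}$ with the $I_i$ consecutive intervals, obtained by running energy increment under that structural constraint. But in the increment step, a witness pair $S,T\subseteq[0,1]^2$ realizing $\bigl|\int_{S\times T}(W-W_\calP)\bigr|>\eps$ need not be (even approximately) a finite union of such rectangles, and refining by an interval-in-$x$/arbitrary-in-$a$ surrogate for $S$ does not obviously yield the $\Omega(\eps^2)$ energy gain that terminates the loop. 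You correctly flag this as ``the bulk of the technical work,'' but in fact there is no mechanism offered and no reason to expect it to go through as written.

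The paper avoids this entirely by \emph{not} forcing product form. It runs unconstrained regularity producing, for each $n$ and level $\ell$, a partition $\calP_{n,\ell}$ that merely refines the dyadic strip partition $\calI_\ell$ (so each class lies in one strip but has arbitrary shape there), with $\calP_{n,\ell+1}$ refining $\calP_{n,\ell}$. The price is that one can no longer summarize a class by an interval endpoint and a layer measure; instead the paper introduces \emph{shape functions} $\alpha^{(n,\ell)}_j(\calI_{\ell'}:t')=2^{\ell'}\lambda\bigl(P^{(n,\ell)}_j\cap I^{(\ell')}_{t'}\bigr)$ recording the density profile of each class against all finer strip partitions $\calI_{\ell'}$. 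After a diagonal extraction in $n$, a \emph{second} martingale---this one in $\ell'$---produces limit profiles $\alpha^{(\ell)}_j\colon[0,1]\to[0,1]$, and the limit partition is $A^{(\ell)}_j=\{(x,a):\sum_{i<j}\alpha^{(\ell)}_i(x)\le a<\sum_{i\le j}\alpha^{(\ell)}_i(x)\}$, which is generally \emph{not} of product form (the $a$-interval varies with $x$). A further martingale in $\ell$ on the cell densities gives the limit orderon, and the small-shift bijection matching $\calP_{n,\ell}$ to $\calA_\ell$ within each strip works just as you anticipate. So the skeleton of your argument is right; what you are missing is replacing the constrained regularity by unconstrained regularity refining dyadic strips, together with the shape-function machinery and its extra martingale over the strip scale $\ell'$.
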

	The proof of Theorem~\ref{thm:Compactness} is significantly more involved than the proof of its unordered analogue. While at a very high level, the roadmap of the proof is similar to that of the unordered one, our setting induces several new challenges, and to handle them we develop new \emph{shape approximation} techniques. These are presented along the proof of the theorem in Section~\ref{sec:compactness}.

	The next result shows that convergence in terms of frequencies is equivalent to being Cauchy in $d_{\triangle}$. This settles the second requirement.

	\begin{theorem}\label{thm:convergence_equiv}
		Let $\{W_n\}_{n=1}^{\infty}$ be a sequence of orderons. Then $\{W_n\}$ is Cauchy in $d_{\triangle}$ if and only if $t(F, W_n)$ converges for any fixed simple ordered graph $F$.
	\end{theorem}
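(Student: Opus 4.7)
My plan is to prove both directions separately. The forward direction rests on a \emph{counting lemma}: for every simple ordered graph $F$ on $k$ vertices there is a constant $c_k$ with
\[
|t(F,W)-t(F,W')|\leq c_k\cdot \csdist(W,W')
\]
for all orderons $W,W'$. To establish it I would fix $\eta>0$ and pick a measure-preserving bijection $f:[0,1]^2\to[0,1]^2$ nearly attaining the infimum in~\eqref{eq:cut_shift}, so that $\shift{f}+\|W^f-W'\|_{\square}\leq \csdist(W,W')+\eta$. Sampling $k$ points $(x_i,a_i)$ uniformly in $[0,1]^2$ and coupling the two sample graphs through the pairs $(x_i,a_i)$ and $f(x_i,a_i)$, the first-coordinate order of the two samples agrees except on an event of probability $O(k^2\shift{f})$ (two consecutive sorted first coordinates must lie within $\shift{f}$ of each other for the order to swap after applying $f$). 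Conditioned on the order matching, a standard hybrid argument telescoping over the $\binom{k}{2}$ potential edges replaces the $W^f$ entries by the $W'$ entries at a total cost of $\binom{k}{2}\|W^f-W'\|_{\square}$. Letting $\eta\to 0$ gives the counting lemma, and the forward direction follows immediately.

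For the backward direction, suppose toward a contradiction that $\{W_n\}$ is not Cauchy in $\csdist$. Then there exist $\delta>0$ and subsequences $\{W_{n_k}\}$, $\{W_{m_k}\}$ with $\csdist(W_{n_k},W_{m_k})\geq\delta$ for all $k$. By Theorem~\ref{thm:Compactness}, after passing to further subsequences we may assume $W_{n_k}\to U$ and $W_{m_k}\to U'$ in $\csdist$ for some orderons $U,U'$ with $\csdist(U,U')\geq\delta$. The counting lemma gives $t(F,W_{n_k})\to t(F,U)$ and $t(F,W_{m_k})\to t(F,U')$ for every simple ordered $F$, while the hypothesis that $t(F,W_n)$ converges forces the two limits to agree: $t(F,U)=t(F,U')$ for every such $F$. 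The whole theorem thus reduces to the \emph{uniqueness} statement that two orderons with identical ordered subgraph frequencies must be at $\csdist$-distance $0$.

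To prove uniqueness I would establish an ordered analogue of the Lov\'asz--Szegedy sampling lemma: for every $\eps>0$ there exists $k_0$ such that for every orderon $V$ and every $k\geq k_0$, the random $k$-vertex graph $\bG(k,V)$, viewed in the natural way as an orderon, lies within $\csdist$-distance $\eps$ of $V$ with probability at least $1-\eps$. Granting this lemma, the identity $t(F,U)=t(F,U')$ for all $F$ on at most $k$ vertices makes $\bG(k,U)$ and $\bG(k,U')$ identically distributed, so for $k$ large a single realization is simultaneously $\eps$-close to both $U$ and $U'$ in $\csdist$; the triangle inequality then gives $\csdist(U,U')\leq 2\eps$, and letting $\eps\to 0$ contradicts $\csdist(U,U')\geq\delta$. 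The main obstacle is the ordered sampling lemma: the cut-norm part follows essentially as in the graphon case from Azuma--Hoeffding concentration of edge densities over dyadic blocks, but the additional $\shift{f}$ term in $\csdist$ requires controlling the positions of the sorted first coordinates $x_{(1)}\leq\cdots\leq x_{(k)}$ of the sample inside $[0,1]$. I would handle this via a Glivenko--Cantelli-type uniform bound showing that the empirical CDF of the $x_i$ concentrates around the identity, so that the measure-preserving rearrangement identifying the sample with a piecewise-constant orderon has shift at most $O(\sqrt{\log k/k})$ with high probability.
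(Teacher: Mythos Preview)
Your forward direction matches the paper's counting lemma (Lemma~\ref{lem:CountingLemma}, proved via Lemmas~\ref{lem:smallcutnorm-simdensity} and~\ref{lem:smallshiftlemma}), but the claimed linear bound $c_k\cdot\csdist(W,W')$ is not what the hybrid delivers. Telescoping over the ordered simplex $\{v_1\le\cdots\le v_k\}$ leaves a two-variable integral carrying the constraint $\indi_{v_i\le v_j}$, so each term is controlled by the \emph{ordered} cut norm $\|W^f-W'\|_{\square'}$ (Lemma~\ref{lem:smoothning}), and that quantity is in general only bounded by $2\sqrt{\|W^f-W'\|_\square}$ (Lemma~\ref{lem:OrederedtoNoOrder}). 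The paper accordingly obtains $6k!\binom{k}{2}\sqrt{\csdist(W,U)}$. This correction is harmless for the theorem itself, but the linear dependence you stated is not available.

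For the backward direction your route genuinely differs from the paper's. The paper proves a direct quantitative inverse (Lemma~\ref{lem:HardStats}): if all $k$-vertex frequencies of $W$ and $U$ agree to within $2^{-k^2}$ then $\csdist(W,U)=O((\log\log k/\log k)^{1/3})$; the argument bounds $d_{TV}(\bG(k,W),\bG(k,U))$, couples the two samples to coincide with positive probability, and applies the sampling theorem---compactness is never invoked. You instead use compactness (Theorem~\ref{thm:Compactness}) to manufacture limits $U,U'$ with identical frequencies and then derive $\csdist(U,U')=0$ from the sampling theorem. Both approaches are valid and both rest on the sampling theorem, which is already Theorem~\ref{thm:Sampling} of the paper, so you should simply cite it rather than reprove it via Glivenko--Cantelli. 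The paper's route buys an explicit rate and avoids the heavier compactness result; yours is softer and arguably cleaner but non-effective.
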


	As a corollary of the last two results, we get the following.

	\begin{corollary}
		For every convergent sequence of ordered graphs $\{G_n\}_{n\in\mathbb{N}}$, there exists an orderon $W\in\calW$ such that $t(F,G_n)\to t(F,W)$ for every ordered graph $F$.
	\end{corollary}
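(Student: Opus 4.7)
The plan is to reduce the corollary to Theorems~\ref{thm:Compactness} and~\ref{thm:convergence_equiv} by embedding each finite ordered graph $G_n$ into $\calW$ via its natural orderon representation. Specifically, for an ordered graph $G$ on $N$ vertices, partition $[0,1]$ into $N$ equal intervals $I_1 < \cdots < I_N$ and define $W_G\colon ([0,1]^2)^2 \to \{0,1\}$ by $W_G((x,a),(y,b)) \eqdef G(i,j)$ whenever $x\in I_i$ and $y\in I_j$, ignoring the second coordinate. I would first verify that $W_G$ is a measurable symmetric $[0,1]$-valued function, hence an orderon, and more importantly that $t(F,W_G) = t(F,G)$ for every simple ordered graph $F$. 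This identity holds exactly because sampling $k$ i.i.d.\ uniform points in $[0,1]^2$ and projecting onto the first coordinate produces $k$ i.i.d.\ uniform samples from $[N]$ (with repetitions), matching the definition of $t(F,G)$; and since $W_G$ takes values in $\{0,1\}$, the edge indicators in the definition of $\bG(k,W_G)$ become deterministic.

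With $W_n \eqdef W_{G_n}$ in hand, the hypothesis that $\{G_n\}$ is convergent says precisely that $t(F,W_n) = t(F,G_n)$ converges for every simple ordered $F$. The ``if'' direction of Theorem~\ref{thm:convergence_equiv} then implies that $\{W_n\}$ is Cauchy in $\csdist$. By Theorem~\ref{thm:Compactness}, the space $\widetilde{\calW}$ is compact and in particular complete, so there exists $W\in\calW$ with $\csdist(W_n,W)\to 0$.

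To upgrade this to the frequency convergence $t(F,W_n)\to t(F,W)$ (rather than mere existence of limits), I would apply Theorem~\ref{thm:convergence_equiv} once more to the interleaved sequence $W_1, W, W_2, W, W_3, W, \ldots$, which is itself $\csdist$-Cauchy since $W_n \to W$. By the ``if'' direction, the frequencies $t(F,\cdot)$ converge along this interleaved sequence; the constant subsequence pins the common limit down to $t(F,W)$, so $t(F,G_n) = t(F,W_n) \to t(F,W)$, as claimed.

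Since Theorems~\ref{thm:Compactness} and~\ref{thm:convergence_equiv} do all the heavy lifting, the main obstacle here is not technical but bookkeeping: one must be careful in pinning down the identity $t(F,W_G) = t(F,G)$ and, because the $\csdist$-limit $W$ is a priori only defined up to the identification in $\widetilde{\calW}$, in noting that ordered frequencies are invariant on $\csdist$-equivalence classes --- a fact that itself follows by applying the ``only if'' direction of Theorem~\ref{thm:convergence_equiv} to a constant sequence alternated with another representative.
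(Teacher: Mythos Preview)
Your proposal is correct and follows exactly the route the paper implies (the paper gives no explicit proof, simply stating the corollary follows from Theorems~\ref{thm:Compactness} and~\ref{thm:convergence_equiv}). One minor slip: in the third paragraph you invoke the ``if'' direction of Theorem~\ref{thm:convergence_equiv} on the interleaved sequence, but going from ``Cauchy in $\csdist$'' to ``frequencies converge'' is the \emph{only if} direction; you use the labels correctly everywhere else.
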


	The next main result is a sampling theorem, stating that a large enough sample from an orderon is almost always close to it in cut-shift distance.  For this, we define the orderon representation $W_G$ of an $n$-vertex ordered graph $G$ by setting $W_G((x, a), (y, b)) = G(Q_n(x), Q_n(y))$ for any $x,a,y,b$, where we define $Q_n(x) = \lceil nx \rceil$ for $x > 0$ and $Q_n(0) = 1$. This addresses the first requirement.

	\begin{theorem}\label{thm:Sampling}
		Let $k$ be a positive integer and let $W\in\calW$ be an orderon. Let $G \sim \bG(k, W)$. Then,
		$$
		\csdist(W,W_{G}) \leq C\left(\frac{\log \log k}{\log k}\right)^{1/3}
		$$
		holds with probability at least $1-C\exp(-\sqrt{k}/C)$ for some constant $C>0$.
	\end{theorem}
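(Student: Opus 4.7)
My plan is to prove the sampling theorem by a two-stage coupling argument that mirrors the unordered graphon sampling lemma but respects the vertex ordering required by the cut-shift distance. Writing the sample as $k$ i.i.d.\ uniform points $(\bx_1,\ba_1),\ldots,(\bx_k,\ba_k)\in[0,1]^2$, relabeled so that $\bx_1\le\cdots\le\bx_k$, I interpose between $W$ and $W_G$ the weighted orderon
\[
U((x,a),(y,b)) \;=\; W\bigl((\bx_{Q_k(x)},\ba_{Q_k(x)}),(\bx_{Q_k(y)},\ba_{Q_k(y)})\bigr),
\]
which is block-constant on the $k$ vertical strips of width $1/k$; the final random $W_G$ is obtained from $U$ by independently rounding each strip-strip value to $\{0,1\}$ with the correct mean. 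The plan is to bound $\csdist(W,U)$ and $\csdist(U,W_G)$ separately and apply the triangle inequality.

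The first bound, on $\csdist(W,U)$, is the difficult step. I would first apply a shape-approximation of $W$ at scale $\epsilon$, of the type developed in Section~\ref{sec:compactness} for the compactness proof: a partition of $[0,1]^2$ into $m=m(\epsilon)$ order-consistent pieces on which $W$ is close in cut norm to a block-constant function. I then construct an explicit measure-preserving bijection $f\colon[0,1]^2\to[0,1]^2$ that sends each sample point $(\bx_i,\ba_i)$ into the slab $[(i-1)/k,i/k]\times[0,1]$ and acts piecewise-linearly in each strip, so that $\shift{f}\le\sup_i|\bx_i-i/k|+O(1/k)$. Two concentration facts drive the argument: the DKW inequality yields $\sup_i|\bx_i-i/k|=O(k^{-1/4})$ with probability $1-\exp(-\sqrt{k}/C)$, which controls the shift contribution; and for each pair of the $m^2$ blocks of the shape approximation, the fraction of sample pairs falling into that pair concentrates around its expected measure at additive error $O(\sqrt{(\log m)/k})$, which by a union bound over all $S,T$ aligned with the approximation grid controls $\|W^f-U\|_\square$.

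For $\csdist(U,W_G)$ I compare via the identity, so the shift contribution vanishes. Here $U$ and $W_G$ share the same $k\times k$ block structure and $W_G$ is obtained from $U$ by independent Bernoulli rounding; $\|U-W_G\|_\square$ then reduces to a supremum of $k^{-2}|\sum_{i,j}\sigma_i\tau_j(U_{ij}-G_{ij})|$ over $\sigma,\tau\in\{0,1\}^k$, which by Hoeffding plus a union bound over the $4^k$ sign patterns is $O(k^{-1/2})$ with failure probability $\exp(-\sqrt{k}/C)$.

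The main obstacle is calibrating the two error terms of Step~1 against each other. The shape-approximation itself contributes $\epsilon$, while the concentration across $m(\epsilon)$ pieces contributes roughly $\sqrt{\log m(\epsilon)/k}$. Since the quantitative dependence $m(\epsilon)$ inherited from the regularity machinery underlying Theorem~\ref{thm:Compactness} is of tower type, the optimal choice $\epsilon=(\log\log k/\log k)^{1/3}$ balances the two and yields the claimed rate. A delicate additional point is verifying that $f$ can be chosen to simultaneously respect the horizontal strips of the shape approximation \emph{and} move each $(\bx_i,\ba_i)$ by at most $O(k^{-1/4})$ in the first coordinate; this is precisely where the first-coordinate interval structure of the shape approximation, as opposed to an arbitrary measurable partition of $[0,1]^2$, becomes essential.
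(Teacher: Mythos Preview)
Your route is genuinely different from the paper's. The paper does not build a single bijection from $W$ to the weighted sample $U$; instead it (i) approximates $W$ by a naive $m$-block orderon $W'$ via Theorem~\ref{thm:ApproxNaive-StepOrderon}, (ii) shows directly that $\csdist(W', W_{\bG(k,W')})$ is small (Lemma~\ref{lem:W-and-G(k,W)-close}), and (iii) couples $\bG(k,W)$ with $\bG(k,W')$ so that $\csdist(W_{\bG(k,W)}, W_{\bG(k,W')}) \le 9\csdist(W,W') + 10k^{-1/4}$ (Lemma~\ref{lem:W-W'-close-in-csdist}). Step (iii) is the ordered-specific ingredient: the coupling applies $f^{-1}$ to the sample points and bounds the resulting reordering via a balls-in-bins overlap estimate, then invokes the unordered second sampling lemma for the cut part. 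Your proposal tries to merge (ii) and (iii) into a single explicit bijection. Your treatment of $\csdist(U,W_G)$ is fine and matches the random-rounding half of Lemma~\ref{lem:W-and-G(k,W)-close}.

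There are, however, two genuine problems with the proposal as written.

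First, the claim that $m(\eps)$ is ``of tower type'' is wrong. The relevant approximation (Theorem~\ref{thm:ApproxNaive-StepOrderon}, resting on the weak regularity Lemma~\ref{lem:reg-nostepping}) gives $m(\eps)=\eps^{-O(1)}2^{O(1/\eps^2)}$, a single exponential. Plugged into your own error term $\eps+\sqrt{\log m(\eps)/k}$, this balances at $\eps\asymp k^{-1/4}$, not at $(\log\log k/\log k)^{1/3}$; so either your error bookkeeping or your identification of the approximation is off. The paper's stated rate comes from the cruder term $m^{3/2}/\sqrt{k}$ in Lemma~\ref{lem:W-and-G(k,W)-close}, not from anything tower-shaped.

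Second, and more structurally, your $f$ is described as acting ``piecewise-linearly in each strip'', i.e.\ on the first coordinate only. But the cut-norm approximation you need (Lemma~\ref{lem:reg-refinement}) produces a partition $\calQ$ of $[0,1]^2$ whose parts are arbitrary measurable subsets of each coarse strip, not vertical sub-strips; the naive block form in Theorem~\ref{thm:ApproxNaive-StepOrderon} is achieved only \emph{after} a further bijection that scrambles the second coordinate. Now $U$ is constant on each sample slab and equals $W$ at the single point $(\bx_i,\ba_i)$, whereas $W^f$ with $f(x,a)=(g(x),a)$ averages $W$ over $[g^{-1}(\text{slab }i)]\times[0,1]$; there is no reason these agree even in cut norm unless $W$ is already naive. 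To make the argument go through, $f$ must send each sample slab into the $\calQ$-part containing $(\bx_i,\ba_i)$, which forces $f$ to act nontrivially on the $a$-coordinate. That costs nothing in $\shift{f}$, but it is not the map you describe, and checking that such an $f$ exists as a measure-preserving bijection with the right cut-norm control (together with an ordered second-sampling step to pass $\|W-W_\calQ\|_\square$ to the sample) is precisely the work the paper packages into the coupling Lemma~\ref{lem:W-W'-close-in-csdist}. As written, your proposal does not carry this out.
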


	Theorem~\ref{thm:Sampling} implies, in particular, that ordered graphs are a dense subset in $\calW$.

	\begin{corollary}\label{cor:density_simple}
		For every orderon $W$ and every $\eps>0$, there exists a simple ordered graph $G$ on at most $2^{\eps^{-3+o(1)}}$ vertices such that $\csdist(W,W_G)\le\eps$.
	\end{corollary}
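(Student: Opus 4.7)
The plan is to derive this from Theorem~\ref{thm:Sampling} by a one-shot probabilistic method argument: pick $k$ large enough that the error bound in the sampling theorem falls below $\eps$, sample $\bG \sim \bG(k,W)$, and observe that the failure probability is strictly less than $1$, so a witnessing $G$ must exist.

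Concretely, let $C$ be the constant from Theorem~\ref{thm:Sampling}, and let $k$ be the smallest positive integer such that
\[
C\left(\frac{\log\log k}{\log k}\right)^{1/3} \le \eps.
\]
Solving this inequality asymptotically: it is equivalent to $\log k \ge (C^3/\eps^3)\log\log k$, and setting $\log k = c\,\eps^{-3}\log(1/\eps)$ for a sufficiently large absolute constant $c$ yields $\log\log k = O(\log(1/\eps))$, which satisfies the inequality. Hence $k$ may be chosen with
\[
k \le 2^{\,c\,\eps^{-3}\log(1/\eps)} = 2^{\eps^{-3+o(1)}},
\]
as $\eps \to 0$. (For $\eps$ bounded away from $0$, we may enlarge $k$ arbitrarily without affecting the asymptotic bound, so we may in any case assume $k$ is as large as needed for what follows.)

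Now sample $\bG \sim \bG(k,W)$ as in the statement of Theorem~\ref{thm:Sampling}. Since $\bG$ is obtained by drawing $k$ points from $[0,1]^2$ uniformly and independently, the first coordinates are almost surely distinct and yield a well-defined simple ordered graph on exactly $k$ vertices. By our choice of $k$ and Theorem~\ref{thm:Sampling},
\[
\Pr\bigl[\csdist(W, W_{\bG}) \le \eps\bigr] \;\ge\; 1 - C\exp(-\sqrt{k}/C).
\]
For $k$ large enough (which holds either automatically for small $\eps$, or by padding $k$ upward otherwise), this probability is strictly positive, so at least one realization $G$ of $\bG$ is a simple ordered graph on at most $2^{\eps^{-3+o(1)}}$ vertices with $\csdist(W, W_G) \le \eps$, as desired.

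There is no substantive obstacle here; the corollary is an immediate consequence of Theorem~\ref{thm:Sampling} once $k$ is tuned to the error rate, and the only task is to invert the expression $(\log\log k/\log k)^{1/3}$ to extract the vertex-count bound.
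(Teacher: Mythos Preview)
Your argument is correct and matches the paper's approach: the corollary is stated immediately after Theorem~\ref{thm:Sampling} as a direct consequence, with no separate proof given, and your probabilistic-method extraction together with the inversion of $C(\log\log k/\log k)^{1/3}\le\eps$ to obtain $k\le 2^{\eps^{-3+o(1)}}$ is exactly what is intended. One tiny remark: ``simple'' here means loopless ($G(x,x)=0$), which is not literally the same as the sampled first coordinates being distinct; but since the diagonal blocks of $W_{\bG}$ have total measure $1/k$, deleting any loops changes $\csdist$ by at most $1/k$ and is absorbed in the bound.
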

	
			Our last main result asserts that any orderon $W \in \calW$ can be approximated in $L_1$-distance by an orderon $U$ with a finite block structure, with the added property that any ordered finite structure that appears with positive density in $U$ also has positive density in $W$.\footnote{A weaker result, in which the $L_1$-distance is replaced by the cut-shift distance, is not hard to prove using our previous main results; note that it is indeed strictly weaker since the $L_1$-distance between any two orderons $U$ and $W$ is always no larger (and sometimes strictly smaller) than $\csdist(U, W)$.}
The orderon $U$ is described as follows. the point set $[0,1]^2$ is divided into $b$ ``blocks'', which are subsets of the form $[(i-1)/b, i/b] \times [0,1]$ for some $i \in [b]$. Each block is decomposed into $l$ ``layers'', of the form $[(i-1)/b, i/b] \times [(j-1)/l, j/l]$ where $j \in [l]$. The value of $U((x,a), (y,b))$ is now only dependent on which blocks $x,y$ belong to, which layers $a,b$ belong to, and possibly whether $x < y$. For example, the orderon $U$ representing the limit of the odd-clique sequence (defined by $U((x,a), (y,b))=1$ if $a,b \leq 1/2$, and  $U((x,a), (y,b))=0$ elsewhere) has one block and two layers in it. Roughly speaking, one can think of such $U$ as the orderon representation of a ``pixelized'' ordered graph, where each vertex (block) consists of multiple ``pixels'' (block-layer pairs), and there is a weighted edge\footnote{In fact, a weighted bi-directed edge, with possibly different weights in the the different directions.} between each pair of pixels.
Therefore we call an orderon $U$ with such structure a \emph{pixelized} orderon and term our result the \emph{pixelization lemma}.
	\begin{theorem}[Pixelization lemma; informal]
	\label{thm:pixel}
	For any orderon $W$ and $\eps > 0$, there exists a pixelized orderon $U$ so that $d_1(U, W) \leq \eps$, satisfying the following: for all ordered graphs $F$ with $t(F,U) > 0$, we have $t(F,W)>0$.
	\end{theorem} 
	We note that the pixelized structure of $U$ is necessary for this statement to be correct; it breaks down if we only allow $U$ to be the orderon representation of a standard edge-weighted ordered graph. The pixelization lemma is formally stated and proved in Section \ref{sec:hereditary}; see Lemma \ref{lem:unifomDecision} there.
	
	The pixelization lemma is especially useful for applications where the $L_1$-distance comes into play. Two such applications, reproving the ordered graph removal lemma~\cite{ABF_FOCS17} and proving a new result in extremal combinatorics, are described next.

	\subsection{The furthest ordered graph from a hereditary property}\label{subsec:furthest_intro}
	Here and in the next subsection we describe three applications of our ordered limit theory. We start with an extensive discussion on the first application: A new result on the maximum edit\footnote{For our purposes, define the edit (or Hamming) distance between two ordered graphs $G$ and $G'$ on $n$ vertices as the smallest number of entries that one needs to change in the adjacency matrix $A_G$ of $G$ to make it equal to $A_{G'}$, divided by $n^2$. For this matter, the adjacency matrix $A_G$ of a graph $G$ over vertices $v_1 < \ldots < v_n$ is a binary $n \times n$ matrix where $A_G(i,j) = 1$ if and only if there is an edge between $v_i$ and $v_j$ in $G$. 
	The distance between $G$ and a property $\calP$ of ordered graphs is $\min_{G'} d_1(G,G')$ where $G'$ ranges over all graphs $G'$ of the same size as $G$. The definition for unordered graphs is similar; the only difference is in the notion of isomorphism.} distance $d_1(G, \calH)$ of an ordered graph $G$ from a hereditary\footnote{A property of (ordered or unordered) graphs is \emph{hereditary} if it is closed under taking induced subgraphs.} property $\calH$.

	For a hereditary property $\calH$ of simple ordered graphs, define $\overline{d_\calH} = \sup_G d_1(G, \calH)$ where $G$ ranges over all simple graphs (of any size).
	The parameter $d_{\calH}$ has been widely investigated for unordered graphs. A well-known surprising result of Alon and Stav~\cite{AlonStav2008} states, roughly speaking, that $d_{\calH}$ is always ``achieved'' by the Erd\H{o}s–R\'enyi random graph $\bG(n,p)$ for an appropriate choice of $p$ and large enough $n$.
	\begin{theorem}[\cite{AlonStav2008}]\label{thm:Alon-Stav-unordered}
	For any hereditary property $\calH$ of unordered graphs there exists $p_\calH \in [0,1]$ satisfying the following. A graph $G \sim \bG(n,p_{\calH})$ satisfies $d_1(G, \calH) \geq \overline{d_\calH} - o(1)$ with high probability.
	\end{theorem}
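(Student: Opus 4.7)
The plan is to lift the problem to the space of graphons, exploit compactness to obtain a graphon maximizer, reduce the maximizer to a constant graphon, and then conclude via the sampling lemma.

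First I would extend the edit distance from graphs to graphons. Every hereditary property $\calH$ is determined by a (possibly infinite) family $\calF$ of forbidden induced subgraphs, so define its closure in graphon space as $\overline{\calH} := \{W : t_{\mathrm{ind}}(F, W) = 0 \text{ for every } F \in \calF\}$. Because each induced density $t_{\mathrm{ind}}(F, \cdot)$ is continuous under the cut distance $\delta_\square$, the set $\overline{\calH}$ is closed, and the functional $d_1(W, \overline{\calH}) := \inf_{U \in \overline{\calH}} \|W - U\|_1$ is well-defined. An application of the (infinite) induced graph removal lemma of Alon-Fischer-Krivelevich-Szegedy implies that $d_1(\cdot, \overline{\calH})$ is $\delta_\square$-continuous: closeness in cut distance implies closeness in every induced subgraph density, and hence a comparable number of edits suffices to kill all forbidden patterns. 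By the Lov\'asz-Szegedy compactness theorem for unordered graphons (the unordered analogue of Theorem~\ref{thm:Compactness}) together with the unordered sampling theorem (analogue of Theorem~\ref{thm:Sampling}), this continuous functional attains its supremum $\overline{d_\calH}$ at some graphon $W^*$.

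The crux of the argument is to replace $W^*$ by the constant graphon $W_{p^*} \equiv p^*$, where $p^* := \int W^*$ is the edge density. The intuition is that among graphons of fixed density $p$, the constant one is \emph{maximally structureless} and hence hardest to repair into the hereditary property: every induced configuration already appears at its expected density in every region, so local rearrangements cannot save edits. Formally, one considers random measure-preserving bijections $\phi$ of $[0,1]$; the distance is invariant under $W \mapsto W^\phi$, and an iterated averaging over such $\phi$ drives $W^*$ toward $W_{p^*}$ while not decreasing its distance to $\overline{\calH}$. This is the entropy/encoding argument of Alon and Stav, translated into graphon language. With this reduction in hand, the conclusion follows immediately: with probability $1 - o(1)$, $\bG(n, p^*)$ is $o(1)$-close to $W_{p^*}$ in $\delta_\square$ by the sampling lemma, and continuity of $d_1(\cdot, \overline{\calH})$ yields $d_1(\bG(n, p^*), \calH) \geq \overline{d_\calH} - o(1)$, proving the theorem with $p_\calH := p^*$.

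The main obstacle is the reduction to a constant graphon. The functional $d_1(\cdot, \overline{\calH})$ is neither convex nor concave on graphon space in general, so Jensen's inequality does not suffice; one must exploit the specific combinatorial structure of hereditary properties, essentially the fact that every forbidden subgraph appears with maximal frequency in a random graph of matching density. Carrying out this ``structural convexification'' carefully is the subtle step, and is where the Alon-Stav proof takes its most delicate turn.
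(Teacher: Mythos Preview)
The paper does not actually prove Theorem~\ref{thm:Alon-Stav-unordered}; it is quoted as a known result of Alon and Stav, and the paper only remarks that Lov\'asz and Szegedy~\cite{LS10} reproved it analytically via graph limits. The paper's own contribution is the ordered analogue, Theorem~\ref{thm:ordered_Alon_Stav}, whose proof in Section~\ref{sec:furthest} adapts the Lov\'asz--Szegedy argument. Your outline matches that analytic route quite closely: lift to graphons, use compactness to find a maximizer, average over measure-preserving bijections to symmetrize, and finish with sampling.

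However, your final paragraph contains a genuine error that undercuts the argument. You assert that $d_1(\cdot,\overline{\calH})$ is ``neither convex nor concave on graphon space in general, so Jensen's inequality does not suffice.'' In fact the opposite is true, and this is precisely the key lemma: the closure $\overline{\calH}$ of a hereditary property is \emph{flexible} (it depends only on $\supp_0(W)$ and $\supp_1(W)$), and flexibility forces $d_1(\cdot,\overline{\calH})$ to be \emph{concave}. The paper proves exactly this in Lemmas~\ref{lem:U-flexing} and~\ref{lem:distance_is_concave} for orderons, and the same argument works verbatim for graphons (this is the content of the relevant lemma in~\cite{LS10}). Concavity is what makes the averaging step go through cleanly: writing $W_p \equiv p = \int W^*$ as an average of the $(W^*)^\phi$ over measure-preserving bijections $\phi$, Jensen gives $d_1(W_p,\overline{\calH}) \ge d_1(W^*,\overline{\calH})$ directly. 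So the ``subtle structural convexification'' you describe as the main obstacle is in fact a one-line application of Jensen once concavity is established; the real work is the flexibility/concavity lemma, which you have not identified.
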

	In other words, a random graph $\bG(n,p_{\calH})$ is with high probability asymptotically (that is, up to relative edit distance of $o(1)$) the furthest from the property $\calH$.	From the analytic perspective, Lov\'asz and Szegedy~\cite{LS10} were able to reprove (and extend) this result using graph limits. %Here and henceforth, the $o(1)$ term tends to zero as $n \to \infty$.

	The surprising result of Alon and Stav has led naturally to a very interesting and highly non-trivial question, now known as the (extremal) \emph{graph edit distance problem}~\cite{Martin2016}, which asks the following: Given a hereditary property of interest $\calH$, what is the value (or values) $p_{\calH}$ that maximizes the distance of $\bG(n,p)$ from $\calH$? The general question of determining $p_{\calH}$ given any $\calH$ is currently wide open, although there have been many interesting developments for various classes of hereditary properties; see~\cite{Martin2016} for an extensive survey of previous works and useful techniques.

	While the situation in unordered graphs, and even in (unordered) directed graphs~\cite{Axenovich2011MulticolorAD} and matrices~\cite{Ryan06} has been thoroughly investigated, for ordered graphs no result in the spirit of Theorem~\ref{thm:Alon-Stav-unordered} is known. The first question that comes to mind is whether the behavior in the ordered setting is similar to that in the unordered case:
	Is it true that for any hereditary property $\calH$ of \emph{ordered} graphs there exists $p = p_{\calH}$ for which $G \sim \bG(n,p)$ satisfies $d_1(G, \calH) \geq \overline{d_\calH} - o(1)$  with high probability?

	As we show, the answer is in fact \emph{negative}. Consider the ordered graph property $\calH$ defined as follows: $G \in \calH$ if and only if there do not exist vertices $u_1 < u_2 \leq u_3 < u_4$ in $G$ where $u_1 u_2$ is a non-edge and $u_3 u_4$ is an edge. $\calH$ is clearly a hereditary property, defined by a finite family of forbidden ordered subgraphs.
	In the beginning of Section~\ref{sec:furthest}, we prove that the typical distance of $G \sim \bG(n,p)$ from $\calH$ is no more than $1/4 + o(1)$ (the maximum is asymptotically attained for $p=1/2$). In contrast, we show there exists a graph $G$ satisfying $d_1(G, \calH) = 1/2 - o(1)$, which is clearly the furthest possible up to the $o(1)$ term (every graph $G$ is $1/2$-close to either the complete or the empty graph, which are in $\calH$), and is substantially further than the typical distance of $\bG(n,p)$ for any choice of $p$. This shows that Theorem~\ref{thm:Alon-Stav-unordered} \emph{cannot be true} for the ordered setting. For the exact details, see Subection~\ref{subsec:furthest_tehnical}, which is completely elementary and self-contained.

	However, the news are not all negative: We present a positive result in the ordered setting, which generalizes the unordered statement in some sense, and whose proof makes use of our ordered limit theory. While it is no longer true that $\bG(n,p)$ generates graphs that are asymptotically the furthest from $\calH$, we show that a random graph generated according to a \emph{consecutive stochastic block model} is approximately the furthest. A \emph{stochastic block model}~\cite{Abbe2018} with $M$ blocks is a well-studied  generalization of $\bG(n,p)$, widely used in the study of community detection, clustering, and various other problems in mathematics and computer science.
	 A stochastic block model is defined according to the following three parameters: $n$, the total number of vertices; $(q_1, \ldots, q_M)$, a vector of probabilities that sum up to one; and a symmetric $M \times M$ matrix of probabilities $p_{ij}$. A graph on $n$ vertices is generated according to this model as follows. First, we assign each of the vertices independently\footnote{In some contexts, the stochastic block model is defined by determining the \emph{exact} number of vertices in each $A_i$ in advance, rather than assigning the vertices independently; all results here are also true for this alternative definition.} to one of $M$ parts $A_1, \ldots, A_M$, where the probability of any given vertex to fall in $A_i$ is $q_i$. Then, for any $(i,j) \in [M]^2$, and any pair of disjoint vertices $u \in A_i$ and $v \in A_j$, we add an edge between $u$ and $v$ with probability $p_{ij}$. By \emph{consecutive}, we mean that all vertices assigned to $A_i$ precede (in the vertex ordering) all vertices assigned to $A_{i+1}$, for any $i \in [M-1]$. Our main result now is as follows.

	\begin{theorem}\label{thm:ordered_Alon_Stav}
		Let $\calH$ be a hereditary property of simple ordered graphs and let $\eps > 0$. There exists a consecutive stochastic block model with at most $M = M_{\calH}(\eps)$ blocks with equal containment probabilities (i.e., $q_i = 1/M$ for any $i \in [M]$), satisfying the following. A graph $G$ on $n$ vertices generated by this model satisfies $d_1(G, \calH) \geq \overline{d_{\calH}} - \eps$ with probability that tends to one as $n \to \infty$.
	\end{theorem}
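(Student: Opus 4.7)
The plan is to follow the analytic reproof of the unordered Alon--Stav theorem due to Lov\'asz and Szegedy, carrying out each step inside the orderon framework established in the earlier sections. The high-level strategy is to extend the edit distance $d_1(\cdot,\calH)$ to orderons in a $d_\triangle$-continuous way; to use compactness (Theorem~\ref{thm:Compactness}) to locate an extremal orderon $W^\star$; to approximate $W^\star$ by a finite ordered graph $G_0$ via Corollary~\ref{cor:density_simple}; and to read off the desired consecutive stochastic block model directly from $G_0$.

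First I would extend $d_1(\cdot,\calH)$ to orderons so that it agrees (up to $o(1)$) with the usual graph edit distance on orderons of the form $W_G$, and so that the resulting functional is continuous on $\widetilde\calW$ with respect to $d_\triangle$. Establishing this continuity is the central technical step, and what I expect to be the main obstacle, since it is the bridge between the ``rigid'' edit-distance viewpoint and the ``soft'' cut-shift one. The upper bound on $d_1(W,\calH)$ uses a witness of $d_\triangle(W,W')\leq\delta$: the corresponding near-identity measure-preserving bijection $f$ allows any orderon $U$ certifying closeness of $W'$ to the $d_\triangle$-closure of $\calH$ to be transported through $f^{-1}$ to a certificate for $W$, at cost only $O(\delta)$. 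The lower bound, where the hereditary structure of $\calH$ is essential, is based on Theorem~\ref{thm:convergence_equiv}: if a sequence $W_n\to W$ in $d_\triangle$ has each $W_n$ within edit distance $d$ of some orderon having zero frequency for every forbidden $F \notin \calH$, then by compactness these witnesses have a subsequential $d_\triangle$-limit $U$ in which subgraph-frequency convergence keeps every forbidden $F$ at frequency zero, so $W$ itself lies within edit distance $d$ of the $d_\triangle$-closure of $\calH$.

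Granting continuity, Theorem~\ref{thm:Compactness} supplies an extremal $W^\star \in \widetilde\calW$ realizing $\sup_W d_1(W,\calH)$, and this supremum equals $\overline{d_\calH}$ by density of graph-orderons (Corollary~\ref{cor:density_simple}) together with continuity. Applying Corollary~\ref{cor:density_simple} once more, I obtain an ordered graph $G_0$ on at most $M := M_\calH(\eps) = 2^{(\eps/3)^{-3+o(1)}}$ vertices with $d_\triangle(W^\star, W_{G_0}) \leq \eps/3$; continuity then gives $d_1(G_0, \calH) \geq \overline{d_\calH} - \eps/3$. Now define the consecutive stochastic block model $\calM$ on $M$ equal blocks (so $q_i = 1/M$) with edge probabilities $p_{ij} := G_0(i,j) \in \{0,1\}$. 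A sample $H$ from $\calM$ on $n$ vertices is distributed exactly as $\bG(n, W_{G_0})$: each vertex is assigned uniformly and independently to one of the $M$ consecutive blocks, and for $i \neq j$ every edge between block $i$ and block $j$ is present iff $G_0(i,j)=1$. Theorem~\ref{thm:Sampling} applied to $W_{G_0}$ then gives $d_\triangle(W_H, W_{G_0}) \to 0$ with probability tending to one as $n \to \infty$, and a final invocation of continuity yields $d_1(H,\calH) \geq d_1(G_0,\calH) - \eps/3 \geq \overline{d_\calH} - \eps$ with high probability, completing the proof.
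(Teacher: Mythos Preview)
Your proposal has a genuine gap: the $d_\triangle$-continuity of $d_1(\cdot,\barH)$ on which the whole argument rests is neither proved in the paper nor, as far as one can see, true in general. Both halves of your continuity sketch conflate $L^1$-closeness with cut-shift closeness. For the ``upper bound'' direction, knowing $d_\triangle(W,W')\le\delta$ gives you a bijection $f$ with small shift and $\|W-(W')^f\|_\square$ small; transporting a witness $U\in\barH$ for $W'$ through $f$ yields $U^f\in\barH$ with $d_1((W')^f,U^f)=d_1(W',U)$, but you still have no control on $d_1(W,(W')^f)$, only on the cut norm. For the ``lower bound'' direction, if $W_n\to W$ and $U_n\in\barH$ with $d_1(W_n,U_n)\le d$, compactness gives $U_n\to U\in\barH$ in $d_\triangle$, but the measure-preserving bijections $f_n$ realizing $W_n\to W$ and the bijections $g_n$ realizing $U_n\to U$ need not coincide; the Lov\'asz--Szegedy lemma you implicitly invoke (Lemma~\ref{lem:cut_norm_liminf}) needs convergence in the \emph{same} cut norm, so you would need $\liminf d_1(W_n^{f_n},U_n^{f_n})\ge d_1(W,U)$, and there is no reason $\|U_n^{f_n}-U\|_\square\to 0$.

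The paper circumvents this by never claiming continuity. Instead it shows (i) $\barH$ is \emph{flexible}, hence $d_1(\cdot,\barH)$ is \emph{concave} on $\calW$ (Lemmas~\ref{lem:U-flexing}--\ref{lem:distance_is_concave}); (ii) averaging over second-coordinate shifts then produces, for any $W$, a \emph{naive} orderon at least as far from $\barH$; (iii) a measure-theoretic approximation gives a \emph{naive block} orderon within $\eps$ of the supremum (Lemma~\ref{lem:naive_block_orderon_far}). The crucial point is Lemma~\ref{lem:block_good_behavior}: when the limit $W$ is a naive block orderon, \emph{any} sequence of small-shift bijections $f_n$ satisfies $\|W_n^{f_n}-W\|_\square\to 0$. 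This lets one use the \emph{same} shifts for both $W_{G_n}$ and the witnesses $W_{H_n}$ in Lemma~\ref{lem:lim_inf_L1_dist}, recovering the needed lower semicontinuity in that special case only. Your route via Corollary~\ref{cor:density_simple} produces a graph $G_0$ close to $W^\star$ in $d_\triangle$, but without the concavity/naivety reduction you cannot conclude $d_1(W_{G_0},\barH)$ is close to $d_1(W^\star,\barH)$.
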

	The proof, given in Subsection~\ref{subsec:furthest-proof}, is a good example of the power of the analytic perspective, combining our ordered limit theory with standard measure-theoretic tools and a few simple lemmas proved in~\cite{LS10}.

%	\onote{Regarding Eldar's comment on the ``convergence behavior'' of these random models:
%		They obviously converge in $d_{\triangle}$ to a naive orderon, which I think should be the one achieving the maximum $L^1$-distance from $\calH$ if the sequence is chosen properly. However, our current proof \emph{does not} show that the maximum is actually attained, which I think would require extra effort (I can try to get there closer to the deadline, but maybe it is not the best idea to make major technical changes at this point). Therefore, I'd rather not discuss this ``at the limit'' topic until we know that the maximum is achieved.}

	\subsection{Sampling and property testing}
	We finish by showing two additional applications of the ordered limit theory. These applications are somewhat more algorithmically oriented---concerning sampling and property testing---and illustrate the use of our theory for algorithmic purposes.
	%We provide a full proof for the first one and a detailed sketch for the other.
	The first of them is concerned with naturally estimable ordered graph parameters, defined as follows.
	\begin{definition} [naturally estimable parameter]\label{def:naturally_estimable}
		An ordered graph parameter $f$ is \emph{naturally estimable} if for every $\eps>0$ and $\delta>0$ there is a positive integer $k=k(\eps,\delta)>0$ satisfying the following. If $G$ is an ordered graph with at least $k$ nodes and $G|_{\bk}$ is the subgraph induced by a uniformly random ordered set of exactly $k$ nodes of $G$, then \[\Prx_{G|_{\bk}}[|f(G)-{f}(G|_{\bk})|>\eps]<\delta. \]
	\end{definition}
	The following result provides an analytic characterization of ordered natural estimability, providing a method to study estimation problems on ordered graphs from the analytic perspective.
	\begin{theorem}\label{thm:Param-Testing} Let $f$ be a bounded simple ordered graph parameter. Then, the following are equivalent:
		\begin{enumerate}
			\item \label{thm:Estim-b} $f$ is naturally estimable.
			\item \textbf{\label{thm:Estim-c}}For every convergent sequence $\{G_n\}_{n\in\mathbb{N}}$ of ordered simple graphs with $|V(G_n)|\to \infty$, the sequence of numbers $\{f(G_n)\}_{n\in\mathbb{N}}$ is convergent.
			\item \label{thm:Estim-d}There exists a functional $\hat{f}(W)$ over $\calW$ that satisfies the following:
			\begin{enumerate}
				\item  \label{thm:Estim-d1}$\hat{f}(W)$ is continuous with respect to $\csdist$.
				\item \label{thm:Estim-d2} For every $\eps>0$, there is $k=k(\eps)$ such that for every ordered graph $G$ with $|V(G)|\ge k$, it holds that $\left|\hat{f}(W_G)-f(G)\right|\le \eps$.
			\end{enumerate}
		\end{enumerate}
	\end{theorem}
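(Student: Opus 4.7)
The plan is to prove the cycle $(1)\Rightarrow(2)\Rightarrow(3)\Rightarrow(1)$, invoking Theorem~\ref{thm:Sampling}, Theorem~\ref{thm:Compactness}, Theorem~\ref{thm:convergence_equiv}, and their easy corollary that each subgraph frequency $t(F,\cdot)$ is continuous on $(\calW,\csdist)$; I will also use that $t(F,W_G)=t(F,G)$, which is immediate from the construction of $W_G$. For $(1)\Rightarrow(2)$, fix a convergent $\{G_n\}$ and $\eps,\delta>0$, and let $k=k(\eps,\delta)$. Since $f$ is bounded, natural estimability gives $|f(G_n)-\Ex[f(G_n|_{\bk})]|\le\eps+2\|f\|_\infty\delta$ once $|V(G_n)|\ge k$. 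But $\Ex[f(G_n|_{\bk})]$ is a convex combination of the finitely many numbers $\{f(H):|V(H)|=k\}$ with coefficients equal to the $k$-vertex induced frequencies of $G_n$, which converge by hypothesis, so $\Ex[f(G_n|_{\bk})]$ has a limit. Letting $\eps,\delta\to 0$ (and $k\to\infty$) forces $\{f(G_n)\}$ to be Cauchy.

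For $(2)\Rightarrow(3)$, set $\hat f(W):=\lim_n f(\bG(n,W))$. Theorem~\ref{thm:Sampling} together with Borel--Cantelli yields $\csdist(W,W_{\bG(n,W)})\to 0$ almost surely, so by Theorem~\ref{thm:convergence_equiv} the sample sequence is a.s.\ frequency-convergent, and~(2) guarantees the limit exists a.s.; interleaving two independent sample sequences into a single a.s.-convergent one and reapplying~(2) identifies the limit as an a.s.\ constant, which defines $\hat f(W)$. For continuity~(a), suppose $W_m\to W$ in $\csdist$; Corollary~\ref{cor:density_simple} lets me pick ordered graphs $H_m$ with $|V(H_m)|\to\infty$, $\csdist(W_{H_m},W_m)\to 0$, and $|f(H_m)-\hat f(W_m)|\to 0$. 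Then $W_{H_m}\to W$, so $\{H_m\}$ is frequency-convergent and $f(H_m)\to\hat f(W)$ by the very definition of $\hat f$, forcing $\hat f(W_m)\to\hat f(W)$. For the agreement~(b), argue by contradiction: if some $\eps>0$ admits arbitrarily large $G_\ell$ with $|\hat f(W_{G_\ell})-f(G_\ell)|>\eps$, Theorem~\ref{thm:Compactness} extracts a subsequence with $W_{G_\ell}\to W$; continuity gives $\hat f(W_{G_\ell})\to\hat f(W)$, while the identity $t(F,W_{G_\ell})=t(F,G_\ell)$ makes $\{G_\ell\}$ frequency-convergent, whence~(2) and the definition of $\hat f$ yield $f(G_\ell)\to\hat f(W)$, a contradiction.

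For $(3)\Rightarrow(1)$, compactness of $\widetilde{\calW}$ promotes continuity of $\hat f$ to uniform continuity, so given $\eps>0$ I can pick $\eta>0$ with $\csdist(W,W')<\eta\Rightarrow|\hat f(W)-\hat f(W')|<\eps/3$, and then $k$ large enough that (i)~Theorem~\ref{thm:Sampling} ensures $\csdist(W,W_{\bG(k,W)})<\eta/2$ with probability $\ge 1-\delta/2$ for every orderon $W$, and (ii)~item~(b) of~(3) applies with error $\eps/3$ to all graphs of size $\ge k$. For $G$ with $|V(G)|$ sufficiently larger than $k^2$, $G|_{\bk}$ differs from $\bG(k,W_G)$ only via sampling with versus without repetition, a gap of $O(k^2/|V(G)|)$ in total variation; coupling through this gap, $\csdist(W_G,W_{G|_{\bk}})<\eta$ with probability $\ge 1-\delta$, and then $f(G)\approx_{\eps/3}\hat f(W_G)\approx_{\eps/3}\hat f(W_{G|_{\bk}})\approx_{\eps/3}f(G|_{\bk})$ gives natural estimability. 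The main technical friction I expect is in $(2)\Rightarrow(3)$, where establishing the agreement clause~(b) requires the compactness extraction above rather than a direct computation, and in $(3)\Rightarrow(1)$, where the transfer of Theorem~\ref{thm:Sampling} (stated for orderons with continuous samples) to the combinatorial sample $G|_{\bk}$ hinges on the standard with-vs-without-replacement coupling; both are elementary but need to be spelled out carefully.
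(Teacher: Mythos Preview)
Your proof is correct and rests on the same pillars as the paper's---Theorem~\ref{thm:Sampling}, Theorem~\ref{thm:Compactness}, and Theorem~\ref{thm:convergence_equiv}---but the implications are organized differently in a few places worth noting. The paper proves $(1)\Leftrightarrow(2)$ and $(2)\Leftrightarrow(3)$ rather than a cycle. In $(2)\Rightarrow(3)$ the paper defines $\hat f(W)$ as $\lim_n f(G_n)$ for \emph{any} sequence $G_n\to W$ (checking well-definedness by interleaving two such sequences), which makes your appeal ``by the very definition of $\hat f$'' immediate; with your sample-based definition you need one extra interleaving with an almost-sure sample sequence to justify that step. For item~(\ref{thm:Estim-d2}) the paper argues via \emph{blow-ups}: if $|\hat f(W_{G_n})-f(G_n)|>\eps$, then since $G_n^{\otimes j}\to W_{G_n}$ one finds $j_n$ with $|f(G_n^{\otimes j_n})-\hat f(W_{G_n})|\le\eps/2$, and interleaving $G_n$ with $G_n^{\otimes j_n}$ produces a convergent sequence whose $f$-values are not Cauchy; your route---extract a $\csdist$-convergent subsequence by compactness and invoke the continuity of $\hat f$ just established---is shorter and avoids blow-ups entirely. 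For the return to~(\ref{thm:Estim-b}) the paper goes $(2)\Rightarrow(1)$ by contradiction (interleave a convergent subsequence of offending $G_k$ with offending samples $H_k=G'_k|_k$), while you go $(3)\Rightarrow(1)$ directly via uniform continuity of $\hat f$ on the compact space $\widetilde\calW$. Both routes need that $\csdist(W_G,W_{G|_{\bk}})$ is small with high probability uniformly over $G$ with $|V(G)|\ge k$; your with-versus-without-replacement coupling covers the regime $|V(G)|\gg k^2$, and the residual range requires a separate (standard) concentration argument for subgraph densities combined with Lemma~\ref{lem:HardStats}---a point the paper also leaves implicit through Observation~\ref{Obs:GraphsToOrderons}.
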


	Our third application is a new analytic proof of the ordered graph removal lemma of~\cite{ABF_FOCS17}, implying that every hereditary property of ordered graphs (and images over a fixed alphabet) is testable, with one-sided error, using a constant number of queries.
	(For the relevant definitions, see~\cite{ABF_FOCS17} and Definition~\ref{def:naturally_estimable} here.)
	\begin{theorem}[\cite{ABF_FOCS17}]\label{thm:removal_lemma}
		Let $\calH$ be a hereditary property of simple ordered graphs, and fix $\eps, c > 0$. Then there exists $k = k(\calH, \eps, c)$ satisfying the following: For every ordered graph $G$ on $n \geq k$ vertices that is $\eps$-far from $\calH$, the probability that $G|_{\bk}$ does not satisfy $\calH$ is at least $1-c$.
	\end{theorem}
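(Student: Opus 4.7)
The plan is to prove the removal lemma by contradiction using the three main theorems. Suppose the claim fails: for every $k$, there is an ordered graph $G_k$ on $n_k\geq k$ vertices that is $\eps$-far from $\calH$ yet satisfies $\Pr[G_k|_{\bk}\in\calH]>c$. By Theorem~\ref{thm:Compactness}, pass to a subsequence along which $W_{G_k}$ converges in $\csdist$ to some orderon $W\in\calW$; by Theorem~\ref{thm:convergence_equiv}, $t(F,G_k)\to t(F,W)$ for every ordered graph $F$. We may assume $n_k\to\infty$, as otherwise infinitely many $G_k$ coincide and the hypothesis collapses. The strategy is to show that $W$ lies in the hereditary closure $\barH:=\{W\in\calW:t(F,W)=0\text{ for all }F\notin\calH\}$, and then to pull an $\calH$-witness of size $n_k$ back to $G_k$ using Theorem~\ref{thm:Sampling}, contradicting the $\eps$-farness of $G_k$.

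To show $W\in\barH$, first observe that heredity of $\calH$ makes the hypothesis monotone in the sample size: a uniform $k$-subset of $G_k$ can be coupled to contain a uniform $m$-subset, so $\Pr[G_k|_{\bm}\in\calH]\geq\Pr[G_k|_{\bk}\in\calH]>c$ for every $m\leq k$. Translating to orderon samples (with a negligible correction of order $m^2/n_k$) and passing to the limit yields $\Pr[\bG(m,W)\in\calH]=\sum_{F\in\calH,\,|V(F)|=m}t(F,W)\geq c/2$ for every $m\geq 1$. Now apply Theorem~\ref{thm:Sampling}: for every $\eta>0$ and sufficiently large $m$, a union bound combining the events $\bG(m,W)\in\calH$ (probability $\geq c/2$) and $\csdist(W,W_{\bG(m,W)})<\eta$ (probability $\geq 1-\exp(-\sqrt{m}/C)$) yields some $H_m\in\calH$ with $\csdist(W,W_{H_m})<\eta$. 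Thus $W$ lies in the $\csdist$-closure of $\{W_H:H\in\calH\}$; since heredity forces $t(F,W_H)=0$ for every $H\in\calH$ and $F\notin\calH$, continuity of frequencies under $\csdist$ (Theorem~\ref{thm:convergence_equiv}) gives $t(F,W)=0$ for all such $F$. Hence $W\in\barH$, and $\bG(m,W)\in\calH$ almost surely for every $m$.

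To finish, apply Theorem~\ref{thm:Sampling} once more with sample size $n_k$: draw $H_k'\sim\bG(n_k,W)$, which lies in $\calH$ almost surely (since $W\in\barH$) and satisfies $\csdist(W,W_{H_k'})\leq C(\log\log n_k/\log n_k)^{1/3}\to 0$ with probability $1-\exp(-\sqrt{n_k}/C)$. The triangle inequality then produces, in a realization of positive probability, a graph $H_k'\in\calH$ on exactly $n_k$ vertices with $\csdist(W_{G_k},W_{H_k'})\to 0$. The remaining task is to convert this $\csdist$-closeness between the same-size graphical orderons $W_{G_k}$ and $W_{H_k'}$ into edit-distance closeness $d_1(G_k,H_k')\to 0$, which will contradict $d_1(G_k,\calH)\geq\eps$.

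The main obstacle lies in this final conversion. A near-optimal measure-preserving bijection $f$ witnessing $\csdist(W_{G_k},W_{H_k'})$ must have vanishing shift, so at the vertex level it acts as a permutation $\sigma$ of the $n_k$ vertices displacing each by $o(n_k)$ positions; combined with a standard cut-norm-to-$L_1$ inequality on the block-constant orderons, this yields $d_1(G_k,\sigma(H_k'))\to 0$. The delicate point is that $\sigma(H_k')$ need not itself lie in $\calH$, since heredity is not preserved under arbitrary vertex reorderings; however, by carefully coupling the sampling $H_k'\sim\bG(n_k,W)$ with the near-identity bijection $f$---so that the sample is drawn in the order inherited from $W$ via $f$---one can force $\sigma$ to be the identity up to $o(1)$ error, at which point $H_k'\in\calH$ itself provides the sought-after witness. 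Making this final coupling rigorous is the only remaining technical ingredient beyond the compactness and sampling machinery developed above.
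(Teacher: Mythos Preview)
Your outline up through showing $W\in\barH$ is sound: the compactness and counting-lemma machinery do give $W\in\barH$, and consequently $\csdist(W_{G_k},\barH)\to 0$. The gap is in the final ``conversion'' step, and it is not a technicality that a coupling can fix.

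You assert that for two graphical orderons $W_{G_k}$ and $W_{H_k'}$ on $n_k$ vertices, small shift plus small cut norm yields small edit distance via a ``standard cut-norm-to-$L_1$ inequality on block-constant orderons''. No such inequality exists. Take two independent samples of $\bG(n,1/2)$: both converge to the constant-$1/2$ orderon, so their cut distance (with zero shift, identity permutation) is $o(1)$, yet their edit distance is $1/2+o(1)$. Thus even if your coupling forces $\sigma=\mathrm{id}$ exactly, all you obtain is $\|W_{G_k}-W_{H_k'}\|_\square\to 0$, which says nothing about $d_1(G_k,H_k')$. The last paragraph of your proposal does not address this; it only handles the shift component, not the cut component.

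What you actually need is the implication ``$\csdist(W_{G_k},\barH)$ small $\Rightarrow$ $d_1(W_{G_k},\barH)$ small'', and this is precisely the orderon removal lemma that carries all the weight. In the paper this is Lemma~\ref{lem:dtri-continouity}, derived from Lemma~\ref{lem:non-uniform-hereditary}. The proof does \emph{not} compare $G_k$ to a sampled $H_k'\in\calH$; instead it exploits the \emph{flexibility} of $\barH$ (Lemma~\ref{lem:U-flexing}): one approximates a nearby $U\in\barH$ by a layered strip orderon $U'$, reads off a $\{0,1,\ast\}$-valued decision function from the $0/1$-sets of $U'$, and then \emph{imposes} this decision function on $W_{G_k}$ itself. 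The resulting orderon is in $\barH$ by flexibility, and the $L_1$-cost of imposing the decisions is controlled (Lemmas~\ref{lem:UD:W'-in-barH} and~\ref{lem:UD:W'-W-not-far}) because the relevant block integrals of $W_{G_k}$ are close to those of $U'$ by the cut-norm assumption. This is a structural argument specific to hereditary closures and cannot be replaced by a generic sampling or metric comparison.
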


%\red{Omri: This paragraph will be removed once we finish the proof in Section 8.}
The proof of Theorem \ref{thm:removal_lemma} utilizes the analytic tools developed in this work, and  bypasses the need for many of the sophisticated combinatorial techniques from \cite{ABF_FOCS17}, resulting in an arguably cleaner proof.

	\subsection{Related and subsequent work}\label{subsec:related_work}

	The theory of graph limits has strong ties to the area of property testing, especially in the dense setting. Regularity lemmas for graphs, starting with the well-known regularity lemma of Szemer\'edi~\cite{Szemeredi78}, later to be joined by the weaker (but more efficient) versions of Frieze and Kannan~\cite{FriezeKannan1996, FriezeKannan1999} and the stronger variants of Alon~et~al.\@~\cite{AFKS2000}, among others, have been very influential in the development of property testing. For example, regularity was used to establish the testability of all hereditary properties in graphs~\cite{AlonShapira08}, the relationship between the testability and estimability of graph parameters~\cite{FischerNewman07}, and combinatorial characterizations of testability~\cite{AlonFischerNewmanShapira2009}.

	The analytic theory of convergence, built using the cut distance and its relation to the weak regularity lemma, has proved to be an interesting alternative perspective on these results. Indeed, the aforementioned results have equivalent analytic formulations, in which both the statement and the proof seem cleaner and more natural.
	A recent line of work has shown that many of the classical results in property testing of dense graphs can be extended to dense ordered graph-like structures, including vertex-ordered graphs and images. In~\cite{ABF_FOCS17}, it was shown that the testability of hereditary properties extends to the ordered setting (see Theorem~\ref{thm:removal_lemma} above). Shortly after, in~\cite{BF_CCC18} it was proved that characterizations of testability in unordered graphs can be partially extended to similar characterizations in ordered graph-like structures, provided that the property at stake is sufficiently ``well-behaved'' in terms of order.

	Graphons and their sparse analogues have various applications in different areas of mathematics, computer science, and even social sciences.
	The connections between graph limits and real-world large networks have been very actively investigated; see the survey of Borgs and Chayes~\cite{BorgsChayes17}.
	Graph limits have applications in probability and data analysis~\cite{OrbanzRoy15}.
	Graphons were used to provide new analytic proofs of results in extremal graph theory; see Chapter 16 in~\cite{Lov12}. Through the notion of free energy, graphons were also shown to be closely connected to the field of statistical physics~\cite{BCLSV_annals12}.
	We refer the reader to~\cite{Lov12} for more details.

	A subsequent independent work, by Garbe, Hancock, Hladky, and Sharifzadeh, investigates an alternative limit object for the ordered setting in the context of latin squares. See \cite{Latin_squares_2019} for their findings, as well as connections between orderons and their limit object, called a latinon. Compared to our work, which aims to construct a limit object for orderons from simple ``basic principles'', their techniques are more general but require heavier analytic machinery, including the disintegration theorem in measure theory. In particular, they show that a limit object equivalent to our orderons can be obtained through their framework. 
		
	Inspired by limit objects like permutons, orderons and latinons, an intriguing subsequent work by Simkin \cite{simkin2021number} asymptotically settles the famous $n$-queens problem, first raised in 1848. This problem asks for the total number of valid configurations involving $n$ queens (not allowed to share the same row, column, or diagonal) on an $n \times n$ chessboard. A central ingredient in Simkin's proof is a new limit object that he develops for queen configurations, called a \emph{queenon}. Interestingly, the result of the $n$-queens problem turns out to have a measure theoretic flavour, being the solution of an optimization problem in a space of Borel probability measures, and serving as an intriguing example for the usefulness of measure-theoretic tools in modern combinatorics.  
	
	%\onote{Need to mention Towsner's recent ordered hypergraph removal lemma paper, which seems to use a limit object which is slightly different.}
Finally, subsequent works by Coregliano and Razborov \cite{SemanticLimits2020} and by Towsner \cite{Towsner2021} investigate limit objects for ordered structures in generalized and/or higher order settings. The first work establishes a very general framework for limit theories by building upon deep connections to the area of flag algberas, applying ideas from model theory and logic. A variant of ordered graph removal that allows for modifying the order of the vertices can be derived from their theory. The second work is more closely related to ours, proving a generalization of the ordered graph removal lemma to ordered hypergraphs. As in our case, the latter work also relies on a corresponding limit theory, but it is interestingly different from ours: whereas our limit object is in a sense 4-dimensional even though it represents discrete objects (ordered graphs) in $2$-dimensions, the latter work is able to define a limit object whose ``dimensionality'' equals to that of the hypergraph, but at the expense of working with a more complicated measure space, which is based on Keisler graded probability spaces. We refer the interested reader to \cite{Towsner2021} for more details.

	\section{Preliminaries}\label{sec:preliminaries}
In this section we formally describe some of the basic ingredients of our theory, including the limit object---the \emph{orderon}, and several distance notions including the cut-norm for orderons (both unordered and ordered variants are presented), and the cut-shift distance. We then show that the latter is a pseudo-metric for the space of orderons. This will later allow us to view the space of orderons as a metric space, by identifying orderons of cut-shift distance $0$.

The measure used here is the Lebesgue measure, denoted by $\lambda$. We start with the formal definition of an orderon.

	\begin{definition}[orderon] An \emph{orderon} is a measurable function $W: \left([0,1]^2\right)^2\to [0,1]$ that is symmetric in the sense that $W((x,a),(y,b))=W((y,b),(x,a))$ for all $(x,a),(y,b)\in[0,1]^2$. For the sake of brevity, we also denote $W((x,a),(y,b))$ by $W(v_1,v_2)$ for $v_1,v_2\in[0,1]^2$.
	\end{definition}
	We denote the set of all orderons by $\calW$.
%\begin{definition} [Naive Orderon] A \emph{naive orderon} is an orderon $W$, such that for every $u,v\in [0,1]$ it holds that \[W((u,a),(v,b))=W((u,a')(v,b'))\qquad \forall a,a',b,b'\in [0,1]\;.\]
%	\end{definition}

\begin{definition}[measure-preserving bijection]

	A map $g \colon[0,1]^2\to[0,1]^2$ is \emph{measure preserving} if the pre-image $g^{-1}(X)$ is measurable for every measurable set $X$ and $\lambda(g^{-1}(X))=\lambda(X)$.
	A \emph{measure preserving bijection} is a measure preserving map whose inverse map exists (and is also measure preserving).
\end{definition}
	Let $\mathcal{F}$ denote the collection of all measure preserving bijections from $[0,1]^2$ to itself. Given an orderon $W\in\calW$ and $f\in \calF$, we define $W^f$ as the unique orderon satisfying $W^f((x,a),(y,b)) = W(f(x,a),f(y,b))$ for any $x,a,y,b \in [0,1]$.
	Additionally, denote by $\pi_1 \colon [0,1]^2\to[0,1]$ the projection to the first coordinate, that is, $\pi_1(x,a)=x$ for any $(x,a)\in[0,1]^2$.

%  We say that two orderons $W,W'$ are \emph{equivalent} if there exists
%$f \in \mathcal{F}$, for which
%\[\orderon{W}{x}{a}{y}{b}=W'\left(f(x,a),f(y,b)\right) \text{ \;and \;} \pi_1(f(x,a))=x\;\;\;\; \forall (x,a)\in[0,1]^2\;.\]
\subsection{Cut-norm and ordered cut-norm}
The definition of the (unordered) cut-norm for orderons is analogous to the corresponding definition for graphons.

\begin{definition}[cut-norm] Given a symmetric measurable function $W \colon ([0,1]^2)^2 \to \mathbb{R}$, we define the \emph{cut-norm} of $W$ as \[\|W\|_\square\eqdef \sup_{S,T\subseteq[0,1]^2}\left|\int_{(x,a)\in S \;(y,b)\in T}W((x,a),(y,b))dxdadydb\right|\;.\]
\end{definition}

%\begin{definition} Given two orderons $W,U\in \calW$, we define the \emph{cut-distance} as
%	\[ d_\square(W,U)\eqdef \inf_{f \in \mathcal{F}}\|W-U^f\|_\square. \]
%\end{definition}

As we are working with ordered objects, the following definition of \emph{ordered cut-norm} will sometimes be of use (in particular, see Section~\ref{sec:statistics}). Given $v_1,v_2\in[0,1]^2$, we write $v_1\le v_2$ to denote that $\pi_1(v_1)\le \pi_1(v_2)$. Let $\indi_{E}$ be the indicator function for the event $E$.
\begin{definition}[ordered cut-norm] Let $W \colon ([0,1]^2)^2 \to \mathbb{R}$ be a symmetric measurable function. The \emph{ordered cut norm} of $W$ is defined as
	\[  \|W\|_{\square'}=\sup_{S,T\subseteq[0,1]^2}\left|\int_{ (v_1,v_2)\in S \times T}W(v_1,v_2)\indi_{v_1\le v_2}dv_1dv_2 \right| \;.\]
\end{definition}
We mention two important properties of the ordered-cut norm. The first is a standard smoothing lemma, and the second is a relation between the ordered cut-norm and the unordered cut-norm.
\begin{lemma}\label{lem:smoothning}
	Let $W\in\calW$ and $\mu,\nu:[0,1]^2\to [0,1]$. Then, \[\left|\int_{v_1,v_2} \mu(v_1)\nu(v_2)W(v_1,v_2)\indi_{v_1\le v_2}dv_1dv_2\right|\le\|W\|_{\square'}.\]
\end{lemma}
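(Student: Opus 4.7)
The plan is to prove the lemma by a layer-cake representation of $\mu$ and $\nu$, reducing the bound for general $[0,1]$-valued functions to the bound for indicator functions, which is exactly what the definition of $\|W\|_{\square'}$ controls.

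First I would write, for every $v \in [0,1]^2$,
\[
\mu(v) = \int_0^1 \indi_{\{\mu(v) \ge s\}}\, ds, \qquad \nu(v) = \int_0^1 \indi_{\{\nu(v) \ge t\}}\, dt,
\]
and set $S_s \eqdef \{v \in [0,1]^2 : \mu(v) \ge s\}$ and $T_t \eqdef \{v \in [0,1]^2 : \nu(v) \ge t\}$, which are measurable subsets of $[0,1]^2$ (assuming, as is standard in this setting, that $\mu$ and $\nu$ are measurable).

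Next I would substitute these representations into the integral and swap the order of integration via Fubini (the integrand is bounded and the domain has finite measure, so this is justified):
\[
\int_{v_1,v_2} \mu(v_1)\nu(v_2) W(v_1,v_2)\indi_{v_1 \le v_2}\, dv_1 dv_2
= \int_0^1\!\!\int_0^1 \left(\int_{S_s \times T_t} W(v_1,v_2)\indi_{v_1 \le v_2}\, dv_1 dv_2\right) ds\, dt.
\]
By the very definition of $\|W\|_{\square'}$, the inner integral has absolute value at most $\|W\|_{\square'}$ for every choice of $s,t \in [0,1]$.

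Finally, applying the triangle inequality for integrals gives
\[
\left|\int_{v_1,v_2} \mu(v_1)\nu(v_2) W(v_1,v_2)\indi_{v_1 \le v_2}\, dv_1 dv_2\right|
\le \int_0^1\!\!\int_0^1 \|W\|_{\square'}\, ds\, dt = \|W\|_{\square'},
\]
which is the claimed bound. No step here is genuinely obstacle-worthy; the only mild subtlety is checking that Fubini applies and that $S_s, T_t$ are measurable, both of which follow from the measurability assumptions on $\mu, \nu, W$ and the boundedness of the integrand.
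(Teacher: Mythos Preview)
Your proof is correct and arguably cleaner than the paper's. The paper proceeds by first reducing to step functions $\mu=\sum_i a_i\indi_{S_i}$, $\nu=\sum_j b_j\indi_{T_j}$ via $L^1$-density, then observing that the resulting functional $f(a,b)$ is bilinear in the coefficient vectors $a,b$, so its maximum absolute value over the cube $[0,1]^{|\calS|}\times[0,1]^{|\calT|}$ is attained at a vertex, i.e.\ at $\{0,1\}$-valued coefficients, where it is bounded by $\|W\|_{\square'}$ by definition. Your layer-cake argument bypasses both the approximation step and the bilinearity/extreme-point reasoning by writing $\mu$ and $\nu$ directly as integrals of indicators of superlevel sets and applying Fubini; this is shorter and avoids any limiting argument. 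The paper's route is the one more commonly seen in the graph-limits literature (cf.\ Lov\'asz's book), but your approach is a perfectly good substitute here.
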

\begin{proof}
  Fix partitions $\calS = \{S_i\}$ and $\calT = \{T_j\}$ of $[0,1]^2$.
  We show below that the claim holds when $\mu$ and $\nu$ are step functions on $\calS$ and $\calT$, respectively.
  Then, the proof is complete by the fact that all integrable functions are approximable in $L^1(([0,1]^2)^2)$ by step functions.

  Since $\mu$ and $\nu$ are step functions, we can write $\mu = \sum_i a_i \indi_{S_i}$ and $\nu =  \sum_j b_j \indi_{T_j}$ for some vectors $a \in  [0,1]^{|\calS|}$ and $b \in [0,1]^{|\calT|}$.
  We define
  \[
    f(a,b)\eqdef\int_{v_1,v_2} \mu(v_1)\nu(v_2)W(v_1,v_2)\indi_{v_1\le v_2}dv_1dv_2.
  \]
  When $a \in \{0,1\}^{|\calS|}$ and $b \in \{0,1\}^{|\calT|}$, we have
  \begin{align*}
    |f(a,b)|
    & = \left|\int \sum_i \sum_j a_i b_j \indi_{S_i}(v_1) \indi_{T_i}(v_2) W(v_1,v_2) dv_1 dv_2 \right| \\
    & = \left|\int_{\bigcup_{i:a_i=1}S_i }\int_{\bigcup_{j:b_j=1}T_j } W(v_1,v_2) dv_1 dv_2\right| \leq \|W\|_{\square'},
  \end{align*}
  where the last inequality follows from the definition of the ordered cut-norm.
  As $f(a,b)$ is bilinear in $a$ and $b$, and $|f(a,b)| \leq \|W\|_{\square'}$ for any $a \in \{0,1\}^{|\calS|}$ and $b \in \{0,1\}^{|\calT|}$, we have $|f(a,b)| \leq \|W\|_{\square'}$ for any $a \in [0,1]^{|\calS|}$ and $b \in [0,1]^{|\calT|}$.
%	For $\xi\in[0,1]$ and $h\colon ([0,1]^2)^2\to [0,1]$, let $L_\xi(h)\eqdef \{v\in[0,1]^2\mid h(v)=\xi\}$. Then,
%\begin{align*}
%&\left|\int_{v_1,v_2}\mu(v_1)\nu(v_2) W(v_1,v_2)\indi_{v_1\le v_2}dv_1dv_2\right|=\left|\int_{0}^1\int_0 ^1\xi_1\cdot\xi_2\int_{L_{\xi_1}(\mu)}\int_{L_{\xi_2}(\nu)}W(v_1,v_2)\indi_{v_1\le v_2}dv_1dv_2 d\xi_1 d\xi_2\right|\\
%&\le  \int_{0}^1\int_0 ^1|\xi_1||\xi_2|\left|\int_{L_{\xi_1}(\mu)}\int_{L_{\xi_2}(\nu)}W(v_1,v_2)\indi_{v_1\le v_2}dv_1 dv_2 \right|d\xi_1 d\xi_2\le \|W\|_{\square'}\int_{0}^1\int_0 ^1|\xi_1||\xi_2|d\xi_1 d\xi_2\le\|W\|_{\square'},
%\end{align*}and the proof is complete.
\end{proof}
\begin{lemma}\label{lem:OrederedtoNoOrder}
	Let $W\colon ([0,1]^2)^2\to [-1,1]$ be a symmetric measurable function. Then,   \[\frac{\|W\|_{\square'}^2}{4}\le \|W\|_{\square}\le 2\|W\|_{\square'}\;.\]
\end{lemma}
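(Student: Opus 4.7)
The plan is to prove the two bounds separately. The right-hand inequality $\|W\|_\square \le 2\|W\|_{\square'}$ is a short symmetry argument, while the left-hand inequality $\|W\|_{\square'}^2/4 \le \|W\|_\square$ requires discretizing the domain into $\pi_1$-strips and decomposing the resulting ``upper-triangular'' sum into a small number of order-respecting rectangles.

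\textbf{Upper bound.} For any measurable $S,T\subseteq[0,1]^2$, I would split $\int_{S\times T}W\,dv$ according to whether $v_1\le v_2$ or $v_1>v_2$ (the equality locus $\pi_1(v_1)=\pi_1(v_2)$ has Lebesgue measure zero). The first piece is bounded by $\|W\|_{\square'}$ directly. For the second, renaming $v_1\leftrightarrow v_2$ and invoking the symmetry $W(v_1,v_2)=W(v_2,v_1)$ rewrites it as $\int_{T\times S}W\,\indi_{v_1\le v_2}\,dv$, which is again bounded by $\|W\|_{\square'}$. Summing the two and taking the supremum over $S,T$ yields the claim.

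\textbf{Lower bound.} Let $c:=\|W\|_{\square'}$, fix $\eta>0$, and pick $S,T$ with $\int_{S\times T}W\,\indi_{v_1\le v_2}\,dv\ge c-\eta$ (WLOG positive, by replacing $W$ with $-W$ if needed). Partition $[0,1]$ into $n$ equal intervals $I_1,\ldots,I_n$ and set $S_i:=S\cap\pi_1^{-1}(I_i)$, $T_j:=T\cap\pi_1^{-1}(I_j)$, and $a_{i,j}:=\int_{S_i\times T_j}W\,dv$. Since $\indi_{v_1\le v_2}$ equals $1$ on $S_i\times T_j$ for $i<j$ and $0$ for $i>j$, the ordered integral decomposes as $\sum_{i<j}a_{i,j}+\sum_i d_i$, where $d_i:=\int_{S_i\times T_i}W\,\indi_{v_1\le v_2}\,dv$ satisfies $|d_i|\le 1/n^2$, giving a diagonal residual bounded by $1/n$. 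The key combinatorial step is to cover $\{(i,j):i<j\}$ by exactly $n-1$ disjoint rectangles $A_k\times C_k$ with $A_k,C_k\subseteq[n]$ and $\max A_k<\min C_k$, obtained by placing $1,\ldots,n$ as the ordered leaves of a binary tree and letting each internal node contribute the rectangle (left-subtree leaves)$\times$(right-subtree leaves); a least-common-ancestor argument shows these rectangles tile the upper triangle. Each block sum $\Psi_k:=\sum_{(i,j)\in A_k\times C_k}a_{i,j}$ equals $\int_{(\bigcup_{i\in A_k}S_i)\times(\bigcup_{j\in C_k}T_j)}W\,dv$, so $|\Psi_k|\le\|W\|_\square$. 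Summing gives $\sum_k\Psi_k\ge c-\eta-1/n$, and by averaging $\max_k\Psi_k\ge(c-\eta-1/n)/(n-1)$. Choosing $n:=\lceil 2/c\rceil$ makes $1/n\le c/2$ and $n-1\le 2/c$, which yields $\|W\|_\square\ge c^2/4-O(\eta)$; sending $\eta\to 0$ gives the bound.

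\textbf{Main obstacle.} The delicate point is the exact choice of $n$: the diagonal error $1/n$ must be comfortably smaller than $c$, while the number of cross blocks $n-1$ must be small enough for the averaging step to produce the factor $1/4$, rather than a weaker constant like $1/8$ that a naive dyadic partition ($n=2^k$) would give because of the gap between $2^{\lceil\log_2(2/c)\rceil}$ and $2/c$. Taking $n=\lceil 2/c\rceil$ (not a power of two) balances both constraints simultaneously, and the tree decomposition is what lets us ``group'' the $\binom{n}{2}$ individual cells into just $n-1$ legitimate cut-norm rectangles, which is essential to get $c^2$ (rather than $c^3$) in the final estimate.
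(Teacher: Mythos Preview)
Your proof is correct and follows the same overall plan as the paper: symmetry for the upper bound, and for the lower bound partition into $n\approx 2/c$ strips, control the diagonal contribution by $O(1/n)\le c/2$, decompose the strict upper triangle into about $n$ order-separated rectangles, and average.

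The one genuine difference is your rectangle decomposition. You tile $\{(i,j):i<j\}$ via a binary tree with $n$ ordered leaves, obtaining $n-1$ rectangles from the internal nodes. The paper instead uses the simpler \emph{column} decomposition $\bigl\{(S\cap I^{(<j)})\times(T\cap I_j)\bigr\}_{j\in[n]}$, which already tiles the upper triangle with at most $n$ rectangles, each satisfying the separation property $\max A_k<\min C_k$, and needs no combinatorial lemma. Both decompositions yield the same averaging bound and hence the same constant $1/4$. Your ``main obstacle'' paragraph somewhat misidentifies the issue: the delicate point is only the choice of $n\approx 2/c$ (which you handle correctly via $n=\lceil 2/c\rceil$), not the shape of the tiling; the straightforward column tiling would already give $1/4$, so the tree construction, while valid, is not needed.
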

\begin{proof}
	The inequality $\|W\|_{\square}\le 2\|W\|_{\square'}$ follows immediately from the fact that $W$ is symmetric.
	For the other inequality, let $\xi=\|W\|_{\square'}$, fix $\gamma>0$, and let $S,T\subseteq[0,1]^2$ be a pair of sets satisfying \[\left|\int_{ S \times T}W(v_1,v_2)\indi_{v_1\le v_2}dv_1dv_2\right|\ge\xi-\gamma\;.\]
	We partition $[0,1]^2$ into strips $\calI=\{I_1,\dots,I_{2/\xi}\}$, such that for every $j\in[2/\xi]$, $I_j=\left[\frac{(j-1)\xi}{2}, \frac{j\xi}{2}\right]\times[0,1]$.  For every $j\in[2/\xi]$, let $I^{(< j)}=\bigcup_{i<j}I_i$ (where $I^{(< 1)}=\emptyset$). Then,
	\begin{align*}
	\xi-\gamma&\le\left|\int_{ S \times T}W(v_1,v_2)\indi_{v_1\le v_2}dv_1dv_2\right|\\
	&\le\sum_{i\in [2/\xi]}   \left|\int_{ (S \cap I_i)\times (T\cap I_i)}W(v_1,v_2)\indi_{v_1\le v_2}dv_1dv_2\right|+\sum_{\substack{j\in [2/\xi]}}\left|\int_{ (S \cap I^{(<j)})\times (T\cap I_j)}W(v_1,v_2)\indi_{v_1\le v_2}dv_1dv_2\right|
	\end{align*}
	Note that by the fact that $|W(v_1,v_2)|\le1$ for all $v_1,v_2\in [0,1]^2$,
	\begin{align*}
	\sum_{i\in [2/\xi]}   \left|\int_{ (S \cap I_i)\times (T\cap I_i)}W(v_1,v_2)\indi_{v_1\le v_2}dv_1dv_2\right|\le \sum_{i\in[2/\xi]} \lambda(I_i\times I_i)\le \xi/2,
	\end{align*}
	and therefore,
	\[ \sum_{\substack{j\in [2/\xi]}}\left|\;\int_{ (S \cap I^{(<j)})\times (T\cap I_j)}W(v_1,v_2)\indi_{v_1\le v_2}dv_1dv_2\right|\ge\xi/2-\gamma.\]
	On the other hand, the above implies that there exists $j\in[2/\xi]$ such that   \[ \left|\int_{ (S \cap I^{(<j)})\times (T\cap I_j)}W(v_1,v_2)\indi_{v_1\le v_2}dv_1dv_2\right|\ge\xi^2/4-\xi\gamma/2,	\]
	Note that for every $(v_1,v_2) \in (S \cap I^{(<j)})\times (T\cap I_j)$, we have that $\indi_{v_1\le v_2}=1$, and thus \[ \|W\|_\square \ge \left|\int_{ (S \cap I^{(<j)})\times (T\cap I_j)}W(v_1,v_2)dv_1dv_2\right|\ge\frac{\xi^2}{4}-\frac{\gamma\xi}{2}\;.
	\]
	Since the choice of $\gamma$ is arbitrary, the lemma follows.
\end{proof}

\subsection{The cut and shift distance}
The next notion of distance is a central building block in this work. It can be viewed as a locality preserving variant of the unordered cut distance, which accounts for order changes resulting from applying a measure preserving function.
\begin{definition}
	Given two orderons $W,U\in \calW$ we define the \emph{CS-distance} (cut-norm+shift distance) as:
	\begin{align*}
	\csdist(W,U)\eqdef\inf_{f \in \mathcal{F}}\left(\shift{f}+\|W-U^f\|_\square\right),
	\end{align*}
%	&=\inf_{f \in \mathcal{F}}\left(\shift{f}+\sup_{S,T\subseteq[0,1]^2}\left|\int_{(x,a)\in S, \;(y,b)\in T}\left(W((x,a),(y,b))-U^f((x,a),(y,b))\right)dxdadydb\right|\;,

	where $\shift{f}\eqdef \sup_{x,a \in[0,1]}\left|x-\pi_1(f(x,a))\right|$.
\end{definition}
\begin{lemma}\label{lem:pseudo_metric}
$\csdist$ is a pseudo-metric on the space of orderons.
\end{lemma}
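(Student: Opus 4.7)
The plan is to verify the four conditions of a pseudo-metric in turn: non-negativity, $\csdist(W,W)=0$, symmetry, and the triangle inequality. Non-negativity is immediate since both $\shift{f}$ and $\|W-U^f\|_\square$ are non-negative, and taking $f=\mathrm{id}$ gives $\shift{\mathrm{id}}=0$ and $W-W^{\mathrm{id}}=0$, yielding $\csdist(W,W)=0$.

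For symmetry, I would exploit the fact that $\calF$ is a group under composition: for each $f \in \calF$, its inverse $f^{-1}$ is again in $\calF$. The first observation is that $\shift{f}=\shift{f^{-1}}$, which follows by substituting $(y,b)=f^{-1}(x,a)$, so that $|x-\pi_1(f^{-1}(x,a))|$ becomes $|\pi_1(f(y,b))-y|$, and $(y,b)$ ranges over all of $[0,1]^2$ as $(x,a)$ does. The second observation is that the cut-norm is invariant under measure-preserving bijections: for any symmetric measurable $V:([0,1]^2)^2\to\R$ and any $h\in\calF$, applying the change of variables $(x',a')=h(x,a)$, $(y',b')=h(y,b)$ on the supremum definition of $\|\cdot\|_\square$ yields $\|V\|_\square=\|V^h\|_\square$. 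Setting $V((x,a),(y,b))=W((x,a),(y,b))-U(f(x,a),f(y,b))$ and applying this to $h=f^{-1}$ gives $\|W-U^f\|_\square=\|W^{f^{-1}}-U\|_\square$. Combining, the functional $\shift{f}+\|W-U^f\|_\square$ equals $\shift{f^{-1}}+\|U-W^{f^{-1}}\|_\square$, and taking the infimum over all $f$ (equivalently, over all $f^{-1}$) yields $\csdist(W,U)=\csdist(U,W)$.

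For the triangle inequality, given orderons $W,U,V$ and any $f,g\in\calF$, the key idea is to use the composition $h=g\circ f$, which lies in $\calF$. Writing $W-U^h=(W-V^f)+(V^f-U^{g\circ f})=(W-V^f)+(V-U^g)^f$ and invoking the invariance $\|(V-U^g)^f\|_\square=\|V-U^g\|_\square$ together with the triangle inequality for $\|\cdot\|_\square$ gives
\[
\|W-U^h\|_\square \;\le\; \|W-V^f\|_\square + \|V-U^g\|_\square.
\]
For the shift term, for each $(x,a)\in[0,1]^2$, letting $(y,b)=f(x,a)$,
\[
|x-\pi_1(h(x,a))| \le |x-\pi_1(f(x,a))| + |\pi_1(f(x,a))-\pi_1(g(y,b))| \le \shift{f}+\shift{g},
\]
so $\shift{h}\le\shift{f}+\shift{g}$. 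Adding the two bounds and taking the infimum over $f$ and $g$ separately yields $\csdist(W,U)\le\csdist(W,V)+\csdist(V,U)$.

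The only step that requires some care, rather than bookkeeping, is the invariance of the cut-norm under measure-preserving bijections used inside both the symmetry and triangle-inequality arguments; everything else reduces to the triangle inequalities for $|\cdot|$ and $\|\cdot\|_\square$, the group structure of $\calF$, and a direct change-of-variables computation.
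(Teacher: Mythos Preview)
Your proof is correct and follows essentially the same approach as the paper's: both arguments hinge on the group structure of $\calF$, the identity $\shift{f}=\shift{f^{-1}}$, the invariance $\|V^h\|_\square=\|V\|_\square$ of the cut norm under measure-preserving bijections, and the subadditivity $\shift{g\circ f}\le\shift{f}+\shift{g}$. The only cosmetic difference is that the paper parameterizes the composed bijection as $g^{-1}\circ f$ while you use $g\circ f$, which is immaterial since $g$ ranges over all of $\calF$.
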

\begin{proof}First note that non-negativity follows trivially from the definition. In addition, it is easy to see that $\csdist(W,W)=0$ for any orderon $W$.
For symmetry,
\begin{align*}
\csdist(W,U&)=
\inf_{g \in \mathcal{F}}\left(\shift{g}
+\|W-U^g\|_\square\right) =\inf_{g \in \mathcal{F}}\left(\shift{g^{-1}}
+\|W-U^g\|_\square\right)\\
&=\inf_{g^{-1} \in \mathcal{F}}\left(\shift{g^{-1}}
+\|W^{g^{-1}}-U\|_\square\right)=\inf_{f \in \mathcal{F}}\left(\shift{f}
+\|U-W^f\|_\square\right)\\
&=\csdist(U,W)\;.
\end{align*}
Where we used the fact that $g$ is a measure preserving bijection and that $\shift{g^{-1}} = \shift{g}$ for any $g \in \mathcal{F}$.

Consider three orderons $W,U,Z$. We now show that $\csdist(W,U)\le \csdist(W,Z)+\csdist(Z,U)$.
\begin{align*}
\csdist(W,U)&=
\inf_{f,g \in \mathcal{F}}\left(\shift{g^{-1} \circ f}+\|W-U^{g^{-1}\circ f}\|_\square\right)\\
&\le \inf_{f,g \in \mathcal{F}}\left(\shift{f}+\shift{g^{-1}}+\|W^g-U^{ f}\|_\square\right)\\
&\le \inf_{f \in \mathcal{F}} \left(\shift{f}+\|Z-U^{ f}\|_\square\right)+ \inf_{g \in \mathcal{F}}\Big( \shift{g}+\|W^g-Z\|_\square \Big )\\
&= \csdist(W,Z) + \csdist(Z, U)\;,
\end{align*}
where the first equality holds since $g^{-1} \circ f$ is a measure preserving bijection, and the last inequality follows from the triangle inequality; note that $\shift{g^{-1}} = \shift{g}$ for any $g \in \mathcal{F}$.
\end{proof}

%We say that a measurable function $f:[0,1]^2\to[0,1]^2$ is a  \emph{zero-shift} function if $\shift{f}=0$. The next lemma is a direct consequence of the definitions.
%\begin{lemma} If two orderons $U,W$ are equivalent (under a zero-shift function $f$), then $\csdist(W,U)=0$.
%\end{lemma}
\section{Block orderons and their density in $\calW$}\label{sec:block_orderons}

In this section we show that weighted ordered graphs are dense in the space of orderons coupled with the cut-shift distance. To start, we have to define the orderon representation of a weighted ordered graph, called a \emph{naive block orderon}. A naive $n$-block orderon is defined as follows.

\begin{definition}[naive block orderon]
	Let $m\in \mathbb{N}$ be an integer. For $z\in(0,1]$, we denote $Q_n(z) = \lceil nz \rceil$; we also set $Q_n(0)=1$.
	An \emph{$m$-block naive orderon} is a function $W\colon\left([0,1]^2\right)^2\to [0,1]$ that can be written as
	\[W((x,a),(y,b))=G(Q_n(x), Q_n(y))\;,\qquad\forall x,a,y,b\in[0,1]\;,\]
	for some weighted ordered graph $G$ on $n$ vertices.
\end{definition}
Following the above definition, we denote by $W_G$ the naive block orderon defined using $G$, and view $W_G$ as the orderon ``representing'' $G$ in $\calW$. Similarly to the unordered setting, this representation is slightly ambiguous (but this will not affect us).
Indeed, it is not hard to verify that two weighted ordered graphs $F$ and $G$ satisfy $W_F = W_G$ if and only if both $F$ and $G$ are blowups of some weighted ordered graph $H$. Here, a weighted ordered graph $G$ on $nt$ vertices is a \emph{$t$-blowup} of a weighted ordered graph $H$ on $n$ vertices if $G(x,y) = H(\lceil x/t \rceil, \lceil y/t \rceil)$ for any $x,y \in [nt]$.

%\onote{Old couple of definitions}
%\begin{definition} [Block orderon] Let $m\in \mathbb{N}$ be an integer. For $z\in[0,1]$, we denote by $Q_m(z)$ the largest multiple of $1/m$ which is less then $z$ (where $Q_m(0)=0$). An \emph{$m$-block orderon} is a function $W:\left([0,1]^2\right)^2\to [0,1]$ that can be written as
%	\[W((x,a),(y,b))=W'((Q_m(x),Q_m(a)),(Q_m(y),Q_m(b)))\;,\qquad\forall x,a,y,b\in[0,1]\;,\]
%	for some symmetric function $W'\colon\left( \{0,\frac{1}{m},\ldots, 1\}^2\right)^2\to [0,1]$.
%	\end{definition}
%Similarly, we can define a naive block orderon.
%\begin{definition} A \emph{naive $m$-block orderon} is an $m$-block orderon $W$, such that for every $x,y\in[0,1]$, it holds that
%	\[W((x,a),(y,b))=W((x,a')(y,b'))\qquad \forall a,a',b,b'\in [0,1]\;.\]
%\end{definition}
%\onote{-------------------- End of old definitions.}
%
 We call an orderon $U\in\calW$ a \emph{step function with at most $k$ steps}  if there is a partition $\calR=\{S_1,\ldots,S_k\}$ of $[0,1]^2$ such that $U$ is constant on every $S_i\times S_j$.

\begin{remark}[The name choices]
The definition of a step function in the space of orderons is the natural extension of a step function in graphons. Note that a naive block orderon is a special case of a step function, where the steps $S_i$ are rectangular (this is why we call these ``block orderons''). The ``naive'' prefix refers to the fact that we do not make use of the second coordinate in the partition.
\end{remark}

%Every $W\in \calW$ can be considered as a kernel operator on the Hilbert space $L^2(([0,1]^2)^2)$ by \[ (Wf)(x,a)=\int_{[0,1]^2} W((x,a),(y,b))f(y,b)d(y,b) \;.\]
%We define the usual $L_2$-norm of $W$ as a function from $([0,1]^2)^2\to [0,1]$. This norm defines a Hilbert space with the inner product \[\langle W, U\rangle=\int_{[0,1]^4}W((x,a),(y,b))U((x,a),(y,b))dxdadydb\;.   \]

For every $W\in\calW$ and every partition $\calP=\{S_1,\ldots,S_k\}$ of $[0,1]^2$ into measurable sets, let $W_\calP\colon ([0,1]^2)^2\to[0,1]$ denote the step function obtained from $W$ by replacing its value at $((x,a),(y,b))\in S_i\times S_j$ by the average of $W$ on $S_i\times S_j$. That is, \[ W_\calP((x,a),(y,b))=\frac{1}{\lambda(S_i)\lambda(S_j)}\int_{ S_i \times S_j}W((x',a'),(y',b'))dx'da' dy'db'\;,\]
Where $i$ and $j$ are the unique indices such that $(x,a)\in S_i$ and $(y,b)\in S_j$, respectively.

The next lemma is an extension of the regularity lemma to the setting of Hilbert spaces.
\begin{lemma}[\cite{LS07} Lemma $4.1$]\label{lem:reg-lem-hilbert}
	Let $\{\calK_i\}_i$ be arbitrary non-empty subsets of a Hilbert space $\calH$. Then, for every $\eps>0$ and $f\in \calH$ there is an $m\le \lceil1/\eps^2\rceil$ and there are $f_i\in\calK_i$ ($1\le i\le k$) and $\gamma_1\ldots,\gamma_k\in \mathbb{R}$ such that for every $g\in \calK_{k+1}$ \[  |\la g, f- (\gamma_1f_1+\cdots+\gamma_k f_k)\ra|\le \eps \|f\|\|g\| \]
	\end{lemma}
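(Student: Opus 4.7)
The plan is a greedy energy-increment argument carried out inside $\calH$. I will produce the sequence $(f_i,\gamma_i)$ iteratively, starting from $k=0$ and the trivial approximation $h_0:=0$. At each stage I maintain the invariant that $h_k=\gamma_1 f_1+\cdots+\gamma_k f_k$ equals the orthogonal projection $P_{V_k}f$ of $f$ onto $V_k:=\mathrm{span}\{f_1,\ldots,f_k\}$, with $f_i\in\calK_i$ for each $i$. At stage $k$ I check the stopping condition: whether $|\langle g,\,f-h_k\rangle|\le\eps\|f\|\|g\|$ holds for every $g\in\calK_{k+1}$. If yes, I stop and output $(f_i,\gamma_i)_{i=1}^{k}$. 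Otherwise I pick a witness $g\in\calK_{k+1}$ violating the inequality, set $f_{k+1}:=g$, and update $h_{k+1}:=P_{V_{k+1}}f$. Note that this construction automatically places the new element in $\calK_{k+1}$, as required.

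The heart of the argument is an energy-increment estimate: at every update stage, $\|h_{k+1}\|^2\ge\|h_k\|^2+\eps^2\|f\|^2$. To establish it, decompose the witness as $g=g_1+g_2$ with $g_1\in V_k$ and $g_2\perp V_k$; observe that $g_2\ne 0$, for otherwise $g\in V_k$ would force $\langle g,f-h_k\rangle=0$ by the defining property of orthogonal projection, contradicting the failure of the stopping test. Because $f-h_k\perp V_k$, we have $\langle g,f-h_k\rangle=\langle g_2,f-h_k\rangle$, so the failure condition combined with $\|g_2\|\le\|g\|$ yields $|\langle g_2,f-h_k\rangle|>\eps\|f\|\|g_2\|$. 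Since $V_{k+1}=V_k\oplus\mathrm{span}\{g_2\}$, the Pythagorean formula gives $\|P_{V_{k+1}}f\|^2=\|P_{V_k}f\|^2+|\langle f,g_2/\|g_2\|\rangle|^2$, and another use of $g_2\perp V_k$ identifies $\langle f,g_2\rangle$ with $\langle f-h_k,g_2\rangle$, so the squared increment is strictly larger than $\eps^2\|f\|^2$.

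Iterating, after $k$ updates we obtain $\|h_k\|^2>k\eps^2\|f\|^2$; since $\|h_k\|^2\le\|f\|^2$ by Pythagoras (as $h_k=P_{V_k}f$), the process must halt in at most $\lceil 1/\eps^2\rceil$ updates. Since the terminal $h_k$ lies in $V_k$, it admits a representation $\gamma_1f_1+\cdots+\gamma_kf_k$ with real coefficients, which is the required conclusion.

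The one technical point that demands care is the handling of the witness $g$: the failure condition involves the full norm $\|g\|$, whereas the energy gain comes only from the component $g_2$ orthogonal to $V_k$. The argument above threads this needle by using $\|g_2\|\le\|g\|$ to transfer the inequality to $g_2$ before invoking the Pythagorean identity. Beyond this, everything reduces to routine Hilbert-space projection algebra, and in particular nothing beyond separability or completeness of $\calH$ is used.
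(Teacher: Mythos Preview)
Your proof is correct. Note that the paper does not actually give its own proof of this lemma: it simply cites it as Lemma~4.1 of~\cite{LS07}. Your greedy energy-increment argument---iteratively projecting $f$ onto the span of previously chosen witnesses and showing a strict gain of more than $\eps^2\|f\|^2$ in $\|P_{V_k}f\|^2$ at each step---is exactly the standard proof from that reference, so there is nothing to compare.
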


%Note that on the space $\calW$, the stepping operator is a linear operator such that for every $U,W\in\calW$ and a partition $\calP$ we have that,
%\[\langle U_\calP,W_\calP\rangle=\langle U_\calP,W\rangle=\langle U,W_\calP\rangle\;.\]
%
%In addition, for a fixed partition $\calP$, it is holds that
%\begin{align}\label{eq-Projection}
%\langle W_\calP, W-W_\calP \rangle=0\;.
%\end{align}
%
%
% Therefore, the stepping operator is the orthogonal projection onto the finite dimensional space of stepfunctions with steps in $\calP$.
% In turn, Equation~\ref{eq-Projection} implies that the stepping operator is contractive with respect to the $L_2$ norm:
% \[ \normtwo {W_\calP}^2=\normtwo{W}^2-\normtwo{W-W_\calP} ^2\le \normtwo{W}^2\]
%

 The next lemma is a direct consequence of Lemma~\ref{lem:reg-lem-hilbert}.
 \begin{lemma}\label{lem:reg-nostepping}
 	For every $W\in\calW$ and $\eps>0$ there is a step function $U\in \calW$ with at most $\lceil2^{8/\eps^2}\rceil$ steps such that  \[ \|{W-U}\|_\square\le \eps \;.\]
 \end{lemma}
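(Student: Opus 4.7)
The plan is to apply the Hilbert-space regularity lemma (Lemma~\ref{lem:reg-lem-hilbert}) with $\calH = L^2(([0,1]^2)^2)$, $f = W$, and every $\calK_i$ chosen as the set of ``cut indicators'' $\{\indi_{S \times T} : S,T \sse [0,1]^2 \text{ measurable}\}$. Since $\|W\|_2 \le 1$ and every cut indicator has $L^2$-norm at most $1$, running the lemma with parameter $\eps/2$ yields $k \le \lceil 4/\eps^2 \rceil$ pairs $(S_i, T_i)$ and reals $\gamma_1, \ldots, \gamma_k$ such that the function $V \eqdef \sum_{i=1}^{k} \gamma_i \indi_{S_i \times T_i}$ obeys $|\la \indi_{S \times T}, W - V\ra| \le \eps/2$ for every measurable $S,T \sse [0,1]^2$. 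Rewriting the inner product as $\int_{S \times T} (W - V)$, this is exactly $\|W - V\|_\square \le \eps/2$.

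Next I would take $\calP$ to be the common refinement of the $2k$ partitions $\{S_i, [0,1]^2 \setminus S_i\}$ and $\{T_i, [0,1]^2 \setminus T_i\}$; this has at most $2^{2k} \le 2^{8/\eps^2}$ parts. By construction $V$ is constant on every block $P_a \times P_b$, so $V$ is a step function on $\calP$, but its values may fall outside $[0,1]$, so $V$ itself is not an orderon. The natural fix is to set $U \eqdef W_\calP$, the block-average of $W$ on $\calP$, which automatically takes values in $[0,1]$ and is therefore a step orderon with at most $2^{8/\eps^2}$ steps.

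The last and most delicate step is to show $\|W - U\|_\square \le \eps$. Let $A$ denote the $L^2$-projection onto step functions on $\calP \otimes \calP$, so that $AW = U$; note that $A$ is self-adjoint and idempotent, and that $AV = V$. For any measurable $S,T \sse [0,1]^2$, using $A(W - U) = 0$ together with $AV = V$ one obtains
\[ \int_{S\times T}(W-U) \;=\; \la (I-A)\indi_{S\times T},\, W - V\ra \;=\; \int_{S\times T}(W-V) \;-\; \la A\indi_{S\times T},\, W - V\ra. \]
The first summand is bounded by $\|W - V\|_\square \le \eps/2$. In the second summand, $A\indi_{S\times T}$ factors as a tensor $\mu(v_1)\nu(v_2)$ with $\mu, \nu : [0,1]^2 \to [0,1]$ (taking $\mu \equiv \lambda(S\cap P_a)/\lambda(P_a)$ on each $P_a$, and analogously for $\nu$). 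An unordered analog of Lemma~\ref{lem:smoothning}, obtained by dropping the $\indi_{v_1 \le v_2}$ factor and repeating the same bilinearity-plus-extreme-points argument, then bounds this summand by $\|W - V\|_\square \le \eps/2$. Summing gives $|\int_{S \times T}(W - U)| \le \eps$ for every $S,T$, and taking the supremum yields the claim. I expect the main obstacle to be exactly this last passage: Lemma~\ref{lem:reg-lem-hilbert} only produces a real-valued linear combination, and forcing the approximant into $\calW$ (i.e., the $[0,1]$-valued step functions) without spoiling the cut-norm bound is what the conditional-expectation plus smoothing trick above accomplishes.
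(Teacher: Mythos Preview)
Your proof is correct, and it differs from the paper's in two meaningful ways.

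First, the paper chooses each $\calK_i$ to be the set of \emph{symmetric} indicators $\{\indi_{S\times S}:S\sse[0,1]^2\}$, so the output $f'=\sum_j\gamma_j\indi_{S_j\times S_j}$ is automatically symmetric and is a step function on the partition generated by only $k$ sets (hence $2^k$ parts). The price is that Lemma~\ref{lem:reg-lem-hilbert} then only controls $\int_{S\times S}(W-f')$, and the paper recovers control over $\int_{S\times T}$ via the polarization identity $\int_{(S\cup T)^2}=\int_{S^2}+2\int_{S\times T}+\int_{T^2}$ (losing a factor of $2$). Your choice of $\calK_i=\{\indi_{S\times T}\}$ gives the cut-norm bound directly, at the cost of $2k$ generating sets and hence $2^{2k}$ parts. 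Both routes land in the stated range; note, though, that your inequality $2^{2k}\le 2^{8/\eps^2}$ tacitly assumes $\lceil 4/\eps^2\rceil=4/\eps^2$ and can otherwise overshoot $\lceil 2^{8/\eps^2}\rceil$ by a constant factor---harmless, but worth a word.

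Second, and more substantively, you explicitly address how to force the approximant into $\calW$. The paper simply asserts ``we get a step function $U\in\calW$'', but its $f'$ need not be $[0,1]$-valued; the paper only resolves this later, in Claim~\ref{clm-stepping-operator} and Lemma~\ref{lem:reg-stepping}, by passing to $W_\calP$ and paying another factor of $2$. Your conditional-expectation identity $\la\indi_{S\times T},W-W_\calP\ra=\la(I-A)\indi_{S\times T},W-V\ra$ together with the (unordered) smoothing bound is exactly a self-contained proof that the stepping operator is contractive in cut norm---so in effect you have folded Claim~\ref{clm-stepping-operator} into the proof of Lemma~\ref{lem:reg-nostepping} and obtained $U=W_\calP\in\calW$ directly. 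That is a cleaner packaging than the paper's.
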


 \begin{proof} We apply Lemma~\ref{lem:reg-lem-hilbert} to the case where the Hilbert space is $L^2([0,1]^4)$, and each $\calK_i$ is the set of indicator functions of product sets $S\times S$, where $S\subseteq [0,1]^2$ is a measurable subset. Then for  $f\in \calW $, there is an $f'=\sum_{j=1}^{k}\gamma_j f_j$, which is a step function with at most $2^k$ steps. Therefore, we get a step function $U\in \calW$ with at most $2^{\lceil 1/\eps^2\rceil}$ steps such that for every measurable set $S\subseteq[0,1]^2$ \[\left|\int_{v_1,v_2\in S \times S}(W(v_1,v_2)-U(v_1,v_2))dv_1dv_2\right|\le \eps \;.\] By the above and the fact that
\begin{align*}
  &\left|\int_{ v_1,v_2\in(S \cup T)\times (S\cup T)}(W(v_1,v_2)-U(v_1,v_2))dv_1dv_2\right|= \Bigg|\int_{ v_1,v_2\in S \times S}(W(v_1,v_2)-U(v_1,v_2))dv_1dv_2 \\
   &+2\cdot\int_{ v_1,v_2\in S \times T}(W(v_1,v_2)-U(v_1,v_2))dv_1dv_2+\int_{ v_1,v_2\in T \times T}(W(v_1,v_2)-U(v_1,v_2))dv_1dv_2\Bigg|\le \eps,
 	\end{align*}
 	we get that for any two measurable sets $S, T\subseteq [0,1]^2$, \[\left|\int_{ v_1,v_2\in S \times T}(W(v_1,v_2)-U(v_1,v_2))dv_1dv_2\right|\le 2\eps \;,\] which implies the lemma.
 \end{proof}

Similarly to the graphon case, the step function $U$ might not be a stepping of $W$. However, it can be shown that these steppings are almost optimal.
\begin{claim}\label{clm-stepping-operator} Let $W\in\calW$, let $U$ be a step function, and let $\calP$ denote the partition of $[0,1]^2$ into the steps of $U$. Then $\|W-W_\calP\|_\square\le 2\|W-U\|_\square$.
	\end{claim}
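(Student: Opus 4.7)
The plan is to use the triangle inequality together with the fact that the stepping operator is a contraction in the cut norm. First I would write
\[
\|W - W_\calP\|_\square \le \|W - U\|_\square + \|U - W_\calP\|_\square,
\]
so it suffices to show that $\|U - W_\calP\|_\square \le \|W - U\|_\square$. Since $U$ is constant on each block of $\calP$, we have $U_\calP = U$, and since the stepping operator $W \mapsto W_\calP$ is linear in $W$, this gives
\[
U - W_\calP = U_\calP - W_\calP = (U - W)_\calP.
\]
Thus the claim reduces to showing that stepping does not increase the cut norm, namely $\|F_\calP\|_\square \le \|F\|_\square$ for every symmetric measurable $F$.

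To prove the contraction property, I would fix arbitrary measurable $S, T \sse [0,1]^2$ and rewrite the integral of $F_\calP$ against $\indi_S \otimes \indi_T$ by ``pushing'' the averaging from $F$ onto the indicator functions. Concretely, for each block $S_i \in \calP$ define
\[
\mu(v) = \frac{\lambda(S \cap S_i)}{\lambda(S_i)} \quad \text{for } v \in S_i,
\]
and similarly define $\nu(v)$ using $T$. Both $\mu$ and $\nu$ are measurable functions from $[0,1]^2$ to $[0,1]$. A direct calculation, expanding $F_\calP$ as an average on each block and applying Fubini, shows that
\[
\int_{S \times T} F_\calP(v_1, v_2)\, dv_1 dv_2 = \int \mu(v_1)\,\nu(v_2)\, F(v_1, v_2)\, dv_1 dv_2.
\]

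At this point I would invoke the standard smoothing argument for the unordered cut norm (the analogue of Lemma~\ref{lem:smoothning}, proved by the same bilinearity/extreme-points reasoning, since $\int \mu(v_1) \nu(v_2) F(v_1,v_2) dv_1 dv_2$ is bilinear in $(\mu, \nu) \in [0,1]^{[0,1]^2} \times [0,1]^{[0,1]^2}$ and extreme points of $[0,1]^{[0,1]^2}$ are indicators). This gives
\[
\left|\int \mu(v_1) \nu(v_2) F(v_1, v_2)\, dv_1 dv_2\right| \le \|F\|_\square.
\]
Taking the supremum over $S, T$ then yields $\|F_\calP\|_\square \le \|F\|_\square$. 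Applying this to $F = U - W$ and combining with the triangle inequality produces the desired bound $\|W - W_\calP\|_\square \le 2\|W-U\|_\square$.

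The only real subtlety is the smoothing step: one must be careful that the bilinearity argument works with $[0,1]$-valued step weights $\mu, \nu$ (rather than only $\{0,1\}$-valued indicators). This is routine — essentially the same proof as Lemma~\ref{lem:smoothning} but without the $\indi_{v_1 \le v_2}$ factor — so there is no genuine obstacle; the computation is straightforward once the identity relating $\int_{S\times T} F_\calP$ to the smoothed integral against $F$ is set up.
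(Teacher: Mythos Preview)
Your proof is correct and follows exactly the paper's approach: triangle inequality, then $U=U_\calP$, then contractivity of the stepping operator in the cut norm. The only difference is that the paper simply invokes the contractivity as a known fact in one line, whereas you spell out its proof via the smoothing argument.
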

\begin{proof}
	The proof follows from the fact that $U=U_\calP$ and the fact that the stepping operator is contractive with respect to the cut norm. More explicitly, \[\|W-W_\calP\|_\square \le \|W-U\|_\square +\|U-W_\calP\|_\square=\|W-U\|_\square +\|U_\calP-W_\calP\|_\square\le 2\|W-U\|_\square \;.\]
\end{proof}

Using Lemma~\ref{lem:reg-nostepping} and Claim~\ref{clm-stepping-operator} we can obtain the following lemma.
\begin{lemma}\label{lem:reg-stepping}
	For every function $W\in \calW$ and every $\eps>0$, there is a partition $\calP$ of $[0,1]^2$ into at most $2^{\lceil32/\eps^2\rceil}$ sets with positive measure such that $\|W-W_\calP\|_\square\le \eps$.
	\end{lemma}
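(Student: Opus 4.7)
The plan is to combine Lemma~\ref{lem:reg-nostepping} and Claim~\ref{clm-stepping-operator} almost directly, just with a factor-of-two adjustment in the target accuracy. First I would apply Lemma~\ref{lem:reg-nostepping} with error parameter $\eps/2$ in place of $\eps$. This produces a step function $U\in\calW$ together with some underlying partition $\calP=\{S_1,\ldots,S_k\}$ of $[0,1]^2$ (the partition whose cells are the steps of $U$), with $k\le \lceil 2^{8/(\eps/2)^2}\rceil = \lceil 2^{32/\eps^2}\rceil$ and $\|W-U\|_\square\le \eps/2$.

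Next I would feed this $\calP$ into Claim~\ref{clm-stepping-operator}. The claim tells us that replacing $W$ by its average on each block of $\calP$ is essentially as good as comparing with $U$, up to a factor of two: $\|W-W_\calP\|_\square\le 2\|W-U\|_\square\le \eps$, which is the bound we want. Since $W_\calP$ is constant on each $S_i\times S_j$ and the number of blocks of $\calP$ is at most $\lceil 2^{32/\eps^2}\rceil\le 2^{\lceil 32/\eps^2\rceil}$, the size bound in the statement also comes out.

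The only nuisance is the positive-measure requirement: the partition delivered by Lemma~\ref{lem:reg-nostepping} might contain cells of measure zero. But this is harmless—the values of $W_\calP$ on such cells do not affect any integral, so I can simply discard all cells $S_i$ with $\lambda(S_i)=0$ (absorbing them, say, into some neighboring positive-measure cell) without changing $W_\calP$ as an element of $L^\infty$, nor the cut-norm distance $\|W-W_\calP\|_\square$. After this cleanup the remaining partition has only positive-measure sets and at most $2^{\lceil 32/\eps^2\rceil}$ of them, so the lemma follows.

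There is no real obstacle here; both ingredients are already proved, and the argument is a two-line assembly with an $\eps\mapsto\eps/2$ bookkeeping step to absorb the factor of $2$ coming from Claim~\ref{clm-stepping-operator}. The only thing worth being careful about is matching the exponent bound: one should note $\lceil 2^{8/(\eps/2)^2}\rceil\le 2^{\lceil 32/\eps^2\rceil}$, which is immediate from $8/(\eps/2)^2 = 32/\eps^2$ and the monotonicity of $2^x$ in $x$.
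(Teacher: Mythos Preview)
Your proposal is correct and is exactly the approach the paper indicates: apply Lemma~\ref{lem:reg-nostepping} with $\eps/2$, then feed the resulting partition into Claim~\ref{clm-stepping-operator} to absorb the factor of~$2$, and discard any measure-zero cells. The paper merely writes ``Using Lemma~\ref{lem:reg-nostepping} and Claim~\ref{clm-stepping-operator} we can obtain the following lemma,'' and your write-up fills in precisely those details.
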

Using the above lemma, we can impose stronger requirements on our partition. In particular, we can show that there exists a partition of $[0,1]^2$ to sets of the same measure. Such a partition is referred to as an \emph{equipartition}. Also, we say that a partition $\calP$ \emph{refines} $\calP '$, if $\calP$ can be obtained from $\calP'$ by splitting each $P_j\in\calP '$ into a finite number of sets (up to sets of measure $0$).
\begin{lemma}\label{lem:reg-refinement} Fix some $\eps>0$. Let $\calP$ be an equipartition of $[0,1]^2$ into $k$ sets,  and fix $q\ge 2k^2\cdot 2^{162/\eps^2}$ such that $k$ divides $q$. Then, for any $W\in \calW$, there exists an equipartition $\calQ$ that refines $\calP$ with $q$ sets,  such that $\|W-W_\calQ\|_\square \le \frac{8\eps}{9}+\frac{2}{k}$.
\end{lemma}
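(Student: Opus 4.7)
The plan is to combine Lemma~\ref{lem:reg-stepping} with a carefully ``rounded'' refinement of $\calP$ that respects an auxiliary partition $\calR$ up to the measure-theoretic slack. First I would apply Lemma~\ref{lem:reg-stepping} with parameter $4\eps/9$ to obtain a partition $\calR$ with $m\le 2^{\lceil 162/\eps^2\rceil}$ measurable parts satisfying $\|W-W_\calR\|_\square\le 4\eps/9$; this is precisely where the exponent $162$ in the hypothesis on $q$ originates, since $32/(4/9)^2=162$, and the hypothesis $q\ge 2k^2 m$ will control the rounding error.

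Next I would build $\calQ$ locally inside each $\calP$-part. Working inside a fixed $P_j$ of measure $1/k$: for every $R_l\in\calR$ I carve off $\lfloor q\lambda(P_j\cap R_l)\rfloor$ arbitrary subsets of $P_j\cap R_l$, each of measure exactly $1/q$, leaving a residue of measure less than $1/q$ inside each $P_j\cap R_l$. Since $k\mid q$ and the carved counts are integers, the residues inside $P_j$ have total measure $N_j/q$ for some nonnegative integer $N_j\le m$, which I split arbitrarily into $N_j$ ``dirty'' measure-$1/q$ parts. Over all $j$ this yields an equipartition $\calQ$ refining $\calP$ with exactly $q$ parts; the union $D\subseteq[0,1]^2$ of all dirty parts satisfies $\lambda(D)\le km/q\le 1/(2k)$ by the hypothesis on $q$.

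To bound $\|W-W_\calQ\|_\square$ I would use the three-term decomposition
\[
W-W_\calQ=(W-W_\calR)+\bigl(W_\calR-(W_\calR)_\calQ\bigr)+\bigl((W_\calR)_\calQ-W_\calQ\bigr).
\]
The first term has cut-norm at most $4\eps/9$ by construction. The third equals $-(W-W_\calR)_\calQ$ by linearity of the stepping operator, hence has cut-norm at most $\|W-W_\calR\|_\square\le 4\eps/9$ by the contractivity of stepping in cut-norm (the very property used in Claim~\ref{clm-stepping-operator}). For the middle term, the key observation is that $(W_\calR)_\calQ(x,y)$ coincides with $W_\calR(x,y)$ whenever the $\calQ$-parts containing $x$ and $y$ are each contained in a single $\calR$-part, because $W_\calR$ is already constant on such a $\calQ$-rectangle; thus $W_\calR-(W_\calR)_\calQ$ is supported on $\{(x,y)\in ([0,1]^2)^2 : x\in D\ \text{or}\ y\in D\}$, a set of Lebesgue measure at most $2\lambda(D)\le 1/k$, on which the difference is bounded in absolute value by $1$. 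Its cut-norm is therefore at most $1/k\le 2/k$, and summing the three contributions gives $\|W-W_\calQ\|_\square\le 8\eps/9+2/k$.

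The main obstacle is controlling the middle term. Because the measures $\lambda(P_j\cap R_l)$ are in general not multiples of $1/q$, no equipartition refining $\calP$ can simultaneously refine $\calR$, so one must precisely quantify where $\calQ$ fails to refine $\calR$ and bound the resulting error. The decisive move is to shift that failure onto the already piecewise-constant function $W_\calR$ (rather than onto $W$ itself), where the resulting perturbation is supported on the dirty rectangles and is controlled by a direct $L^\infty$--measure bound, without any additional regularity or averaging argument.
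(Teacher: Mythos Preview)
Your proposal is correct and follows essentially the same route as the paper: apply Lemma~\ref{lem:reg-stepping} with parameter $4\eps/9$ to get $\calR$, carve each $P_j\cap R_l$ into $\lfloor q\lambda(P_j\cap R_l)\rfloor$ pieces of measure $1/q$, and absorb the residues into a ``dirty'' set $D$ of measure at most $km/q\le 1/(2k)$. The only difference is in the final bookkeeping: the paper builds an auxiliary step function $U$ on $\calQ$ (equal to $W_{\widetilde\calQ}$ off the dirty region and $0$ on it), bounds $\|W-U\|_\square\le 4\eps/9+1/k$, and then invokes Claim~\ref{clm-stepping-operator} to double; you instead write $W-W_\calQ=(W-W_\calR)+(W_\calR-(W_\calR)_\calQ)-(W-W_\calR)_\calQ$ and bound the three pieces directly via contractivity of stepping and the support bound, which in fact gives the slightly sharper $8\eps/9+1/k$.
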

\begin{proof} Let $\calP'=\{P'_1,\ldots,P'_{p'}\}$ be a partition of $[0,1]^2$ into $p'\le 2^{162/\eps^2}$ sets such that $\|W-W_{\calP'}\|_\square\le \frac{4\eps}{9}$, and let $\widetilde\calQ=\{ \widetilde Q_1,\ldots,  \widetilde Q_{ \tilde q}\}$ be a common refinement of $\calP$ and $\calP'$, with ${\tilde q}\le k\cdot 2^{162/\eps^2}$.  We construct an equipartition $\calQ$ as follows.
For every $i\in [k]$, consider all the sets $\widetilde Q^i_{1},\ldots,\widetilde Q^i_{\ell_i}\in \widetilde{Q}$ consisting of $P_i\in\calP$. For each $r\in [\ell_i]$ we let $a_r=\lfloor\frac{\lambda(\widetilde Q^i_r)}{q}\rfloor$ and partition $\widetilde Q^i_r$ into sets $\widetilde Q^i_{r,1},\ldots,\widetilde{Q}^i_{r,a_r}$, each of measure $1/q$, plus an exceptional part $\widetilde{Q}^i_{r,\text{ex}}$ which is the residual set. That is,
\[  \widetilde{Q}^i_r= \left(\bigcup_{b=1}^{a_r} \widetilde{Q}^i_{r,b}\right)\cup \widetilde{Q}^i_{r,\text{ex}} \;.\]
Next, for every $i\in[k]$ let $R_i=\bigcup_{r=1}^{\ell_i}\widetilde{Q}^i_{r,\text{ex}}$ and repartition each $R_i$ to sets of measure $1/q$ to get an equipartition $\calQ$ of size $q$. Let $U$ be a step function that agrees with $W_{\widetilde{\calQ}}$ on $([0,1]^2\setminus \bigcup_{i\in[k]}R)^2$ and $0$ on the complement. Since $U$ disagrees with $W_{\widetilde{Q}}$ on a set of measure at most $2\lambda(R)\le \frac{2k \cdot 2^{162/\eps^2}}{q}$, we have that
\[\|W-U\|_\square\le \|W-W_{\widetilde{Q}}\|_\square+\|W_{\widetilde{Q}}-U\|_\square \le \frac{4\eps}{9}+\frac{2k\cdot 2^{162/\eps^2}}{q}\;. \]
By our choice of $q\ge 2k^2\cdot 2^{162/\eps^2} $ we get that
\[  \|W-U\|_\square\le \frac{4\eps}{9}+\frac{1}{k}\;.\]
By construction $U$ is a step function with steps in $\calQ$, and using Claim~\ref{clm-stepping-operator} we get that \[\|W-W_\calQ\|_\square\le 2\|W-U\|_\square\le \frac{8\eps}{9}+\frac{2}{k}\;,\] and the proof is complete.
\end{proof}

The next lemma is an (easier) variant of Lemma~\ref{lem:reg-refinement}, in the sense that we refine two given partitions. However, the resulting partition will not be an equipartition.

\begin{lemma}\label{lem:reg-refinementV2}
	Fix some $\eps>0$ and $d\in\mathbb{N}$. Let $\calI_d$ be an equipartition of $[0,1]^2$ into $2^d$ sets, $\calP$ be a partition of $[0,1]^2$ into $k$ sets, and fix $q\ge 2 (k\cdot 2^{d})^2\cdot 2^{162/\eps^2}$ such that both $k$ and $2^d$ divide $q$. Then, for any $W\in \calW$, there exists a partition $\calQ$ that refines both $\calP$ and $\calI_d$ with $q$ sets,  such that $\|W-W_\calQ\|_\square \le \frac{8\eps}{9}+\frac{2}{k\cdot 2^{d}}$.
\end{lemma}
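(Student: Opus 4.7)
The plan is to follow the argument of Lemma~\ref{lem:reg-refinement} almost verbatim, with the partition $\calP$ there replaced by the common refinement $\calP^\ast \eqdef \calP \wedge \calI_d$ of the two given partitions. Since $\calP^\ast$ consists of at most $k\cdot 2^d$ parts, each contained in a unique part of $\calP$ and a unique part of $\calI_d$, any partition of $[0,1]^2$ refining $\calP^\ast$ automatically refines both $\calP$ and $\calI_d$. The weaker requirement here---that $\calQ$ need not be an equipartition---is precisely what lets us dispense with divisibility conditions on the measures of parts of $\calP^\ast$, which would otherwise be a problem since $\calP^\ast$ itself need not be an equipartition.

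Concretely, I would first apply Lemma~\ref{lem:reg-stepping} with parameter $4\eps/9$ to obtain a partition $\calP'$ of $[0,1]^2$ into $p' \le 2^{\lceil 162/\eps^2\rceil}$ sets such that $\|W - W_{\calP'}\|_\square \le 4\eps/9$. Next, I would form the common refinement $\widetilde\calQ \eqdef \calP^\ast \wedge \calP'$, which refines $\calP$, $\calI_d$, and $\calP'$, and has $\tilde q \le k\cdot 2^d \cdot 2^{162/\eps^2}$ parts. Exactly as in the proof of Lemma~\ref{lem:reg-refinement}, I would then split each $\widetilde Q_r \in \widetilde\calQ$ into $\lfloor q\lambda(\widetilde Q_r)\rfloor$ pieces of measure $1/q$ plus an exceptional residual of measure less than $1/q$; the total residual measure $\lambda(R)$ is thus at most $\tilde q/q \le 1/(2k\cdot 2^d)$ by the hypothesis $q \ge 2(k\cdot 2^d)^2 \cdot 2^{162/\eps^2}$. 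Each residual sits inside a unique part of $\calP^\ast$, so further subdividing residuals inside their respective $P^\ast_j$ preserves the refinement of both $\calP$ and $\calI_d$; I would choose these subdivisions so that the overall count of parts of $\calQ$ is exactly $q$.

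For the cut-norm estimate, I would mirror the endgame of Lemma~\ref{lem:reg-refinement}: define an auxiliary step function $U$ that agrees with $W_{\widetilde\calQ}$ outside $(R\times[0,1]^2)\cup([0,1]^2 \times R)$ and vanishes on that set. The same stepping argument used there yields $\|W - W_{\widetilde\calQ}\|_\square \le 4\eps/9$, while the pointwise estimate $|W_{\widetilde\calQ} - U|\le 1$ combined with support on a set of measure at most $2\lambda(R)$ gives $\|W_{\widetilde\calQ} - U\|_\square \le 2\lambda(R) \le 1/(k\cdot 2^d)$. Since $U$ is a step function whose steps refine $\calQ$, Claim~\ref{clm-stepping-operator} yields
\[
\|W - W_\calQ\|_\square \le 2\|W-U\|_\square \le \frac{8\eps}{9} + \frac{2}{k\cdot 2^d},
\]
as desired. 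The only delicate point is the combinatorial bookkeeping needed to land on exactly $q$ parts while simultaneously refining $\calP$ and $\calI_d$; this is routine since the freedom to subdivide is entirely localized inside the parts of $\calP^\ast$, and the slack $q \ge 2\tilde q$ leaves ample room to adjust.
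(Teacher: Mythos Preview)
Your proposal is correct and follows essentially the same route as the paper: apply Lemma~\ref{lem:reg-stepping} with parameter $4\eps/9$ to obtain $\calP'$, form the common refinement of $\calP$, $\calI_d$, and $\calP'$, and then finish via Claim~\ref{clm-stepping-operator} exactly as in the proof of Lemma~\ref{lem:reg-refinement}. The paper's own argument is slightly leaner in that it explicitly notes no equipartitioning is needed and so skips the splitting-into-measure-$1/q$ pieces you carry out; your extra step is harmless and lands on the same bound.
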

\begin{proof}
	 Let $\calP'=\{P'_1,\ldots,P'_{p'}\}$ be a partition of $[0,1]^2$ into $p'\le 2^{162/\eps^2}$ sets such that $\|W-W_{\calP'}\|_\square\le \frac{4\eps}{9}$, and let $\calQ=\{  Q_1,\ldots,   Q_{ q}\}$ be a common refinement of the three partitions $\calP$, $\calP'$ and $\calI_d$. Note that we do not repartition further to get an equipartition.  The rest of the proof is similar to the proof of Lemma~\ref{lem:reg-refinement}.\end{proof}

The following theorem shows that naive block orderons are a dense subset in $\calW$.

\begin{theorem}\label{thm:ApproxNaive-StepOrderon}
	For every orderon $W\in \calW$ and every $\eps>0$, there exist a naive $\frac{c}{\eps^4}2^{162/\eps^2}$-block orderon $W'$ (for some constant $c>0$) such that \[\csdist(W,W')\le\eps\;. \]
\end{theorem}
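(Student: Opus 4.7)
The plan is to first use Lemma~\ref{lem:reg-refinement} to approximate $W$ by a step function whose step partition refines a partition of $[0,1]^2$ into narrow vertical strips, and then use a measure-preserving bijection to shuffle the steps into the vertical-strip shape that is characteristic of a naive block orderon. The narrowness of the strips will make the shift of the bijection small.

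Pick $d$ to be the smallest integer with $2^d \ge 27/\eps$, let $\calI_d$ be the equipartition of $[0,1]^2$ into the vertical strips $I_k=[(k-1)/2^d,k/2^d]\times[0,1]$ for $k\in[2^d]$, and let $q$ be the smallest multiple of $2^d$ satisfying $q\ge 2\cdot 2^{2d}\cdot 2^{162/\eps^2}$. Applying Lemma~\ref{lem:reg-refinement} with $\calP=\calI_d$ yields an equipartition $\calQ=\{Q_1,\ldots,Q_q\}$ refining $\calI_d$ with $\|W-W_\calQ\|_\square\le 8\eps/9+2/2^d$. For each $k\in[2^d]$, let $J_k=\{j\in[q]:Q_j\subseteq I_k\}$; since $\calQ$ is an equipartition refining $\calI_d$, $|J_k|=q/2^d$. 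Fix any permutation $\sigma$ of $[q]$ which, for every $k\in[2^d]$, sends the consecutive index block $\{(k-1)q/2^d+1,\ldots,kq/2^d\}$ bijectively onto $J_k$.

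Now construct a measure-preserving bijection $f\in\calF$ piece by piece: for every $i\in[q]$, pick a measure-preserving bijection from the strip $[(i-1)/q,i/q]\times[0,1]$ onto $Q_{\sigma(i)}$ (such a bijection exists since both sets are Borel subsets of $[0,1]^2$ of equal measure $1/q$, and any two atomless standard probability spaces of the same measure are measure-isomorphic), and let $f$ be the union of these bijections. Then $W_\calQ^f((x,a),(y,b))=W_\calQ(f(x,a),f(y,b))$ is constant on each product $([(i-1)/q,i/q]\times[0,1])\times([(j-1)/q,j/q]\times[0,1])$, with value equal to the constant value of $W_\calQ$ on $Q_{\sigma(i)}\times Q_{\sigma(j)}$. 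Hence $W_\calQ^f=W_G$ for the weighted ordered graph $G$ on $q$ vertices defined by setting $G(i,j)$ to be that constant value, so $W_\calQ^f$ is indeed a naive $q$-block orderon. The alignment built into $\sigma$ guarantees that whenever $x\in[(i-1)/q,i/q]$ and $\sigma(i)\in J_k$, both $x$ and $\pi_1(f(x,a))\in Q_{\sigma(i)}$ lie in $I_k$, so $|x-\pi_1(f(x,a))|\le 1/2^d$; therefore $\shift{f}\le 1/2^d$.

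Applying the definition of $\csdist$ with the bijection $f^{-1}$, and using $(W_\calQ^f)^{f^{-1}}=W_\calQ$ together with $\shift{f^{-1}}=\shift{f}$ (as in the proof of Lemma~\ref{lem:pseudo_metric}), we obtain
\begin{align*}
\csdist(W,W_G)\le\shift{f}+\|W-W_\calQ\|_\square\le \frac{1}{2^d}+\frac{8\eps}{9}+\frac{2}{2^d}\le\eps,
\end{align*}
by the choice $2^d\ge 27/\eps$. The total number of blocks is $q=O(2^{2d})\cdot 2^{162/\eps^2}=O(\eps^{-2})\cdot 2^{162/\eps^2}$, which is well within the stated $(c/\eps^4)\cdot 2^{162/\eps^2}$ bound. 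The only mildly delicate step is the coupling between the strip-refining partition $\calQ$ (controlling the cut-norm error) and the block-aligned permutation $\sigma$ (which simultaneously makes the rearrangement into a naive block orderon and keeps the shift at most $1/2^d$).
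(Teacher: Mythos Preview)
Your proof is correct and follows essentially the same approach as the paper: apply Lemma~\ref{lem:reg-refinement} starting from a partition into narrow vertical strips, then use a measure-preserving bijection to rearrange the resulting equipartition into the vertical-strip form of a naive block orderon, with the strip width controlling the shift. The only difference is cosmetic: the paper starts from the product partition $\{J_i\times J_j\}$ into $1/\gamma^2$ squares (with $\gamma=\eps/100$) before applying the regularity refinement, whereas you go straight to the strip partition $\calI_d$; your choice is slightly more efficient and yields a block count of order $\eps^{-2}2^{162/\eps^2}$ rather than the paper's $\eps^{-4}2^{162/\eps^2}$, comfortably within the stated bound.
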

\begin{proof} Fix $\eps>0$ and $\gamma=\gamma(\eps)>0$. We consider an interval equipartition $J=\{J_1,\ldots,J_{1/\gamma}\}$ of $[0,1]$ (namely, for each $j\in[\frac{1}{\gamma}-1]$, $J_j=\left[(j-1)\cdot\gamma,j\cdot\gamma\right)$, and for $j=1/\gamma$, $J_j=\left[(j-1)\cdot\gamma,j\cdot\gamma\right]$).  In addition, let  $\calP=(J_i\times J_j\mid i,j\in[1/\gamma])$ be an equipartition of $[0,1]^2$. By Lemma~\ref{lem:reg-refinement}, there exists an equipartition $\calQ$ of $[0,1]^2$ of size $q= \frac{2}{\gamma^4}2^{162/\eps^2}$ that refines $\calP$, such that \[\|W-W_\calQ\|_\square\le \frac{8\eps}{9}+{2\gamma^2}\;.\]

	Next we construct a small shift measure preserving function $f$ as follows. For every $i\in [1/\gamma] $, consider the collection of sets $\{Q^{i}_{k}\mid k \in [\gamma q] \}$ in $\calQ$ such that \[(J_i\times [0,1])\cap \calQ =\{Q^{i}_{k}\mid k \in [\gamma{q}] \}\;.\]
	For each $k\in[\gamma q]$, the function $f$ maps $Q^{i}_{k}$ to a rectangular set \[ \left[(i-1)\gamma+\frac{(k-1)}{{q}} ,(i-1)\gamma+\frac{k}{{q}} \right)\times [0,1]\;.\]
	Finally, for every $i,j\in [q]$  and every $(x,a), (y,b)\in Q_i\times Q_j$, we define  \[W'(f(x,a),f(y,b))=W_\calQ ((x,a),(y,b))\]
	Note that the resulting function $W'$ obeys the definition of a naive $q$-block orderon and $\shift{f}\le \gamma$. Therefore, setting $\gamma=\eps/100$, we get that
	\[\csdist(W,W')\le \gamma+ \frac{8\eps}{9}+2\gamma^2\le \eps/100 +8\eps/9 +2\eps^2/100^2\le \eps\;,\]
	as desired.
\end{proof}

%\newpage

\section{Compactness of the space of orderons}
\label{sec:compactness}

In this section we prove Theorem~\ref{thm:Compactness}. We construct a metric space $\widetilde\calW$ from $\calW$ with respect to $\csdist$, by identifying $W,U\in\calW$ with $\csdist(W,U)=0$. Let $\widetilde{\calW}$ be the image of $\calW$ under this identification. On $\widetilde{\calW}$ the function $\csdist$ is a distance function.

We start with some definitions and notations.
Let $(\Omega,\calM,\lambda)$ be some probability space, $\calP_\ell=\left\{P^{(\ell)}_i\right\}_i$  a partition of $\Omega$, and let $\beta\left(\calP_\ell:\cdot\right)\colon \calP_\ell\to [0,1]$ be a function. For $v\in \Omega$, we slightly abuse notation and  write $\beta\left(\calP_\ell:v\right)$ to denote $\beta\left(\calP_\ell:i\right)$ for $v\in P^{(\ell)}_i$. With this notation, observe that for every $\ell$
\begin{align}\label{Eq:SumToIntegral}
\int_{v\in \Omega}\beta(\calP_{\ell}:v)dv=\sum_{i\in[|\calP_{\ell}|]}\lambda\left(P^{(\ell)}_i\right)\beta(\calP_{\ell}:i)\;.\end{align}

The following two results serve as useful tools to prove convergence.
The first result is known as the \emph{martingale convergence theorem}, see e.g.\@ Theorem A.12 in~\cite{Lov12}. The second result is an application of the martingale convergence theorem, useful for our purposes.

\begin{theorem}[see~\cite{Lov12}, Theorem A.12]\label{thm:MartingaleConvergence}
	Let $\{\bX_i\}_{i\in\mathbb{N}}$ be a martingale such that $\sup_n \Ex[|\bX_n|]<\infty$. Then $\{\bX_i\}_{i\in\mathbb{N}}$ is convergent with probability $1$
\end{theorem}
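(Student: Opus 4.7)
The plan is to follow Doob's classical argument via the \emph{upcrossing inequality}. For each pair of rationals $a < b$ and each $n$, let $\bU_n(a,b)$ denote the number of disjoint upcrossings of $[a,b]$ completed by the trajectory $\bX_1, \ldots, \bX_n$, i.e., the largest $k$ for which one can find times $s_1 < t_1 < \cdots < s_k < t_k \leq n$ with $\bX_{s_i} \leq a$ and $\bX_{t_i} \geq b$. The core estimate to establish is Doob's upcrossing inequality
$$
(b - a)\, \Ex[\bU_n(a,b)] \leq \Ex[(\bX_n - a)^+].
$$
The standard proof introduces a predictable $\{0,1\}$-valued ``gambling strategy'' $\bH_k$ that switches on the first time the martingale drops to $\leq a$ and off the next time it rises to $\geq b$; the martingale transform $\bY_n = \sum_{k=1}^n \bH_k (\bX_k - \bX_{k-1})$ is itself a martingale starting at $0$, so $\Ex[\bY_n] = 0$. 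A pathwise lower bound $\bY_n \geq (b-a)\bU_n(a,b) - (\bX_n - a)^-$ --- separating completed upcrossings, each contributing at least $b-a$, from a possibly unfinished one at time $n$ --- then yields the inequality, after applying the argument to the non-negative submartingale $(\bX_n - a)^+$ to absorb the $(\cdot)^-$ term.

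With the upcrossing inequality in hand, the hypothesis $\sup_n \Ex[|\bX_n|] < \infty$ immediately gives
$$
\sup_n \Ex[\bU_n(a,b)] \leq \frac{\sup_n \Ex[|\bX_n|] + |a|}{b - a} < \infty.
$$
Since $\bU_n(a,b)$ is monotone increasing in $n$, monotone convergence implies $\Ex[\bU_\infty(a,b)] < \infty$, hence $\bU_\infty(a,b) < \infty$ almost surely. Observe now that the event that $\{\bX_n\}$ fails to converge in $[-\infty, \infty]$ equals $\{\liminf_n \bX_n < \limsup_n \bX_n\}$, which decomposes as $\bigcup_{a < b,\, a,b \in \Q} \{\liminf_n \bX_n < a < b < \limsup_n \bX_n\} \subseteq \bigcup_{a < b,\, a,b \in \Q} \{\bU_\infty(a,b) = \infty\}$. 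This is a countable union of null sets, so $\bX_n$ converges almost surely to some random variable $\bX_\infty$ taking values in $[-\infty, \infty]$. Finally, Fatou's lemma yields $\Ex[|\bX_\infty|] \leq \liminf_n \Ex[|\bX_n|] \leq \sup_n \Ex[|\bX_n|] < \infty$, so $\bX_\infty$ is finite almost surely, completing the proof.

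The main obstacle is the pathwise inequality $\bY_n \geq (b-a)\bU_n(a,b) - (\bX_n - a)^-$ underlying the upcrossing estimate: carefully accounting for which intervals between the successive entry/exit times are ``completed'' versus ``open'' at the terminal time $n$ is the one genuinely combinatorial step in the argument. Everything afterwards --- passing to the supremum via monotone convergence, the countable-union step over rational intervals, and the final Fatou bound --- is routine once the upcrossing inequality is in place.
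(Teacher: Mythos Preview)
Your proposal is a correct sketch of Doob's classical proof via the upcrossing inequality. However, the paper does not prove this theorem at all: it is stated with a citation to~\cite{Lov12}, Theorem A.12, and used as a black box in the proof of Lemma~\ref{lem:PartitionConvergence}. So there is nothing to compare against; your argument supplies a standard proof where the paper simply quotes a known result.
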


\begin{lemma}\label{lem:PartitionConvergence} Let $\{\calP_\ell\}_\ell$ be a sequence of partitions of $\Omega$ such that for every $\ell$, $\calP_{\ell+1}$ refines $\calP_{\ell}$. Assume that for every $\ell$ and $j\in [|\calP_{\ell}|]$, the functions $\beta(\calP_{\ell}:\cdot)$ satisfy \begin{align}\label{Eq:MartingaleCondition} \lambda\left(P^{(\ell)}_j\right) \beta\left(\calP_\ell:j\right)=\sum_{i\in[|\calP_{\ell+1}|]}\lambda\left(P^{(\ell)}_j\cap P^{(\ell+1)}_i\right)\beta(\calP_{\ell+1}:i).\end{align}
	Then,  there is a measurable function $\beta \colon \Omega\to [0,1]$ such that $\beta(v)=\lim\limits_{\ell\to\infty}\beta(\calP_{\ell}:v)$ for almost all $v\in \Omega$.
\end{lemma}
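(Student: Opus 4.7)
The plan is to interpret $\{\beta(\calP_\ell:\cdot)\}_\ell$ as a bounded martingale with respect to the filtration induced by the refining partitions, and then invoke Theorem~\ref{thm:MartingaleConvergence}. Concretely, let $\calF_\ell\subseteq\calM$ be the $\sigma$-algebra generated by the parts of $\calP_\ell$, viewed as atoms. Since $\calP_{\ell+1}$ refines $\calP_\ell$, the inclusion $\calF_\ell\subseteq\calF_{\ell+1}$ holds, so $\{\calF_\ell\}_\ell$ is a filtration on $(\Omega,\calM,\lambda)$. Define the random variable $\bX_\ell\colon\Omega\to[0,1]$ by $\bX_\ell(v)\eqdef\beta(\calP_\ell:v)$; by construction $\bX_\ell$ is constant on every atom of $\calP_\ell$ and hence $\calF_\ell$-measurable.

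Next, I would verify the martingale property $\Ex[\bX_{\ell+1}\mid\calF_\ell]=\bX_\ell$. Because $\calF_\ell$ is generated by the finite (or countable) collection of atoms $\{P^{(\ell)}_j\}_j$, it suffices to check that for every such atom,
\[
\int_{P^{(\ell)}_j}\bX_{\ell+1}\,d\lambda=\int_{P^{(\ell)}_j}\bX_\ell\,d\lambda.
\]
The right-hand side equals $\lambda(P^{(\ell)}_j)\beta(\calP_\ell:j)$, while the left-hand side decomposes along the refinement as $\sum_i\lambda(P^{(\ell)}_j\cap P^{(\ell+1)}_i)\beta(\calP_{\ell+1}:i)$. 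Equation~\eqref{Eq:MartingaleCondition} is exactly the identity between these two expressions, so the martingale property is immediate.

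Once the martingale is established, boundedness is easy: since $\beta(\calP_\ell:\cdot)$ takes values in $[0,1]$, we have $\Ex[|\bX_\ell|]\le 1$, so $\sup_\ell\Ex[|\bX_\ell|]<\infty$. Theorem~\ref{thm:MartingaleConvergence} then yields a measurable set $\Omega_0\subseteq\Omega$ with $\lambda(\Omega_0)=1$ on which $\bX_\ell(v)$ converges. Define $\beta(v)\eqdef\lim_{\ell\to\infty}\bX_\ell(v)$ for $v\in\Omega_0$ and $\beta(v)\eqdef 0$ otherwise; as a pointwise limit of measurable functions on $\Omega_0$, $\beta$ is measurable, and the values lie in $[0,1]$ by the bound on each $\bX_\ell$. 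This yields the desired limit function.

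There is no real obstacle beyond setting up the correspondence cleanly: the only step that requires a tiny bit of care is checking that the deterministic recursion~\eqref{Eq:MartingaleCondition} actually encodes the conditional expectation identity, which it does atom-by-atom because the atoms of $\calP_\ell$ generate $\calF_\ell$. Everything else is a direct invocation of the martingale convergence theorem and the fact that $[0,1]$-valued measurable functions are closed under almost-everywhere pointwise limits.
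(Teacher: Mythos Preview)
Your proposal is correct and follows essentially the same approach as the paper: both set up $\beta(\calP_\ell:\cdot)$ as a bounded martingale with respect to the filtration induced by the refining partitions, verify the martingale identity atom-by-atom via Equation~\eqref{Eq:MartingaleCondition}, and then invoke Theorem~\ref{thm:MartingaleConvergence}. The only difference is cosmetic---the paper phrases things via a uniform random point $\bX\in\Omega$ and conditions on the index map $\psi_\ell(\bX)$, whereas you work directly with the $\sigma$-algebras $\calF_\ell$---but the content is identical.
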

\begin{proof} Fix some $\ell\in \mathbb{N}$. Let $\bX$ be a uniformly distributed random variable in $\Omega$. Let $\psi_\ell\colon \Omega\to [|\calP_{\ell}|]$ be the function mapping each $v\in\Omega$ to its corresponding part in $\calP_{\ell}$ and let $\bZ_\ell=\beta(\calP_{\ell}:\bX)$.  We now show that the sequence $(\bZ_1,\bZ_2\ldots)$ is a martingale. That is, $\Ex_{\bX\sim\Omega}\left[\bZ_{\ell+1}\mid \bZ_1,\ldots,\bZ_\ell\right]=\bZ_\ell$, for every $\ell\in\mathbb{N}$. Note that by the fact that $\calP_{\ell+1}$ refines $\calP_{\ell}$, $\psi_\ell(\bX)$ determines $\psi_i(\bX)$ for every $i<\ell$. By  definition, the value $\beta(\calP_{\ell}:\bX)$  is completely determined by $\psi_\ell(\bX)$, and so it suffices to prove that $\bZ_\ell=\Ex_{\bX\sim\Omega}\left[\bZ_{\ell+1}\mid\psi_\ell(\bX) \right]$. By the fact that for every $j\in[|\calP_{\ell}|]$ Equation~(\ref{Eq:MartingaleCondition}) holds (and in particular holds for $\psi_\ell(\bX)$), we can conclude that the sequence $(\bZ_1,\bZ_2,\ldots)$ is a martingale.

Since $\bZ_\ell$ is bounded, we can invoke the martingale convergence theorem (Theorem~\ref{thm:MartingaleConvergence}) and conclude that $\lim\limits_{\ell\to\infty}\bZ_\ell$ exists with probability $1$. That is, $\beta(v)=\lim\limits_{\ell\to\infty}\beta(\calP_{\ell}:v)$ exists for almost all $v\in\Omega$.\end{proof}

\begin{definition}
Fix some $d\in \mathbb{N}$ and define $\calI_d=\left\{I^{(d)}_1,\ldots,I^{(d)}_{2^d}\right\}$ so that for every $t\in\left[2^d\right]$, $I^{(d)}_t= \Big[\frac{t-1}{2^d},\frac{t}{2^d}\Big)\times [0,1]$. We refer to this partition as the \emph{strip partition of order $d$}. \end{definition}

The next lemma states that for any orderon $W$ we can get a sequence of partitions $\{\calP_{\ell}\}_\ell$, with several properties that will be useful later on.

\begin{lemma}\label{lem:PartitionSeq}
	For any orderon $W\in \calW$ and $\ell\in \mathbb{N}$, there is a sequence of partitions $\{\calP_{\ell}\}_\ell$ of $[0,1]^2$ with the following properties.
	\begin{enumerate}
		\item $\calP_{\ell}$ has $g(\ell)$ many sets (for some monotone increasing $g\colon\mathbb{N}\to\mathbb{N}$).
		\item\label{Item:divisibility} For every $\ell$, $\Gamma_{\ell}\eqdef\frac{g(\ell)}{g(\ell-1)} \in \mathbb{N}$.
		\item \label{Item:divisibility2}For every $\ell'\ge\ell$, the partition $\calP_{\ell'}$ refines both $\calP_{\ell}$ and the strip partition $\calI_{\ell'}$. In particular, for every $j\in[g(\ell-1)]$,
		$$P^{(\ell-1)}_j=\bigcup_{j'=(j-1)\cdot\Gamma_{\ell}+1}^{j\cdot\Gamma_{\ell}} P^{(\ell)}_{j'}\;.$$

		\item $W_{\ell}=(W)_{\calP_{\ell}}$ satisfies $\|W-W_{\ell}\|_\square\le \frac{4}{g(\ell-1)2^\ell}$.
	\end{enumerate}
\end{lemma}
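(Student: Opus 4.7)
The plan is to construct the sequence $\{\calP_\ell\}_\ell$ inductively, at each step invoking Lemma~\ref{lem:reg-refinementV2} to obtain a partition that simultaneously refines the previous one and the strip partition of the current order. For the base case I would take $\calP_0 \eqdef \{[0,1]^2\}$ with $g(0) = 1$; all four properties then hold vacuously for $\ell = 0$ (property~4 is not even imposed at $\ell = 0$ since it references $g(\ell-1)$).

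For the inductive step with $\ell \geq 1$, given $\calP_{\ell-1}$ with $g(\ell-1)$ parts, I would apply Lemma~\ref{lem:reg-refinementV2} with $\calP = \calP_{\ell-1}$, $k = g(\ell-1)$, strip order $d = \ell+1$, and regularity parameter $\eps_\ell \eqdef 9/(8\, g(\ell-1)\, 2^\ell)$. The choice of $d = \ell + 1$ rather than $d = \ell$ reserves a factor-of-two slack in the error budget, which I use below to absorb a subsequent refinement. The lemma then produces a partition $\calQ$ with $q$ parts, refining both $\calP_{\ell-1}$ and $\calI_{\ell+1}$ (and therefore also $\calI_\ell$), such that
\[
\|W - W_\calQ\|_\square \;\leq\; \tfrac{8\eps_\ell}{9} + \tfrac{2}{g(\ell-1)\cdot 2^{\ell+1}} \;\leq\; \tfrac{2}{g(\ell-1)\cdot 2^\ell}.
\]

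The main obstacle is the uniformity clause of property~3: each $P^{(\ell-1)}_j$ must split into \emph{exactly} $\Gamma_\ell = g(\ell)/g(\ell-1)$ parts of $\calP_\ell$, so that the prescribed consecutive enumeration is possible. Lemma~\ref{lem:reg-refinementV2} only guarantees $q$ parts in total, leaving the individual counts $a_j$ of $\calQ$-parts inside $P^{(\ell-1)}_j$ uneven. I would resolve this by letting $\Gamma_\ell \eqdef \max_j a_j$ and further splitting $\calQ$-parts within each block $P^{(\ell-1)}_j$ until every block contains exactly $\Gamma_\ell$ parts of $\calP_\ell$; setting $g(\ell) \eqdef \Gamma_\ell \cdot g(\ell-1)$ gives property~2 automatically, and I then enumerate the parts consecutively within each $P^{(\ell-1)}_j$ to match property~3. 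Since $\calP_\ell$ refines $\calQ$, the step function $W_\calQ$ is constant on each $\calP_\ell$-step, so Claim~\ref{clm-stepping-operator} applied with $U = W_\calQ$ yields
\[
\|W - W_{\calP_\ell}\|_\square \;\leq\; 2\,\|W - W_\calQ\|_\square \;\leq\; \tfrac{4}{g(\ell-1)\, 2^\ell},
\]
verifying property~4; this is precisely where the factor-of-two slack from taking $d = \ell+1$ is cashed in. Property~3 at levels $\ell' > \ell$ then follows by transitivity, since each step of the induction refines both the previous partition and a strip partition finer than the one at its own level.
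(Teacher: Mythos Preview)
Your proposal is correct and follows the same inductive scheme as the paper (iterated application of Lemma~\ref{lem:reg-refinementV2} with the previous partition and the current strip partition). The only difference is in how the uniform block-count clause of property~3 is enforced: the paper simply pads each $P^{(\ell-1)}_j$ with zero-measure sets, so that $W_{\calP_\ell}$ coincides with the output of Lemma~\ref{lem:reg-refinementV2} and no further cut-norm argument is needed, whereas you genuinely split parts and then recover the bound via Claim~\ref{clm-stepping-operator}, which is why you need the extra factor-of-two from taking $d=\ell+1$ rather than $d=\ell$. Both work; the padding trick is marginally more economical.

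One small caveat: your choice $\Gamma_\ell = \max_j a_j$ can depend on $W$ through the partition $\calQ$, but in the compactness proof the function $g$ must be the same for every orderon in the sequence $\{W_n\}$. You should therefore fix $\Gamma_\ell$ in advance (e.g.\ take $\Gamma_\ell = q$, the total size chosen in Lemma~\ref{lem:reg-refinementV2}), which depends only on $\ell$ and $g(\ell-1)$ and not on $W$.
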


\begin{proof}
We invoke Lemma~\ref{lem:reg-refinementV2} with the trivial partition  $\{[0,1]^2\}$ and the strip partition $\calI_1$, to get a partition $\calP_{n,1}$ with  $g(1)$ many sets such that $\calP_{n,1}$ refines $\calI_1$ and $\|W_n-W_{n,1}\|_\square\le 1$.
For $\ell>1$, we invoke Lemma~\ref{lem:reg-refinementV2} with $\calI_\ell$ and $\calP_{n,\ell-1}$ to get a partition $\calP_{n,\ell}$ of size $g(\ell)=(g(\ell-1)\cdot 2^\ell)^2\cdot 2^{O(g(\ell-1)^2)}$ which refines both $\calI_\ell$ and $\calP_{n,\ell-1}$ such that $\|W_n-W_{n,\ell}\|_\square\le \frac{4}{g(\ell-1)2^\ell}$. In order to take care of divisibility, we add empty (zero measure) sets in order to satisfy items (\ref{Item:divisibility}) and (\ref{Item:divisibility2}). \end{proof}

\medskip

Consider a sequence of orderons $\{W_n\}_{n\in\mathbb{N}}$. For every $n\in \mathbb{N}$, we use Lemma~\ref{lem:PartitionSeq} to construct a sequence of functions $\{W_{n,\ell}\}_\ell$ such that $\|W_n-W_{n,\ell}\|_\square$ is small. For each $\ell$, we would like to approximate the shape of the limit partition resulting from taking $n\to\infty$. Inside each strip $I^{(\ell)}_t$, we consider the relative measure of the  intersection of each set contained in $I^{(\ell)}_t$, with a finer strip partition $\calI_{\ell'}$.

\begin{definition}[shape function]\label{def:Convergence-alpha-nl}For fixed $n\in \mathbb{N}$, let $\{\calP_{n,\ell}\}_\ell$ be partitions of $[0,1]^2$ with the properties listed in Lemma~\ref{lem:PartitionSeq}. For every $\ell'>\ell$ and $I^{(\ell')}_{t'}\in \calI_{\ell'}$, we define $\alpha^{(n,\ell)}_{j}(\calI_{\ell'}:t')\eqdef2^{\ell'}\cdot\lambda\left(P^{(n,\ell)}_{j}\cap I^{(\ell')}_{t'}\right)$ to be the relative volume of the set $P^{(n,\ell)}_{j}$ in $I^{(\ell')}_{t'}$.

	For any $\ell'\ge \ell$ and $I^{(\ell')}_{t'}\in \calI_{\ell'}$, by the compactness of $[0,1]$, we can select a subsequence of $\{W_n\}_{n\in\mathbb{N}}$ such that $\alpha^{(n,\ell)}_{j}(\calI_{\ell'}:t')$ converges for all   $j\in[g(\ell)]$ as $n\to\infty$. Let \begin{align*}
	\alpha^{(\ell)}_{j}(\calI_{\ell'}:t')&\eqdef\lim\limits_{n\to\infty}\alpha^{(n,\ell)}_{j}(\calI_{\ell'}:t')\;.
	\end{align*}

\end{definition}

Next we define the limit density function.
\begin{definition}[density function]\label{def:Convergence-delta-nl} For fixed $n\in \mathbb{N}$, let $\{\calP_{n,\ell}\}_\ell$ be partitions of $[0,1]^2$ with the properties listed in Lemma~\ref{lem:PartitionSeq}. We let $\delta^{(n,\ell)}\left(\calP_{n,\ell}\times \calP_{n,\ell}: i,j\right)\eqdef W_{n,\ell}((x,a),(y,b))$ for $(x,a)\in P^{(n,\ell)}_i$ and $(y,b)\in P^{(n,\ell)}_j$.

	By the compactness of $[0,1]$, we can select a subsequence of $\{W_n\}_{n\in\mathbb{N}}$  such that $\delta^{(n,\ell)}(\calP_{n,\ell}\times\calP_{n,\ell}:i,j)$ converge for all   $i,j\in[g(\ell)]$ as $n\to\infty$. Let \begin{align*}
	\delta^{(\ell)}(i,j)&\eqdef\lim\limits_{n\to\infty}\delta^{(n,\ell)}(\calP_{n,\ell}\times\calP_{n,\ell}:i,j)\;.
	\end{align*}

\end{definition}

The following lemma states that by taking increasingly refined strip partitions $\calI_{\ell'}$, we obtain a limit shape function for each set contained in any strip of $\calI_{\ell}$.

\begin{lemma}\label{lem:martingaleAlphas}For fixed $\ell$ and $j\in [g(\ell)]$, there is a measurable function $\alpha^{(\ell)}_j\colon [0,1]\to [0,1]$  such that $ \alpha^{(\ell)}_j(x)=\lim\limits_{\ell'\to \infty}\alpha^{(\ell)}_j\left(\calI_{\ell'}:x\right)$  for almost all $x\in[0,1]$.
\end{lemma}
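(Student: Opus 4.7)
My plan is to apply Lemma~\ref{lem:PartitionConvergence} with $\Omega = [0,1]$ (equipped with the 1D Lebesgue measure $\lambda$), the nested sequence of dyadic interval partitions $\calJ_{\ell'} = \{J^{(\ell')}_{t'}\}_{t'=1}^{2^{\ell'}}$ where $J^{(\ell')}_{t'} = \bigl[\tfrac{t'-1}{2^{\ell'}},\tfrac{t'}{2^{\ell'}}\bigr)$, and the step function $\beta(\calJ_{\ell'} : t') \eqdef \alpha^{(\ell)}_{j}(\calI_{\ell'} : t')$. This identification is legitimate because the strip $I^{(\ell')}_{t'} = J^{(\ell')}_{t'} \times [0,1]$ is determined by its first coordinate, so $\alpha^{(\ell)}_j(\calI_{\ell'}:\cdot)$ descends to a function on $[0,1]$ that is constant on every $J^{(\ell')}_{t'}$.

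The first hypothesis of Lemma~\ref{lem:PartitionConvergence} is that $\calJ_{\ell'+1}$ refines $\calJ_{\ell'}$: by construction each $J^{(\ell')}_{t'}$ is the disjoint union of $J^{(\ell'+1)}_{2t'-1}$ and $J^{(\ell'+1)}_{2t'}$. The core of the argument is verifying the martingale condition~\eqref{Eq:MartingaleCondition}. Using Definition~\ref{def:Convergence-alpha-nl},
\[
\lambda\bigl(J^{(\ell')}_{t'}\bigr)\, \beta(\calJ_{\ell'}: t')
\;=\; 2^{-\ell'} \cdot \lim_{n\to\infty}\, 2^{\ell'}\, \lambda\bigl(P^{(n,\ell)}_j \cap I^{(\ell')}_{t'}\bigr)
\;=\; \lim_{n\to\infty} \lambda\bigl(P^{(n,\ell)}_j \cap I^{(\ell')}_{t'}\bigr),
\]
while, using that $I^{(\ell')}_{t'} = I^{(\ell'+1)}_{2t'-1} \sqcup I^{(\ell'+1)}_{2t'}$ and that $\lambda(J^{(\ell')}_{t'}\cap J^{(\ell'+1)}_{t''}) = 2^{-(\ell'+1)}$ iff $t''\in\{2t'-1,2t'\}$,
\[
\sum_{t''} \lambda\bigl(J^{(\ell')}_{t'}\cap J^{(\ell'+1)}_{t''}\bigr)\, \beta(\calJ_{\ell'+1}: t'')
\;=\; \lim_{n\to\infty}\!\! \sum_{t''\in\{2t'-1,2t'\}} \!\!\lambda\bigl(P^{(n,\ell)}_j \cap I^{(\ell'+1)}_{t''}\bigr)
\;=\; \lim_{n\to\infty} \lambda\bigl(P^{(n,\ell)}_j \cap I^{(\ell')}_{t'}\bigr),
\]
so both sides agree. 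Pulling the (finite) sum inside the limit is harmless; this is really the only computation.

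Lemma~\ref{lem:PartitionConvergence} then yields a measurable function $\alpha^{(\ell)}_j : [0,1] \to [0,1]$ with $\alpha^{(\ell)}_j(x) = \lim_{\ell'\to\infty} \beta(\calJ_{\ell'}:x) = \lim_{\ell'\to\infty} \alpha^{(\ell)}_j(\calI_{\ell'}:x)$ for almost every $x \in [0,1]$, which is exactly the claim. The only mild subtlety I anticipate is bookkeeping around the subsequence extraction implicit in Definition~\ref{def:Convergence-alpha-nl}: each $\ell'$ introduces only finitely many new coordinates $\alpha^{(n,\ell)}_j(\calI_{\ell'}: t')$ to be made convergent, so a standard diagonal extraction produces a single subsequence of $\{W_n\}$ along which every $\alpha^{(\ell)}_j(\calI_{\ell'}:t')$ is well-defined simultaneously; after this there is no real obstacle, and the martingale check above does the work.
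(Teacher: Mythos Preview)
Your proof is correct and follows essentially the same approach as the paper: verify the martingale condition~\eqref{Eq:MartingaleCondition} for the functions $\alpha^{(\ell)}_j(\calI_{\ell'}:\cdot)$ by using additivity of the measure over the refinement $I^{(\ell')}_{t'} = I^{(\ell'+1)}_{2t'-1} \sqcup I^{(\ell'+1)}_{2t'}$ and passing to the limit in $n$, then invoke Lemma~\ref{lem:PartitionConvergence}. The only cosmetic difference is that you explicitly project to $\Omega=[0,1]$ with the dyadic interval partitions $\calJ_{\ell'}$, whereas the paper phrases the application in terms of the strip partitions $\calI_{\ell'}$ of $[0,1]^2$; since strips are determined by the first coordinate, these are equivalent.
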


\begin{proof} Fix $n,\ell$ and $\ell'>\ell$. For every $j\in[g(\ell)]$, by the definition of $\alpha^{(n,\ell)}_j(\calI_{\ell'}:t')$ and the strip partition $\calI_{\ell'}$
	\[\lambda\left(I^{(\ell')}_{t'}\right)\cdot\alpha^{(n,\ell)}_j(\calI_{\ell'}:t') = \lambda\left(P^{(n,\ell)}_j\cap I^{(\ell')}_{t'}\right)\qquad\forall t'\in\left[2^\ell\right] .\]
On the other hand, since $\calI_{\ell'+1}$ refines $\calI_{\ell'}$,
\begin{align*}
\lambda\left(P^{(n,\ell)}_j\cap I^{(\ell')}_{t'}\right)&=\lambda\left(P^{(n,\ell)}_j\cap I^{(\ell'+1)}_{2t'-1}\right)+\lambda\left(P^{(n,\ell)}_j\cap I^{(\ell'+1)}_{2t'}\right)\\
&= \lambda\left(I^{(\ell'+1)}_{2t'-1}\right)\cdot\alpha^{(n,\ell)}_j(\calI_{\ell'+1}:2t'-1)+\lambda\left(I^{(\ell'+1)}_{2t'}\right)\cdot\alpha^{(n,\ell)}_j(\calI_{\ell'+1}:2t')\;.
\end{align*}
Therefore, when $n\to\infty$ we get that,
\[ \lambda\left(I^{(\ell')}_{t'}\right)\cdot\alpha^{(\ell)}_j(\calI_{\ell'}:t')=\lambda\left(I^{(\ell'+1)}_{2t'-1}\right)\cdot\alpha^{(\ell)}_j(\calI_{\ell'+1}:2t'-1)+\lambda\left(I^{(\ell'+1)}_{2t'}\right)\cdot\alpha^{(\ell)}_j(\calI_{\ell'+1}:2t')\;, \]
which is exactly the condition in Equation~(\ref{Eq:MartingaleCondition}).
By applying Lemma~\ref{lem:PartitionConvergence} with the sequence of strip partitions $\{\calI_{\ell'}\}_{\ell'}$ on $\alpha^{(\ell)}_j$ the lemma follows.
\end{proof}

The next lemma asserts that the limit shape functions behave consistently.

\begin{lemma}\label{lem:RefinementLemma}
	For every $\ell$ and $j\in [g(\ell-1)]$,
	\[ \alpha^{(\ell-1)}_j(x)=\sum_{j'=(j-1)\cdot\Gamma_{\ell}+1}^{j\cdot\Gamma_{\ell}}\alpha^{(\ell)}_{j'}(x)\;,
	\] for almost all $x\in[0,1]$.
\end{lemma}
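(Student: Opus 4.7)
The plan is to deduce the claim at the finite level (for fixed $n$) from the fact that the partition $\calP_{n,\ell}$ refines $\calP_{n,\ell-1}$, then pass to the limit first in $n$ and then in $\ell'$, leveraging Lemma~\ref{lem:martingaleAlphas} to identify the pointwise limit of the strip-shape functions.

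First, I would fix $n$, $\ell$, $\ell'\ge \ell$, and $t'\in[2^{\ell'}]$. By property~(\ref{Item:divisibility2}) of Lemma~\ref{lem:PartitionSeq}, we have
\[
P^{(n,\ell-1)}_j \cap I^{(\ell')}_{t'} \;=\; \bigcup_{j'=(j-1)\Gamma_\ell+1}^{j\Gamma_\ell}\left(P^{(n,\ell)}_{j'}\cap I^{(\ell')}_{t'}\right)
\]
as a disjoint union (up to a measure-zero set). Multiplying both sides by $2^{\ell'}$ and applying Definition~\ref{def:Convergence-alpha-nl}, this becomes
\[
\alpha^{(n,\ell-1)}_j(\calI_{\ell'}:t') \;=\; \sum_{j'=(j-1)\Gamma_\ell+1}^{j\Gamma_\ell}\alpha^{(n,\ell)}_{j'}(\calI_{\ell'}:t').
\]
Since the sum is finite and each term converges as $n\to\infty$ (along the chosen subsequence), taking $n\to\infty$ yields
\[
\alpha^{(\ell-1)}_j(\calI_{\ell'}:t') \;=\; \sum_{j'=(j-1)\Gamma_\ell+1}^{j\Gamma_\ell}\alpha^{(\ell)}_{j'}(\calI_{\ell'}:t'),
\]
and hence (interpreting both sides as step functions of $x\in[0,1]$)
\[
\alpha^{(\ell-1)}_j(\calI_{\ell'}:x) \;=\; \sum_{j'=(j-1)\Gamma_\ell+1}^{j\Gamma_\ell}\alpha^{(\ell)}_{j'}(\calI_{\ell'}:x)
\]
for every $x\in[0,1]$ and every $\ell'\ge\ell$.

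Finally, I would take $\ell'\to\infty$. By Lemma~\ref{lem:martingaleAlphas}, for each index $j'$ in the finite range $\{(j-1)\Gamma_\ell+1,\ldots,j\Gamma_\ell\}$, and also for the index $j$ at level $\ell-1$, the functions $\alpha^{(\cdot)}_{\cdot}(\calI_{\ell'}:x)$ converge pointwise almost everywhere to their limit counterparts $\alpha^{(\cdot)}_{\cdot}(x)$. Since the intersection of finitely many full-measure sets has full measure, outside a null set we may simultaneously pass to the limit on both sides of the identity, obtaining
\[
\alpha^{(\ell-1)}_j(x) \;=\; \sum_{j'=(j-1)\Gamma_\ell+1}^{j\Gamma_\ell}\alpha^{(\ell)}_{j'}(x)
\]
for almost every $x\in[0,1]$, as required.

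There is essentially no obstacle: the proof is a bookkeeping exercise combining refinement of partitions with two sequential limits. The only point that requires mild attention is ensuring that the sum-and-limit operations commute, which is handled by the finiteness of the sum in $j'$ and by intersecting the finitely many almost-everywhere convergence sets supplied by Lemma~\ref{lem:martingaleAlphas}.
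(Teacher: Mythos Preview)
Your proof is correct and follows essentially the same route as the paper: establish the identity at the finite level from the refinement property of $\calP_{n,\ell}$ over $\calP_{n,\ell-1}$, pass to the limit in $n$, and then apply Lemma~\ref{lem:martingaleAlphas} to take $\ell'\to\infty$. If anything, you are slightly more explicit than the paper about why the two limits commute with the finite sum.
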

\begin{proof} Fix some $n,\ell$ and $\ell'>\ell$. By the additivity of the Lebesgue measure,
	\[\alpha^{(n,\ell-1)}_j(\calI_{\ell'}:x)=\sum_{j'=(j-1)\cdot\Gamma_{\ell}+1}^{j\cdot\Gamma_{\ell}}\alpha^{(n,\ell)}_{j'}(\calI_{\ell'}:x)\qquad\forall x\in[0,1]\;.
	\]
	By the fact that for every $j\in[g(\ell-1)]$ and $x\in[0,1]$ the sequence $\left\{\alpha^{(n,\ell-1)}_j(\calI_{\ell'}:x)\right\}_n$ converges to $\alpha^{(\ell-1)}_j(\calI_{\ell'}:x)$ as $n\to \infty$, we get that
	\[\alpha^{(\ell-1)}_j(\calI_{\ell'}:x)=\sum_{j'=(j-1)\cdot\Gamma_{\ell}+1}^{j\cdot\Gamma_{\ell}}\alpha^{(\ell)}_{j'}(\calI_{\ell'}:x)\qquad\forall x\in[0,1]\;.
	\]
	By applying Lemma~\ref{lem:martingaleAlphas} on each $j'\in[g(\ell)]$, where $\ell'\to \infty$, we get that \[\alpha^{(\ell-1)}_j(x)=\sum_{j'=(j-1)\cdot\Gamma_{\ell}+1}^{j\cdot\Gamma_{\ell}}\alpha^{(\ell)}_{j'}(x)\;,\] for almost all $x\in[0,1]$.
\end{proof}

Using the sequence of $\left\{\alpha^{(\ell)}_j\right\}_{j}$ we define a limit partition $\calA_\ell=\left\{A^{(\ell)}_1,\ldots,A^{(\ell)}_{g(\ell)}\right\}$ of $[0,1]^2$ as follows.

\begin{definition}[limit partition]\label{def:A-partition}
	For every $\ell\in \mathbb{N}$, let $\calA_\ell=\left\{A^{(\ell)}_1,\ldots,A^{(\ell)}_{g(\ell)}\right\}$ be a partition of $[0,1]^2$ such that,
	\[A^{(\ell)}_j=\left\{(x,a)\colon \sum_{i<j}\alpha^{(\ell)}_i(x)\le a <\sum_{i\le j}\alpha^{(\ell)}_i(x)\right\}\qquad \forall j\in[g(\ell)]\;.\]
\end{definition}

\begin{lemma}\label{lem:LimitingPartitionProp}
	For any $\ell$, the partition $\calA_\ell$ has the following properties
	\begin{enumerate}
		\item $\calA_\ell$ refines the strip partition $\calI_\ell$.
		\item The partition $\calA_\ell$ refines $\calA_{\ell-1}$.
		\item For every $j\in[g(\ell)]$, $\lambda\left(A^{(\ell)}_j\right)=\lim\limits_{n\to\infty}\lambda\left(P^{(n,\ell)}_j\right)$.
	\end{enumerate}
	\end{lemma}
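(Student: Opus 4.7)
The plan is to prove the three items in the order (3), (1), (2), since item (3) is essentially an exchange-of-limits calculation that does not depend on the other two, item (1) relies on the supports of the $\alpha^{(\ell)}_j$, and item (2) follows directly from Lemma~\ref{lem:RefinementLemma} combined with the definition of $\calA_\ell$.

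For item (3), the key identity is that for any $\ell' \ge \ell$,
\[
\lambda\left(P^{(n,\ell)}_j\right) \;=\; \sum_{t'\in[2^{\ell'}]}\lambda\left(I^{(\ell')}_{t'}\right)\cdot\alpha^{(n,\ell)}_j\!\left(\calI_{\ell'}:t'\right) \;=\; \int_0^1 \alpha^{(n,\ell)}_j\!\left(\calI_{\ell'}:x\right) dx,
\]
using $\lambda(I^{(\ell')}_{t'})=2^{-\ell'}$ and equation~\eqref{Eq:SumToIntegral}. Since each $\alpha^{(n,\ell)}_j(\calI_{\ell'}:\cdot)$ takes values in $[0,1]$ and converges a.e. to $\alpha^{(\ell)}_j(\calI_{\ell'}:\cdot)$ as $n\to\infty$ (Definition~\ref{def:Convergence-alpha-nl}), dominated convergence gives $\lim_{n\to\infty}\lambda(P^{(n,\ell)}_j)=\int_0^1\alpha^{(\ell)}_j(\calI_{\ell'}:x)\,dx$ for every $\ell'\ge\ell$. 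Since the left-hand side does not depend on $\ell'$, I let $\ell'\to\infty$ on the right; Lemma~\ref{lem:martingaleAlphas} gives pointwise a.e.\ convergence to $\alpha^{(\ell)}_j(x)$, still bounded by $1$, so a second application of dominated convergence yields $\lim_{n\to\infty}\lambda(P^{(n,\ell)}_j)=\int_0^1\alpha^{(\ell)}_j(x)\,dx$. Finally, Fubini applied to Definition~\ref{def:A-partition} gives $\lambda(A^{(\ell)}_j)=\int_0^1\alpha^{(\ell)}_j(x)\,dx$, establishing item~(3).

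For item (1), I use that $\calP_{n,\ell}$ refines $\calI_\ell$ (Lemma~\ref{lem:PartitionSeq}(\ref{Item:divisibility2})) together with the consecutive indexing convention implicit in Lemma~\ref{lem:PartitionSeq}: each $P^{(n,\ell)}_j$ sits in one strip $I^{(\ell)}_{t(j)}$ whose index $t(j)$ is independent of $n$. Consequently, for any $\ell'\ge\ell$ and any $t'$ with $I^{(\ell')}_{t'}\not\subseteq I^{(\ell)}_{t(j)}$, we have $\alpha^{(n,\ell)}_j(\calI_{\ell'}:t')=0$; passing to the limit in $n$ and then in $\ell'$ via Lemma~\ref{lem:martingaleAlphas} gives $\alpha^{(\ell)}_j(x)=0$ for almost every $x\notin\bigl[(t(j){-}1)/2^\ell,\,t(j)/2^\ell\bigr)$. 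Plugging this into the definition of $A^{(\ell)}_j$ shows that (up to a null set) $A^{(\ell)}_j \subseteq I^{(\ell)}_{t(j)}$, so $\calA_\ell$ refines $\calI_\ell$.

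For item (2), I apply Lemma~\ref{lem:RefinementLemma}: for almost every $x\in[0,1]$ and every $j\in[g(\ell-1)]$, the partial sums telescope, giving
\[
\sum_{i<j}\alpha^{(\ell-1)}_i(x) \;=\; \sum_{i'\le (j-1)\Gamma_\ell}\alpha^{(\ell)}_{i'}(x), \qquad \sum_{i\le j}\alpha^{(\ell-1)}_i(x) \;=\; \sum_{i'\le j\Gamma_\ell}\alpha^{(\ell)}_{i'}(x).
\]
Comparing the defining inequalities of $A^{(\ell-1)}_j$ and $A^{(\ell)}_{j'}$ for $j'\in\{(j-1)\Gamma_\ell+1,\ldots,j\Gamma_\ell\}$, the $a$-interval for $A^{(\ell-1)}_j$ decomposes as a disjoint union of the intervals for these $A^{(\ell)}_{j'}$'s, yielding $A^{(\ell-1)}_j=\bigsqcup_{j'=(j-1)\Gamma_\ell+1}^{j\Gamma_\ell}A^{(\ell)}_{j'}$ up to a null set.

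The only subtlety I anticipate is bookkeeping the null sets across the two limits $n\to\infty$ and $\ell'\to\infty$ in item~(1), and justifying the indexing convention $t(j)$ used there; both are handled by the fact that Lemma~\ref{lem:PartitionSeq} may be taken to produce partitions whose parts are labeled consistently by strip (possibly after padding with empty sets as in its proof).
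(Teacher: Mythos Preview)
Your proof is correct and follows essentially the same approach as the paper's own argument. The paper proves the items in order (1), (2), (3) but uses the same ingredients: item~(1) via the observation that each $\alpha^{(\ell)}_j$ is supported in a single strip, item~(2) via Lemma~\ref{lem:RefinementLemma} and the telescoping of partial sums in Definition~\ref{def:A-partition}, and item~(3) via the identity $\lambda(P^{(n,\ell)}_j)=\int\alpha^{(n,\ell)}_j(\calI_{\ell'}:x)\,dx$ followed by passing $n\to\infty$ and then $\ell'\to\infty$ using Lemma~\ref{lem:martingaleAlphas}; you are simply more explicit about invoking dominated convergence and the indexing convention, which the paper leaves implicit.
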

\begin{proof}
	The first item follows by the fact that each $\alpha^{(\ell)}_j$ is non-zero inside only one strip.

	By the definition of the sets $A^{(\ell)}_j$ and Lemma~\ref{lem:RefinementLemma} it follows that for each $j\in[g(\ell-1)]$,
	\[   A^{(\ell)}_{j'}\subset  A^{(\ell-1)}_j \qquad \text{for all}\qquad (j-1)\cdot\Gamma_{\ell}+1\le j'\le j\cdot\Gamma_{\ell},\]
	and therefore,
	\[A^{(\ell-1)}_j=\bigcup_{j'=(j-1)\cdot\Gamma_{\ell}+1}^{j\cdot\Gamma_{\ell}}A^{(\ell)}_{j'},  \]

	which shows the second item. To prove the third item of the lemma, note that for every $n,\ell$ and $\ell'>\ell$,
	\begin{align*}
	\lim\limits_{n\to\infty}\lambda\left(P^{(n,\ell)}_j\right)&=\lim\limits_{n\to\infty}\sum_{t'\in\left[2^{\ell'}\right]}2^{-\ell'}\cdot\alpha^{(n,\ell)}_j(\calI_{\ell'}:t')=\sum_{t'\in\left[2^{\ell'}\right]}2^{-\ell'}\cdot\alpha^{(\ell)}_j(\calI_{\ell'}:t')=\int_{x} \alpha^{(\ell)}_j(\calI_{\ell'}:x)dx,	\end{align*}
	where the last equality follows from Equation~(\ref{Eq:SumToIntegral}). Finally, by taking $\ell'\to\infty$ and using Lemma~\ref{lem:martingaleAlphas}, we get
	\[\lim\limits_{n\to\infty}\lambda\left(P^{(n,\ell)}_j\right)=\int_{x} \alpha^{(\ell)}_j(x)dx=\lambda\left(A^{(\ell)}_j\right)\;.\]
\end{proof}

Using the definition of $\delta^{(\ell)}$ and $\calA_\ell$, we define a density function on the limit partition. For $(x,a)\in A^{(\ell)}_i$ and $(y,b)\in A^{(\ell)}_j$, let \[\delta\left(\calA_\ell\times \calA_\ell: (x,a),(y,b)\right)\eqdef\delta^{(\ell)}(i,j)\;.\]

\begin{lemma}\label{lem:deltaMartingale} For each $\ell\in\mathbb{N}$ and $i,j\in[g(\ell-1)]$,
	\begin{align*}
	\sum_{i'=(i-1)\cdot\Gamma_{\ell}+1}^{i\cdot\Gamma_{\ell}}\;\sum_{j'=(j-1)\cdot\Gamma_{\ell}+1}^{j\cdot\Gamma_{\ell}}\lambda\left(A^{(\ell)}_{i'}\right)\cdot\lambda\left(A^{(\ell)}_{j'}\right)&\delta\left(\calA_\ell\times \calA_\ell: i',j'\right)\\
	&=\lambda\left(A^{(\ell-1)}_{i}\right)\cdot\lambda\left(A^{(\ell-1)}_{j}\right)\delta\left(\calA_{\ell-1}\times \calA_{\ell-1}: i,j\right)\;.
	\end{align*}
\end{lemma}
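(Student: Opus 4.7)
The plan is to establish the identity at each finite level $n$ first, and then pass to the limit, exploiting the fact that $\delta^{(n,\ell)}(\cdot,\cdot)$ is by construction an average of $W_n$. More precisely, $W_{n,\ell}=(W_n)_{\calP_{n,\ell}}$ takes the constant value $\delta^{(n,\ell)}(\calP_{n,\ell}\times\calP_{n,\ell}:i,j)$ on $P^{(n,\ell)}_i\times P^{(n,\ell)}_j$, and by the definition of the stepping operator this constant equals $\frac{1}{\lambda(P^{(n,\ell)}_i)\lambda(P^{(n,\ell)}_j)}\int_{P^{(n,\ell)}_i\times P^{(n,\ell)}_j}W_n$. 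In particular,
\[
\lambda(P^{(n,\ell)}_i)\,\lambda(P^{(n,\ell)}_j)\,\delta^{(n,\ell)}(\calP_{n,\ell}\times\calP_{n,\ell}:i,j)=\int_{P^{(n,\ell)}_i\times P^{(n,\ell)}_j}W_n.
\]

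Next I would invoke Lemma~\ref{lem:PartitionSeq}(\ref{Item:divisibility2}), which asserts that $\calP_{n,\ell}$ refines $\calP_{n,\ell-1}$ with the explicit decomposition $P^{(n,\ell-1)}_i=\bigcup_{i'=(i-1)\Gamma_{\ell}+1}^{i\cdot\Gamma_{\ell}} P^{(n,\ell)}_{i'}$, and analogously for $j$. Splitting the integral of $W_n$ over $P^{(n,\ell-1)}_i\times P^{(n,\ell-1)}_j$ across the induced grid of $\Gamma_\ell^{2}$ sub-rectangles and rewriting each piece via the displayed identity above yields the finite-$n$ analogue of the lemma,
\[
\lambda(P^{(n,\ell-1)}_i)\lambda(P^{(n,\ell-1)}_j)\delta^{(n,\ell-1)}(\calP_{n,\ell-1}\!\times\!\calP_{n,\ell-1}:i,j)=\sum_{i',j'}\lambda(P^{(n,\ell)}_{i'})\lambda(P^{(n,\ell)}_{j'})\delta^{(n,\ell)}(\calP_{n,\ell}\!\times\!\calP_{n,\ell}:i',j'),
\]
where the indices $i',j'$ range over the refinement blocks at level $\ell$.

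To finish, I would take $n\to\infty$ along the diagonal subsequence implicit in Definitions~\ref{def:Convergence-alpha-nl} and \ref{def:Convergence-delta-nl}; since $\ell$ is fixed and only finitely many indices appear at levels $\ell-1$ and $\ell$, a single subsequence works simultaneously for all of them. The block densities $\delta^{(n,\cdot)}$ converge to the corresponding $\delta^{(\cdot)}$ by construction, and $\lambda(P^{(n,\ell')}_s)\to\lambda(A^{(\ell')}_s)$ by Lemma~\ref{lem:LimitingPartitionProp}(3); since the right-hand side is a fixed finite sum of bounded products, the limit passes termwise. Unfolding the shorthand $\delta(\calA_{\ell'}\times\calA_{\ell'}:s,t)=\delta^{(\ell')}(s,t)$ yields the stated equation. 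I do not anticipate a real obstacle here: the content is the tower property for conditional averages (consistency of averaging under partition refinement) combined with a routine diagonal extraction, both of which were baked into the setup in advance.
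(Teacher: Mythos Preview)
Your proposal is correct and follows essentially the same route as the paper: establish the identity at each finite $n$ using that $\calP_{n,\ell}$ refines $\calP_{n,\ell-1}$ and that the density values are cell averages of $W_n$, then pass to the limit using the convergence of $\delta^{(n,\cdot)}\to\delta^{(\cdot)}$ and $\lambda(P^{(n,\cdot)}_s)\to\lambda(A^{(\cdot)}_s)$ from Lemma~\ref{lem:LimitingPartitionProp}(3). Your write-up is in fact more explicit than the paper's, which simply asserts the finite-$n$ identity ``by the definition of the partitions $\calP_{n,\ell}$, $\calP_{n,\ell-1}$ and the density functions $\delta^{(n,\ell)}$, $\delta^{(n,\ell-1)}$'' before taking $n\to\infty$.
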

\begin{proof}
	Fix $n,\ell$ and $i,j\in [g(\ell-1)]$. By the definition of the partitions $\calP_{n,\ell}$, $\calP_{n,\ell-1}$ and the density functions $\delta^{(n,\ell)}$, $\delta^{(n,\ell-1)}$
	\begin{align*}
	\sum_{i'=(i-1)\cdot\Gamma_{\ell}+1}^{i\cdot\Gamma_{\ell}}\;\sum_{j'=(j-1)\cdot\Gamma_{\ell}+1}^{j\cdot\Gamma_{\ell}}\lambda\left(P^{(n,\ell)}_{i'}\right)\cdot\lambda\left(P^{(n,\ell)}_{j'}\right)&\delta^{(n,\ell)}\left(\calP_{n,\ell}\times \calP_{n,\ell}: i',j'\right)\\
	&=\lambda\left(P^{(n,\ell-1)}_{i}\right)\cdot\lambda\left(P^{(n,\ell-1)}_{j}\right)\delta\left(\calP_{n,\ell-1}\times \calP_{n,\ell-1}: i,j\right)\;.
	\end{align*}
	By taking the limit as $n\to\infty$ and using the third item of Lemma~\ref{lem:LimitingPartitionProp},
	\begin{align*}
	\sum_{i'=(i-1)\cdot\Gamma_{\ell}+1}^{i\cdot\Gamma_{\ell}}\;\sum_{j'=(j-1)\cdot\Gamma_{\ell}+1}^{j\cdot\Gamma_{\ell}}\lambda\left(A^{(\ell)}_{i'}\right)\cdot\lambda\left(A^{(\ell)}_{j'}\right)&\delta\left(\calA_{\ell}\times \calA_{\ell}: i',j'\right)\\
	&=\lambda\left(A^{(\ell-1)}_{i}\right)\cdot\lambda\left(A^{(\ell-1)}_{j}\right)\delta\left(\calA_{\ell-1}\times \calA_{\ell-1}: i,j\right)\;.\end{align*}
\end{proof}

The next Lemma asserts that the natural density function of the limit partition is measurable. It follows directly from the combination of Lemma~\ref{lem:PartitionConvergence} and Lemma~\ref{lem:deltaMartingale}.
\begin{lemma}\label{lem:Uell Convergence} There exists a measurable function $\delta:([0,1]^2)^2\to[0,1]$ such that $\delta((x,a),(y,a))=\lim\limits_{\ell\to\infty}\delta\left(\calA_\ell\times \calA_\ell: (x,a),(y,b)\right)$ for almost all $(x,a),(y,b) \in ([0,1]^2)^2$.
	\end{lemma}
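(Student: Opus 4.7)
The plan is to apply Lemma~\ref{lem:PartitionConvergence} directly, with the underlying probability space being $\Omega = ([0,1]^2)^2$ equipped with the product Lebesgue measure, and with the refining sequence of partitions being $\{\calA_\ell \times \calA_\ell\}_{\ell \in \mathbb{N}}$. All the pieces are essentially in place — the task is to verify the hypotheses of Lemma~\ref{lem:PartitionConvergence} and then translate the conclusion into the desired statement.

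First I would check that $\{\calA_\ell \times \calA_\ell\}_\ell$ is a refining sequence of partitions of $\Omega$. This follows immediately from the second item of Lemma~\ref{lem:LimitingPartitionProp}, which states that $\calA_\ell$ refines $\calA_{\ell-1}$: if $\calA_\ell$ refines $\calA_{\ell-1}$, then the product partition $\calA_\ell \times \calA_\ell$ refines $\calA_{\ell-1} \times \calA_{\ell-1}$, and moreover each part $A^{(\ell-1)}_i \times A^{(\ell-1)}_j$ decomposes precisely as the disjoint union of the parts $A^{(\ell)}_{i'} \times A^{(\ell)}_{j'}$ where $(i-1)\Gamma_\ell + 1 \le i' \le i\Gamma_\ell$ and $(j-1)\Gamma_\ell + 1 \le j' \le j\Gamma_\ell$.

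Next I would verify the martingale-type condition, Equation~(\ref{Eq:MartingaleCondition}), for the function $\delta(\calA_\ell \times \calA_\ell : \cdot)$. This is precisely the content of Lemma~\ref{lem:deltaMartingale}: the equation
\[
\lambda\!\left(A^{(\ell-1)}_i\right)\lambda\!\left(A^{(\ell-1)}_j\right)\delta(\calA_{\ell-1} \times \calA_{\ell-1}: i,j) = \sum_{i',j'} \lambda\!\left(A^{(\ell)}_{i'}\right)\lambda\!\left(A^{(\ell)}_{j'}\right)\delta(\calA_{\ell} \times \calA_{\ell}: i',j')
\]
where the sum is over the parts $(i',j')$ refining $(i,j)$, is exactly the required condition once we observe that $\lambda \times \lambda$ on the product parts factors as a product. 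Since $\delta$ takes values in $[0,1]$ (inherited from the values of $W_{n,\ell}$ and preserved under limits), all hypotheses of Lemma~\ref{lem:PartitionConvergence} are satisfied.

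Applying Lemma~\ref{lem:PartitionConvergence} then produces a measurable function $\delta: \Omega = ([0,1]^2)^2 \to [0,1]$ such that for almost all $((x,a),(y,b)) \in \Omega$,
\[
\delta((x,a),(y,b)) = \lim_{\ell \to \infty} \delta(\calA_\ell \times \calA_\ell : (x,a),(y,b)),
\]
which is exactly the claim. There is essentially no obstacle here since the work has already been done in the preceding lemmas; the only mild care needed is to confirm that the product construction preserves both the refinement and martingale structures, which is routine.
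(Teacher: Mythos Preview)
Your proof is correct and follows exactly the approach the paper takes: the paper's proof is the single sentence ``It follows directly from the combination of Lemma~\ref{lem:PartitionConvergence} and Lemma~\ref{lem:deltaMartingale},'' and you have simply unpacked that sentence, verifying refinement via Lemma~\ref{lem:LimitingPartitionProp} and the martingale condition via Lemma~\ref{lem:deltaMartingale}.
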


Finally, we are ready to prove Theorem~\ref{thm:Compactness}.

\begin{proofof}{Theorem~\ref{thm:Compactness}}
We start by giving a high-level overview of the proof. 	Let $\{W_n\}_{n\in \mathbb{N}}$ be a sequence of functions in $\cal W$. We show that there exists a subsequence that has a limit in $\widetilde{\calW}$.

For every $n\in \mathbb{N}$, we use Lemma~\ref{lem:PartitionSeq} to construct a sequence of functions $\{W_{n,\ell}\}_\ell$ such that $\|W_n-W_{n,\ell}\|_\square\le \frac{4}{g(\ell-1)2^\ell}$. Then, for every fixed $\ell\in\mathbb{N}$, we find a subsequence of $\{W_{n,\ell}\}$ such that their corresponding $\alpha^{(n,\ell)}_j $ and $\delta^{(n,\ell)}(i,j)$ converge for all $i,j\in[g(\ell)]$ (as $n\to\infty$).
For every $\ell$, we consider the partition $\calA_\ell$ (which by Definition~\ref{def:A-partition}, is determined by $\{\alpha^{(\ell)}_j\}_j$)  and $\delta^{(\ell)}$. Using $\calA_{\ell}$ and $\delta^{(\ell)}$, we can the define the function $U_\ell$, such that $W_{n,\ell}\to U_{\ell}$ almost everywhere as $n\to\infty$.

Given the sequence of functions $\{U_\ell\}_\ell$, we use Lemma~\ref{lem:Uell Convergence} to show that $\{U_\ell\}_\ell$ converges to some $U$ almost everywhere as $\ell\to\infty$ (where $U$ is defined according the limit density function $\delta$). Finally we show that for any fixed $\eps>0$, there is $n_0(\eps)$ such that for any $n>n_0(\eps)$, $\csdist(W_n,U)\le \eps$.

Fix some $\eps>0$ and $\xi(\eps)>0$ which will be determined later. Consider the sequence $\{U_\ell\}_\ell$ which is defined by the partition $\calA_{\ell}$ and the density function $\delta^{(\ell)}$. By Lemma~\ref{lem:Uell Convergence}, the sequence $\{U_\ell\}_\ell$ converges (as $\ell\to\infty$) almost everywhere to $U$, which is defined by the limit density function $\delta$. Therefore, we can find some $\ell>1/\xi$ such that $\|U_\ell-U\|_1\le\xi$.

Fixing this $\ell$, we show that there is $n_0$ such that $\csdist(W_{n,\ell},U_\ell)\le 2^{-\ell}+ 3\xi$ for all $n>n_0$. We shall do it in two steps by defining an interim function $W'_{n,\ell}$ and using the triangle inequality.

Recall that the function $W_{n,\ell}$ is defined according to the partition $\calP_{n,\ell}$ and the density function $\delta^{(n,\ell
)}$. Let $W'_{n,\ell}$ be the function defined according to the partition $\calA_{\ell}$ and the density function $\delta^{(n,\ell)}$. That is, for every $(x,a)\in A^{(\ell)}_i$ and $(y,b)\in A^{(\ell)}_j$, $W'_{n,\ell}((x,a),(y,b))\eqdef \delta^{(n,\ell)}\left(\calP_{n,\ell}\times\calP_{n,\ell}:i,j\right)$.
By the third item of Lemma~\ref{lem:LimitingPartitionProp}, for every $j\in[g(\ell)]$, $\lambda\left(A^{(\ell)}_j\right)=\lim\limits_{n\to\infty}\lambda\left(P^{(n,\ell)}_j\right)$. Then, we can find $n'_0(\ell)$ such that for all $n>n'_0$,
\begin{align}\label{eq:epsodyssy}
{\max\left(\lambda\left(A^{(\ell)}_j\right),\lambda\left(P^{(n,\ell)}_j\right)\right)-\min\left(\lambda\left(A^{(\ell)}_j\right),\lambda\left(P^{(n,\ell)}_j\right)\right)}\le \frac{\xi }{g(\ell)}\qquad\forall j\in[g(\ell)]\;.
\end{align}
We define a measure preserving map $f$ from $W_{n,\ell}$ to $W'_{n,\ell}$ as follows. For every strip  $I^{(\ell)}_t\in\calI_\ell$, we consider all the sets $\{P^{(n,\ell)}_{j_1}\ldots,P^{(n,\ell)}_{j_t}\}$ in $\calP_{n,\ell}$ such that $\bigcup_{j'=j_1}^{j_t}P^{(n,\ell)}_{j'}=I^{(\ell)}_t$. Similarly, consider all the sets $\{A^{(\ell)}_{j_1}\ldots,A^{(\ell)}_{j_t}\}$ in $\calA_{\ell}$ such that $\bigcup_{j'=j_1}^{j_t}A^{(\ell)}_{j'}=I^{(\ell)}_t$. For every $j'\in\{j_1,\ldots,j_t\}$, we map an arbitrary subset $S^{(n,\ell)}_{j'}\subseteq P^{(n,\ell)}_{j'}$ of measure $\min\left(\lambda\left(A^{(\ell)}_{j'}\right),\lambda\left(P^{(n,\ell)}_{j'}\right)\right)$ to an arbitrary subset (with the same measure) of $A^{(\ell)}_{j'}$. Next, we map $I^{(\ell)}_t\setminus\bigcup_{j'=j_1}^{j_t}S^{(n,\ell)}_{j'}$ to $I^{(\ell)}_t\setminus \bigcup_{j'=j_1}^{j_t}f(S^{(n,\ell)}_{j'})$. Note that by  (\ref{eq:epsodyssy}) and  the fact that $W_{n,\ell}$ and $W'_{n,\ell}$ have the same density function $\delta^{(n,\ell)}$, the functions $W_{n,\ell}$ and $W'_{n,\ell}$ disagree on a set of measure at most $2\xi$. Note that for every $I^{(\ell)}_t\in \calI_\ell$, the function $f$ maps sets from $\calP_{n,\ell}$ that are contained in $I^{(\ell)}_t$ to sets in $\calA_{\ell}$ that are contained in $I^{(\ell)}_t$, and thus, $\shift{f}\le 2^{-\ell}$. Therefore, for $n>n'_0$, we get that $\csdist(W_{n,\ell},W'_{n,\ell})\le 2^{-\ell}+2\xi$, and the first step is complete.

In the second step we bound $\csdist(W'_{n,\ell},U_\ell)$. The two functions $W'_{n,\ell}$ and $U_\ell$ are defined on the same partition $\calA_{\ell}$, however, their values are determined by the density functions $\delta^{(n,\ell)}$ and $\delta^{(\ell)}$ respectively. By the fact that $\delta^{(n,\ell)}$ converges to $\delta^{(\ell)}$ (as $n\to\infty$), we can find $n''_0(\ell)$ such that for all $n>n''_0$,
\begin{align*}
\left|\delta^{(n,\ell)}(i,j)-\delta^{(\ell)}(i,j)\right|\le \frac{\xi}{g(\ell)^2} \qquad \forall i,j\in[g(\ell)]\;.
\end{align*}
Thus, for every $n>n''_0$, it holds that $\csdist(W'_{n,\ell},U_\ell)\le\|W'_{n,\ell}-U_\ell\|_1 \le\xi$. By choosing $n_0=\max(n'_0,n''_0)$ we get that \[ \csdist(W_{n,\ell},U_\ell)\le \csdist(W_{n,\ell},W'_{n,\ell})+\csdist(W'_{n,\ell},U_\ell)\le 2^{-\ell}+3\xi\;. \]

By putting everything together we get that for every $n>n_0$
\begin{align*}
\csdist(W_n,U)&\le\csdist(W_n,W_{n,\ell})+\csdist(W_{n,\ell},U_\ell)+\csdist(U_\ell,U)\\
						&\le \|W_n-W_{n,\ell}\|_\square+ \csdist(W_{n,\ell},U_\ell) +\|U_\ell-U\|_1\\
						&\le O\left(\frac{1}{g(\ell-1)2^\ell}\right)+2^{-\ell}+3\xi+\xi.
\end{align*}
By our choice of $\ell>1/\xi$ we get that \[\csdist(W_n,U)\le6\xi\;. \]
By choosing $\xi=\eps/6$ the theorem follows.
\end{proofof}
\section{Sampling theorem}\label{sec:sampling}
In this section we prove Theorem~\ref{thm:Sampling}.
We start by defining two models of random graphs which are constructed using orderons.
\begin{definition}[ordered $W$-random graphs]\label{def:WRandomGraph}
	Given a function $W\in\calW$ and an integer $n>0$, we generate an edge-weighted ordered random graph $\bH(n,W)$ and an ordered random graph $\bG(n,W)$, both on nodes $[n]$, as follows.
	%	 \emph{ordered $W$-random graph} $\bG(n,W)$ on nodes $[n]$ as follows.
	We generate $\bZ_1,\ldots,\bZ_n$ i.i.d random variables, distributed uniformly in $[0,1]$, and let $\bX_1\le\cdots\le\bX_n$ be the result of sorting $\bZ_1,\ldots,\bZ_n$. In addition, we generate $\bY_1\ldots,\bY_n$  i.i.d random variables, distributed uniformly in $[0,1]$.
	Then, for all $i,j\in[n]$, we set the edge weight of $(i,j)$ in $\bH(n,W)$ to be $W((\bX_i,\bY_i),(\bX_j,\bY_j))$.
	Also, for all $i,j\in[n]$, we set $(i,j)$ to be an edge in $\bG(n,W)$ with probability $W((\bX_i,\bY_i),(\bX_j,\bY_j))$ independently.
\end{definition}
The proof consists of two main parts. The first (and simpler) part states that large enough samples from a naive block orderon are typically close to it in cut-shift distance. The second and main part shows that samples from orderons that are close with respect to cut-shift distance are typically close as well.
We start with the proof of the first part, regarding sampling from naive block orderons.
\begin{lemma}\label{lem:W-and-G(k,W)-close}
	Let $k$ be a positive integer and $W\in\calW$ be a naive $m$-block orderon.
	For any $\eps > 0$, we have
	\[\Pr\left[\csdist(W,W_{\bG(k,W)}) > \frac{2m^{3/2}}{\sqrt{k}} + \frac{\eps}{\sqrt{k}} \right] \leq \exp\left(-C \eps^2 k\right).\]
	for some constant $C>0$.
\end{lemma}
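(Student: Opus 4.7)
Since $W$ is a naive $m$-block orderon, there exists a weighted ordered graph $G$ on $m$ vertices with $W((x,a),(y,b)) = G(Q_m(x), Q_m(y))$; the value of $W$ depends only on the first coordinates. A sample $\bG(k, W)$ is generated by drawing i.i.d.\@ uniform $\bZ_1,\ldots,\bZ_k$, sorting them into $\bX_1 \leq \cdots \leq \bX_k$, assigning each $\bX_i$ a block label $j_i = Q_m(\bX_i)$, and independently sampling each edge $(i, i')$ from $\mathrm{Ber}(G(j_i, j_{i'}))$. Letting $\bn_j = |\{i : j_i = j\}|$ and $\sigma_j = \bn_1 + \cdots + \bn_j$, the indices of vertices in block $j$ form a contiguous range $A_j = \{\sigma_{j-1}+1, \ldots, \sigma_j\}$. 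Since $\sigma_j \sim \mathrm{Bin}(k, j/m)$, Hoeffding's inequality together with a union bound over $j \in [m]$ yields $\max_j |\sigma_j - jk/m| \leq \eps\sqrt{k}/4$ with probability at least $1 - \exp(-\Omega(\eps^2 k))$, whenever $\eps^2 k \gg \log m$.

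Next, I would construct a measure-preserving bijection $f \colon [0,1]^2 \to [0,1]^2$ that aligns the block structures of $W$ and $W_{\bG(k, W)}$. For each $j$, the block $B_j = ((j-1)/m, j/m] \times [0,1]$ of $W$ (of measure $1/m$) should be sent into the strip $T_j = (\sigma_{j-1}/k, \sigma_j/k] \times [0,1]$ of $W_{\bG(k, W)}$ (of measure $\bn_j/k$). These measures generally differ: the plan is to match a sub-rectangle of $B_j$ of area $c_j = \min(1/m, \bn_j/k)$ to a sub-rectangle of $T_j$ of the same area using a pure first-coordinate shift (leaving the second coordinate unchanged), and then route the residual ``excess'' mass from over-populated blocks to the ``empty'' slack in under-populated blocks via a secondary, nearly order-preserving bijection. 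By the concentration from the first step, every point's first-coordinate displacement is bounded by the cumulative block-size discrepancy, yielding $\shift{f} \leq m^{3/2}/\sqrt{k}$ on this event.

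The final step is to bound $\|W^f - W_{\bG(k, W)}\|_\square$. After the alignment, on each matched block pair $(B_j, B_{j'})$ the corresponding entries of $W_{\bG(k, W)}$ are independent Bernoullis with expectation $G(j, j') = W^f$ there, so the difference $W^f - W_{\bG(k, W)}$ is pure sampling noise supported on a step function with respect to the $k$-strip partition in each coordinate. Since the cut-norm supremum for such a step function is attained at $S, T$ that are unions of $k$-strips, applying Hoeffding to the sum of independent centered Bernoullis indexed by any $S' \times T' \subseteq [k]^2$, followed by a union bound over the $4^k$ choices of $(S', T')$, gives $\|W^f - W_{\bG(k, W)}\|_\square \leq \eps/\sqrt{k}$ with probability at least $1 - \exp(-\Omega(\eps^2 k))$. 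Combining the two estimates produces $\csdist(W, W_{\bG(k, W)}) \leq \shift{f} + \|W^f - W_{\bG(k, W)}\|_\square \leq 2m^{3/2}/\sqrt{k} + \eps/\sqrt{k}$ with the claimed failure probability. The hard part will be the second step: designing the rectangle matching so that the second coordinate cleanly absorbs the area mismatch, and especially arranging the excess-to-empty routing so that no point travels further than the cumulative discrepancy bound controlled in the first step.
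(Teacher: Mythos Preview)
Your Step~1 concentration claim is wrong, and this breaks the tail bound you need. For $\sigma_j \sim \mathrm{Bin}(k, j/m)$, Hoeffding gives $\Pr[|\sigma_j - jk/m| > t] \leq 2\exp(-2t^2/k)$. Plugging in your deviation $t = \eps\sqrt{k}/4$ yields failure probability $2\exp(-\eps^2/8)$, and a union bound over $m$ blocks only makes it $2m\exp(-\eps^2/8)$ --- nowhere near $\exp(-\Omega(\eps^2 k))$. To get the tail $\exp(-\Omega(\eps^2 k))$ you would need deviation $t = \Theta(\eps k)$, which translates to $\shift f \le \eps$, not $\eps/\sqrt{k}$, so the final bound would be off by a $\sqrt{k}$ factor. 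Relatedly, your Step~2 claim $\shift f \le m^{3/2}/\sqrt{k}$ does not follow from your Step~1 bound; the quantities do not match up. And in Step~3 you assert the difference $W^f - W_{\bG(k,W)}$ is ``pure sampling noise,'' but on the residual mass (which you yourself flag as the hard part) it is not; that contribution needs its own bound.

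The paper's route avoids all of this by \emph{not} constructing a shift at all. It uses $\csdist(W,W_{\bH(k,W)}) \le \|W - W_{\bH(k,W)}\|_\square$ and bounds the cut norm directly by $1 - \lambda(Q)$, where $Q$ is the region on which the two block structures already agree; this gives $1-\lambda(Q) \le 2m\sum_i |\bA_i/k - 1/m|$. A variance calculation plus Jensen yields $\E[\csdist(W,W_{\bH(k,W)})] \le 2m^{3/2}/\sqrt{k}$, and then Azuma is applied \emph{once, to the final scalar quantity} $\csdist(W,W_{\bH(k,W)})$ as a function of $\bZ_1,\ldots,\bZ_k$ with bounded differences $O(1/k)$, giving the $\exp(-C\eps^2 k)$ tail. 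The passage from the weighted $\bH(k,W)$ to the unweighted $\bG(k,W)$ is then exactly your Step~3 (the $4^k$ union bound), which the paper imports as \cite[Lemma~10.11]{Lov12}. The key methodological point is that the sharp tail comes from concentrating the \emph{output} statistic via bounded differences, not from concentrating the intermediate block counts $\sigma_j$ individually --- the latter cannot deliver the required rate.
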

\begin{proof}
	We first show that $\csdist(W,W_{\bH(k,W)})$ is small with high probability and then discuss how it derives a concentration bound for $\csdist(W,W_{\bG(k,W)})$.

	First, we show that the expectation of $\csdist(W,W_{\bH(k,W)})$ is at most $2m^{3/2}/k$.
	Let $\calP=\{P_i\mid i\in[m] \}$ be the block partition of $W$.
	That is, $P_{i}=\left[(i-1)/m,i/m\right] $ for every $i\in[m]$.
	Note that for any $i\in[m]$, $\lambda(P_{i})=1/m$.
	Let $\bZ_1,\ldots,\bZ_k$ be independent uniformly random variables in $[0,1]$ used to construct $\bH(k,W)$.
	%	\anote{we generated the variables and only after name them them according to $\pi_1()$. It might be that I miss something though.}
	For every $i\in[m]$,  let $\bA_{i}$ be the number of samples $\bZ_\ell$ falling into $P_{i}$.
	By the fact that the variables are uniform, for every $i\in [m]$,
	\[\Ex_{\bZ_1,\ldots,\bZ_k }[\bA_{i}]=\frac{k}{m} \qquad \text{and}\qquad \Var[\bA_{i}]=\frac{1}{m}\left(1-\frac{1}{m}\right)k<\frac{k}{m}.\]
	We construct a partition $\calP' =\{P'_{i}\mid i\in[m] \}$ of $[0,1]$ using the values $\bA_{i}$.
	For every $i\in[m]$, we define \[ P'_{i}=\left[\sum_{i'<i}\frac{\bA_{i'}}{k},\sum_{i'\le i}\frac{\bA_{i'}}{k} \right].\]
	We construct an orderon $W_{\bH(k,W)}\in \calW$ so that the value of $W_{\bH(k,W)}$ on $(P'_{i} \times [0,1]) \times (P'_{j} \times [0,1])$ is the same as the value of $W$ on $(P_{i} \times [0,1])\times (P_{j} \times [0,1])$.
	Therefore, $W_{\bH(k,W)}$ agrees with $W$ on a set \[Q=\bigcup_{i,j\in[m]}\left(\left(P_{i} \cap P'_{i}\right) \times [0,1]\right) \times \left(\left(P_{j}\cap P'_{j}\right)\times [0,1]\right).\]
	We note that $\lambda(P'_i) = \bA_i/k$ and
	$$
	\lambda\left(\bigcup_{i\in[m]}\left(P_{i}\cap P'_{i}\right)\right)
	\geq 1 - \sum_{i \in [m]} \left|\sum_{i'\le i}\frac{\bA_{i'}}{k} - \frac{i}{m}\right|
	\geq 1 - \sum_{i \in [m]}\sum_{i'\le i} \left|\frac{\bA_{i'}}{k} - \frac{1}{m}\right|
	\geq 1 - m\sum_{i \in [m]} \left|\frac{\bA_{i}}{k} - \frac{1}{m}\right|.
	$$
	Then,
	\begin{align*}
	& \csdist(W,W_{\bH(k,W)})\le \|W-W_{\bH(k,W)}\|_\square\le 1-\lambda(Q)=1-{\left(1 - m\sum_{i \in [m]} \left|\frac{\bA_i}{k} - \frac{1}{m}\right|\right)}^2\\
	& \le 2 m\sum_{i \in [m]} \left|\frac{\bA_{i}}{k} - \frac{1}{m}\right|
	\le 2m{\left(m\sum_{i \in [m]}{\left(\frac{1}{m}-\frac{\bA_i}{k}\right)}^2\right)}^{1/2}
	= {\left(\frac{4m^3}{k^2}\sum_{i \in [m]}{\left(\frac{k}{m}-\bA_i\right)}^2\right)}^{1/2}.
	\end{align*}
	Therefore, by taking expectation \[\Ex\left[{\csdist(W,W_{\bH(k,W)})}^2\right] \le \frac{4m^3}{k^2} \sum_{i \in [m]}\Var(\bA_i) <\frac{4m^3}{k},\]
	and $\Ex\left[\csdist(W,W_{\bH(k,W)})\right]<2m^{3/2}/\sqrt{k}$ holds by Jensen's inequality.

	By applying Azuma's inequality (see~\cite[Corollary~A.15]{Lov12}), noting that a single change in $\bZ_\ell$ changes the value of $\csdist(W,W_{\bH(k,W)})$ by at most $O(1/k)$, we have for any $\eps > 0$,
	\begin{align}
	\Pr\left[\csdist(W,W_{\bH(k,W)}) > \frac{2m^{3/2}}{\sqrt{k}} + \frac{\eps}{k} \right] \leq \exp\left(-C' \eps^2 k\right).
	\label{eq:W-and-G(k,W)-close-1}
	\end{align}
	for some constant $C'>0$.

	For an edge-weighted ordered graph $H$ on nodes $[k]$, we define $\bG(H)$ to be the ordered graph obtained by, for all $i,j\in[n]$, setting $(i,j)$ to be an edge in $\bG(H)$ with probability being the weight of $(i,j)$ in $H$ independently.
	By~\cite[Lemma~10.11]{Lov12}, we have for any edge-weighted ordered graph $H$ and $\eps > 0$
	\begin{align}
	\Pr\left[\csdist(W_{\bG(H)},W_H) > \frac{\eps}{\sqrt{k}}\right] \leq
	\Pr\left[\|W_{\bG(H)}-W_{H}\|_\square > \frac{\eps}{\sqrt{k}} \right] \leq \exp\left(-\eps^2 k/100\right).
	\label{eq:W-and-G(k,W)-close-2}
	\end{align}

	%  Note that we can apply the lemma because the sorting process according to $\bZ$ during the constructions of $\bG^f$ and $\bG'$ does not affect the cut norm.

	The desired concentration bound is obtained by~\eqref{eq:W-and-G(k,W)-close-1},~\eqref{eq:W-and-G(k,W)-close-2} and a union bound.
\end{proof}

% For a simple ordered graph $F=([k],E)$ and $E' \subseteq \binom{k}{2} \setminus E$, we denote by $F + E'$ the simple ordered graph $F=([k],E\cup E')$.
% Then, we have the following.
% \begin{proposition}
%   Let $W \in \calW$ be an orderon and $F=([k],E)$ be a simple ordered graph.
%   Then,
%   $$\Pr[\bG(k,W)=F] = \sum_{E' \subseteq \binom{k}{2} \setminus E} {(-1)}^{E'} t(F + E',W).$$
% \end{proposition}
% \begin{proof}
%   Recalling $t(F,W) = \Prx[F\subseteq \bG(k,W)]$, the equality holds by the principle of inclusion and exclusion.
% \end{proof}

%Moreover, we denote by $W(\bZ,\bY)$ the ordered graph obtained by sampling $W$ according to $\bZ$ and $\bY$ without sorting $\bZ$. \ynote{better notation?}
% when we want to make the dependence on these random variables more explicit.
%	Let $\bZ^<=(\bZ^<_1,\ldots,\bZ^<_k)$ be the sequence of random variables obtained by sorting $\bZ_1,\ldots,\bZ_k$.
%\ynote{This is $\bX$ in Definition~\ref{def:WRandomGraph}.}

Before proceeding to the next lemma, we first recall the notion of a coupling of distributions.
\begin{definition}[couplings]
	Let $\calD_1$ and $\calD_2$ be distributions over domains $\Omega_1$ and $\Omega_2$, respectively.
	Then, a coupling of $\calD_1$ and $\calD_2$ is a distribution $\calD$ over $\Omega_1 \times \Omega_2$ such that the marginal distributions of $\calD$ on $\Omega_1$ and $\Omega_2$ are the same distributions as $\calD_1$ and $\calD_2$, respectively.
	%	Let $\bX_1$ and $\bX_2$ be two random variables defined on probability spaces $(\Omega_1,F_1,P_1)$ and $(\Omega_2,F_2,P_2)$.
	%	Then a coupling of $\bX_1$ and $\bX_2$ is a new probability space $(\Omega ,F,P)$ over which there are two random variables $\bY_1$ and $\bY_2$ such that $\bY_1$ has the same distribution as $\bX_1$ while $\bY_2$ has the same distribution as
\end{definition}

Let $W,W' \in \calW$ be orderons.
The following lemma says that the random ordered graphs $\bG(k,W)$ and $\bG(k,W')$ can be coupled (note that $\bG(k,W)$ and $\bG(k,W')$ can be regarded as distributions over ordered graphs of $k$ vertices) so that, when $\csdist(W,W')$ is small,  $\csdist(W_{\bG(k,W)},W_{\bG(k,W')})$ is also small with high probability.

Roughly speaking, the main idea is as follows. The ``cut part'' follows from results in the unordered setting, specifically Corollary 10.12 in~\cite{Lov12}. For the ``shift part'', we show that there is a coupling between samples $G \sim \bG(k, W)$ and samples $G' \sim \bG_{\bk}(k, W^f)$, so that for most of the pairs $(G, G')$, one can turn $G$ into $G'$ by only making local reordering of vertices, where no vertex is moved more than $O(k \cdot \shift{f})$ steps away from its original location. This follows from the fact that if one takes a sample from an orderon, then the location of the $i$-th order statistic---the vertex that is the $i$-th smallest in the first coordinate---is typically close to its mean.

% Let $\calG_k$ be the family of ordered graphs on $k$ vertices.
\begin{lemma}\label{lem:W-W'-close-in-csdist}
	Let $W,W' \in \calW$ be orderons and $k$ be a positive integer.
	Then, the random ordered graphs $\bG(k,W)$ and $\bG(k,W')$ can be coupled so that
	%  Then, there exists a distribution $\calD$ over $\calG_k \times \calG_k$ such that the marginal distributions on the first and second ordered graphs are equal to $\bG(k,W)$ and $\bG(k,W')$, respectively, such that $(\bG,\bG') \sim \calD$ satisfies
	%   and let $\bZ$ and $\bY$ be uniform over ${[0,1]}^k$.
	$$
	\csdist(W_{\bG(k,W)}, W_{\bG(k,W')}) \leq 9\csdist(W, W') + \frac{10}{k^{1/4}}
	$$
	holds with probability at least $1 - k{(e/4)}^{4k\csdist(W,W')} - 5e^{-\sqrt{k}/10}$.
\end{lemma}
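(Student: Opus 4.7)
The plan is to couple $\bG(k,W)$ and $\bG(k,W')$ through a near-optimal $f\in\mathcal{F}$ for $\csdist(W,W')$, and then to separately bound the shift and cut-norm contributions to $\csdist(W_{\bG(k,W)},W_{\bG(k,W')})$ using two concentration arguments of different flavors.

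Fix $\gamma>0$ and pick $f\in\mathcal{F}$ with $\shift{f}+\|W-(W')^f\|_\square\le\csdist(W,W')+\gamma$. Take i.i.d.\ uniform samples $\bv_1,\dots,\bv_k\in[0,1]^2$ together with i.i.d.\ uniform edge-seeds $\{\bU_{i,j}\}_{1\le i<j\le k}$; both graphs are driven by this common source. Let $\sigma$ be the permutation sorting $\{\bv_i\}$ by first coordinate, and $\tau$ the permutation sorting $\{f(\bv_i)\}$ by first coordinate. In $G=\bG(k,W)$, vertex $a$ represents $\bv_{\sigma(a)}$ and edge $(a,b)$ appears iff $\bU_{\sigma(a),\sigma(b)}\le W(\bv_{\sigma(a)},\bv_{\sigma(b)})$; in $G'=\bG(k,W')$, vertex $a$ represents $f(\bv_{\tau(a)})$ and edge $(a,b)$ appears iff $\bU_{\tau(a),\tau(b)}\le W'(f(\bv_{\tau(a)}),f(\bv_{\tau(b)}))$. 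A direct check shows these have the correct marginals. Setting $\pi=\tau^{-1}\circ\sigma$, the ``twisted'' graph $\widetilde{G}'(a,b):=G'(\pi(a),\pi(b))$ is distributed as $\bG(k,(W')^f)$ and, crucially, is coupled to $G$ edge-wise by the \emph{same} seed $\bU_{\sigma(a),\sigma(b)}$ compared against the (close) thresholds $W(\bv_{\sigma(a)},\bv_{\sigma(b)})$ versus $(W')^f(\bv_{\sigma(a)},\bv_{\sigma(b)})$.

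For the shift, note that $|\pi(a)-a|$ is at most the number of indices $j$ whose first coordinate lies in the window $(\pi_1(\bv_{\sigma(a)})-2\shift{f},\pi_1(\bv_{\sigma(a)})+2\shift{f})$, since $f$ perturbs first coordinates by at most $\shift{f}$ and only points in such a window can switch sides with $a$. Covering $[0,1]$ by $O(1/\shift{f})$ overlapping length-$4\shift{f}$ intervals and applying a Chernoff tail bound to $\mathrm{Bin}(k,O(\shift{f}))$, together with a union bound, yields that with probability at least $1-k(e/4)^{4k\csdist(W,W')}$ every such window contains at most $4k\csdist(W,W')$ points, so $\max_a|\pi(a)-a|=O(k\csdist(W,W'))$. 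The natural measure-preserving bijection $h$ realizing $\pi^{-1}$ block-wise on the vertex-blocks of $W_{G'}$ then satisfies $W_{G'}^h=W_{\widetilde{G}'}$ and $\shift{h}=O(\csdist(W,W'))+O(1/k)$. For the cut-norm, the coupled pair $(G,\widetilde{G}')$ is exactly the ``side-by-side $W$-random sampling'' setup, so the standard concentration from~\cite[Cor.~10.12]{Lov12} (extending verbatim from graphons to orderons, since the argument only relies on edge-wise Bernoulli sampling over shared random points) gives $\|W_G-W_{\widetilde{G}'}\|_\square\le\|W-(W')^f\|_\square+O(1/k^{1/4})$ with probability at least $1-5e^{-\sqrt{k}/10}$. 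Combining via $\csdist(W_G,W_{G'})\le\shift{h}+\|W_G-W_{G'}^h\|_\square$ and letting $\gamma\to 0$ yields the claimed inequality.

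The main obstacle is the uniformity in the shift analysis: we must control $|\pi(a)-a|$ simultaneously for all $a$, but the relevant window is centered at a random order statistic, so the deterministic-cover-plus-union-bound costs exactly one factor of $k$ in the failure probability --- matching the $k$ appearing in $k(e/4)^{4k\csdist(W,W')}$. Everything else is bookkeeping of constants to reach the $9\csdist(W,W')$ and $10/k^{1/4}$ in the statement.
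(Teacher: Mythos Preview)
Your argument is essentially the paper's: couple through a near-optimal $f$, split $\csdist(W_{\bG(k,W)},W_{\bG(k,W')})$ into a permutation-shift part and a cut-norm part, and handle the latter via \cite[Cor.~10.12]{Lov12}. The only visible difference is in the shift concentration --- the paper cites a result of Sanders on the maximum overlap of $k$ random arcs of length $4\csdist(W,W')$ on a circle, while you use a deterministic cover by $O(1/\shift{f})$ intervals together with a Chernoff bound. One bookkeeping point: that cover produces a union-bound prefactor $O(1/\shift{f})$, not $k$, and an exponent $4k\shift{f}$, not $4k\csdist(W,W')$, which can be too small when $\shift{f}\ll\csdist(W,W')$; to recover the stated probability you should take the window radius to be $2\csdist(W,W')$ (this is legitimate since $\shift{f}\le 2\csdist(W,W')$ once $\gamma\le\csdist(W,W')$, as in the paper's choice) and union-bound over the $k$ sample-centred windows, conditioning on one sample point so the remaining $k-1$ are still i.i.d.
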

\begin{proof}
	For any $\delta > 0$, there exists $f \in \calF$ such that $\shift{f}+\|W^f-W'\|_\square < \csdist(W,W') + \delta$.
	Here, we choose $\delta = \csdist(W,W')$ and fix $f$ for this choice.
	To define the desired coupling between $\bG \eqdef \bG(k,W)$ and $\bG' \eqdef \bG(k,W')$, we first define a coupling between $\bG$ and $\bG^f \eqdef \bG(k,W^f)$ so that $\csdist(W_{\bG},W_{\bG^f})$ is small with high probability, and then define a coupling between $\bG^f$ and $\bG'$ so that $\csdist(W_{\bG^f},W_{\bG'})$ is small with high probability.
	We obtain the desired bound by chaining the couplings and a union bound.

	Recall that, in the construction of $\bG(k,W)$, we used two sequences of independent random variables $\bZ = {(\bZ_i)}_{i \in [k]}$ and $\bY = {(\bY_i)}_{i \in [k]}$.
	To look at the construction more in detail, it is convenient to introduce another sequence of independent random variables $\bR = {(\bR_{ij})}_{i,j \in [k],i<j}$, where each $\bR_{ij}$ is uniform over $[0,1]$.
	After defining $\bX={(\bX_i)}_{i \in [k]}$ as in Definition~\ref{def:WRandomGraph}, we obtain $\bG(k,W)$ by setting $(i,j)$ to be an edge if $W((\bX_i,\bY_i),(\bX_j,\bY_j)) \geq \bR_{ij}$ for each $i,j \in [k]$ with $i < j$.
	To make the dependence on these random variables more explicit, we write $G(\bZ,\bY,\bR; W)$ to denote the ordered graph obtained from $W$ by using $\bZ$, $\bY$, and $\bR$.

	Let $\left(\bZ={(\bZ_i)}_{i \in [k]},\bY={(\bY_i)}_{i \in [k]},\bR={(\bR_{ij})}_{i,j\in [k]:i<j}\right)$ be uniform over ${[0,1]}^{k+k+\binom{k}{2}}$.
	Then, we define $\left(\bZ'={(\bZ'_i)}_{i \in [k]},\bY'={(\bY'_i)}_{i \in [k]},\bR'={(\bR'_{ij})}_{i,j\in [k],i<j}\right)$ so that $(\bZ'_i,\bY'_i) = f^{-1}(\bZ_i,\bY_i)$ for every $i \in [k]$ and $\bR'_{ij} = \bR_{ij}$ for every $i,j \in [k]$ with $i<j$.
	Note that the marginal $(\bZ',\bY',\bR')$ is uniform over ${[0,1]}^{k+k+\binom{k}{2}}$, and hence the distribution of $G(\bZ',\bY',\bR';W^f)$ is exactly same as that of $\bG(k,W^f)$.
	Now, we couple $G(\bZ,\bY,\bR; W)$ with $G(\bZ',\bY',\bR';W^f)$. %\anote{I dont understand the last sentence. I think we need to say that all the marginals are uniform over their corresponding spaces for the conclusion that the distributions are the same will hold, right? }
	%  $\calD$ as the distribution of the pair $(\bG,\bG')$, where $\bG=G(\bZ,\bY,\bR; W)$ and $\bG'= G(\bZ',\bY',\bR';W')$.

	%	Now, we want to construct a measure preserving function $g \in \calF$ from $W[\bZ,\bY]$ to $W^f[\bZ',\bY']$ with $\shift{f'}$ being small.
	We can naturally define a measure preserving function $g \in \calF$ from $W_{\bG}$ to $W_{\bG^f}$ with $W_{\bG}^g = W_{\bG^f}$ as follows ($\bG$ and $\bG^f$ are coupled as in the last paragraph).
	Let $\pi\colon[k] \to [k]$ be a permutation of $[k]$ such that $\bZ_{\pi^{-1}(1)} \leq \bZ_{\pi^{-1}(2)} \leq \cdots \leq \bZ_{\pi^{-1}(k)}$.
	Then, $\pi(i)$ is the position of $\bZ_i$ in this sorted sequence.
	Similarly we define a permutation $\pi'\colon[k] \to [k]$ using $\bZ'$.
	Then, we arbitrarily choose $g$ so that the part corresponding to $\pi(i)$ is mapped to the part corresponding to $\pi'(i)$, that is, $\{g(v) \mid v \in [(\pi(i)-1)/k,\pi(i)/k] \times [0,1] \} = [(\pi'(i)-1)/k,\pi'(i)/k] \times [0,1]$ for every $i \in [k]$.

	We now show a concentration bound for $\shift{g}$.
	For each $i \in [k]$, we consider a segment $\bA_i = [\bZ_i-2\csdist(W,W'),\bZ_i+2\csdist(W,W')]$ in a circular domain $[0,1]$, where we identify $-x$ with $1 - x$.
	Letting $\bM$ be the maximum number of overlaps of the segments at a point $x$ over $x$ in the circular domain, we can upper bound $\shift{g}$ by $(\bM-1)/k$ because $\shift{f} \leq 2\csdist(W,W')$ and the overlap of two segments may cause a shift of $1/k$ in $g$ to map the vertex corresponding to one segment to the vertex corresponding to the other.
	Let $\mu = 4k\csdist(W,W')$ be the average overlap at a point.
	As the segments $\bA_1,\ldots,\bA_k$ are independently and uniformly distributed, by~\cite[Theorem 3.1]{Sanders:2003gw} (to apply this theorem, we considered the circular domain instead of the interval $[0,1]$), we have
	\begin{align}
	\Pr[\bM \geq 2\mu  + 1] \leq k {\left(\frac{e}{4}\right)}^{\mu}. \label{eq:W-W'-close-in-csdist-1}
	\end{align}
	Hence, we have
	$$
	\shift{g} \leq 8 \csdist(W,W')
	$$
	with probability at least $1-k{(e/4)}^{4k \csdist(W,W')}$.

	%  One may wonder why we set $\bA_i = [\bZ_i-\csdist(W,W')-\delta,\bZ_i+\csdist(W,W')+\delta]$ instead of $[\bZ_i-\shift{f},\bZ_i+\shift{f}]$.
	%  The reason is that, if we define $\bA_i = [\bZ_i-\shift{f},\bZ_i+\shift{f}]$, then the exponent in~\eqref{eq:W-W'-close-in-csdist-1} becomes $2k \cdot \shift{f}$, which can be very small even when $\csdist(W,W')$ is large.

	%With good probability for any two points participating in the sample, whose distance between them in the original orderon is less than delta, the relative distance between them in the sample is less than 2 delta.
	%That is, $|\bZ_i-\bZ'_i| \leq ...$ for every $i \in [k]$.

	Next, we couple $\bG^f$ with $\bG'$ by coupling $G(\bZ',\bY',\bR';W^f)$ with $G(\bZ',\bY',\bR';W')$.
	By~\cite[Corollary 10.12]{Lov12}, we have %$\bG^f$ and $\bG'$ can be coupled so that
	$$
	\left\|W_{\bG^f}-W_{\bG'}\right\|_\square
	= \left\|W_{G(\bZ',\bY',\bR',W^f)}-W_{G(\bZ',\bY',\bR',W')}\right\|_\square
	\leq \left\|W^f-W'\right\|_\square + \frac{10}{k^{1/4}}
	%	  = \csdist(W,W')+\delta+\frac{8}{k^{1/4}}
	$$
	with probability at least $1-5e^{-\sqrt{k}/10}$.
	Note that we can apply the corollary because the sorting process according to $\bZ'$ during the constructions of $\bG^f$ and $\bG'$ does not affect the cut norm.

	Now, we combine by chaining the couplings $(\bG,\bG^f)$ and $(\bG^f,\bG')$.
	By a union bound, we have
	\begin{align*}
	& \csdist(W_{\bG},W_{\bG'}) \leq
	\shift{g} +
	\left\|W_{\bG^f}-W_{\bG'}\right\|_\square \\
	& \leq 8 \csdist(W,W')+ \left\|W^f-W'\right\|_\square + \frac{10}{k^{1/4}}
	\leq 9\csdist(W,W') + \frac{10}{k^{1/4}}
	\end{align*}
	with probability at least $1 - k{(e/4)}^{4k\csdist(W,W')} - 5e^{-\sqrt{k}/10}$.
\end{proof}

The proof of Theorem~\ref{thm:Sampling} now follows easily from the above two lemmas and Theorem~\ref{thm:ApproxNaive-StepOrderon}. Indeed, from Theorem~\ref{thm:ApproxNaive-StepOrderon} we know that any orderon $W$ has an arbitrarily close naive block orderon $W'$, and from Lemma~\ref{lem:W-W'-close-in-csdist} we conclude that the cut-shift distance between samples from $W$ and samples from $W'$ is typically not much larger than $d_\triangle(W, W')$. Finally, Lemma~\ref{lem:W-and-G(k,W)-close} implies that $W'$ is typically close in $d_{\triangle}$ to large samples taken from it.

\begin{proofof}{Theorem~\ref{thm:Sampling}}
	Let $W'$ be the naive $m$-block orderon obtained by applying Theorem~\ref{thm:ApproxNaive-StepOrderon} on $W$.
	Let $(\bG,\bG')$ be the coupling obtained by applying Lemma~\ref{lem:W-W'-close-in-csdist} on $W$ and $W'$.
	%  Let $\calD$ be the distribution over $\calG_k \times \calG_k$
	%  Let $(\bG,\bG')$ be a pair of random ordered graphs sampled from $\calD$.
	%  Let $(\bZ,\bY)$ be the random variables used to construct $\bG$, and $(\bZ',\bY')$
	By the triangle inequality, we have
	\begin{align}
	\csdist(W,W_\bG)
	\leq
	\csdist(W,W') + \csdist(W',W_{\bG'}) + \csdist(W_{\bG'},W_{\bG}).
	\label{eq:Sapmling-1}
	\end{align}
	%  \ynote{The triangle inequality above assumes we can use orderons and ordered graphs interchangeably. Do we need a proof?}
	By Lemmas~\ref{lem:W-and-G(k,W)-close} and~\ref{lem:W-W'-close-in-csdist} and the union bound, we have for any $\eps' > 0$
	\begin{align*}
	\eqref{eq:Sapmling-1}
	& \leq \csdist(W,W') + \frac{2m^{3/2}}{k} + \frac{\eps'}{\sqrt{k}} + 9\csdist(W',W) + \frac{10}{k^{1/4}}  \\
	& = 10\csdist(W,W') + \frac{2m^{3/2}}{k} + \frac{\eps'}{\sqrt{k}} + \frac{10}{k^{1/4}} \\
	& = 10\eps + \frac{c^{3/2} 2^{2062/\eps^3}}{\eps^6 k} + \frac{\eps'}{\sqrt{k}} +  \frac{10}{k^{1/4}},
	\end{align*}
	with probability at least $1-k(e/4)^{4k\eps}- 5e^{-\sqrt{k}/10}-\exp(C{(\eps')}^2 k)$.
	By setting $\eps = {\Theta(\log\log k/\log k)}^{1/3}$ and $\eps' = \Theta(\sqrt{k}/\log k)$, we have the desired bound.
\end{proofof}

\section{Subgraph statistics}\label{sec:statistics}

For $k\in \mathbb{N}$, let $\Omega=\left([0,1]^2\right)^k$. We define $\Omega_o\subset\Omega$ such that $v\in \Omega_o$ if and only if  $v$ is ordered according to the entries of the first coordinate in the tuple (in such case, we write $v_1\le\cdots\le v_k$). Namely, $v=\left((x_1,y_1),\ldots,(x_k,y_k)\right)\in \Omega_o$ if and only if for every $i\le j$, $x_i\le x_j$. In addition, given a set of pairs $E\subseteq\binom{[k]}{2}$ we let $E_{<}=\{(i,j)\in E\mid i<j\}$.
Let us restate the definition of homomorphism density of an ordered graph in an orderon from Subsection~\ref{subsec:main_ingredients} in a slightly different but equivalent form.
\begin{definition}[homomorphism density] Let $F=([k],E)$ be a simple ordered graph and let $W\in\calW$ be an orderon.  We define the (induced) \emph{homomorphism density} of $F$ in $W$ as
	\[t(F,W)\eqdef k!\cdot \intop_{v\in\Omega_o}\prod_{(i,j)\in E_{<}} W(v_i,v_j)\cdot \prod_{(i,j)\in\overline{E}_{<}} \left(1-W(v_i,v_j)\right)dv\;,
	\]
or equivalently,
\[ t(F,W)\eqdef k!\cdot\intop_{v\in\Omega}\left(\prod_{{(i,j)\in E_{<}}} W(v_i,v_j)\cdot \prod_{{(i,j)\in \overline{E}_{<}}} \left(1-W(v_i,v_j)\right)\cdot \prod_{{i<j}}\indi_{v_i\le v_j}\right) dv\;. \]
\end{definition}

Recall the definition of $t(F, G)$ where $G$ is an ordered graph, presented in Subsection~\ref{subsec:main_ingredients}. Clearly, $t(F, G) = t(F, W_G)$ always holds. Our first main result of the section proves the ``only if'' direction of Theorem~\ref{thm:convergence_equiv}, showing that if a sequence of orderons $W_n$ is Cauchy in $d_{\triangle}$, then it is convergent in terms of subgraph frequencies.

%\begin{lemma}
%	For a simple ordered graph $F=([k],E)$ and an orderon $W$ \[\Ex_{\bG\sim\bG(n,W)}\left[t(F,W_\bG)\right]=\Prx_{\bG\sim\bG(n,W)}[F\subseteq \bG]=t(F,W)\]
%\end{lemma}
%\begin{proof}
%	Let $\phi: [k]\to [n]$ be an injective map. For a fixed choice of $(X_1,Y_1),\ldots,(X_n,Y_n)$ such that $X_1\le\ldots\le X_n$, the events that $(\phi(i),\phi(j))\in E(\bG)$ are independent for different edges $(i,j)\in E(F)$. Similarly, the events that $(\phi(i),\phi(j))\notin E(\bG)$ are independent for different ``non-edges'' $(i,j)\notin E(F)$. Therefore, the probability that $\phi$ is an embedding of $F$ into $\bG$ as an induced subgraph is \[ \prod_{(i,j)\in E(F)}W((X_{\phi(i)},Y_{\phi(i)}),(X_{\phi(j)},Y_{\phi(j)}))\cdot \prod_{(i,j)\notin E(F)}\left(1-W((X_{\phi(i)},Y_{\phi(i)}),(X_{\phi(j)},Y_{\phi(j)})) \right).\]
%	By choosing $(X_1,Y_1),\ldots,(X_n,Y_n)$ at random such that $X_1\le\ldots\le X_n$ (as in Definition~\ref{def:WRandomGraph})
%	and summing over all injections the lemma follows.
%\end{proof}
%
%\anote{Something is troubling me regarding the proof of this lemma. Also, I am not so sure we need it anyways.}
%\onote{I think we do need such a lemma as part of our theory. Could you point out what you think is a problem in the proof?}

\begin{lemma}\label{lem:CountingLemma} Let $W,U\in\calW$. Then, for every simple ordered graph $F=([k],E)$\[\left|{t(F,W)}-{t(F,U)}\right|\le 6 k!\binom{k}{2}\cdot\sqrt{\csdist(W,U)}\;. \]
\end{lemma}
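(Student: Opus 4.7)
The plan is to reduce the bound in $\csdist$ to a bound in cut-norm via the defining measure-preserving bijection, and then to execute a telescoping ``ordered counting lemma'' combined with the ordered-to-unordered cut-norm conversion from Lemma~\ref{lem:OrederedtoNoOrder} (this is where the square root in the final bound enters).

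First, for any $\delta>0$ pick $f\in\calF$ with $\shift{f}+\|W^f-U\|_\square \le \csdist(W,U)+\delta$. The triangle inequality yields
\[
|t(F,W)-t(F,U)| \le |t(F,W)-t(F,W^f)| + |t(F,W^f)-t(F,U)|,
\]
and I will bound the first summand using $\shift{f}$ and the second using $\|W^f-U\|_\square$.

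For the first summand, substituting $v_\ell = f(u_\ell)$ in the integral defining $t(F,W)$ and using that $f$ is measure preserving gives
\[
t(F,W) = k!\int_\Omega \prod_{(i,j)\in E_<} W^f(u_i,u_j)\prod_{(i,j)\in\overline{E}_<}(1-W^f(u_i,u_j))\prod_{i<j}\indi_{f(u_i)\le f(u_j)}\,du,
\]
so that $t(F,W)-t(F,W^f)$ equals the same integral but with $\prod_{i<j}\indi_{f(u_i)\le f(u_j)} - \prod_{i<j}\indi_{u_i\le u_j}$ in place of the final product. A short ``swap'' argument, using that $|\pi_1(u_\ell)-\pi_1(f(u_\ell))|\le\shift{f}$ for all $\ell$, shows that whenever these two indicator products disagree at $u$, some pair $i<j$ must satisfy $|\pi_1(u_i)-\pi_1(u_j)|<2\shift{f}$. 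A union bound over $\binom{k}{2}$ pairs, using that for $X,Y$ uniform in $[0,1]$ we have $\Pr[|X-Y|<2t]\le 4t$, gives that this ``bad'' set has measure at most $4\binom{k}{2}\shift{f}$. The remaining integrand is bounded by $1$ in absolute value, so $|t(F,W)-t(F,W^f)|\le 4k!\binom{k}{2}\shift{f}$.

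For the second summand I telescope $\prod_{i<j}g_{ij}(W^f(v_i,v_j)) - \prod_{i<j}g_{ij}(U(v_i,v_j))$ (where $g_{ij}(x)=x$ for $(i,j)\in E$ and $g_{ij}(x)=1-x$ otherwise) as a sum of $\binom{k}{2}$ terms, each of the form $\pm(W^f-U)(v_i,v_j)\cdot P^{(i,j)}(v)$ with $P^{(i,j)}$ a product of the remaining factors, all in $[0,1]$. The key observation is the following factorization: for each fixed $v_{-i,-j}\eqdef(v_\ell)_{\ell\ne i,j}$, the product $P^{(i,j)}(v)\cdot\prod_{i'<j'}\indi_{v_{i'}\le v_{j'}}$ decomposes as $C(v_{-i,-j})\cdot\mu(v_i)\cdot\nu(v_j)\cdot\indi_{v_i\le v_j}$ with $C,\mu,\nu\in[0,1]$, since no factor or indicator other than the excluded $(i,j)$-factor and $\indi_{v_i\le v_j}$ depends simultaneously on both $v_i$ and $v_j$. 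Applying Lemma~\ref{lem:smoothning} to the inner integral over $v_i,v_j$ bounds each telescoping term by $k!\|W^f-U\|_{\square'}$, so $|t(F,W^f)-t(F,U)|\le k!\binom{k}{2}\|W^f-U\|_{\square'}$. Invoking Lemma~\ref{lem:OrederedtoNoOrder} then yields $\|W^f-U\|_{\square'}\le 2\sqrt{\|W^f-U\|_\square}$.

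Combining the two summands gives $|t(F,W)-t(F,U)| \le k!\binom{k}{2}(4\shift{f} + 2\sqrt{\|W^f-U\|_\square})$; a short calculus exercise shows that subject to $\shift{f}+\|W^f-U\|_\square\le \csdist(W,U)+\delta \le 1+\delta$, this is at most $6k!\binom{k}{2}\sqrt{\csdist(W,U)+\delta}$. Letting $\delta\to 0$ finishes the proof. The main technical obstacle is the factorization observation in the telescoping step: once a single pair $(i,j)$ is isolated, all remaining factors and indicators cleanly separate into functions of $v_i$ and $v_j$ individually, which is precisely what allows Lemma~\ref{lem:smoothning} to be applied with the ordered cut norm.
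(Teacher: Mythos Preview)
Your proof is correct and follows essentially the same route as the paper's. The paper packages the two estimates as separate lemmas (Lemma~\ref{lem:smallcutnorm-simdensity} for the cut-norm term and Lemma~\ref{lem:smallshiftlemma} for the shift term) and then combines them via the triangle inequality exactly as you do; your telescoping with the factorization $C\cdot\mu(v_i)\cdot\nu(v_j)\cdot\indi_{v_i\le v_j}$ and subsequent appeal to Lemma~\ref{lem:smoothning} and Lemma~\ref{lem:OrederedtoNoOrder} is precisely the content of Lemma~\ref{lem:smallcutnorm-simdensity}, and your direct ``bad set'' argument for the shift term is a mild repackaging of the paper's telescoping on indicators in Lemma~\ref{lem:smallshiftlemma}, yielding the same constant $4k!\binom{k}{2}$.
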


In order to prove the above, we introduce the following two lemmas. Lemma~\ref{lem:smallcutnorm-simdensity} considers pairs of orderons that are close in cut-norm, whereas Lemma~\ref{lem:smallshiftlemma} describes the effect of shifts.

\begin{lemma}\label{lem:smallcutnorm-simdensity}
	For any $W,U\in \calW$ and every ordered graph $F=([k],E)$, \[\left|t(F,W)-t(F,U)\right|\le 2k! \binom{k}{2}\sqrt{\|W-U\|_\square}\;.\]
\end{lemma}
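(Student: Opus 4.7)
The plan is a standard hybrid/telescoping argument, substituting $W$ with $U$ one edge at a time, except that we use the ordered cut-norm $\|\cdot\|_{\square'}$ and the smoothing lemma (Lemma~\ref{lem:smoothning}) to bound each single-edge replacement, and then convert to $\|\cdot\|_{\square}$ via Lemma~\ref{lem:OrederedtoNoOrder} at the very end.

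Fix an enumeration $e_1,\dots,e_m$ of $\binom{[k]}{2}$ with $m=\binom{k}{2}$, and for each $r\in\{0,\dots,m\}$ write $W^{(r)}$ for the ``hybrid'' integrand in which the pairs $e_1,\dots,e_r$ are evaluated using $U$ (either $U$ or $1-U$, depending on whether the pair lies in $E_<$ or $\overline{E}_<$) and the remaining pairs $e_{r+1},\dots,e_m$ are evaluated using $W$. Using the equivalent form of the definition,
\[
t(F,W)-t(F,U)=k!\sum_{r=1}^{m}\int_{\Omega}\bigl(W^{(r-1)}(v)-W^{(r)}(v)\bigr)\prod_{i<j}\indi_{v_i\le v_j}\,dv,
\]
so it suffices to bound each summand by $\|W-U\|_{\square'}$.

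Consider the $r$-th summand and write $e_r=(i^*,j^*)$ with $i^*<j^*$. Split the variables as $v=(v_{i^*},v_{j^*},v')$ with $v'=(v_l)_{l\neq i^*,j^*}$. For fixed $v'$, every factor in the integrand involving at most one of $v_{i^*},v_{j^*}$ (be it a $W$/$U$/$(1-W)$/$(1-U)$ factor or an order-indicator $\indi_{v_l\le v_{i^*}}$, $\indi_{v_{i^*}\le v_l}$, etc.) is a number in $[0,1]$ depending only on $v_{i^*}$ and $v'$, or only on $v_{j^*}$ and $v'$, or only on $v'$. Collecting these we get a factorization
\[
\bigl(W^{(r-1)}-W^{(r)}\bigr)(v)\prod_{i<j}\indi_{v_i\le v_j}=\pm C(v')\,\mu_{v'}(v_{i^*})\,\nu_{v'}(v_{j^*})\,\bigl(W-U\bigr)(v_{i^*},v_{j^*})\,\indi_{v_{i^*}\le v_{j^*}},
\]
with $C,\mu_{v'},\nu_{v'}$ all taking values in $[0,1]$; the sign depends on whether $e_r\in E_<$ or $e_r\in\overline{E}_<$ but is irrelevant for the absolute-value bound. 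For each $v'$, Lemma~\ref{lem:smoothning} applied to $\mu_{v'}$ and $\nu_{v'}$ yields
\[
\left|\int \mu_{v'}(v_{i^*})\,\nu_{v'}(v_{j^*})\,(W-U)(v_{i^*},v_{j^*})\,\indi_{v_{i^*}\le v_{j^*}}\,dv_{i^*}dv_{j^*}\right|\le \|W-U\|_{\square'}.
\]
Integrating over $v'$ and using $0\le C(v')\le 1$ together with $\lambda([0,1]^{2(k-2)})=1$, the $r$-th summand is bounded by $\|W-U\|_{\square'}$.

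Summing the $m=\binom{k}{2}$ contributions gives $|t(F,W)-t(F,U)|\le k!\binom{k}{2}\|W-U\|_{\square'}$, and Lemma~\ref{lem:OrederedtoNoOrder} (which implies $\|W-U\|_{\square'}\le 2\sqrt{\|W-U\|_{\square}}$) yields the stated bound. The only subtle step is the telescoping-with-indicators bookkeeping: one must verify that after separating variables, the factors involving only $v_{i^*}$ (respectively only $v_{j^*}$) together with the order indicators can genuinely be collected into a single $[0,1]$-valued function of $v_{i^*}$ (respectively $v_{j^*}$) for each fixed $v'$, so that the smoothing lemma applies; the remaining indicator $\indi_{v_{i^*}\le v_{j^*}}$ is precisely what makes $\|\cdot\|_{\square'}$ the right norm here.
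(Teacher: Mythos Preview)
Your proposal is correct and follows essentially the same approach as the paper's proof: telescope over the $\binom{k}{2}$ pairs, fix all but the two variables of the current pair so that the remaining factors separate into a product $g(v_{i_s})h(v_{j_s})$ of $[0,1]$-valued functions, apply the smoothing lemma (Lemma~\ref{lem:smoothning}) to bound each term by $\|W-U\|_{\square'}$, and then convert to $\|W-U\|_{\square}$ via Lemma~\ref{lem:OrederedtoNoOrder}. The only cosmetic difference is that the paper absorbs each order indicator $\indi_{v_i\le v_j}$ into the corresponding factor $\gamma_W(v_i,v_j)$ from the start, whereas you keep the indicators separate and distribute them into $\mu_{v'},\nu_{v'}$ at the factoring step; either packaging works.
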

\begin{proof}
	Fix some arbitrary ordering on $\binom{[k]}{2}_{<}$. For every $(i,j)\in\binom{[k]}{2}_{<}$ we define the function
	\[
	\gamma_W(v_i,v_j)=\begin{cases}
	W(v_i,v_j)\cdot \indi_{v_i\le v_j}, &\text{if }  (i,j)\in E(F) \\
	(1-W(v_i,v_j))\cdot\indi_{v_i\le v_j}, &\text{if } (i,j)\notin E(F)
	\end{cases}\;,
	\]
	and define $\gamma_U$ similarly.
	\begin{align*}
	t(F,W)-t(F,U)&= k!\intop_{v\in\Omega}\left(\prod_{(i,j)\in\binom{[k]}{2}_<}\gamma_W(v_i,v_j)-\prod_{(i,j)\in\binom{[k]}{2}_<}\gamma_U(v_i,v_j)\right)dv
	\end{align*}
By identifying each $e_r\in\left[\binom{[k]}{2}_<\right]$ with $(i_r,j_r)$, the integrand can be written as
 \begin{align*}
 \sum_{s=1}^{\binom{k}{2}}\left(\prod_{r<s}\gamma_W(v_{i_r},v_{j_r})\right)&\left(\prod_{r>s}\gamma_U(v_{i_r},v_{j_r})\right)\cdot\Big(\gamma_W(v_{i_s},v_{j_s})-\gamma_U(v_{i_s},v_{j_s})\Big)\;.
 \end{align*}
 To estimate the integral of a given term, we fix all variables except $v_{i_s}$ and $v_{j_s}$. Then, the integral is of the form
 \[\int_{v_{i_s},v_{j_s}}  g(v_{i_s}) h(v_{j_s}) \Big(\gamma_W(v_{i_s},v_{j_s})-\gamma_U(v_{i_s},v_{j_s})\Big)dv_{i_s}dv_{j_s},\]
 where $g,h\colon [0,1]^2\to[0,1]$ are some functions. By applying Lemma~\ref{lem:smoothning}, it suffices to provide an upper bound on
 \[\sup_{S,T\subseteq[0,1]^2}\left|\;\intop_{S\times T}\Big(\gamma_W(v_{i_s},v_{j_s})-\gamma_U(v_{i_s},v_{j_s})\Big)dv_{i_s}dv_{j_s}\;\right|.\]
 By using Lemma~\ref{lem:OrederedtoNoOrder}, we get that
 \begin{align*}
&\sup_{S,T\subseteq[0,1]^2}\left|\;\intop_{S\times T}\Big(\gamma_W(v_{i_s},v_{j_s})-\gamma_U(v_{i_s},v_{j_s})\Big)dv_{i_s}dv_{j_s}\;\right|\nonumber \\
&=\sup_{S,T\subseteq[0,1]^2}\left|\;\intop_{S\times T}\Big(W(v_{i_s},v_{j_s})-U(v_{i_s},v_{j_s})\Big)\indi_{v_{i_s}\le v_{j_s}}dv_{i_s}dv_{j_s}\;\right| =\|W-U\|_{\square'}\le 2\sqrt{\|W-U\|_\square}.
 \end{align*}
 By summing up over all $\binom{k}{2}$ pairs of vertices, the lemma follows.
\end{proof}
\begin{lemma}\label{lem:smallshiftlemma}Let $U\in \calW$ and let $\phi\colon[0,1]^2\to[0,1]^2$ be a measure preserving function. Then, for every ordered graph $F=([k],E)$ \[ \left|t(F,U)-t(F,U^\phi)\right|\le 4k!\binom{k}{2}\cdot\shift{\phi}\;. \]

\end{lemma}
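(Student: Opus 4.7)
The plan is to reduce the difference $|t(F,U) - t(F,U^\phi)|$ to a purely combinatorial estimate on pairs of coordinates, by exploiting the change of variables induced by the measure preserving map $\phi$. The key observation is that in the integrand defining $t(F,W)$, the orderon values are entangled with ordering indicators $\indi_{v_i \le v_j}$, and after substitution these indicators are the only part that changes.

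First I would write $t(F,U^\phi)$ using the integral over $\Omega = ([0,1]^2)^k$ (the second form in the definition). Substituting $w_i = \phi(v_i)$ and using that $\phi$ is measure preserving, the product of orderon factors becomes $\prod_{(i,j) \in E_<} U(w_i,w_j) \prod_{(i,j) \in \bar E_<}(1 - U(w_i,w_j))$, while the ordering product transforms to $\prod_{i<j}\indi_{\phi^{-1}(w_i) \le \phi^{-1}(w_j)}$. Consequently,
\[
t(F,U) - t(F,U^\phi) = k! \int_\Omega \Psi(w)\left(\prod_{i<j}\indi_{w_i \le w_j} - \prod_{i<j}\indi_{\phi^{-1}(w_i) \le \phi^{-1}(w_j)}\right) dw,
\]
where $\Psi(w)$ is a product of orderon values and their complements, hence $|\Psi(w)| \le 1$.

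Next, since each of the two products is $\{0,1\}$-valued, I would use the elementary fact that their difference in absolute value is at most the sum over pairs $(i,j)$ with $i<j$ of the indicator that $\indi_{w_i \le w_j}$ disagrees with $\indi_{\phi^{-1}(w_i) \le \phi^{-1}(w_j)}$. The integrand for each pair depends only on $(w_i,w_j)$, and integrating gives an identical contribution.

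The main (but mild) step is the pair-wise geometric estimate. Set $s = \shift{\phi}$; by the symmetry in the definition $\shift{\phi^{-1}} = s$, so $|\pi_1(w) - \pi_1(\phi^{-1}(w))| \le s$ for every $w$. If the two indicators disagree on the pair $(w_i,w_j)$, say $\pi_1(w_i) \le \pi_1(w_j)$ but $\pi_1(\phi^{-1}(w_i)) > \pi_1(\phi^{-1}(w_j))$, then combining the two shift bounds yields $|\pi_1(w_i) - \pi_1(w_j)| \le 2s$. Since $\pi_1(w_i)$ and $\pi_1(w_j)$ are marginally uniform on $[0,1]$, the probability of this event is at most $4s$. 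Summing over the $\binom{k}{2}$ pairs and multiplying by $k!$ gives the bound $4 k! \binom{k}{2} \shift{\phi}$, as required. I do not expect any serious obstacle; the only delicate bookkeeping is verifying the $2s$ threshold in the disagreement analysis, which follows from the triangle inequality applied to the four quantities $\pi_1(w_i), \pi_1(w_j), \pi_1(\phi^{-1}(w_i)), \pi_1(\phi^{-1}(w_j))$.
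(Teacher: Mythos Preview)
Your proof is correct and follows essentially the same route as the paper: change variables via $\phi$ to factor out the orderon-value product (bounded by $1$), reduce to the difference of the two ordering-indicator products, and bound this by a sum of $\binom{k}{2}$ pairwise disagreement events each of measure at most $4\,\shift{\phi}$. The paper reaches the same pairwise estimate via a telescoping sum on the indicator product (and phrases the geometric step as ``the intervals $[\pi_1(v),\pi_1(\phi^{-1}(v))]$ intersect''), whereas you use the cleaner observation that for $\{0,1\}$-valued factors $|\prod A_i-\prod B_i|\le\sum_i\indi_{A_i\neq B_i}$; both arguments are equivalent here.
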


\begin{proof} The proof is similar to Lemma~\ref{lem:smallcutnorm-simdensity}. However, we shall slightly change notation. Let $\gamma_W$ be defined as follows.
	\[
	\gamma_W(v_i,v_j)=\begin{cases}
	W(v_i,v_j), &\text{if }   (i,j)\in E(F) \\
	1-W(v_i,v_j), &\text{if } (i,j)\notin E(F)
	\end{cases}\;.
	\] Then, by changing the integration variables in $t(F,U^\phi)$ from $v_i,v_j$ to $\phi^{-1}(v_i),\phi^{-1}(v_j)$, we have
	\begin{align*}
	t(F,U)-t(F,U^\phi)&= k!\int_{v\in\Omega}\left(\prod_{(i,j)\in \binom{[k]}{2}_<}\gamma_U(v_i,v_j)\indi_{v_i\le v_j}-\prod_{(i,j)\in\binom{[k]}{2}_<}\gamma_{U^\phi}(\phi^{-1}(v_i),\phi^{-1}(v_j))\indi_{\phi^{-1}(v_i)\le\phi^{-1} (v_j)}\right)dv\\
	&=k!\int_{v\in\Omega}\left(\prod_{(i,j)\in \binom{[k]}{2}_<}\gamma_U(v_i,v_j)\indi_{v_i\le v_j}-\prod_{(i,j)\in\binom{[k]}{2}_<}\gamma_{U}(v_i,v_j)\indi_{\phi^{-1}(v_i)\le\phi^{-1} (v_j)}\right)dv\\
	&=k!\int_{v\in\Omega}\prod_{(i,j)\in \binom{[k]}{2}_<}\gamma_U(v_i,v_j)\left(\prod_{(i,j)\in \binom{[k]}{2}_<}\indi_{v_i\le v_j}-\prod_{(i,j)\in\binom{[k]}{2}_<}\indi_{\phi^{-1}(v_i)\le\phi^{-1} (v_j)}\right)dv.
	\end{align*}
Hence,
	\begin{align*}
|t(F,U)-t(F,U^\phi)|&\le k!\left|\int_{v\in\Omega}\left(\prod_{(i,j)\in \binom{[k]}{2}_<}\indi_{v_i\le v_j}-\prod_{(i,j)\in\binom{[k]}{2}_<}\indi_{\phi^{-1}(v_i)\le\phi^{-1} (v_j)}\right)dv\right|\\
	&=k!\left|\int_{v\in\Omega}\sum_{s=1}^{\binom{k}{2}}\left(\prod_{r<s}\indi_{v_{i_r}\le v_{j_r}}\right)\left(\prod_{r>s}\indi_{\phi^{-1}(v_{i_r})\le\phi^{-1} (v_{j_r})}\right)\Big(\indi_{v_{i_s}\le v_{j_s}}-\indi_{\phi^{-1}(v_{i_s})\le\phi^{-1} (v_{j_s})}\Big)dv\right|\;.
	\end{align*}
	Similarly to Lemma~\ref{lem:smallcutnorm-simdensity}, we fix all the variables except $v_{i_s}$ and $v_{j_s}$. Then, by using Lemma~\ref{lem:smoothning}, it suffices to estimate
	\[\left|\int_{v_{i_s},v_{j_s}}\Big(\indi_{v_{i_s}\le v_{j_s}}-\indi_{\phi^{-1}(v_{i_s})\le\phi^{-1} (v_{j_s})}\Big)dv_{i_s}dv_{j_s}\right|\le \int_{v_{i_s},v_{j_s}}\left|\indi_{v_{i_s}\le v_{j_s}}-\indi_{\phi^{-1}(v_{i_s})\le\phi^{-1} (v_{j_s})}\right|dv_{i_s}dv_{j_s} .\]
	Note that whenever the intersection between $[\pi_1(v_{i_s}),\pi_1(\phi^{-1}(v_{i_s}))]$ and $[\pi_1(v_{j_s}),\pi_1(\phi^{-1} (v_{j_s}))]$ is empty, the difference between the indicators is zero. Therefore,
	\begin{align}
	&\left|\int_{v_{i_s},v_{j_s}}\Big(\indi_{v_{i_s}\le v_{j_s}}-\indi_{\phi^{-1}(v_{i_s})\le\phi^{-1} (v_{j_s})}\Big)dv_{i_s}dv_{j_s}\right|\nonumber\\
	&\le\int_{v_{i_s},v_{j_s}} \indi_{[\pi_1(v_{i_s}),\pi_1(\phi^{-1}(v_{i_s}))]\cap [\pi_1(v_{j_s}),\pi_1(\phi^{-1} (v_{j_s}))]\neq \emptyset}\;dv_{i_s}dv_{j_s}\le 4\shift{\phi}.\label{eq:shift}
	\end{align}
 By summing up over all $\binom{k}{2}$ pairs of vertices, the lemma follows.\end{proof}

Using the above two lemmas, it is straightforward to prove Lemma~\ref{lem:CountingLemma}.

\begin{proofof}{Lemma~\ref{lem:CountingLemma}}
For any $\gamma>0$ let $\phi\colon[0,1]^2\to[0,1]^2$ be the measure preserving function such that $\shift{\phi}+\|W-U^\phi\|_\square\le \csdist(W,U)+\gamma$. For this specific $\phi$ we have that \[\|W-U^{\phi}\|_\square=\sup_{S,T\subseteq[0,1]^2}\left|\int_{S\times T}W(v_1,v_2)-U(\phi(v_1),\phi(v_2))dv_1dv_2\right|\le\csdist(W,U)+\gamma,\] and $\shift{\phi}\le \csdist(W,U)+\gamma$.
Then, by assuming $\csdist(W,U)>0$ (note that the case $d_{\triangle}(W, U) = 0$ is covered by considering what happens when $\csdist(W,U) \to 0$), and  using the triangle inequality combined with Lemma~\ref{lem:smallcutnorm-simdensity} and Lemma~\ref{lem:smallshiftlemma}, we get
\begin{align*}
|t(F,W)-t(F,U)|&\le|t(F,W)-t(F,U^\phi)|+|t(F,U^\phi)-t(F,U)|\\
&\le 2k!\binom{k}{2}\sqrt{\csdist(W,U)}+k!\binom{k}{2}\frac{\gamma}{\sqrt{\csdist(W,U)}}+4k!\binom{k}{2}(\csdist(W,U)+\gamma)\\
&\le 6k!\binom{k}{2}\sqrt{\csdist(W,U)}++k!\binom{k}{2}\frac{\gamma}{\sqrt{\csdist(W,U)}}+4k!\binom{k}{2}\sqrt{\gamma}.
\end{align*}
As the choice of $\gamma$ is arbitrary, the lemma follows.\end{proofof}

Next we prove a converse statement, showing that if all frequencies of $k$-vertex graphs in a pair of orderons $W$ and $U$ are very similar, than $d_{\triangle}(W, U)$ is small. This establishes the ``if'' component of Theorem~\ref{thm:convergence_equiv}.

\begin{lemma}\label{lem:HardStats} Let $k\in\mathbb{N}$ and $W,U\in\calW$. Assume that for every ordered graph $F$ on $k$ vertices,
	\[|t(F,W)-t(F,U)|\le 2^{-k^2}.\]
	Then, $\csdist(W,U)\le  2C\left(\frac{\log\log k}{{\log k}}\right)^{1/3}$ for some constant $C>0$.
\end{lemma}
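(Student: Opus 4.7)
The plan is to prove the converse of Lemma~\ref{lem:CountingLemma} via a coupling argument using the sampling theorem (Theorem~\ref{thm:Sampling}). The key observation is that, for an orderon $W$, the probability that the random ordered graph $\bG(k,W)$ equals a specific ordered graph $F$ on $[k]$ is exactly $t(F,W)$: this follows directly from the formula for $t(F,W)$ in terms of the integral over $\Omega_o$, since the joint density of the sorted first-coordinates $(\bX_1,\ldots,\bX_k)$ is $k!$ on $\Omega_o$.

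Given this, the hypothesis $|t(F,W)-t(F,U)| \le 2^{-k^2}$ for every $F$ on $k$ vertices immediately bounds the total variation distance:
\[
\dtv(\bG(k,W),\bG(k,U)) \le \tfrac{1}{2}\sum_F |t(F,W)-t(F,U)| \le \tfrac{1}{2}\cdot 2^{\binom{k}{2}}\cdot 2^{-k^2} \le 2^{-k^2/3},
\]
where the sum is over the $2^{\binom{k}{2}}$ ordered graphs on $[k]$. By the standard maximal coupling, there is therefore a joint distribution on $(\bG,\bG')$ with marginals $\bG(k,W)$ and $\bG(k,U)$ such that $\Pr[\bG = \bG'] \ge 1 - 2^{-k^2/3}$.

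Next I would invoke Theorem~\ref{thm:Sampling} separately for $W$ and $U$: for each of them, a sample is within cut-shift distance $C(\log\log k/\log k)^{1/3}$ with probability at least $1 - C\exp(-\sqrt{k}/C)$. Independently of the coupling, these concentration bounds apply to the marginals $\bG$ and $\bG'$. By a union bound, with probability at least $1 - 2C\exp(-\sqrt{k}/C) - 2^{-k^2/3}$, all three events occur simultaneously: $\csdist(W,W_{\bG}) \le C(\log\log k/\log k)^{1/3}$, $\csdist(U,W_{\bG'}) \le C(\log\log k/\log k)^{1/3}$, and $\bG = \bG'$ (so $W_{\bG} = W_{\bG'}$). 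For $k$ sufficiently large this probability is positive, so a realization exists for which the triangle inequality yields
\[
\csdist(W,U) \le \csdist(W,W_{\bG}) + \csdist(W_{\bG'},U) \le 2C\Bigl(\tfrac{\log\log k}{\log k}\Bigr)^{1/3}.
\]
For small $k$ the claimed bound can be made trivially true by adjusting the constant (since $\csdist \le 1$ always).

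The main (minor) obstacle is making the probability bookkeeping clean: one must verify that the total variation bound is indeed realized by a coupling compatible with the independent sampling appearing in Theorem~\ref{thm:Sampling}. This is standard, since the maximal coupling can be constructed a posteriori on any probability space supporting two independent copies of $\bG(k,W)$ and $\bG(k,U)$ together with the sampling randomness. The rest is arithmetic, tuning constants so that the exponent $2^{-k^2/3}$ dominates the failure probabilities from the sampling theorem.
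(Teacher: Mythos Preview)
Your proposal is correct and follows essentially the same approach as the paper's proof: identify $t(F,W)$ with $\Pr[\bG(k,W)=F]$, bound the total variation distance between $\bG(k,W)$ and $\bG(k,U)$ by summing over all $F$, construct a coupling in which the two samples agree with high probability, apply the sampling theorem (Theorem~\ref{thm:Sampling}) to each marginal, and finish with a union bound plus the triangle inequality. The only cosmetic differences are in the arithmetic (the paper bounds the total variation by $2^{-k}$ rather than your $2^{-k^2/3}$, and phrases the threshold as $1-\exp(-k/2\log k)$), but the underlying argument is identical.
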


\begin{proof} We start by showing that if for some $k\ge 2$, the total variation distance between the distribution $\bG(k,W)$ and $\bG (k,U)$ is small then they are close in CS-distance.

	Assume that For $U,W\in\calW$ and some $k\ge2$ it holds that
	\[d_{TV}(\bG(k,W),\bG(k,U))<1-\exp\left(-\frac{k}{2\log k}\right).\]
	This assumption implies that there exists a joint distribution $(\bG(k,W),\bG(k,U))$ so that $\bG(k,W)=\bG(k,U)$ with probability larger than $\exp\left(-\frac{k}{2\log k}\right)$. By Theorem~\ref{thm:Sampling}, with probability at least $1-C\exp(-\sqrt{k}/C)$, we have that $\csdist(W,\bG(k,W))\le C\left(\frac{\log\log k}{{\log k}}\right)^{1/3}$ for some constant $C>0$. Let ${\boldsymbol\calE_1},\boldsymbol\calE_2,\boldsymbol\calE_3$ denote the events that $\bG(k,W)=\bG(k,U)$, $\csdist(W,\bG(k,W))\le C\left(\frac{\log\log k}{{\log k}}\right)^{1/3}$ and $\csdist(U,\bG(k,U))\le C\left(\frac{\log\log k}{{\log k}}\right)^{1/3}$, respectively.

	Therefore, by using a union bound, $\Prx[\overline{\boldsymbol{\calE_1}}\vee\overline{\boldsymbol{\calE_2}}\vee \overline{\boldsymbol{\calE_3}}]\le 2C\exp(-\sqrt{k}/C)+\exp(-k/2\log k)<1$.
	Hence, there is a positive probability for all the three events to occur, implying that
	\[\csdist(W,U)\le \csdist(W,\bG(k,W))+\csdist(U,\bG(k,U))+\csdist(\bG(k,U),\bG (k,W))\le  2C\left(\frac{\log\log k}{{\log k}}\right)^{1/3}.\]

	The lemma follows by noting that,
	\begin{align*}
	|t(F,W)-t(F,U)|&=\left|\Prx_{\bG\sim\bG(k,W)}[\bG=F]-\Prx_{\bG\sim\bG(k,U)}[\bG=F]\right|\le 2^{-k^2},
	\end{align*}
	and hence,
	\begin{align*}
	d_{TV}(\bG(k,W) ,\bG (k,U))&=\sum_F\left|\Prx_{\bG\sim\bG(k,W)}[\bG=F]-\Prx_{\bG\sim\bG(k,U)}[\bG=F]\right|\le 2^{\binom{k}{2}}\cdot 2^{-k^2}\le 2^{-k}\\
	&<1-\exp\left(-\frac{k}{2\log k}\right).
	\end{align*}
\end{proof}

%\section{Convergence}
%\onote{I think this section can be deleted. It is already fully contained in the intro.}
%\begin{definition}We say that a sequence $\{G_n\}_{n\in\mathbb{N}}$ of ordered graphs is \emph{convergent }if $|V(G_n)|\to \infty$, and the sequence $\{t(F,G_n)\}_{n\in\mathbb{N}}$ has a limit for every ordered graph $F$ as $n\to \infty$.
%\end{definition}
%
%\begin{lemma} For every convergent sequence of ordered graphs $\{G_n\}_{n\in\mathbb{N}}$, there exists an orderon $W\in\calW$ such that $t(F,G_n)\to t(F,W)$ for every ordered graph $F$.
%\end{lemma}
%\begin{proof} By Theorem~\ref{thm:Compactness}, the metric space $\widetilde{\calW}$ is compact with respect to $\csdist$, and hence the sequence $\{W_n\}_{n\in \mathbb{N}}$ which is defined by $W_n=W_{G_n}$, has a convergent subsequence $\{W_{n_j}\}_{j\in \mathbb{N}}$ with limit $W\in \widetilde\calW$. By Lemma~\ref{lem:CountingLemma}, we have that for every ordered graph $F$  on $[k]$
%	\[|t(F,W_{n_j})-t(F,W)|\le 6 k!\cdot\binom{k}{2}\sqrt{\csdist(W_{n_j},W)}\to 0\qquad \text{as        } \;j\to \infty.\]
%	Therefore, $t(F,W_{n_j})=t(F,G_{n_j})$ converges to $t(F,W)$.
%\end{proof}

%%%%%%%%%%%%%%%%%%%%%%%%%%%%%%
\section{The furthest ordered graph from a hereditary property}\label{sec:furthest}
%\red{Omri: Need to define ``flexible'' somewhere sooner.}

In this section we prove Theorem~\ref{thm:ordered_Alon_Stav}. The proof roughly follows along the same lines as the proof of Lov\'asz and Szegedy for the
unordered setting~\cite{LS10}. However, before proceeding, let us show why an exact analogue of Theorem~\ref{thm:Alon-Stav-unordered} cannot hold for the ordered setting.

\subsection{The random graph $\bG(n,p)$ is not always the furthest}
\label{subsec:furthest_tehnical}
Recall the hereditary property $\calH$ defined in Subsection~\ref{subsec:furthest_intro}. For convenience, let us describe $\calH$ again. An ordered graph $G$ satisfies $\calH$ if and only if there do not exist vertices $u_1 < u_2 \leq u_3 < u_4$ in $G$ where $u_1 u_2$ is a non-edge and $u_3 u_4$ is an edge.
Here we prove that (a direct analogue of) Theorem~\ref{thm:Alon-Stav-unordered} does not hold for $\calH$, that is, for any $p$, a typical graph $G \sim G(n,p)$ is \emph{not} asymptotically the furthest graph from $\calH$. We contrast this by describing, for any $n \in \mathbb{N}$, a graph $G$ on $n$ vertices that \emph{is} the furthest from $\calH$ up to an $o(1)$ term.

The following lemma characterizes the property $\calH$ in a way that will make it fairly straightforward, given an ordered graph $G$, to estimate the distance $d_1(G, \calH)$.
\begin{lemma}[Thresholding lemma]\label{lem:threshold_hered_property}
	An ordered graph $G$ on vertices $v_1 < v_2 < \cdots < v_n$ satisfies $\calH$ if and only if there exists a ``threshold'' $i \in [n]$ for which the following two conditions hold.
	\begin{itemize}
		\item For any $1 \leq j < j' \leq i$, the vertices $v_j$ and $v_{j'}$ are \emph{connected} in $G$.
		\item For any $i+1 \leq j < j' \leq n$, the vertices $v_j$ and $v_{j'}$ are \emph{not connected} in $G$.
	\end{itemize}
\end{lemma}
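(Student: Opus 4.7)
The plan is a straightforward equivalence proof, relying on the simple observation that the property $\calH$ is governed by a single number---the largest index $i^\ast$ that serves as the smaller endpoint of some edge---and splitting into two directions.

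For the easy direction ($\Leftarrow$), I assume a threshold $i$ satisfying both bullets exists, and directly verify that $G \in \calH$. Suppose for contradiction there are vertices $u_1 < u_2 \le u_3 < u_4$ with $u_1 u_2$ a non-edge and $u_3 u_4$ an edge. The second bullet implies $u_3 \le i$, since otherwise $i+1 \le u_3 < u_4 \le n$ would force $u_3 u_4$ to be a non-edge. Similarly, the first bullet implies $u_2 > i$, since otherwise $1 \le u_1 < u_2 \le i$ would force $u_1 u_2$ to be an edge. Together this gives $u_2 > i \ge u_3$, contradicting $u_2 \le u_3$.

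For the harder direction ($\Rightarrow$), I assume $G \in \calH$ and construct the threshold explicitly. Define
\[
i^\ast \eqdef \max\{\, j \in [n] : \exists\, j' > j \text{ with } v_j v_{j'} \text{ an edge of } G\,\},
\]
with the convention $i^\ast = 0$ if $G$ has no edges. I will verify both bullets for $i = i^\ast$. The second bullet is immediate: if $i^\ast < j < j' \le n$ and $v_j v_{j'}$ were an edge, then $j$ would belong to the set defining $i^\ast$, contradicting $j > i^\ast$. The first bullet uses the hereditary condition: suppose $1 \le j < j' \le i^\ast$ but $v_j v_{j'}$ is a non-edge. By the definition of $i^\ast$, there exists some $j'' > i^\ast$ with $v_{i^\ast} v_{j''}$ an edge of $G$. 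Setting $(u_1, u_2, u_3, u_4) = (j, j', i^\ast, j'')$, I get $u_1 < u_2 \le u_3 < u_4$ with $u_1 u_2$ a non-edge and $u_3 u_4$ an edge, contradicting $G \in \calH$.

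There is no real obstacle here; the only mildly delicate point is handling the boundary cases ($G$ has no edges, in which case $i^\ast = 0$ and the first bullet is vacuous; or $G$ is complete, in which case $i^\ast = n-1$ suffices and the second bullet is vacuous), and making sure the middle inequality $u_2 \le u_3$ in the forbidden pattern really does allow $u_2 = u_3$ so that the construction in the $(\Rightarrow)$ direction with $u_2 = j' = i^\ast = u_3$ (possible when $j' = i^\ast$) is a legitimate forbidden configuration. This is exactly why the definition of $\calH$ uses $u_2 \le u_3$ rather than $u_2 < u_3$, so the proof goes through without modification.
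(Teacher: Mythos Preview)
Your proof is correct and follows a route that is dual to the paper's. The paper defines the threshold as the maximal $i$ for which $\{v_1,\ldots,v_i\}$ is a clique, so that the first bullet holds by construction and the work goes into verifying the second bullet (via the maximality of $i$ to locate a non-edge $v_{j''}v_{i+1}$). You instead define $i^\ast$ as the largest smaller endpoint of any edge, so that the second bullet is immediate and the work goes into the first bullet (via the definition of $i^\ast$ to locate an edge $v_{i^\ast}v_{j''}$). The two arguments are mirror images of each other and equally short; neither has any real advantage.

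One tiny technical point: in the edgeless case you set $i^\ast=0$, but the lemma as stated asks for $i\in[n]$. This is harmless---just take $i=1$ instead, for which the first bullet is still vacuous and the second bullet holds since $G$ has no edges.
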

%The proof of the lemma is simple and elementary; for readability purposes, it is relegated to Section \ref{sec:furthest}.
\begin{proof}
	Suppose first that $G \in \calH$, and pick the maximal $i \in [n-1]$ for which all pairs of the form $v_jv_{j'}$ where $1 \leq j < j' \leq i$ are connected in $G$. The first condition holds trivially, and it only remains to prove the second one. Suppose on the contrary that it does not hold, which means that $i < n-1$ and that $v_j$ and $v_{j'}$ are connected for some $i+1 \leq j < j' \leq n$. By the maximality of $i$, there exists some $j'' \leq i$ for which $v_{j''}v_{i+1}$ is \emph{not} an edge. Since $i+1 \leq j$, the four vertices $v_{j''} < v_{i+1} \leq v_j < v_{j'}$ induce the ordered pattern forbidden by $\calH$, a contradiction.
	
	Conversely, if there exists $i$ for which the two conditions hold, then for any tuple of vertices $v_{j_1}, v_{j_2}, v_{j_3}, v_{j_4}$ where $j_1 < j_2 \leq j_3 < j_4$, at least one of the following holds. Either we have $j_2 \leq i$, meaning that $v_{j_1} v_{j_2}$ is an edge, or $j_3 \geq i+1$, meaning that $v_{j_3} v_{j_4}$ is a non-edge. In both cases, the tuple does not violate the condition in the definition of $\calH$.
\end{proof}

Using Lemma~\ref{lem:threshold_hered_property}, we can estimate the typical distance of a graph $G \sim \bG(n,p)$ from $\calH$.
\begin{lemma}
	For any $n \in \mathbb{N}$ and $p \in [0,1]$ (possibly depending on $n$), a graph $G \sim \bG(n,p)$ satisfies with high probability that $d_1(G, \calH) = p(1-p) + o(1) \leq \frac{1}{4} + o(1)$.
\end{lemma}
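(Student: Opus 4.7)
The plan is to leverage the thresholding characterization from Lemma~\ref{lem:threshold_hered_property} to reduce the distance computation to a one-parameter optimization, then control the randomness via standard concentration.

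First I would observe that, by Lemma~\ref{lem:threshold_hered_property}, the distance $d_1(G, \calH)$ equals the minimum, over all possible thresholds $i \in \{0,1,\ldots,n\}$, of the cost of ``repairing'' $G$ with threshold $i$: filling in all missing edges among $v_1,\ldots,v_i$ and deleting all edges among $v_{i+1},\ldots,v_n$ (edges across the threshold are unconstrained and cost nothing). Writing $\bX_i$ for the number of non-edges of $G$ inside the prefix $\{v_1,\ldots,v_i\}$ and $\bY_i$ for the number of edges of $G$ inside the suffix $\{v_{i+1},\ldots,v_n\}$, and accounting for the symmetric adjacency matrix, this gives $d_1(G, \calH) = \min_{0 \le i \le n} \frac{2(\bX_i + \bY_i)}{n^2}$.

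Next I would analyze the random variable for each fixed $i$. Since $G \sim \bG(n,p)$, $\bX_i$ and $\bY_i$ are independent sums of independent Bernoulli trials with $\Ex[\bX_i] = (1-p)\binom{i}{2}$ and $\Ex[\bY_i] = p\binom{n-i}{2}$, both of magnitude at most $\binom{n}{2}$. Hoeffding's inequality gives $\Pr[|\bX_i - \Ex[\bX_i]| \ge n^{3/2}\sqrt{\log n}\,] \le n^{-\omega(1)}$, and the same for $\bY_i$. A union bound over the $n+1$ choices of $i$ then shows that, with probability $1 - o(1)$, the normalized cost $\bc_i \eqdef \frac{2(\bX_i+\bY_i)}{n^2}$ satisfies $|\bc_i - \Ex[\bc_i]| = O(\sqrt{\log n}/\sqrt{n}) = o(1)$ simultaneously for every $i$. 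In particular, $d_1(G, \calH) = \min_i \Ex[\bc_i] + o(1)$ w.h.p.

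It then remains to evaluate the deterministic minimum. Writing $i = tn$, we have $\Ex[\bc_i] = (1-p)t^2 + p(1-t)^2 + O(1/n)$. The quadratic $\varphi(t) = (1-p)t^2 + p(1-t)^2$ is convex with $\varphi'(t) = 2(1-p)t - 2p(1-t)$, so its minimum over $t \in [0,1]$ is attained at $t = p$, giving $\varphi(p) = (1-p)p^2 + p(1-p)^2 = p(1-p)$. Because $\varphi$ is Lipschitz on $[0,1]$ with constant $O(1)$, the discrete minimum $\min_i \Ex[\bc_i]$ differs from $\varphi(p)$ by only $O(1/n)$, yielding $d_1(G, \calH) = p(1-p) + o(1)$ w.h.p. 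The bound $p(1-p) \le 1/4$ is the elementary AM-GM fact with equality at $p=1/2$.

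The argument is essentially routine once the thresholding reformulation is in place, and the only mild subtlety I anticipate is to keep the error bounds uniform in $p$ (in particular when $p = p(n)$ may depend on $n$). Hoeffding's inequality handles this cleanly because its error bound depends only on the number of trials and not on their individual probabilities, so the concentration statement ``$|\bc_i - \Ex[\bc_i]| = o(1)$ for every $i$'' holds with a rate independent of $p$. This makes the conclusion $d_1(G, \calH) = p(1-p) + o(1) \le 1/4 + o(1)$ hold uniformly.
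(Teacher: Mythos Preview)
Your proposal is correct and follows essentially the same approach as the paper's proof: both use the thresholding characterization of Lemma~\ref{lem:threshold_hered_property} to reduce $d_1(G,\calH)$ to a one-parameter minimization of the cost $(1-p)\binom{i}{2}+p\binom{n-i}{2}$, apply a Chernoff/Hoeffding-type bound with a union bound over $i$ to replace costs by their expectations up to $o(1)$, and then optimize the resulting quadratic to get $p(1-p)$ at $i\approx np$. Your write-up is somewhat more careful about the normalization and about uniformity in $p$, but the argument is the same.
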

\begin{proof}
	Standard Chernoff-type concentration bounds show that w.h.p.~the following holds for all $i \in [n]$: the number of non-edges among the first $i$ vertices is $(1-p)i^2/2 + o(n^2)$, and the number of edges among the last $n-i$ vertices is $p(n-i)^2/2 + o(n^2)$. Thus, w.h.p.~we have
	\begin{equation}
	\label{eq:dist_of_random_graph}
	d_1(G, \calH) = \frac{1}{\binom{n}{2}} \min_{i \in [n]} \left( (1-p)\frac{i^2}{2} + p\frac{(n-i)^2}{2} + o(n^2)  \right) = p(1-p) + o(1)
	\end{equation}
	where the minimum of the sum (up to the $o(n^2)$ term) is attained for $i \in \{\lfloor np \rfloor, \lceil np \rceil\}$. The $p(1-p)$ term is maximized at $p = 1/2$, where it equals $1/4$. %In other words, the maximal distance that we can hope to get by taking $G \sim \bG(n,p)$ is $1/4 + o(1)$, attained at $p = 1/2$.
\end{proof}

On the other hand, there exist large graphs whose distance from $\calH$ is much larger than $1/4$.
\begin{lemma}
	Let $n \in \mathbb{N}$, and consider the graph $G$ on the vertex set $[n]$ with the standard ordering, where $u$ and $v$ are connected if and only if $u+v \geq n$. The graph $G$ satisfies $d_1(G, \calH) \geq 1/2 - o(1)$.
\end{lemma}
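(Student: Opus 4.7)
The plan is to invoke the threshold characterization of $\calH$ from Lemma~\ref{lem:threshold_hered_property} to reduce the problem to a short combinatorial counting. First I would observe that every ordered graph $G' \in \calH$ on $[n]$ has some threshold $i \in \{0, 1, \ldots, n\}$ for which every pair inside $\{1,\ldots,i\}$ is an edge and every pair inside $\{i+1,\ldots,n\}$ is a non-edge, while the edges crossing the threshold are unconstrained. For a fixed $i$, the edit distance to $G$ is minimized by matching the crossing edges of $G'$ to those of $G$, which gives
\[
d_1(G, \calH) \;=\; \frac{2}{n^2} \min_{0 \leq i \leq n} \bigl(A_i + B_i\bigr),
\]
where $A_i \eqdef |\{(u,v) : 1 \leq u < v \leq i,\ u+v < n\}|$ is the number of non-edges of $G$ inside $\{1,\ldots,i\}$ (which must be flipped to edges) and $B_i \eqdef |\{(u,v) : i < u < v \leq n,\ u+v \geq n\}|$ is the number of edges of $G$ inside $\{i+1,\ldots,n\}$ (which must be flipped to non-edges); the factor $2$ accounts for the symmetric adjacency matrix convention.

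Next I would estimate $A_i + B_i$ by a ``tent function'' sum. Each index $w \in \{1,\ldots,n\}$ contributes to exactly one of $A_i$ or $B_i$: a direct count shows that its contribution to $A_i$ (when $w \leq i$) is $\min(w-1,\,n-w-1)$, and its contribution to $B_i$ (when $w > i$) is $\min(w+1,\,n-w)$. Both of these differ from $\min(w, n-w)$ by at most $1$, so
\[
A_i + B_i \;=\; \sum_{w=1}^{n} \min(w, n-w) \;+\; O(n) \;=\; \Big\lfloor \frac{n^{2}}{4} \Big\rfloor + O(n),
\]
using the standard identity $\sum_{w=1}^n \min(w, n-w) = \lfloor n^2/4 \rfloor$. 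The key point is that the dominant term is \emph{independent of $i$}: the graph $G$ is designed so that every choice of threshold leaves essentially the same amount of editing work to do.

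Combining the two steps yields
\[
d_1(G, \calH) \;\geq\; \frac{2}{n^{2}}\!\left(\frac{n^{2}}{4} - O(n)\right) \;=\; \frac{1}{2} - O\!\left(\frac{1}{n}\right) \;\geq\; \frac{1}{2} - o(1),
\]
as desired. I do not anticipate serious obstacles. The only care needed is in the bookkeeping of the $O(1)$ discrepancies between $\min(w-1, n-w-1)$, $\min(w, n-w)$, and $\min(w+1, n-w)$, and of the boundary indices $w \in \{1, n\}$; since we only require a lower bound of $1/2 - o(1)$, all such lower-order effects are safely absorbed into the $O(n)$ error term.
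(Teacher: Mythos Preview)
Your proposal is correct and follows essentially the same approach as the paper: reduce via the threshold lemma to computing $A_i+B_i$ for each $i$, and show this is $n^2/4+O(n)$ regardless of $i$. The only organizational difference is that the paper handles the count by reducing (via the symmetry $i\leftrightarrow n-i$) to the case $i\le n/2$, where $A_i=\binom{i}{2}$ exactly and $B_i=\tfrac12\binom{n}{2}-\binom{i}{2}-o(n^2)$, whereas you avoid the case split by writing $A_i+B_i$ as a single sum $\sum_w \min(w,n-w)+O(n)$; your phrase ``each index $w$ contributes to exactly one of $A_i$ or $B_i$'' should be read as ``$w$ is the larger vertex of its pair when $w\le i$ and the smaller vertex when $w>i$,'' which is the convention that makes your per-$w$ counts add up correctly.
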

\begin{proof}
	Fix any $i \in [n]$. Without loss of generality, we may assume that $i \leq n/2$ (the case $i > n/2$ is symmetric).
	On one hand, the number of non-edges between pairs of vertices $j < j' \in [i]$ is $\binom{i}{2}$. On the other hand, a straightforward calculation shows that the number of edges between pairs of vertices $j,j'$ where $i+1 \leq j < j'$ is $\frac{1}{2}\binom{n}{2} - \binom{i}{2} - o(n^2)$. Thus, the number of edges that one needs to add or remove if the threshold of Lemma~\ref{lem:threshold_hered_property} is set to $i$ is $\frac{1}{2}\binom{n}{2} - o(n^2)$ (regardless of the value of $i$), implying that $d_1(G, \calH) \geq 1/2 - o(1)$.
\end{proof}

Conversely, it is trivial that $\overline{d_\calH} \leq 1/2$, as any graph $G$ can be turned into either a complete or empty graph (which both satisfy $\calH$) by adding or removing at most $\frac{1}{2}\binom{n}{2}$ edges. Combined with the last lemma, we conclude that $\overline{d_{\calH}} = 1/2$.

\subsection{Proof of Theorem~\ref{thm:ordered_Alon_Stav}}
\label{subsec:furthest-proof}
We continue to the proof of Theorem~\ref{thm:ordered_Alon_Stav}. Along this section, the ordered graphs that we consider are generally simple, and the notion of a hereditary property refers to a property of simple ordered graphs.
To begin, we first establish several basic properties of hereditary properties in the ordered setting, starting with a discussion on their \emph{closure} in the space of orderons.

Let $\calH$ be a hereditary simple ordered graph property ($\calH$ will be fixed throughout the section). Recall that a property is \emph{hereditary} if it satisfies the following: if $G\in\calH$ then every induced subgraph of $G$ (vertex repetitions are not allowed) is also in $\calH$.

In general, we define the \emph{closure} $\barP$ of an ordered graph property $\calP$ as the set of all orderons $W$ for which there exists a sequence of graphs $G_n \in \calP$ (with $|V(G_n)| \to \infty$) that converges to $\calP$ in $d_{\triangle}$. Note that $\barP$ is a closed set in $\calW$ with respect to $d_{\triangle}$ (also note that it is generally \emph{not} true that $W_G \in \barP$ for any $G \in \calP$). The following characterization for the closure of a hereditary property will be useful in multiple occasions along this section as well as in Section \ref{sec:hereditary}.

\begin{lemma}\label{lem:conditions_closure}
	Let $\calH$ be a hereditary property of ordered graphs. The following conditions are equivalent for an orderon $W$.
	\begin{itemize}
		\item $W \in \barH$.
		\item $t(F,W) = 0$ for any ordered graph $F \notin \calH$.
		\item $\Prx[\bG(k,W)\in \calH]=1$ for every $k$.
	\end{itemize}
\end{lemma}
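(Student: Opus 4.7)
The plan is to prove the three implications by showing the cycle $(1) \Rightarrow (2) \Rightarrow (3) \Rightarrow (1)$, although $(2) \Leftrightarrow (3)$ is essentially immediate from the definition of $t(F,W)$.

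First, for $(2) \Leftrightarrow (3)$: since an ordered graph has trivial automorphism group, the quantities $t(F,W)$ for $F$ running over all simple ordered graphs on $k$ vertices form a probability distribution, namely the distribution of $\bG(k,W)$. Hence
\begin{align*}
\Prx[\bG(k,W) \notin \calH] \;=\; \sum_{F \notin \calH,\, |V(F)|=k} t(F,W),
\end{align*}
which is $0$ for every $k$ if and only if $t(F,W) = 0$ for every $F \notin \calH$. This takes care of two of the three implications at once.

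For $(1) \Rightarrow (2)$: suppose $W \in \barH$, so there is a sequence $G_n \in \calH$ with $|V(G_n)| \to \infty$ and $\csdist(W_{G_n}, W) \to 0$. Fix any simple ordered $F \notin \calH$. Since $\calH$ is hereditary, $F$ cannot appear as an induced ordered subgraph of any $G_n$, so $t(F, G_n) = t(F, W_{G_n}) = 0$ for every $n$. By Lemma~\ref{lem:CountingLemma}, $t(F, W_{G_n}) \to t(F, W)$, forcing $t(F, W) = 0$.

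The main direction is $(3) \Rightarrow (1)$, and the key tool will be the sampling theorem (Theorem~\ref{thm:Sampling}). Assume $\Prx[\bG(k,W) \in \calH] = 1$ for every $k$. For each $k$, Theorem~\ref{thm:Sampling} guarantees that with probability at least $1 - C\exp(-\sqrt{k}/C)$,
\begin{align*}
\csdist(W, W_{\bG(k,W)}) \;\leq\; C\Bigl(\tfrac{\log\log k}{\log k}\Bigr)^{1/3}.
\end{align*}
Intersecting with the probability-$1$ event $\{\bG(k,W) \in \calH\}$ and taking a union bound, for all sufficiently large $k$ there exists (with positive probability, hence as an actual realization) an ordered graph $G_k \in \calH$ on $k$ vertices with $\csdist(W, W_{G_k}) \leq C(\log\log k / \log k)^{1/3}$. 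The right-hand side tends to $0$ as $k \to \infty$, so $\{G_k\}$ is a sequence in $\calH$ with $|V(G_k)| \to \infty$ and $W_{G_k} \to W$ in $\csdist$, i.e., $W \in \barH$.

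No step looks like a serious obstacle: the counting lemma handles $(1)\Rightarrow(2)$, the distributional identity handles $(2)\Leftrightarrow(3)$, and the sampling theorem together with the hereditary hypothesis handles $(3)\Rightarrow(1)$. The only mildly delicate point is ensuring that one can extract an \emph{actual} graph $G_k \in \calH$ close to $W$ in $\csdist$; this is resolved by the elementary observation that the intersection of a probability-$1$ event with a high-probability event still has positive (in fact high) probability, so a suitable $G_k$ must exist.
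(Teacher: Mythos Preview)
Your approach matches the paper's almost exactly: the paper also treats $(2)\Leftrightarrow(3)$ via the identity $\Prx[\bG(k,W)\notin\calH]=\sum_{F\notin\calH,\,|V(F)|=k}t(F,W)$, proves $(1)\Rightarrow(2)$ by passing to the limit along a sequence $G_n\in\calH$ converging to $W$, and proves $(3)\Rightarrow(1)$ via the sampling theorem.

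There is one small but genuine slip in your $(1)\Rightarrow(2)$ argument. You assert that since $\calH$ is hereditary and $G_n\in\calH$, we have $t(F,G_n)=0$ for $F\notin\calH$. This is not quite true: the frequency $t(F,G)$ in this paper is defined by sampling $k$ vertices of $G$ uniformly and independently \emph{with repetitions allowed}, and when a repetition occurs the resulting $k$-vertex graph need not be an induced subgraph of $G_n$ in the sense used for hereditary properties (where repetitions are disallowed). Concretely, if $\calH$ is the class of complete ordered graphs and $F$ is the non-edge on two vertices, then $t(F,K_n)=1/n>0$. The correct statement, which the paper makes explicitly, is that $t(F,W_{G_n})$ is bounded by the collision probability $\binom{k}{2}/|V(G_n)|\to 0$; then your appeal to Lemma~\ref{lem:CountingLemma} finishes the argument exactly as you wrote. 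So the fix is to replace ``$t(F,G_n)=0$'' by ``$t(F,G_n)\le\binom{k}{2}/|V(G_n)|\to 0$'' and the rest of your proof goes through unchanged.
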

\begin{proof}
	The second and third conditions are clearly equivalent: for any fixed $k$, the probability that $\bG(k,W)\notin \calH$ is the sum of $t(F, W)$ over all $F \notin \calH$ on $k$ vertices. We now show equivalence between the first condition and the other two. If $W \in \barH$ then there exists a sequence of ordered graphs $G_n \in \calH$ with $|V(G_n)| \to \infty$ that converges to $W$ in $d_{\triangle}$. Fix $k$. For any $n$, the density $t(F, W_n)$ is bounded by the probability that, when picking $k$ vertices from a set of $n$ vertices with repetition, some vertex will be picked more than once (see Observation \ref{Obs:GraphsToOrderons} for more details). This probability is bounded by $\binom{k}{2}/n \to 0$, that is, $t(F, W_n) \to 0$ as $n \to \infty$.
	Since $G_n \to W$, it follows that $t(F, W) = \lim t(F, W_n) = 0$. The converse follows immediately from the third condition combined with our sampling theorem, which together show that with high probability, a large enough sample of $W$ will both satisfy $\calH$ and be arbitrarily close to $W$ in $d_{\triangle}$.
\end{proof}

We will need the definition of a \emph{flexible} property, given below; this is an ordered analogue of flexible properties in the unordered setting, defined in \cite{LS10}.

\begin{definition}[support, flexibility]
	For an orderon $W \in \calW$ and a value $0 \leq \alpha \leq 1$, the $\alpha$-support of $W$, denoted $\supp_\alpha(W)$, is the set of all pairs $(u,w) \in ([0,1]^2)^2$ for which $W(u,v) = \alpha$.
	
	We say that a property $\calR$ is \emph{flexible} if for any $W, W' \in \calW$ where $\supp_\alpha(W) \subseteq \supp_\alpha(W')$ for $\alpha = 0,1$ and $W \in  \calR$ it holds that $W' \in \calR$ as well. In particular, this means that the supports $\supp_{\alpha}(W)$ for $\alpha = 0,1$ fully determine whether $W$ satisfies a given flexible property.
\end{definition}

%\begin{definition}[flexing, flexible property]
%	For two orderons $U, W \in \calW$, we say that $U$ is a \emph{flexing} of $W$ if $U(u,v) = W(u,v)$ for any $(u,v) \in ([0,1]^2)^2$ satisfying that $W(u,v) \in \{0,1\}$.
%
%	A property $\calP$ of orderons is \emph{flexible} if the flexing of any orderon $W \in \calP$ is also in $\calP$.
%\end{definition}

\begin{lemma}\label{lem:U-flexing} The closure $\barH$ of a hereditary property is flexible.
\end{lemma}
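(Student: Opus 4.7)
The plan is to use the characterization in Lemma~\ref{lem:conditions_closure}, which reduces membership in $\barH$ to the vanishing of the induced homomorphism densities of all graphs outside $\calH$. Concretely, it suffices to prove that whenever $W \in \barH$ and $\supp_\alpha(W) \subseteq \supp_\alpha(W')$ for $\alpha \in \{0,1\}$, then $t(F, W') = 0$ for every simple ordered graph $F = ([k], E) \notin \calH$. I will fix such $F$ and work directly with the integral definition
\[
t(F, W') = k! \int_{v \in \Omega_o} \prod_{(i,j) \in E_{<}} W'(v_i, v_j) \prod_{(i,j) \in \overline{E}_{<}} \bigl(1 - W'(v_i, v_j)\bigr)\, dv,
\]
aiming to show the integrand vanishes almost everywhere on $\Omega_o$.

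The first step is to analyze when the analogous integrand for $W$ vanishes. Since the integrand is nonnegative, the assumption $t(F, W) = 0$ forces the product $\prod_{(i,j) \in E_{<}} W(v_i, v_j) \cdot \prod_{(i,j) \in \overline{E}_{<}} (1 - W(v_i, v_j))$ to equal zero for almost every $v = (v_1, \ldots, v_k) \in \Omega_o$. Equivalently, at almost every such $v$ there is either an index pair $(i,j) \in E_{<}$ with $W(v_i, v_j) = 0$ (the factor $W(v_i, v_j)$ kills the product) or an index pair $(i,j) \in \overline{E}_{<}$ with $W(v_i, v_j) = 1$ (the factor $1 - W(v_i, v_j)$ kills the product). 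I will call such an $(i,j)$ an obstruction witness for $F$ at $v$.

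The second step transfers the obstruction witness from $W$ to $W'$. By the support inclusion hypothesis applied pointwise, $W(v_i, v_j) = 0$ forces $W'(v_i, v_j) = 0$ and $W(v_i, v_j) = 1$ forces $W'(v_i, v_j) = 1$. Hence every obstruction witness for $F$ under $W$ is also an obstruction witness under $W'$, so the $W'$-integrand vanishes at a.e.\ $v \in \Omega_o$ and $t(F, W') = 0$. Taking a union over the finitely many ordered graphs on $k$ vertices outside $\calH$ for each fixed $k$, and then over all $k$, yields $t(F, W') = 0$ for every $F \notin \calH$, so Lemma~\ref{lem:conditions_closure} gives $W' \in \barH$. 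I do not anticipate any real obstacle here: the entire argument reduces to the observation that the almost-everywhere zero set of the homomorphism integrand is governed solely by the $\{0,1\}$-values of the orderon on required edges and non-edges, after which the support-inclusion hypothesis transfers the conclusion to $W'$ directly.
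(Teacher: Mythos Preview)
Your proof is correct and follows essentially the same approach as the paper: both invoke Lemma~\ref{lem:conditions_closure} to reduce to $t(F,W')=0$ for $F\notin\calH$, then argue that the nonnegative integrand for $W$ vanishes almost everywhere, which forces some factor $W(v_i,v_j)$ or $1-W(v_i,v_j)$ to be zero, and the support-inclusion hypothesis transfers this witness to $W'$. Your version is slightly more explicit in naming the ``obstruction witness,'' but the logic is identical.
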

\begin{proof}
	By Lemma~\ref{lem:conditions_closure}, the closure of a hereditary property can be defined by conditions of the form $t(F,U)=0$ (for all graphs $F\notin \calH$). For every fixed $F\notin \calH$, this condition means that for almost all vectors $v\in [0,1]^{|V(F)|}$, at least one of the factors in
	$\prod_{{(i,j)\in E_{<}}}U(v_i,v_j)\cdot\prod_{{(i,j)\in \overline{E}_{<}}}(1-U(v_i,v_j))$
	must be $0$. This condition is preserved if values of $W$ that are \emph{strictly} between $0$ and $1$ are changed to any other value (including $0$ and $1$).
\end{proof}

The next lemma shows, using the flexibility, that the distance from the closure of a hereditary property is a concave function over the space of orderons.

\begin{lemma}\label{lem:distance_is_concave}
	Let $\calH$ be a hereditary ordered graph property.
	The distance $d_1(\cdot, \barH)$ is a concave function over $\calW$.
\end{lemma}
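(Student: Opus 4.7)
The plan is to exploit the flexibility of $\barH$ (Lemma~\ref{lem:U-flexing}) to split a near-optimal competitor for $W=\lambda W_1+(1-\lambda)W_2$ into competitors for $W_1$ and $W_2$ that together give a matching pointwise bound. Concretely, I interpret $d_1$ on $\calW$ via the natural $L^1$ extension, $d_1(W,W')=\int_{([0,1]^2)^2}|W-W'|$ and $d_1(W,\barH)=\inf_{U\in\barH}d_1(W,U)$.

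Fix $W_1,W_2\in\calW$, $\lambda\in[0,1]$, and set $W=\lambda W_1+(1-\lambda)W_2$. Given any $U\in\barH$, define two orderons $U_1,U_2$ pointwise as follows: at points $(u,v)$ with $U(u,v)\in\{0,1\}$ put $U_1(u,v)=U_2(u,v)=U(u,v)$, and at points with $U(u,v)\in(0,1)$ put $U_1(u,v)=W_1(u,v)$ and $U_2(u,v)=W_2(u,v)$. These are measurable (and symmetric, by symmetry of $U,W_1,W_2$), and by construction $\supp_\alpha(U)\subseteq\supp_\alpha(U_i)$ for $\alpha\in\{0,1\}$ and $i\in\{1,2\}$; hence Lemma~\ref{lem:U-flexing} yields $U_1,U_2\in\barH$.

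The heart of the argument is the pointwise inequality
\[
|W(u,v)-U(u,v)|\;\ge\;\lambda\,|W_1(u,v)-U_1(u,v)|+(1-\lambda)\,|W_2(u,v)-U_2(u,v)|
\]
at almost every $(u,v)$. Where $U\in(0,1)$ the right side is $0$, so there is nothing to show. Where $U=0$, both $U_i=0$, so $|W-U|=W=\lambda W_1+(1-\lambda)W_2=\lambda|W_1-U_1|+(1-\lambda)|W_2-U_2|$; the case $U=1$ is symmetric using $|W-U|=1-W$. Integrating and then taking the infimum over $U\in\barH$ gives
\[
d_1(W,\barH)\;\ge\;\lambda\,d_1(W_1,\barH)+(1-\lambda)\,d_1(W_2,\barH),
\]
as desired.

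There is no serious obstacle: once flexibility is in hand, the only genuine choice is how to split $U$, and the $\{0,1\}$-vs-$(0,1)$ dichotomy is forced because flexibility lets us modify $U$ freely precisely on the set $\{U\in(0,1)\}$, while the remaining set is where $|W-U|$ decomposes exactly as a convex combination. The main thing to be careful about is that $U_1,U_2$ really do lie in $\calW$ (measurability and symmetry) and that $\barH$'s flexibility is invoked with the correct direction of inclusion on supports.
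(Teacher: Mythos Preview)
Your proof is correct and essentially identical to the paper's: both take a competitor in $\barH$, freeze it on its $\{0,1\}$-support, replace it by $W_i$ elsewhere to get $U_i\in\barH$ via flexibility, and verify the pointwise inequality $|W-U|\ge\lambda|W_1-U_1|+(1-\lambda)|W_2-U_2|$ (the paper simply says ``it is easy to verify'' where you spell out the three cases). The only difference is notation: the paper writes the convex combination as $U$ and the competitor as $U'$, with your $U_i$ called $W'_i$.
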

\begin{proof}
	Let $W_1, W_2, U \in \calW$ be three orderons satisfying $U = \lambda W_1 + (1-\lambda) W_2$ for some $0 < \lambda < 1$. It suffices to show that $d_1(U, \barH) \geq \lambda d_1(W_1, \barH) + (1-\lambda)d_1(W_2, \barH)$. The crucial observation is that $\supp_\alpha(U) = \supp_\alpha(W_1) \cap \supp_\alpha(W_2)$ for $\alpha = 0, 1$.
	
	Let $U'$ be any orderon in $\barH$. We show that there exist $W'_1, W'_2 \in \barH$ so that $$\lambda d_1(W_1, W'_1) + (1-\lambda) d_1(W_2, W'_2) \leq d_1(U, U').$$
	Indeed, for $i = 1,2$ pick $W'_i$ as the unique orderon satisfying the following: $W'_i(u,v) = 0$ for $(u,v) \in \supp_0(U')$; % \setminus \supp_0(U)$;
	$W'_i(u,v) = 1$ for $(u,v) \in \supp_1(U')$; % \setminus \supp_1(U)$;
	and otherwise, $W'_i(u,v) = W_i(u,v)$. By Lemma~\ref{lem:U-flexing}, $W'_1, W'_2 \in \barH$, and it is easy to verify that $W'_1, W'_2$ satisfy the desired inequality. The proof of the lemma now follows by taking the infimum over all $U',W_1',W_2'\in \calH$.
\end{proof}

%\begin{proof}
%Let $W,W',U \in \calW$ be three orderons where $U = \lambda W + (1-\lambda) W'$ for some $0 < \lambda < 1$. Fix $\eps > 0$ and let $Z,Z' \in \calR$ be orderons that satisfy $d_1(W, Z) \leq d_1(W, \calR) + \eps$ and $d_1(W', Z') \leq d_1(W', \calR) + \eps$ and note that
%\end{proof}

Recall that an orderon $W$ is called \emph{naive} if its values do not depend on the second (``variability'') coordinate. That is, for every $x,y\in[0,1] $ it holds that $W((x,a), (y,b)) = W((x, a'), (y, b'))$ for any $a,a',b,b' \in [0,1]$.
Our next goal is to show, using the concavity, that for any hereditary property $\calH$ and orderon $W$ there exists a naive orderon $W'$ at least as far from $\barH$ as $W$.

\begin{lemma}
	For any hereditary property $\calH$ and orderon $W$ there exists a naive orderon $W'$ so that $d_1(W', \barH) \geq d_1(W, \barH)$.
\end{lemma}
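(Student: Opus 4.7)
The plan is to construct a naive orderon $W'$ by averaging $W$ over a family of zero-shift, measure-preserving reparameterizations of the second coordinate, and then to use concavity of the distance (Lemma~\ref{lem:distance_is_concave}) to show that this average is no closer to $\barH$ than $W$.

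First I would define the candidate naive orderon
\[
\overline{W}((x,a),(y,b)) \,:=\, \int_0^1 \int_0^1 W((x,s),(y,t))\, ds\, dt,
\]
which depends only on $x$ and $y$ and inherits symmetry from $W$. To realize $\overline{W}$ as a suitable average of $W$, for each integer $N\ge 1$ I would partition $[0,1]$ into equal subintervals $I_1,\ldots,I_N$ and, for $r=(r_1,\ldots,r_N)\in[0,1]^N$, consider the measure-preserving bijection $f_r\in\calF$ given by $f_r(x,a)=(x,\,a+r_{i(x)}\bmod 1)$, where $i(x)$ denotes the index with $x\in I_{i(x)}$. Each $f_r$ has $\shift{f_r}=0$, so a change of variables shows that $L^1$-distances are preserved under $U\mapsto U^{f_r}$, and in fact the property $\barH$ itself is preserved: each indicator $\indi_{v_i\le v_j}$ is invariant (since first coordinates are fixed by $f_r$), so $t(F,U^{f_r})=t(F,U)$ for every ordered graph $F$, and Lemma~\ref{lem:conditions_closure} yields $U\in\barH\iff U^{f_r}\in\barH$. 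Consequently $d_1(W^{f_r},\barH)=d_1(W,\barH)$ for every $r$.

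Next I would set $\overline{W}_N := \Ex_{r} W^{f_r}$, with $r$ uniform in $[0,1]^N$. For any pair $((x,a),(y,b))$ with $i(x)\neq i(y)$, independence of $r_{i(x)}$ and $r_{i(y)}$ collapses the expectation to $\overline{W}((x,a),(y,b))$; on the ``diagonal'' set $\{i(x)=i(y)\}$, of total Lebesgue measure $1/N$, the two orderons may disagree but are bounded, so $\|\overline{W}_N - \overline{W}\|_1\le 1/N$. Iterating Lemma~\ref{lem:distance_is_concave} from the two-point case to finite convex combinations, and then extending to the continuous average by discretizing each $r_i$ over a fine grid $\{0,1/M,\ldots,(M-1)/M\}$ and letting $M\to\infty$ via $L^1$-continuity of $d_1(\cdot,\barH)$, yields
\[
d_1(\overline{W}_N,\barH) \,\ge\, \Ex_{r}\, d_1(W^{f_r},\barH) \,=\, d_1(W,\barH).
\]
Since $d_1(\cdot,\barH)$ is $1$-Lipschitz in the $L^1$ norm, letting $N\to\infty$ gives $d_1(\overline{W},\barH)\ge d_1(W,\barH)$, and $\overline{W}$ is naive, so the proof is complete with $W'=\overline{W}$.

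The main technical obstacle is lifting the two-point concavity of Lemma~\ref{lem:distance_is_concave} to a continuous average of uncountably many reparameterizations; this is precisely what forces the discretize-then-take-limits maneuver above. The other key ingredient -- invariance of $\barH$, and hence of the distance to it, under zero-shift reparameterizations of the second coordinate -- is verified directly from the density-based characterization of $\barH$ in Lemma~\ref{lem:conditions_closure}.
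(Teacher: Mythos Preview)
Your proof is correct and follows essentially the same strategy as the paper: define the target naive orderon as the average of $W$ over second-coordinate rotations, use the zero-shift bijections $f_r(x,a)=(x,a+r_{i(x)}\bmod 1)$, invoke concavity of $d_1(\cdot,\barH)$ on finite convex combinations, and pass to the limit in $L^1$. The paper realizes the passage to the limit probabilistically---sampling $n$ random bijections and using a second-moment bound to find a finite average $U_n$ close to $\overline W$ in $L^1$---whereas you take the full expectation $\overline W_N$ directly and approximate it by grid-discretized averages. Your route is arguably cleaner conceptually, but it leaves the step ``$\overline W_{N,M}\to\overline W_N$ in $L^1$'' unproved; this is true (approximate $W$ in $L^1$ by a function continuous on the torus in the second coordinates, for which the Riemann sums converge uniformly, and use that the averaging operators are $L^1$-contractions), but it is not quite as immediate as your phrase ``via $L^1$-continuity'' suggests. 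The paper's variance argument sidesteps this by producing a concrete finite average already close to $\overline W$.
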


\begin{proof}
	To prove the lemma, we combine the concavity guaranteed by the previous lemma with some measure-theoretic and probabilistic tools.
	For any $(x,y) \in [0,1]^2$ define $\mu^{x,y}$ as the expectation, over all $(a,b) \in [0,1]^2$, of $W((x,a), (y,b))$. We will prove that the unique $W' \colon ([0,1]^2)^2 \to [0,1]$ for which $W'((x,a),(y,b)) = \mu^{x,y}$ for all $(x,y)$ satisfies the conditions of the lemma. (To show that $W'$ is a naive orderon one needs to prove that $W'$ as defined above is indeed a measurable function, but this will follow from the proof.)
	%Specifically, we show that for any integer $n \leq 2$, there exists an orderon $W_n$ satisfying the following conditions.
	%\begin{itemize}
	%	\item $d_1(W_n, \barH) \geq d_1(W, \barH)$.
	%	\item There exists $A_n \subseteq [0,1]^2$ of measure $1-1/n^2$, so that $W_n$ is ``naive when restricted to $A_n$'', namely, $W_n((x, a), (y, b)) = W_n((x, a'), (y, b'))$ for any $(a,b,a',b') \in [0,1]^4$.
	%\end{itemize}
	%We conclude by showing that $W_n \to W'$ where the convergence is even pointwise, and deduce that $d_1(W', \barH) \geq d_1(W, \barH)$.
	
	Fix an integer $n$ and pick $A_n = \{(x,y) \in [0,1]^2 : Q_n(x) \neq Q_n(y) \}$, where the function $Q_n$ is as defined in the beginning of Section~\ref{sec:block_orderons}.
	Consider the family of measure-preserving bijections $f_{\alpha_1, \alpha_2, \ldots, \alpha_n}$, where $(\alpha_1, \ldots, \alpha_n) \in [0,1]^n$, defined as follows: For any $(x,a) \in [0,1]^2$, we set $f_{\alpha_1, \alpha_2, \ldots, \alpha_n}(x,a) = (x, a+\alpha_{Q_n(x)})$. Note that $\shift{f_{\alpha_1, \alpha_2, \ldots, \alpha_n}} = 0$ always holds. As these bijections clearly do not change the $L^1$-distance of $W$ from $\barH$, we have that $d_1(W, \barH) = d_1(W^{f_{\alpha_1, \alpha_2, \ldots, \alpha_n}}, \barH)$ for any choice of $(\alpha_1, \ldots, \alpha_n) \in [0,1]^n$.
	
	Pick $n$ tuples $\{(\balpha^i_1, \ldots, \balpha^i_n)\}_{i=1}^{n}$ uniformly at random. Set $\bg_i = f_{\balpha^i_1, \ldots, \balpha^i_n}$ and consider the orderon
	$$U_n = \frac{1}{n} \sum_{i=1}^{n} W^{\bg_i} = \frac{1}{n} \sum_{i=1}^{n} W^{f_{\balpha^i_1, \ldots, \balpha^i_n}}.$$
	Since each $\bg_i$ is a measure-preserving bijection with shift zero, we have that $d_1(W, \barH) = d_1(W^{\bg_i}, \barH)$. As $U_n$ is a convex combination of the $W^{\bg_i}$, we get that
	$$
	d_1(U_n, \barH) \geq \frac{1}{n} \sum_{i=1}^{n} d_1(W^{\bg_i}, \barH) = d_1(W, \barH).
	$$
	
	Now, given any fixed orderon $U \in \{U_n, W^{\bg_1}, W^{\bg_2}, \ldots, W^{\bg_n}\}$ and $(x,y) \in [0,1]^2$, define the random variable $\bX_U^{x,y}$ as follows: pick $(\ba,\bb) \in [0,1]^2$ uniformly at random, and return the value $U((x,\ba), (y,\bb)) - \mu^{x,y}$. Note that the expectation of $\bX_U^{x,y}$ for any $U$ as above is zero.
	
	A basic probabilistic fact states that, for $n$ i.i.d.\ bounded random variables $\bY_1, \ldots, \bY_n$, the variance of $\ol{ \bY }= (\bY_1 + \cdots + \bY_n) / n$ equals $\Var(\bY_1) / n$, and tends to zero as $n \to \infty$. Fix some $(x,y) \in A_n$, and for any $i \in [n]$ take $\bY_i = \bX_{W^{\bg_i}}^{x,y}$. Finally pick $\ol{ \bY} = \bX_{U_n}^{x,y}$, and note that the $\bY_i$ are indeed i.i.d., that $|\bY_i| \leq 1$ (meaning that $\Var(\bY_i) \leq 1$), and that $\ol{ \bY }= \sum \bY_i / n$. It follows that the variance of $\ol{ \bY }= \bX_{U_n}^{x,y}$ is bounded by $1/n$.
	
	Consequently, for fixed $(x,y) \in A_n$, the probability (ranging over the choices of $\bg_1, \ldots, \bg_n$ and $\ba,\bb \in [0,1]^2$) that $|U_n((x,\ba), (y,\bb)) - \mu^{x,y}| \geq 1/n^{1/3}$ is bounded by $1/n^{1/3}$. Thus, the expected measure of the collection
	$$B_n \eqdef \{(x,a,y,b) : |U_n((x,a), (y,b)) - \mu^{x,y}| \geq 1/n^{1/3} \}$$
	of ``bad'' $(x,a,y,b)$ tuples is bounded by $1/n^{1/3}$ where the randomness is over the choices of $\bg_1, \ldots, \bg_n$. In particular, there exists such a choice of $U_n$ for which $|B_n| \leq 1/n^{1/3}$; we henceforth fix this choice of $U_n$.
	
	As $|[0,1]^2 \setminus A_n| = 1/n^2 \to 0$ as $n \to \infty$, and $\int_{A_n }|U_n - U_{n'}| \leq 4 / n^{1/3}$ for $n' > n$, it follows that the sequence $\{U_n\}$ is $L^1$-Cauchy. Thus, it converges in $L^1$ to some limit $U'$, and clearly the set of points $\{(x,y) \in [0,1]^2 : U'(x,y) \neq \mu^{x,y}\}$ is of measure zero. In other words, $U'$ is measurable and equals $W'$ defined above almost everywhere, and thus $W'$ is measurable as well and $U_n \to W'$ in $L^1$. As $d_1(U_n, \barH) \geq d_1(W, \barH)$ for any $n$, the inequality still holds at the limit, that is, $d_1(W', \barH) \geq d_1(W, \barH)$. This concludes the proof.
\end{proof}

\begin{definition}
	For any hereditary property $\calH$ let $\Delta_{\calH}$ denote the supremum of the distance $d_1(W, \barH)$ among all $W \in \calW$.
\end{definition}
The following is an immediate consequence of the last lemma.
\begin{lemma}\label{lem:far_naive}
	For any hereditary property $\calH$ and $\eps > 0$ there exists a naive orderon $W'$ for which $d_1(W', \barH) \geq \Delta_\calH - \eps$.
\end{lemma}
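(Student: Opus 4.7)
The plan is to observe that this lemma follows immediately by combining the definition of $\Delta_{\calH}$ as a supremum with the previous lemma, which guarantees a naive orderon of equal or greater distance from $\barH$. First I would unpack the supremum: since $\Delta_{\calH} = \sup_{W \in \calW} d_1(W, \barH)$, by the definition of supremum there exists some orderon $W \in \calW$ (not necessarily naive) with $d_1(W, \barH) \geq \Delta_{\calH} - \eps$.

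Then I would apply the previous lemma to this specific $W$: it produces a naive orderon $W'$ satisfying $d_1(W', \barH) \geq d_1(W, \barH)$. Chaining the two inequalities gives
\[
d_1(W', \barH) \;\geq\; d_1(W, \barH) \;\geq\; \Delta_{\calH} - \eps,
\]
which is exactly what the lemma asserts. There is no real obstacle here; the entire content of the statement has already been established in the preceding lemma, and this corollary is essentially just the observation that ``there is a naive orderon at least as far from $\barH$ as any given orderon'' is preserved under taking $\eps$-approximate suprema. One minor point to note is that we do not need $\Delta_{\calH}$ to be attained by any particular orderon; the argument only uses that it can be approached arbitrarily closely, which is automatic from the definition of supremum.
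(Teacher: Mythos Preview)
Your proposal is correct and matches the paper's approach exactly: the paper simply states that the lemma is ``an immediate consequence of the last lemma,'' and your write-up spells out precisely this two-step argument (approximate the supremum, then apply the previous lemma).
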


%Generally, in the rest of this section the orderons with which we will work are naive. Such orderons ``behave'' in a sense like graphons where the order is important.
%, and thus many previous results for graphons

With some abuse of notation, we henceforth view a naive orderon $W$ as a measurable symmetric function from $[0,1]^2$ to $[0,1]$ and for $x,y \in [0,1]$ we denote by $W(x,y)$ the (unique) value of $W((x,a), (y,b))$ for $a,b \in [0,1]$.

Recall the definition of a naive block orderon from Section~\ref{sec:block_orderons} (in particular, the fact that the blocks are \emph{consecutive} in terms of order). The next lemma asserts that there exist naive block orderons with a bounded number of blocks, that are almost the furthest away from $\barH$.

\begin{lemma}\label{lem:naive_block_orderon_far}
	Let $\calH$ be a hereditary property and let $\eps > 0$. There exists a naive block orderon $W$ with at most $M_\calH(\eps)$ blocks so that $d_1(W, \barH) \geq \Delta_\calH - \eps$.
\end{lemma}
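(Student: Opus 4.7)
The plan is to derive the lemma from Lemma~\ref{lem:far_naive} via a simple $L^1$-approximation, exploiting the fact that stepping a naive orderon over an interval partition automatically yields a naive block orderon. First I would apply Lemma~\ref{lem:far_naive} with parameter $\eps/2$ to obtain a naive orderon $W' \in \calW$ satisfying $d_1(W', \barH) \geq \Delta_\calH - \eps/2$. Since $W'$ is naive, I will treat it as a symmetric measurable function $W' \colon [0,1]^2 \to [0,1]$; the task then reduces to approximating $W'$ in $L^1$ by a function that is constant on each block $I_i \times I_j$, where $\{I_1, \ldots, I_M\}$ is an interval partition of $[0,1]$.

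Next, for each $k \in \mathbb{N}$, let $\calP_k$ be the dyadic equipartition of $[0,1]$ into $2^k$ equal intervals, let $\widetilde\calP_k = \{I \times [0,1] \colon I \in \calP_k\}$, and set $W'_k \eqdef W'_{\widetilde\calP_k}$ using the stepping operator of Section~\ref{sec:block_orderons}. The partitions $\widetilde\calP_k \times \widetilde\calP_k$ of $([0,1]^2)^2$ refine each other and their union generates the Borel $\sigma$-algebra. Since $W'$ is bounded, the sequence $\{W'_k\}$ is a bounded martingale, so by Theorem~\ref{thm:MartingaleConvergence} combined with dominated convergence (equivalently, L\'evy's upward theorem) it converges to $W'$ in $L^1$. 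Hence there exists some $k = k(W', \eps)$ — which, since $W'$ is chosen once and for all from $\calH$ and $\eps$, we may regard as $k = k(\calH, \eps)$ — satisfying $\|W'_k - W'\|_1 \leq \eps/2$. The crucial structural observation is that because $W'$ does not depend on the ``variability'' coordinate, the stepping $W'_k$ inherits the same property and is additionally constant on each rectangle $I_i \times I_j$, so $W'_k$ is a naive block orderon with $2^k$ consecutive blocks.

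Finally, I would conclude by a one-line triangle inequality: since $d_1(\cdot, \barH) = \inf_{U \in \barH} \|\cdot - U\|_1$ is $1$-Lipschitz in the $L^1$-norm,
\[
d_1(W'_k, \barH) \;\geq\; d_1(W', \barH) - \|W'_k - W'\|_1 \;\geq\; (\Delta_\calH - \eps/2) - \eps/2 \;=\; \Delta_\calH - \eps,
\]
so setting $W \eqdef W'_k$ and $M_\calH(\eps) \eqdef 2^k$ completes the proof. I do not expect a genuine obstacle here: the heart of the argument is really Lemma~\ref{lem:far_naive}, and once attention is restricted to naive orderons the approximation reduces to standard $L^1$-density of dyadic step functions, with the naivety of $W'$ automatically preserving the desired block structure.
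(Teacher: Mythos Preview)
Your proposal is correct and follows the same overall strategy as the paper: start from Lemma~\ref{lem:far_naive} to get a naive orderon $W'$ that is nearly extremal, then approximate $W'$ in $L^1$ by a naive block orderon and conclude via the $1$-Lipschitz continuity of $d_1(\cdot,\barH)$.

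The implementation of the $L^1$-approximation step differs. The paper invokes the density of finite linear combinations of axis-aligned rectangle indicators in $L^1([0,1]^2)$, symmetrizes, and then overlays a sufficiently fine equal grid to convert the resulting step function into a naive block orderon, bounding the error by counting boundary cells. You instead take conditional expectations over the dyadic strip partitions $\widetilde\calP_k$ and appeal to L\'evy's upward theorem (equivalently, Theorem~\ref{thm:MartingaleConvergence} plus dominated convergence). Your route is a bit more streamlined, since the stepping $W'_{\widetilde\calP_k}$ is \emph{automatically} a naive block orderon with equal-sized blocks, so no separate grid-refinement step is needed. One small imprecision: the $\sigma$-algebra generated by $\bigcup_k \widetilde\calP_k \times \widetilde\calP_k$ is not the full Borel $\sigma$-algebra on $([0,1]^2)^2$ but the pull-back of the Borel $\sigma$-algebra on $[0,1]^2$ under $((x,a),(y,b))\mapsto(x,y)$; this does not affect the argument, since $W'$ is naive and hence measurable with respect to that sub-$\sigma$-algebra, so the martingale limit is indeed $W'$.
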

\begin{proof}
	The proof relies on a fundamental fact in Lebesgue measure theory, stating that (finite) linear combinations of indicator functions of the form $I = I_{[a_i, b_i] \times [c_i, d_i]}$ are dense in the two-dimensional Lebesgue space $L^1[\mathbb{R}^2]$ (and specifically, in $L^1[[0,1]^2]$).
	Indeed, Theorem 2.4 (ii) in \cite{PrincetonLectures} states that for any function $W$ in $L^1[\mathbb{R}^2]$ and $\delta > 0$ there exists $N = N(W, \eps) \geq 0$ and a step function $T$ of the form
	$$
	T = \sum_{i=1}^{N} \alpha_i \cdot I_{[a_i, b_i] \times [c_i, d_i]},
	$$
	where $\alpha_i \in \mathbb{R}$, so that $d_1(W,T) \leq \delta$.
	In our case, $W$ is a naive orderon, and is thus a symmetric function in $L^1[[0,1]^2]$.
	We note that, while Theorem 2.4 (ii) in~\cite{PrincetonLectures} does not guarantee that the function $T$ approximating $W$ is symmetric, we can easily make $T$ symmetric---and in particular, a naive orderon---by replacing it with $(T + \tilde{T})/2$, where $\tilde{T} = \sum_{i=1}^{N} \alpha_i \cdot I_{[c_i, d_i] \times [a_i, b_i]}$.
	
	We now describe how to construct a naive block orderon which is $L^1$-close to the above $T$.  Recall the definition of $Q_n$ given in the beginning of Section~\ref{sec:block_orderons}, and for $M \geq 4N / \delta$, consider the following weighted ordered graph $G$ on $M$ vertices. Given $(i,j) \in [M]^2$, if $T$ is constant over all pairs $(x,y) \in [0,1]^2$ for which $Q_M(x) = i$ and $Q_M(y) = j$, then we define $G(i,j)$ to be this constant value (and say that $(i,j)$ is good). Otherwise, $G(i,j)$ is defined arbitrarily.
	The number of pairs $(i,j)$ that are not good is at most $4MN$. Indeed, $(i,j)$ might not be good only if the square $[(i-1)/M, i/M] \times [(j-1)/M, j/M]$ intersects the boundary of $[a_\ell, b_\ell] \times [c_\ell, d_\ell]$ for some $\ell \in [N]$. However, the latter boundary intersects at most $4M$ such squares, and summing over all $\ell \in [N]$ we get the desired bound.
	
	Now let $U$ be the naive block orderon over $M$ blocks defined by $U(x,y) = G(Q_M(x), Q_M(y))$ for any $(x,y) \in [0,1]^2$. We get that $d_1(U, T) \leq 4MN / M^2 \leq \delta$, where the last inequality holds by our choice of $M$. It thus follows that $d_1(W,U) \leq d_1(W,T) + d_1(U,T) \leq 2\delta$.
	
	To conclude the proof, let $\calH$ be a hereditary property, let $\eps > 0$, and let $W$ be an arbitrary naive orderon satisfying that $d_1(W, \barH) \geq \Delta_\calH - \eps/3$ (the existence of such a $W$ is guaranteed by Lemma~\ref{lem:far_naive}); specifically we can take such a $W$ that minimizes the quantity $N(W, \eps)$, which was defined in the beginning of the proof. Now take $\delta = \eps/3$ and let $U$ denote the naive block orderon over $M = \lceil 4N / \delta \rceil$ blocks which satisfies $d_1(U, W) \leq 2\delta = 2\eps/3$. By the triangle inequality, we conclude that $d_1(U, \barH) \geq \Delta_\calH - \eps$, as desired. Note that $M$ depends only on $\eps$ and $N$, which in turn depends only on $\calH$ and $\eps$.
\end{proof}

We now show how our results for orderons can be translated to finite graphs. Here we make use of several technical lemmas from~\cite{LS10}. The first lemma that we need is the following.

\begin{lemma}[Lemma 3.13(a) in~\cite{LS10}]\label{lem:upper_bound_distance_from_hered}
	For any hereditary property $\calH$ and simple ordered graph $G$, we have $d_1(G, \calH) \leq d_1(W_G, \barH) \leq \Delta_{\calH}$.
\end{lemma}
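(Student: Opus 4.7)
The second inequality $d_1(W_G, \barH) \leq \Delta_\calH$ is immediate from the definition of $\Delta_\calH$ as the supremum of $d_1(W, \barH)$ over $W \in \calW$, so the substance is the first inequality $d_1(G, \calH) \leq d_1(W_G, \barH)$. I plan to prove this by exhibiting, for any $\delta > d_1(W_G, \barH)$, a simple ordered graph $G' \in \calH$ on the same vertex set $[n] = V(G)$ with $d_1(G, G') \leq \delta$; letting $\delta \searrow d_1(W_G, \barH)$ then yields the bound.

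First I fix $U \in \barH$ with $\|W_G - U\|_1 \leq \delta$, which exists because $\barH$ is closed. Then I introduce the following ``strip-aligned'' random ordered graph $\bG'$ on vertex set $[n]$: draw $\bZ_i$ uniformly from $((i-1)/n, i/n)$ and $\bY_i$ uniformly from $[0,1]$, all independently for $i \in [n]$; and for each pair $i<j$ include the edge $\{i,j\}$ independently with probability $U((\bZ_i, \bY_i), (\bZ_j, \bY_j))$. The key observation is that the distribution of $\bG'$ coincides with that of $\bG(n, U)$ conditioned on the positive-probability event $E_n$ that the $i$-th order statistic of the first-coordinate samples falls in $((i-1)/n, i/n)$ for every $i$; this uses the standard fact that the order statistics of $n$ i.i.d.\ uniforms on $[0,1]$, conditioned to lie one per equal strip, are independent and uniform on their respective strips (and the $\bY_i$ are independent of the first coordinates throughout). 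Since Lemma~\ref{lem:conditions_closure} applied with $k=n$ gives $\Pr[\bG(n, U) \in \calH] = 1$, conditioning on the positive-probability event $E_n$ preserves this, so $\bG' \in \calH$ almost surely.

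It remains to bound $\Ex[d_1(G, \bG')]$. For each $i \neq j$, the identity $\Ex[|G(i,j) - \Ber(p)|] = |G(i,j) - p|$ applied to the random edge, combined with the defining property $W_G((x,a),(y,b)) = G(i,j)$ whenever $(x, y) \in ((i-1)/n, i/n) \times ((j-1)/n, j/n)$, shows that the contribution of the pair $(i,j)$ to $\Ex[d_1(G, \bG')]$ equals the integral of $|W_G - U|$ over the corresponding strip block in $([0,1]^2)^2$. Summing over all $i \neq j$ and noting that the omitted diagonal strip blocks contribute non-negatively to $\|W_G - U\|_1$ yields $\Ex[d_1(G, \bG')] \leq \|W_G - U\|_1 \leq \delta$, so some realization of $\bG'$ simultaneously lies in $\calH$ and satisfies $d_1(G, \bG') \leq \delta$. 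The main (though mild) obstacle is cleanly verifying the conditional-distribution equivalence used for ``$\bG' \in \calH$ a.s.''; a direct alternative is to use $t(F, U) = 0$ for every $F \notin \calH$ from Lemma~\ref{lem:conditions_closure} and sum over the finitely many $n$-vertex forbidden $F$ to conclude $\Pr[\bG' \cong F] = 0$.
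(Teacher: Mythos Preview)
Your argument is correct. The paper does not supply its own proof here; it simply cites Lov\'asz--Szegedy and remarks that their argument uses the flexibility of $\barH$ together with a subgraph-statistics argument, and that this translates to the ordered setting. Your route is in the same spirit but organized differently: rather than invoking flexibility, you sample one point per strip from $U\in\barH$, identify this with $\bG(n,U)$ conditioned on the positive-probability event $E_n$, and use the third equivalent condition of Lemma~\ref{lem:conditions_closure} to get $\bG'\in\calH$ almost surely; the expected-edit-distance computation then finishes the job. This is a clean, self-contained adaptation that avoids any appeal to Lemma~\ref{lem:U-flexing}. One cosmetic point: the clause ``which exists because $\barH$ is closed'' is not the operative reason---the existence of $U$ with $\|W_G-U\|_1\le\delta$ follows directly from $\delta>d_1(W_G,\barH)$ being strictly above the infimum.
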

While the above lemma was stated in~\cite{LS10} for hereditary properties of unordered graphs, its proof in~\cite{LS10} only uses the flexibility of a closure of a hereditary property and a simple subgraph statistics argument, and translates as-is to our ordered setting.

The next lemma that we need from~\cite{LS10} is the following.
\begin{lemma}[Lemma 2.8 in~\cite{LS10}]\label{lem:cut_norm_liminf}
	Suppose that $\|U_n - U\|_{\square} \to 0$ and $\|W_n - W\|_{\square} \to 0$ (where $U, U_n, W, W_n$ are naive orderons). Then $\liminf_{n \to \infty} d_1(W_n, U_n) \geq d_1(W, U)$.
\end{lemma}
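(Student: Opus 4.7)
The plan is to dualize the $L^1$ norm and exploit the fact that cut norm convergence suffices to control integration against step functions on product sets. Set $V_n = W_n - U_n$ and $V = W - U$. By the triangle inequality for $\|\cdot\|_\square$, we have $\|V_n - V\|_\square \to 0$, so the claim reduces to the following general statement: if $V_n, V \colon ([0,1]^2)^2 \to [-1,1]$ are symmetric measurable functions with $\|V_n - V\|_\square \to 0$, then $\liminf_n \|V_n\|_1 \geq \|V\|_1$. (The fact that $V, V_n$ are differences of naive orderons does not play a direct role in what follows beyond ensuring $|V|, |V_n| \leq 1$.)

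The starting point is the identity $\|V\|_1 = \int \sigma V$ for $\sigma = \sgn(V)$, together with the observation that $\int \sigma V_n \leq \|V_n\|_1$ since $|\sigma| \leq 1$. Thus
\[
\|V\|_1 - \|V_n\|_1 \;\leq\; \int \sigma\,(V - V_n),
\]
and the task is to show that the right-hand side tends to zero. Fix $\eps > 0$ and, by standard approximation in $L^1(([0,1]^2)^2)$, choose a step function $\tilde\sigma = \sum_{i,j} c_{ij}\,\indi_{S_i \times T_j}$ with $S_i, T_j \subseteq [0,1]^2$ measurable and $\|\sigma - \tilde\sigma\|_1 \leq \eps$. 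Splitting
\[
\int \sigma\,(V - V_n) \;=\; \int (\sigma - \tilde\sigma)(V - V_n) \;+\; \int \tilde\sigma\,(V - V_n),
\]
the first term is bounded by $\|V - V_n\|_\infty \cdot \|\sigma - \tilde\sigma\|_1 \leq 2\eps$, using $\|V\|_\infty,\|V_n\|_\infty \leq 1$. The second term equals $\sum_{i,j} c_{ij} \int_{S_i \times T_j}(V - V_n)$ and is therefore bounded by $\bigl(\sum_{i,j}|c_{ij}|\bigr)\,\|V - V_n\|_\square$, which tends to zero as $n \to \infty$ for this \emph{fixed} $\tilde\sigma$. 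Taking $\liminf_n$ and then letting $\eps \to 0$ yields $\liminf_n \|V_n\|_1 \geq \|V\|_1$, as desired.

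The only mildly delicate ingredient is the step-function approximation of $\sigma$: we must approximate an arbitrary bounded measurable function on $([0,1]^2)^2$ in $L^1$ by finite linear combinations of indicators of product sets $S_i \times T_j$ with $S_i, T_j \subseteq [0,1]^2$. This follows from density of simple functions in $L^1$ together with outer regularity of Lebesgue measure on $[0,1]^4$: every measurable subset of $[0,1]^4$ can be approximated by a finite union of axis-aligned four-dimensional rectangles, and each such rectangle factors as $A \times B$ with $A, B \subseteq [0,1]^2$ axis-aligned rectangles. Everything else in the argument is routine, relying on the dual formula $\|V\|_1 = \sup_{|\sigma|\leq 1}\int \sigma V$, the defining property of the cut norm as a supremum over product sets, and the uniform bound $|V|,|V_n| \leq 1$.
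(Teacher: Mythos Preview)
The paper does not supply its own proof of this lemma; it is cited verbatim as Lemma~2.8 of~\cite{LS10}, so there is no in-paper argument to compare against. Your proof is correct and is essentially the standard argument for lower semicontinuity of the $L^1$ norm under cut-norm convergence: dualize via $\|V\|_1 = \int \sgn(V)\,V$, approximate $\sgn(V)$ in $L^1$ by a finite combination of product indicators, and use that cut-norm convergence controls each term $\int_{S_i\times T_j}(V-V_n)$.
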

%In our setting, we need a similar statement where the sequences converge in $d_{\triangle}$, with one of them converging to a naive block orderon). For the proof, we will first need the following lemma.

Additionally, we need the following lemma, concerning the good behavior of sequences that converge to a naive block orderon.
\begin{lemma}\label{lem:block_good_behavior}
	Fix an integer $M > 0$, and let $W$ be a naive $M$-block orderon. Also let $\{W_n\}$ be a sequence of orderons where $d_{\triangle}(W_n, W) \to 0$, and let $\{f_n\}$ be a sequence of shift functions with $\shift{f_n} \to 0$. Then it holds that $\|W_n^{f_n} - W\|_{\square} \to 0$.
\end{lemma}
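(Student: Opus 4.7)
The plan is to combine two elementary facts and then apply the triangle inequality. The first fact is that the cut-norm is invariant under the simultaneous action of a measure-preserving bijection on both arguments: for any $h\in\calF$ and symmetric measurable $U\colon([0,1]^2)^2\to\R$, $\|U^h\|_\square=\|U\|_\square$ (by the change of variables $u=h^{-1}(u'),v=h^{-1}(v')$, using that $h$ preserves measure and that $h(S),h(T)$ range over all measurable sets as $S,T$ do). In particular, $\|W_n^{f_n}-W\|_\square=\|W_n-W^{f_n^{-1}}\|_\square$. The second fact is a ``rigidity'' property of naive $M$-block orderons, stated next.

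\emph{Auxiliary fact:} If $W$ is a naive $M$-block orderon and $h\in\calF$, then $\|W-W^h\|_\square\le 4M\,\shift{h}$. The proof is short: $W((x,a),(y,b))$ depends only on the pair of block indices $(Q_M(x),Q_M(y))$, so $W$ and $W^h$ can disagree at $((x,a),(y,b))$ only when either $Q_M(x)\neq Q_M(\pi_1(h(x,a)))$ or $Q_M(y)\neq Q_M(\pi_1(h(y,b)))$. Writing $\eps=\shift{h}$, this forces $x$ or $y$ to lie in the $\eps$-neighborhood of one of the $M-1$ block boundaries $k/M$, a set of measure at most $2M\eps$ in each coordinate, so the bad pairs have measure at most $4M\eps$. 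Hence $\|W-W^h\|_1\le 4M\eps$, and since $\|\cdot\|_\square\le\|\cdot\|_1$, the same bound holds for $\|W-W^h\|_\square$.

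To finish, use $\csdist(W_n,W)\to 0$ to pick $g_n\in\calF$ with $\shift{g_n}+\|W-W_n^{g_n}\|_\square\le\csdist(W_n,W)+\tfrac{1}{n}$, so that both summands tend to $0$. By the invariance, $\|W-W_n^{g_n}\|_\square=\|W_n-W^{g_n^{-1}}\|_\square$ and $\|W^{g_n^{-1}}-W^{f_n^{-1}}\|_\square=\|W-W^{f_n^{-1}\circ g_n}\|_\square$. The triangle inequality then gives
\[
\|W_n^{f_n}-W\|_\square=\|W_n-W^{f_n^{-1}}\|_\square\le\|W_n-W^{g_n^{-1}}\|_\square+\|W-W^{f_n^{-1}\circ g_n}\|_\square.
\]
The first term on the right tends to $0$ by the choice of $g_n$, and the second is at most $4M(\shift{f_n}+\shift{g_n})\to 0$ by the auxiliary fact applied to $h=f_n^{-1}\circ g_n$ (using $\shift{h}\le\shift{f_n^{-1}}+\shift{g_n}=\shift{f_n}+\shift{g_n}$). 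The only subtlety worth double-checking is careful bookkeeping of the bijections and their inverses when invoking the invariance of the cut-norm; there is no deeper obstacle.
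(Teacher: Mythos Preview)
Your proof is correct and follows essentially the same route as the paper: pick near-optimizers $g_n$ for $\csdist(W_n,W)$, use invariance of the cut norm under measure-preserving bijections, and exploit the rigidity of a naive $M$-block orderon via the bound $\|W-W^{h}\|_1\le 4M\,\shift{h}$, then conclude by the triangle inequality. The only cosmetic difference is that the paper applies $h_n=f_n\circ g_n^{-1}$ directly to $\|W_n^{g_n}-W\|_\square$ to obtain $\|W_n^{f_n}-W^{h_n}\|_\square\to 0$ and then bounds $\|W^{h_n}-W\|_\square$, whereas you rewrite $\|W_n^{f_n}-W\|_\square=\|W_n-W^{f_n^{-1}}\|_\square$ first; both reduce to the same two estimates.
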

\begin{proof}
	Since $d_{\triangle}(W_n, W) \to 0$, by definition of the cut-shift distance there exists a sequence $g_n$ of shift functions with $\shift{g_n} \to 0$ so that $\|W_n^{g_n} - W \|_{\square} \to 0$. By applying the shift function $h_n = f_n \circ g^{-1}_n$ to both orderons in the expression, we get that $\|W_n^{f_n} - W^{h_n}\|_\square \to 0$. On the other hand, we shall show now that $d_1(W^{h_n}, W) \to 0$. Since the cut norm is always bounded by the $L^1$-distance, we can conclude that $\|W^{h_n} - W\|_{\square} \to 0$, implying by the triangle inequality that $\|W_n^{f_n} - W\|_{\square} \to 0$.
	
	To show that $d_1(W^{h_n}, W) \to 0$, observe first that $\shift{h_n} \leq \shift{f_n} + \shift{g_n} \to 0$. For any pair $(x,y) \in [0,1]^2$ where both $x$ and $y$ are not $\shift{h_n}$-close to the boundary of their block (i.e., are not of the form $i/M \pm \zeta$ where $\zeta \leq \shift{h_n}$), we thus have that $W(x,y) = W^{h_n}(x,y)$. We conclude that $d_1(W^{h_n}, W)$ is bounded by the total volume of pairs $(x,y)$ that are $\shift{h_n}$-close to the boundaries, which is bounded by $4M \cdot \shift{f_n}$, and tends to zero as $n \to \infty$.
\end{proof}

Lemma~\ref{lem:upper_bound_distance_from_hered} gives us a global upper bound of $\Delta_\calH$ on the distance of any ordered graph from a hereditary property $\calH$. Our next main lemma, given below, provides an asymptotic lower bound. The statement of the lemma is analogous to (a special case of) Proposition 3.14 in~\cite{LS10}, although the proof is slightly different (and makes use of Lemma~\ref{lem:block_good_behavior}).

\begin{lemma}\label{lem:lim_inf_L1_dist}
	Let $\calP$ be any ordered graph property and let $G_n \to W$ be an ordered graph sequence that converges (in $d_{\triangle}$) to a naive block orderon. Then
	$$
	\liminf_{n \to \infty} d_1(G_n, \calP) \geq d_1(W, \barP).
	$$
\end{lemma}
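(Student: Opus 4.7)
The plan is to imitate the unordered argument of~\cite{LS10}: after choosing a nearest graph $G'_n \in \calP$ for each $G_n$, pass to a subsequence so that $W_{G'_n}$ converges in $\csdist$ to an orderon $U \in \barP$, transport both sides by a common measure preserving bijection $g_n^{-1}$ to obtain simultaneous cut-norm convergence (to $W$ and $U$ respectively), and finally deduce the $L^1$-lower bound $\liminf_n d_1(W_{G_n}^{g_n^{-1}}, W_{G'_n}^{g_n^{-1}}) \geq d_1(W, U) \geq d_1(W, \barP)$ by a lower-semicontinuity argument in the spirit of Lemma~\ref{lem:cut_norm_liminf}.

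Concretely, for each $n$ let $G'_n$ be an ordered graph on $|V(G_n)|$ vertices with $G'_n \in \calP$ and $d_1(G_n, G'_n) = d_1(G_n, \calP)$. Pass to a subsequence on which $d_1(G_n, G'_n) \to \ell \eqdef \liminf_n d_1(G_n, \calP)$, and by compactness (Theorem~\ref{thm:Compactness}) refine further so that $W_{G'_n} \to U$ in $\csdist$ for some orderon $U$, which lies in $\barP$ by the definition of the closure. For each such $n$ choose $g_n \in \calF$ nearly realizing $\csdist(W_{G'_n}, U)$, so that $\shift{g_n} \to 0$ and $\|W_{G'_n} - U^{g_n}\|_\square \to 0$. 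Apply the bijection $g_n^{-1}$ to both $W_{G_n}$ and $W_{G'_n}$. A direct change-of-variables computation shows that applying the same measure preserving bijection to both arguments leaves both the $L^1$-distance and the cut-norm unchanged, hence
$$
d_1(W_{G_n}^{g_n^{-1}}, W_{G'_n}^{g_n^{-1}}) = d_1(G_n, G'_n) \to \ell, \qquad \|W_{G'_n}^{g_n^{-1}} - U\|_\square = \|W_{G'_n} - U^{g_n}\|_\square \to 0.
$$
Since $W$ is a naive block orderon, $W_{G_n} \to W$ in $\csdist$, and $\shift{g_n^{-1}} = \shift{g_n} \to 0$, Lemma~\ref{lem:block_good_behavior} applied to the sequence $W_{G_n}$ with the shifts $g_n^{-1}$ yields $\|W_{G_n}^{g_n^{-1}} - W\|_\square \to 0$.

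It now suffices to conclude $\liminf_n d_1(W_{G_n}^{g_n^{-1}}, W_{G'_n}^{g_n^{-1}}) \geq d_1(W, U)$, which combined with the above gives $\ell \geq d_1(W, U) \geq d_1(W, \barP)$. This is exactly the statement of Lemma~\ref{lem:cut_norm_liminf}, except that the lemma is stated for naive orderons while our $W_{G_n}^{g_n^{-1}}$, $W_{G'_n}^{g_n^{-1}}$ and $U$ are generally non-naive. The standard lower-semicontinuity proof extends without modification: given $\eps > 0$, approximate $\sgn(W-U)$ in $L^1([0,1]^4)$ by a finite linear combination $\phi = \sum_i c_i \indi_{S_i \times T_i}$ with $|\phi| \leq 1$ and $\int \phi\,(W-U) \geq d_1(W, U) - \eps$; cut-norm convergence on each rectangle $S_i \times T_i$ combined with linearity yields $\int \phi\,(W_{G_n}^{g_n^{-1}} - W_{G'_n}^{g_n^{-1}}) \to \int \phi\,(W - U)$, while $|\phi| \leq 1$ bounds the left-hand side above by $d_1(W_{G_n}^{g_n^{-1}}, W_{G'_n}^{g_n^{-1}})$. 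Letting $\eps \to 0$ finishes the argument. The main obstacle is precisely this extension beyond naive orderons; all other steps are routine uses of compactness, shift invariance of $d_1$ and $\|\cdot\|_\square$, and Lemma~\ref{lem:block_good_behavior}.
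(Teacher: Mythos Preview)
Your argument is correct and follows essentially the same route as the paper's proof: choose nearest graphs in $\calP$, pass to a convergent subsequence with limit $U\in\barP$, pick shifts $f_n$ (your $g_n^{-1}$) realizing the $\csdist$-convergence of the $\calP$-side, use Lemma~\ref{lem:block_good_behavior} on the $W$-side, and then invoke the lower-semicontinuity Lemma~\ref{lem:cut_norm_liminf}. The only substantive difference is that you explicitly flag and resolve the issue that Lemma~\ref{lem:cut_norm_liminf} is stated for naive orderons while $W_{G_n}^{g_n^{-1}}$, $W_{G'_n}^{g_n^{-1}}$ and $U$ need not be naive; the paper simply applies the lemma without comment, whereas you supply the standard $L^1$-approximation-of-$\sgn(W-U)$ argument showing the statement holds verbatim for general orderons.
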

\begin{proof}
	For any $n \in \mathbb{N}$ let $H_n \in \calP$ be a graph with the same size as $G_n$ satisfying $d_1(G_n, H_n) = d_1(G_n, \calP)$ (this minimum is always attained as $d_1(G_n, \calP)$ is a minimum of finitely many values). By taking a subsequence of $G_n$ for which the distance to $\calP$ converges to $\liminf d_1(G_n, \calP)$ and then taking a subsequence of it to ensure convergence of the corresponding subsequence of $H_n$, we may assume that $H_n \to U \in \barP$ (as usual, the convergence is in $d_{\triangle}$).
	
	By the definition of the cut-shift distance, for any $H_n$, there exists a shift function $f_n$ so that $\|W_{H_n}^{f_n} - U\|_{\square} \to 0$ and furthermore $\shift{f_n} \to 0$.
	Applying Lemma~\ref{lem:block_good_behavior} to the sequence $W_{G_n}$, which by the assumption of this lemma converges to the naive block orderon $W$, we conclude that $\|W_{G_n}^{f_n} - W\|_{\square} \to 0$. By applying Lemma~\ref{lem:cut_norm_liminf} to the sequences $W_{G_n}^{f_n}$ and $W_{H_n}^{f_n}$ which converge in cut norm to $W$ and $U$ respectively, we conclude that
	$$
	d_1(W, U) \leq \liminf_{n \to \infty} d_1(W_{G_n}^{f_n}, W_{H_n}^{f_n}) = \liminf_{n \to \infty} d_1(G_n, H_n) = \liminf_{n \to \infty} d_1(G_n, \calP),
	$$
	where the first equality follows from the fact that shifting two orderons by the same shift does not change the $L^1$-distance between them, and the second equality follows from our choice of $H_n$.
\end{proof}

We are now ready to put it all together and prove Theorem~\ref{thm:ordered_Alon_Stav}.

\begin{proofof}{Theorem~\ref{thm:ordered_Alon_Stav}}
	By Lemma~\ref{lem:upper_bound_distance_from_hered} we know that $\overline{d_\calH} \leq \Delta_{\calH}$. Thus, it suffices to show the statement of the theorem with $\overline{d_\calH}$ replaced by $\Delta_{\calH}$.
	By Lemma~\ref{lem:naive_block_orderon_far}, there exists a naive block orderon $W$, whose number of blocks $M$ is only a function of $\calH$ and $\eps$, for which $d_1(W, \barH) \geq \Delta_{\calH} - \eps/2$. By Lemma~\ref{lem:lim_inf_L1_dist}, there exist $\delta > 0$ and $N$ so that any ordered graph $G$ on $n \geq N$ vertices with $d_\triangle(W_{G}, W) \leq \delta$ satisfies $d_1(G, \calH) \geq d_1(W, \barH) - \eps/2 \geq \Delta_{\calH} - \eps$. Pick $G$ according to the random model $\bG(n, W)$. From our sampling result, Theorem~\ref{thm:Sampling}, the probability that $d_{\triangle}(W_{\bG(n, W)}, W) \leq \delta$ tends to one as $n \to \infty$. As the random model $\bG(n,W)$ is precisely a consecutive stochastic block model on $M$ blocks with parameters as in the statement of the theorem, the proof follows.
\end{proofof}

\section{Parameter estimation }\label{sec:applications}

In this section we prove Theorem~\ref{thm:Param-Testing}. Recall the definition of natural estimability from Subsection~\ref{subsec:main_results}.
First, observe that the random graph distributions $\bG|_{\bk}$ (no vertex repetitions) and $\bG(k, W_G)$ (allowing vertex repetitions) are very close in terms of variation distance.
\begin{observation}\label{Obs:GraphsToOrderons}Let $G$ be an ordered graph on $n$ vertices. For every fixed $k$ and a large enough $n$ the distribution $\bG|_{\bk}$ is arbitrarily close (in variation distance) to the distribution $\bG(k,W_G)$.
\end{observation}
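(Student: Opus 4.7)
The plan is to directly couple the two sampling procedures and use a union bound on collisions. Unpacking the definitions, generating $\bG(k,W_G)$ amounts to the following: draw $\bZ_1,\ldots,\bZ_k$ i.i.d.\ uniform in $[0,1]$, draw the auxiliary coordinates $\bY_1,\ldots,\bY_k$ (which are irrelevant since $W_G$ does not depend on them), sort the $\bZ_i$ to obtain $\bX_1 \le \cdots \le \bX_k$, and output the ordered $k$-vertex graph whose $(i,j)$-edge indicator equals $W_G((\bX_i,\bY_i),(\bX_j,\bY_j)) = G(Q_n(\bX_i), Q_n(\bX_j))$. Because each $Q_n(\bX_i)$ is uniform on $[n]$ and the $\bX_i$ are independent, this is precisely the same as drawing $k$ vertices of $G$ uniformly and independently \emph{with repetition}, sorting them, and taking the induced ordered subgraph. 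On the other hand, $\bG|_{\bk}$ corresponds to the same procedure but with sampling \emph{without repetition}.

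Given this observation, I would construct the coupling that first samples $k$ vertices $\bv_1,\ldots,\bv_k$ of $G$ uniformly with repetition (yielding $\bG(k,W_G)$ after sorting) and, conditioned on the event $\mathcal{E}$ that all $\bv_i$ are distinct, uses the same tuple to produce $\bG|_{\bk}$. On $\mathcal{E}$ the two samples are literally equal as ordered graphs, so by the standard coupling inequality
\[
d_{\mathrm{TV}}\bigl(\bG|_{\bk},\, \bG(k,W_G)\bigr) \;\le\; \Pr[\overline{\mathcal{E}}].
\]
A union bound over the $\binom{k}{2}$ pairs $(i,j)$ gives $\Pr[\overline{\mathcal{E}}] \le \binom{k}{2}/n$, which tends to $0$ as $n \to \infty$ for any fixed $k$, yielding the claim.

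There is no real obstacle here; the only mildly delicate point is to spell out carefully that $\bG(k,W_G)$, even though it is defined through the continuous orderon $W_G$, collapses to sampling vertices with repetition because $W_G$ is constant on the rectangular blocks defined by $Q_n$ and takes values in $\{0,1\}$. Once this identification is made, the bound $\binom{k}{2}/n$ on the collision probability completes the proof.
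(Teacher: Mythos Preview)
Your argument is correct and matches what the paper intends: the observation is stated without proof there, but in the proof of Lemma~\ref{lem:conditions_closure} the paper invokes precisely the collision bound $\binom{k}{2}/n$ you derive, so your coupling-and-union-bound justification is exactly the reasoning the authors have in mind.
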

We now turn to the proof of the theorem. First we prove that natural estimability of $f$ implies convergence of $f(G_n)$ for any convergent $\{G_n\}$; then we prove that the latter condition implies the existence of a continuous functional on orderons that satisfies both items of the last condition of Theorem~\ref{thm:Param-Testing}; and finally, we prove the other direction of both statements.

\eqref{thm:Estim-b} $\implies$~\eqref{thm:Estim-c}: Let $\{G_n\}_{n\in\mathbb{N}}$ be a convergent sequence with $|V(G_n)|\to \infty$. Given $\eps>0$, let $k$ be such that for every ordered graph $G$ on at least $k$ nodes, $|f(G)-\Ex_{\bG|_{\bk}}[f(\bG|_{\bk})]|\le 2\eps$ (this can be done by setting $\delta=\eps/M$ where $M$ is an upper bound on the values $f$). By the fact that $\{G_n\}_{n\in\mathbb{N}}$ is convergent, $t(F,W_{G_n})$ tends to a limit for all ordered graphs $F$ on $k$ vertices, which by Observation~\ref{Obs:GraphsToOrderons} implies that $\lim\limits_{n\to\infty}\Prx_{\bG_{\boldsymbol{n}}|_{\bk}}[\bG_{\boldsymbol{n}}|_{\bk}=F]=\lim\limits_{n\to\infty}t(F,W_{G_n})=t(F,W)$. Therefore,
\[r_k\eqdef \lim\limits_{n\to\infty }\Ex_{\bG_{\boldsymbol{n}}|_{\bk}}[f(\bG_{\boldsymbol{n}}|_{\bk})]=\sum_F\lim\limits_{n\to\infty} \Prx_{\bG_{\boldsymbol{n}}|_{\bk}}[\bG_{\boldsymbol{n}}|_{\bk}=F]\cdot f(F)=\sum_F t(F,W)\cdot f(F).\] Thus, for all sufficiently large $n$,
\[|f(G_n)-r_k|\le |f(G_n)-\Ex_{\bG_{\boldsymbol{n}}|_{\bk}}[f(\bG_{\boldsymbol{n}}|_{\bk})]|+\eps\le 3\eps,\] which implies that $\{f(G_n)\}_n$ is convergent.

(\ref{thm:Estim-c})$\implies$(\ref{thm:Estim-d}): For a sequence $\seq{G_n}{n}$ converging to $W$, let $\hat{f}(W)\eqdef\lim\limits_{n\to\infty}f(G_n)$. Note that this quantity is well-defined: Given two ordered graph sequences $\seq{G_n}{n}$ and $\seq{H_n}{n}$ converging  to $W$, we can construct a new sequence $\seq{S_n}{n}$, such that $S_{2n}=G_n$ and $S_{2n-1}=H_n$. By definition, the sequence $\seq{S_n}{n}$ also converges to $W$, and hence $\lim\limits_{n\to\infty}{f(H_n)}=\lim\limits_{n\to\infty}f(G_n)=\lim\limits_{n\to\infty}f(S_n)=\hat{f}(W)$.

To prove~\eqref{thm:Estim-d1}, assume that $\{W_n\}_{n\in\mathbb{N}}\in \calW$ converges to $W$. For every $n$, we can apply Theorem~\ref{thm:Sampling} and obtain a sequence $\{G_{n,k}\}_k$ such that $\lim\limits_{k\to\infty}W_{G_{n,k}}=W_n$. In addition, we can pick a subsequence of $\{G_{n,k}\}_k$ such that \[\csdist(W_n,W_{G_{n,k}})\le2^{-k}\qquad\text{and}\qquad\left|\hat{f}(W_n)-f(G_{n,k})\right|\le 2^{-k}. \]
Now, since for every $n$ the sequence $G_{n,k}$ converges uniformly to $W_n$ (as $k\to \infty$) and $W_n$ converges to $W$, we have that the diagonal sequence $G_{n,n}$ converges  to $W$ as well. Therefore, by the fact that $\hat{f}$ is well defined, we have that $\lim\limits_{n\to\infty}f(G_{n,n})=\hat{f}(W)$. Therefore, for every $\eps>0$, we can find $N$ such that for all $n>N$, $\left|{f}(G_{n,n})-\hat{f}(W)\right|\le \eps/2$. Then, for every $\eps>0$ and all $n\ge \max({N,\log(2/\eps)})$ we have $$\left|\hat{f}(W_n)-\hat{f}(W)\right|\le \left|\hat{f}(W_n)-f(G_{n,n})\right|+\left|f(G_{n,n})-\hat{f}(W)\right|\le\eps/2+\eps/2=\eps,$$
which concludes the proof for $(\ref{thm:Estim-d1})$.

To prove (\ref{thm:Estim-d2}): Assume towards a contradiction that (\ref{thm:Estim-d2}) does not hold. Let $\{G_n\}_n$ be an ordered graph sequence with $|V(G_n)|\to \infty$ such that $|f(G_n)-\hat{f}(W_{G_n})|>\eps$.  For each $n$, consider the sequence $\{G^{\otimes j}_n\}_{j\in\mathbb{N}}$ where  $G_n^{\otimes j}$ is the $j$ blow-up of $G_n$. Note that by the fact that for every $j$ we have that $W_{G_n^{\otimes j}}=W_{G_n}$, the sequence $\{G^{\otimes j}_n\}_{j\in\mathbb{N}}$ converges to $W_{G_n}$. Therefore, by the above construction of $\hat{f}$, $\{f(G^{\otimes j}_n)\}$ converges to $\hat{f}(W_{G_n})$. Thus, for each $n$ we can find $j_n\in \mathbb{N}$ such that $|f(G^{\otimes j_n}_n)-\hat{f}(W_{G_n})|\le \eps/2$. Combined with the triangle inequality, this implies that $|f(G_n)-f(G^{\otimes j_n}_n)|>\eps/2$. By compactness we can assume that $\{G_n\}_n$ has a subsequence $\{G'_n\}_n$ that converges to $W$.
Construct a sequence $\{H_n\}_n$ such that $H_{2n-1}=G'_n$ and $H_{2n}=G'^{\;\otimes j_n}_n$. Note that the sequence $\{H_n\}_n$ converges to $W$ as well (since $\csdist(W_{G_n},W_{G'^{\;\otimes j_n}_n})=0$), but $\{f(H_n)\}_n$ does not converge, as it is not Cauchy, which is a contradiction to (\ref{thm:Estim-c}).

\eqref{thm:Estim-d}$\implies$~\eqref{thm:Estim-c}: Consider any convergent ordered graph sequence $\{G_n\}_{n\in\mathbb{N}}$ such that $|V(G_n)|\to \infty$, and let $W\in\calW$ be its limit. Then, $\csdist(W_{G_n},W)\to 0$, and by the continuity of $\hat{f}$, we have that $\hat{f}(W_{G_n})-\hat{f}(W)\to 0$. Namely, for every $\eps>0$, we can find $N$ such that for all $n\ge N$, $\left|\hat{f}(W_{G_n})-\hat{f}(W)\right|\le \eps/2$. By assumption, we also have that for every $\eps>0$ there is a $k(\eps)$ such that for all $|G_n|\ge k$, $|f(G_n)-\hat{f}(W_{G_n})|\le \eps/2$. This implies that for a large enough $n$, $|f(G_n)-\hat{f}(W)|\le \eps$, concluding the proof.

\eqref{thm:Estim-c}$\implies$~\eqref{thm:Estim-b}: Assume towards a contradiction that~\eqref{thm:Estim-b} does not hold. Namely, that there are $\eps>0$ and $\delta>0$ such that for all $k$, there is $G$ on at least $k$ vertices such that $|f(G)-f(\bG|_{\bk})|>\eps$ with probability at least $\delta$. Suppose we have a sequence $\{G_k\}_k$ where $|V(G_k)|=n(k)\to \infty$, and $|f(G_k)-f(\bG_{\bk}|_{\bk})|>\eps$ with probability at least $\delta$. By the compactness of $\widetilde\calW$, we can select a subsequence $\{G'_k\}$ of $\{G_k\}_k$ such that $\{G'_k\}_k$ converges to some $W\in\widetilde{\calW}$. Using Theorem~\ref{thm:Sampling} (along with a union bound on the confidence probabilities) and the assumption of~\eqref{thm:Estim-c},  for every $k$,  let $H_k=G'_k|_k$ be some specific induced subgraph such that $\csdist(W_{G'_k},W_{H_k})=o_k(1)$ and $|f(H_k)-f(G'_k)|>\eps$. Note that by the triangle inequality, the sequence $ \{H_k\}_k$ converges to $W$. Let $\{S_\ell\}_\ell$ be the sequence where $S_{2\ell}=H_\ell$ and $S_{2\ell-1}=G'_\ell$. Since both $\{H_\ell\}_\ell$ and $\{G'_\ell\}_\ell$ converge to $W$, the sequence $\{S_\ell\}_\ell$ also converges to $W$. However, the sequence $\{f(S_\ell)\}_\ell$ does not converge (as it is not Cauchy), which is a contradiction to~\eqref{thm:Estim-c}.

\section{Pixelization and an Ordered Removal Lemma}\label{sec:hereditary}
%This section contains a very detailed sketch of the proof of Theorem~\ref{thm:removal_lemma}. All parts of the proof are formally established except for the pixelization argument, Lemma~\ref{lem:unifomDecision}, for which we provide a detailed sketch; the missing formalities will be properly defined in the next version of this paper. \onote{Remove this paragraph. Replace with} 
This section contains the proof of Theorem~\ref{thm:removal_lemma}. 

Let $\calH$ be a hereditary property of simple ordered graphs, and recall that $\barH$ denotes the \emph{closure} of the property $\calH$. 
Our main technical lemma is as follows.
\begin{lemma}\label{lem:non-uniform-hereditary} For every hereditary property $\calH$, orderon $U \in \barH$, and parameter
	$\eps>0$, there exists $\delta(\calH,U,\eps) > 0$, so that if $W$ is such that $\csdist(W,U)\le \delta(\calH,U,\eps)$, then $d_1(W,\barH)\le \eps$.
\end{lemma}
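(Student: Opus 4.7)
The plan is to exploit the flexibility of $\barH$ (Lemma~\ref{lem:U-flexing}) together with a step-function approximation of $U$. The basic idea is a ``flexible projection'': given $W$ with $\csdist(W,U)\le\delta$, I would construct $W'\in\barH$ by setting $W'=0$ on $\supp_0(U)$, $W'=1$ on $\supp_1(U)$, and $W'=W$ elsewhere. By construction $\supp_0(W')\supseteq\supp_0(U)$ and $\supp_1(W')\supseteq\supp_1(U)$, so flexibility gives $W'\in\barH$ immediately, and the cost
\[
d_1(W,W')=\int_{\supp_0(U)}(W-U)+\int_{\supp_1(U)}(U-W)
\]
needs to be bounded in terms of $\csdist(W,U)$.

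The key observation is that if $U$ were already a step function on a partition $\calP=\{P_1,\dots,P_m\}$ of $[0,1]^2$, then $\supp_0(U)$ and $\supp_1(U)$ each decompose as unions of at most $m^2$ product rectangles $P_i\times P_j$, and for each such rectangle the definition of the cut-norm gives $\big|\int_{P_i\times P_j}(W-U)\big|\le\|W-U\|_\square$. Summing these bounds yields $d_1(W,W')\le 2m^2\|W-U\|_\square$, which is at most $\eps$ once $\delta\le\eps/(4m^2)$. The contribution of the shift $f$ realizing $\csdist(W,U)\le\delta$ is absorbed by replacing the partition $\calP$ with $f(\calP)=\{f(P_i)\}_i$ (which is still a partition of $[0,1]^2$ into $m$ measurable sets), at the cost of small perturbations controlled by $\shift{f}\le\delta$ via the shift estimates from Lemma~\ref{lem:smallshiftlemma}.

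For general $U\in\barH$ I would first apply regularity (Lemma~\ref{lem:reg-stepping}) to obtain a partition $\calP$ with $m=m(\eta)$ parts and $\|U-U_\calP\|_\square\le\eta$, and then replace $U$ by a \emph{step-function surrogate} $V\in\barH$ that is $\{0,1\}$-valued on $\calP\times\calP$ and satisfies $\|U-V\|_1\le\tau$ for a small threshold $\tau$. A natural candidate sets $V=0$ on every block $P_i\times P_j$ on which $U$ is essentially zero (say, $U_\calP|_{P_i\times P_j}\le\tau^2$) and $V=1$ on every block on which $U_\calP|_{P_i\times P_j}\ge 1-\tau^2$, with the remaining ``fractional'' blocks handled by forcing $V$ to share the support structure of $U$ there. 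Verifying $V\in\barH$ via flexibility requires $\supp_0(V)\supseteq\supp_0(U)$ and $\supp_1(V)\supseteq\supp_1(U)$, which is ensured precisely because $V$ is only declared $0$ or $1$ on blocks where $U_\calP$ is near the corresponding extreme; the discrepancy is controlled by $\eta,\tau$. Performing the flexible projection argument above with $V$ in place of $U$ then yields $d_1(W,W')\le 2m^2(\delta+\eta)+O(\tau)\le\eps$ upon choosing $\tau,\eta$ small in terms of $\eps$ and then $\delta\le\eps/(10\,m(\eta)^2)$.

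The hardest part is the bookkeeping in the last step: one must simultaneously handle (i)~the shift from $\csdist(W,U)\le\delta$, which produces non-rectangular blocks $f(P_i)\times f(P_j)$ and displaces $\supp_0(U),\supp_1(U)$ by $O(\shift{f})$, and (ii)~the construction of a step-function surrogate $V\in\barH$ whose supports genuinely contain those of $U$. In particular, a careless thresholding may produce a $V$ that is \emph{not} in $\barH$, because the averaging in $U_\calP$ can push blocks containing a small $\supp_1(U)$-mass into the ``set to zero'' category. The proper order of dependence is $\tau=\Theta(\eps)$, then $\eta$ chosen so that the excess mass of $U$ outside the classified blocks is $\le\tau$ (this determines $m=m(\eta)$), and finally $\delta=\Theta(\eps/m^2)$; this is the delicate parameter choice that the paper notes will be formalized in a future version.
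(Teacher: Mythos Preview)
Your flexible-projection idea is exactly the mechanism the paper uses (under the name ``decision function''), but the two places you flag as ``bookkeeping'' are in fact the whole content of the proof, and neither is resolved by your sketch.

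\textbf{The step-function surrogate.} You cannot obtain a step function $V\in\barH$ that is $L^1$-close to $U$ by thresholding $U_\calP$. The problem is not the threshold level. Flexibility (Lemma~\ref{lem:U-flexing}) only lets you modify $U$ on the set where $0<U<1$; if a block $P_i\times P_j$ has $U_\calP$ near $0$ but carries any positive-measure piece of $\supp_1(U)$, declaring $V=0$ there destroys $\supp_1(V)\supseteq\supp_1(U)$, and no choice of $\tau,\eta$ repairs this---you need $V\in\barH$ \emph{exactly}, and a single forbidden $F$ with $t(F,V)>0$ suffices to ruin it. The paper's Lemma~\ref{lem:unifomDecision} produces the surrogate by a completely different route: it samples a random ``multigrid'' from $U$, applies a multipartite Ramsey argument to find a uniform sub-multigrid, and uses the fact that this sub-multigrid occurs with positive probability in $U$ (so any forbidden $F$ appearing in it would force $t(F,U)>0$) to certify that the resulting step function lies in $\barH$. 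No averaging-and-thresholding construction can substitute for this.

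\textbf{The shift.} This is where your argument actually breaks. You want to project $W$ onto $\supp_\alpha(U^f)$ (equivalently, work with the partition $f^{-1}(\calP)$), but flexibility then requires $U^f\in\barH$, and this is \emph{false} in general: ordered subgraph densities are not invariant under measure-preserving bijections with nonzero shift, so $t(F,U^f)$ can be strictly positive even though $t(F,U)=0$. Lemma~\ref{lem:smallshiftlemma} only gives $|t(F,U)-t(F,U^f)|=O(\shift f)$, which is useless for membership in the closed set $\{t(F,\cdot)=0\}$. The paper's fix is structural, not quantitative: the surrogate from Lemma~\ref{lem:unifomDecision} is a \emph{layered strip orderon}, whose parts sit inside short interval strips $[(j-1)/\ell,j/\ell]\times[0,1]$. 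After trimming (Definition~\ref{def:TrimAndStretch}), a shift with $\shift f\le\delta\ll 1/\ell$ sends each part into the \emph{same} strip, producing a ``clone'' partition. Lemma~\ref{lem:UD:W'-in-barH} then shows that applying the same decision function to any clone partition still yields an element of $\barH$---essentially because clones preserve the relative order of parts, so the zero-density certificates transfer verbatim. This is precisely the step your outline is missing, and it cannot be recovered from an arbitrary regularity partition (Lemma~\ref{lem:reg-stepping}), whose parts need not respect any interval structure.
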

For any orderon $U$, define the property $\calH_U$ as that containing precisely all simple ordered graphs $F$ where $t(F,U) > 0$. It is clear that if $H \subseteq F$ and $t(F,U) > 0$ then $t(H,U) > 0$ as well, and so $\calH_U$ is \emph{hereditary}. 
From Lemma \ref{lem:conditions_closure} we know that $\calH_U$ is minimal in the following sense: any hereditary property $\calH$ satisfying that $U \in \barH$ has $\barHU \subseteq \calH$. Indeed, if $U \in \barH$ then the aforementioned lemma implies that $t(F,U) = 0$ for any $F \notin \calH$; In other words, if $t(F,U) > 0$, then $F \in \calH$. Since the former condition holds for all $F \in \calH_U$, we have that $\calH_U \subseteq \calH$.
Now, with the minimality of $\calH_U$ in hand, in order to prove Lemma \ref{lem:non-uniform-hereditary} it suffices to prove the following simpler statement.
\begin{lemma}\label{lem:HU_removal}
For every orderon $U$ and $\eps > 0$ there exists $\delta = \delta(U, \eps) > 0$ so that if $\csdist(W, U) < \delta$ then $d_1(W, \overline{\calH_U}) < \eps$.
\end{lemma}

Note that the parameter $\delta$ depends on the object $U$.
However, since $\barH$ is a closed set in a compact space, it is also compact by itself, which implies the following removal lemma for orderons.

%In order to eliminate this dependency,  we first show that $\barH$ is closed with respect to $\csdist$, and then use a compactness argument to conclude the proof.

%\begin{lemma}\label{prop:ClosedOrderonProperties}The set $\overline{\calH}$ is closed in $\calW$ with respect to $\csdist$.
%\end{lemma}
%\begin{proof}
%	Suppose that $\{W_n\}_n\in\overline{\calH}$ converges to $W$ with respect to $\csdist$. Then, for every ordered graph $F$, $t(F,W_n)$ converges to $t(F,W)$ (as $n\to\infty$). Hence, if $t(F,W_n) \to 0$ for every $F\notin\calH$, then also $t(F,W)=0$, which implies that $W\in\overline{\calH}$.
%\end{proof}

%The above lemma, together with the fact that $(\widetilde{\calW},\csdist)$ is compact, implies that $(\barH,\csdist)$ is compact.
%With this we can prove the following.

\begin{lemma}\label{lem:dtri-continouity} For any $\eps>0$ there exists $\delta = \delta_{\barH}(\eps) > 0$, so that if $\csdist(W,\barH)\le \delta$, then $d_1(W,\barH)\le \eps$.
\end{lemma}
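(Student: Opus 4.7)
The plan is to obtain Lemma~\ref{lem:dtri-continouity} as a straightforward ``compactness upgrade'' of the non-uniform Lemma~\ref{lem:non-uniform-hereditary}, using Theorem~\ref{thm:Compactness} to replace the pointwise modulus $\delta(U,\eps)$ by a uniform one. First, I would check that $\barH$ is a compact subset of $\widetilde{\calW}$: by its very definition as a limit set, $\barH$ is closed in $\widetilde{\calW}$ with respect to $\csdist$, and since $\widetilde{\calW}$ is compact (Theorem~\ref{thm:Compactness}), so is $\barH$. Consequently, for any $W\in\calW$ the infimum $\csdist(W,\barH)=\inf_{U\in\barH}\csdist(W,U)$ is attained by some $U^\star\in\barH$.

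Fix $\eps>0$. For every $U\in\barH$, Lemma~\ref{lem:non-uniform-hereditary} supplies a radius $\delta(U,\eps)>0$ such that any $W$ with $\csdist(W,U)\le\delta(U,\eps)$ satisfies $d_1(W,\barH)\le\eps$. Consider the open cover
\[
\barH \;\subseteq\; \bigcup_{U\in\barH} B_{\csdist}\!\left(U,\tfrac{\delta(U,\eps)}{2}\right),
\]
where $B_{\csdist}(U,r)$ denotes the open $\csdist$-ball of radius $r$ around $U$. By compactness of $\barH$, there is a finite subcover coming from orderons $U_1,\dots,U_m\in\barH$. Define the uniform radius
\[
\delta \;\eqdef\; \tfrac{1}{2}\min_{1\le i\le m}\delta(U_i,\eps) \;>\; 0.
\]

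Now suppose $W\in\calW$ satisfies $\csdist(W,\barH)\le\delta$. Pick $U^\star\in\barH$ achieving the infimum, so $\csdist(W,U^\star)\le\delta$. Since $U^\star$ lies in the finite subcover, there is some $i$ with $\csdist(U^\star,U_i)\le \delta(U_i,\eps)/2$. Applying the triangle inequality for the pseudo-metric $\csdist$ (Lemma~\ref{lem:pseudo_metric}),
\[
\csdist(W,U_i)\;\le\;\csdist(W,U^\star)+\csdist(U^\star,U_i)\;\le\;\delta+\tfrac{\delta(U_i,\eps)}{2}\;\le\;\delta(U_i,\eps).
\]
Invoking Lemma~\ref{lem:non-uniform-hereditary} with the specific orderon $U_i$ then yields $d_1(W,\barH)\le\eps$, as required.

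The main obstacle here is not in this argument itself---which is a standard finite-cover compactness argument---but rather in ensuring that all the ingredients it relies on are genuinely available: namely, the compactness of the ambient space (Theorem~\ref{thm:Compactness}), the fact that $\barH$ is closed in $\widetilde{\calW}$ (immediate from its definition as a set of $\csdist$-limits), and the non-uniform continuity statement at each individual point (Lemma~\ref{lem:non-uniform-hereditary}). Once all three are granted, the uniformization step is essentially automatic and carries no further surprises.
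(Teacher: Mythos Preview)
Your proposal is correct and takes essentially the same approach as the paper: both argue that $\barH$ is compact (as a closed subset of the compact space $\widetilde{\calW}$), cover it by balls of radius $\delta(U,\eps)/2$ coming from Lemma~\ref{lem:non-uniform-hereditary}, extract a finite subcover, and take the minimum. The only cosmetic difference is that the paper phrases the conclusion as a uniform positive lower bound on $\delta(U,\eps)$ over $U\in\barH$ (using the triangle-inequality observation $\delta(U',\eps)\ge\delta(U,\eps)-\csdist(U,U')$), whereas you apply the triangle inequality directly to $W$; the two are equivalent.
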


\begin{proof} Fix $\eps>0$. Lemma~\ref{lem:non-uniform-hereditary} implies that for every $U\in\barH$ there exists $\delta(U, \eps) > 0$ such that if $\csdist(W,U)\le \delta(U, \eps)$ then $d_1(W,\barH)\le \eps$.

	Assume that $U'\in\barH$  and $\csdist(U',U)\le \eta$. Clearly
	$\delta(U',\eps)\ge \delta(U,\eps)-\eta$ by the triangle inequality. So for a fixed $\eps$ and every $U$ there is a ball with radius $\delta(U, \eps) / 2$ (in cut-shift distance) around $U$ with a guaranteed lower bound of $\delta(U,\eps)/2$ on $\delta(U',\eps)$ for any $U'$ in that ball. By compactness, we can cover $\barH$ with a finite subset of this set of balls, obtaining a positive universal lower bound on $\delta(U,\eps)$ for every $U\in\barH$.
\end{proof}

Next, we describe how to derive the proof of Theorem~\ref{thm:removal_lemma}. For this we need the following lemma. %two lemmas.

%\begin{lemma}\label{lem:convergence_H_bar}
%	The limit of any convergent sequence of graphs from $\calH$ is in $\barH$.
%\end{lemma}
%\begin{proof}
%	Let $\{H_n\}_n$ be such a sequence. For any fixed $k$, we know that any $k$-vertex induced subgraph of $H_n$ (with no vertex repetitions) is in $\calH$, and using Observation~\ref{Obs:GraphsToOrderons} we get that for any $F \notin \calH$, $t(F,H_n) \to 0$ as $n \to \infty$. Thus, $H_n$ converges to an orderon $W$ satisfying $t(F,W) = 0$ for any $F \notin \calH$, as desired.
%\end{proof}

\begin{lemma}\label{lem:finite_bad_examples}
	For any $\eps>0$ there are only finitely many ordered graphs $H\in \calH$ with $d_1(W_H,\barH)>\eps$.
\end{lemma}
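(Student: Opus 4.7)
The plan is to argue by contradiction. Suppose that there exist infinitely many ordered graphs $H \in \calH$ with $d_1(W_H, \barH) > \eps$. Since for each fixed integer $n$ there are only finitely many ordered graphs on $n$ vertices, the sizes of these graphs must be unbounded. Consequently, I can extract a sequence $\{H_n\}_{n \in \mathbb{N}} \subseteq \calH$ with $|V(H_n)| \to \infty$ such that $d_1(W_{H_n}, \barH) > \eps$ for every $n$.

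Next, I would apply the compactness of $\widetilde{\calW}$ established in Theorem~\ref{thm:Compactness} to the sequence $\{W_{H_n}\}_{n \in \mathbb{N}}$: after passing to a subsequence (still denoted by $H_n$ for brevity), there exists an orderon $W \in \widetilde{\calW}$ with $\csdist(W_{H_n}, W) \to 0$. Because the sequence $\{H_n\}$ lies entirely in $\calH$ and satisfies $|V(H_n)| \to \infty$, the very definition of the closure of a property used in Section~\ref{sec:furthest} immediately gives $W \in \barH$.

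Finally, I would invoke Lemma~\ref{lem:dtri-continouity} with parameter $\eps$ to obtain $\delta = \delta(\eps) > 0$ such that any orderon $W'$ with $\csdist(W', \barH) \le \delta$ satisfies $d_1(W', \barH) \le \eps$. Since $W \in \barH$ and $\csdist(W_{H_n}, W) \to 0$, for all sufficiently large $n$ we have $\csdist(W_{H_n}, \barH) \le \csdist(W_{H_n}, W) \le \delta$, which by Lemma~\ref{lem:dtri-continouity} yields $d_1(W_{H_n}, \barH) \le \eps$. This contradicts our standing assumption that $d_1(W_{H_n}, \barH) > \eps$ for every $n$, completing the proof.

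The argument is essentially a compactness-plus-continuity packaging: the only non-routine ingredient is the ``removal lemma for orderons'' (Lemma~\ref{lem:dtri-continouity}), which converts closeness in $\csdist$ to closeness in $L^1$ relative to $\barH$; everything else is book-keeping. Thus the main obstacle has already been overcome in the preceding lemmas, and no step in this proof should require further technical work beyond verifying that the closure membership $W \in \barH$ follows directly from the definition of $\barH$ applied to the sequence $\{H_n\}$.
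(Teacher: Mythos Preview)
Your proof is correct and follows essentially the same approach as the paper: argue by contradiction, extract an infinite sequence in $\calH$ with unbounded vertex sets, pass to a convergent subsequence by compactness, observe that the limit lies in $\barH$ by definition of the closure, and then apply Lemma~\ref{lem:dtri-continouity} to derive the contradiction. The only difference is cosmetic---you spell out the role of $\delta(\eps)$ in applying Lemma~\ref{lem:dtri-continouity} a bit more explicitly than the paper does.
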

\begin{proof} 
	%By Lemma~\ref{lem:dtri-continouity}, $d_{1}(W_H, \barH) > \eps$ implies $d_\triangle(W_H, \barH) > \eps$, and so it suffices to only consider the $L^1$-distance.
	Suppose that there is a sequence of ordered graphs $\{G_n\}_{n\in\mathbb{N}}$ in $\calH$ such that $d_1(W_{G_n},\barH)\geq\eps$. As the total number of ordered graphs with up to $n$ vertices is bounded as a function of $n$, we may assume that $|V(G_n)| \to \infty$ as $n \to \infty$. Furthermore, by compactness, we can assume that the sequence converges to some $W$, that is, $d_{\triangle}(W_{G_n}, W) \to 0$. By definition of the closure $\barH$, it follows that $W\in\barH$.
	%	For every specific $F \notin \calH$, the probability
	%	$t(F,W_{G_n})$ converges to zero. This is because such a graph can only occur when we sample two points from an interval that corresponds to the same vertex. This implies that $t(F,W)=\lim_{n\to\infty}t(F,W_{G_n})=0$, which by definition, implies that $W$ is in $\barH$.
	By Lemma~\ref{lem:dtri-continouity},	$d_1(W_{G_n}, \barH) \to 0$ as well, which is a contradiction.
\end{proof}

\begin{proofof}{Theorem \ref{thm:removal_lemma}}
	Fix $\eps, c > 0$. By Lemma~\ref{lem:dtri-continouity}, there exists $\delta > 0$ such that any orderon $W$ with $d_1(W, \barH) \geq \eps$ satisfies $d_{\triangle}(W, \barH) \geq \delta$. On the other hand, by Lemma \ref{lem:finite_bad_examples}, 
	there exists some $k_1 \in \mathbb{N}$ so that any simple ordered graph $H \in \calH$ on at least $k_1$ vertices satisfies $d_1(W_H, \barH) \leq \delta/2$.
	Furthermore, by Theorem~\ref{thm:Sampling} and Observation~\ref{Obs:GraphsToOrderons}, there exist integers $k_2 \geq s \geq k_1$ satisfying the following. For any graph $G$ on $n \geq k_2$ vertices, with probability at least $1-c$ it holds that $d_{\triangle}(W_{\bG|_{\bs}}, W_G) < \delta/2$. Set $k =  k_2$ in the statement of the theorem.
	
	Let $G$ be a simple ordered graph on $n \geq k$ vertices with $d_1(G, \calH) \geq \eps$.
	By Lemma~\ref{lem:upper_bound_distance_from_hered}, we have $d_1(W_G, \barH) \geq \eps$. 
	From the above paragraph we know that $d_{\triangle}(W_G, \barH) \geq \delta$.
	Now let $H \sim \bG|_{\bs}$. Again by the above paragraph, with probability at least $1-c$ it holds that $d_{\triangle}(W_{H}, W_G) < \delta/2$, which means by the triangle inequality that $d_\triangle(W_H, \barH) > \delta/2$. As $s \geq k_1$, we conclude that $H \notin \calH$.
\end{proofof}

\subsection{Proof of Lemma~\ref{lem:HU_removal} }

In this subsection we provide the proof of Lemma~\ref{lem:HU_removal}.
We need to show that for every orderon $U$ and every
$\eps>0$ there exists $\delta(U,\eps) > 0$, so that if $W$ is such that $\csdist(W,U)\le \delta(U,\eps)$, then $d_1(W,\overline{\calH_U})\le \eps$.
Along the proof, we will be using the following family of orderons, called \emph{layered strip orderons}, in various occasions.\footnote{Note that naive block orderons, discussed previously in this paper, are a special case of a layered strip orderon.}

\begin{definition}\label{def:layered_strip_orderon}
%For $\ell\in\mathbb{N}$ define $\calI_{\ell} = \{I_1^{\ell}, \ldots, I_{\ell}^{\ell}\}$ as a partition into blocks of the form $I_j^{\ell} = [(j-1)/\ell,j/\ell]\times [0,1]$ for any $j \in [\ell]$. 
An \emph{$(\ell,k)$-strip layered orderon} (where $\ell,k \in \mathbb{N}$) is a step function whose steps are the following: All rectangles of the form $([(j-1)/\ell,j/\ell]\times [(i-1)/k,i/k])\times ([(j'-1)/\ell,j'/\ell]\times [(i'-1)/k,i'/k])$ where $i,i' \in [k]$, $j,j'\in\ell$ and $j\neq j'$, and additionally for every rectangle of the form $([(j-1)/\ell,j/\ell]\times [(i-1)/k,i/k])\times ([(j-1)/\ell,j/\ell]\times [(i'-1)/k,i'/k])$ its sub-partition according to whether $((x,a),(y,b))$ satisfies $x<y$ or $y<x$.

Each rectangle of the form $([(j-1)/\ell,j/\ell] \times [0,1]$ is called a \emph{strip}, whereas each of the $k$ rectangle of the form $([(j-1)/\ell,j/\ell]\times [(i-1)/k,i/k])$ within a strip will be called a \emph{layer}.
%However, for points in the same block $(x,a), (y,b) \in I^{\ell}_j$, we allow the value of the orderon to also depend on whether $x < y$ or $y < x$.
\end{definition}

When working with $(\ell,k)$-strip layered orderons for fixed $\ell$ and $k$, a consideration of an infinite family $\calF$ of forbidden graphs can be reduced to a finite one by a well-known embedding technique by Alon and Shapira~\cite{AlonShapira08} (see \cite{ABF_FOCS17} for the ordered variant). When considering orderons (rather than discrete graphs) the argument can be boiled down to a lemma whose proof is almost trivial.

\begin{lemma}\label{lem:infinite_finite}
For every (possibly infinite) family $\calF$ and integers $\ell$ and $k$ there exists a finite family $\calF_{\ell,k}$, so every $(\ell,k)$-strip layered orderon $W$ that satisfies $t(\calF,W)>0$ satisfies also $t(\calF_{\ell,k},W)>0$.
\end{lemma}
\begin{proof}
We begin with the observation that it is enough to consider strip layered orderons whose values are all in $\{0,\frac12,1\}$. The reason is that if we take any strip layered orderon $W$, and define $W'$ so that $W'((x,a),(y,b))=W((x,a),(y,b))$ if $W((x,a),(y,b))\in\{0,1\}$ and $W'((x,a),(y,b))=\frac12$ otherwise, then clearly $t(F,W)>0$ if and only if $t(F,W')>0$ for every ordered graph $F$.

Now ignoring values of $W((x,a),(y,b))$ for $x=y$ (which do not contribute anything to $t(\calF,W)$), there are a finite number of $(\ell,k)$-strip layered orderons that take values in $\{0,\frac12,1\}$ (there are $3^{(\ell+\ell^2)k^2}$  of them to be exact). For every orderon $W$ in this set, if $t(\calF,W)>0$ then we pick one graph $F_W\in\calF$ for which $t(F_W,W)>0$. The set containing all graph $F_W$ that we picked during this process is the required finite set $\calF_{\ell,k}$.
\end{proof}

Orderons are measurable functions, and similarly to Section~\ref{sec:furthest}, they can be approximated in $L^1$ by a step function whose steps are rectangles, which can be viewed in turn as a layered strip orderon. From now on, to reduce the number of parameters, we refer only to $(\ell,\ell)$-strip layered orderons (rather than use a second parameter for the number of steps).% and use the abbreviation of \emph{$\ell$-strip layered orderon} for these.

\begin{lemma}\label{lem:step_approx_orderon} Let $W\in{\calW}$ be an orderon. Then, there exist $\ell\in\mathbb{N}$ and a function $W^{\calR_\ell}\in {\calW}$ which is an $(\ell,\ell)$-strip layered orderon, satisfying $\|W-W^{\calR_\ell}\|_1\le o_\ell(1)$.
\end{lemma}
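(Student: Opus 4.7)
Proof plan. I would prove this by a standard two-step approximation, exactly in the spirit of the proof of Lemma~\ref{lem:naive_block_orderon_far}. The idea is first to approximate $W$ in $L^1$ by a (symmetric) finite linear combination of indicators of product rectangles in $[0,1]^2 \times [0,1]^2$, and then to ``straighten'' each rectangle along its first coordinate so that its projection to the first coordinate becomes a union of whole strips $[(j-1)/\ell, j/\ell]$.

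Step 1 (rectangle approximation). Since $W \colon [0,1]^4 \to [0,1]$ is a bounded measurable function, the density of two-dimensional rectangle step functions in $L^1$ (e.g.\ Theorem 2.4(ii) in~\cite{PrincetonLectures}, iterated over the two coordinates $([0,1]^2)^2$) gives, for any $\eta > 0$, a finite sum
\[ T = \sum_{i=1}^N \alpha_i \, \indi_{R_i \times R'_i}, \qquad R_i, R'_i \subseteq [0,1]^2 \text{ axis-aligned rectangles}, \]
with $\|W-T\|_1 \le \eta/4$. Since $W$ is symmetric, replacing $T$ by $(T+\tilde T)/2$, where $\tilde T$ swaps the two $[0,1]^2$ arguments, keeps the approximation error at most $\eta/2$ and produces a symmetric approximant whose nonzero pieces are finite unions of product rectangles $R \times R'$ with $R,R'$ each of the form $[a,b]\times[c,d] \subseteq [0,1]^2$.

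Step 2 (aligning with the strips). Fix a rectangle $R = [a,b]\times[c,d]$ appearing in $T$. Define $\hat R = [\lfloor a\ell\rfloor/\ell,\; \lceil b\ell\rceil/\ell]\cap[0,1]\;\times\;[c,d]$, so that $\hat R$ is a union of rectangles of the form $[(j-1)/\ell,\, j/\ell]\times[c,d]$ (each of which lies in a single strip $I_j^{\ell}$), and $\lambda(R \triangle \hat R) \le 2/\ell$. Replacing every $R_i$ and $R'_i$ in the symmetrized $T$ by $\hat R_i$ and $\hat R'_i$ yields a function $T'$ with
\[ \|T - T'\|_1 \;\le\; \Bigl(\sum_{i=1}^N |\alpha_i|\Bigr)\cdot\frac{4N}{\ell}, \]
so for $\ell$ large enough (depending only on $T$, hence only on $\eta$), $\|T-T'\|_1 \le \eta/2$. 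By construction, each product piece $\hat R_i \times \hat R'_i$ in $T'$ is a union of ``brick pairs'' $\bigl([(j-1)/\ell, j/\ell]\times[c,d]\bigr)\times\bigl([(j'-1)/\ell, j'/\ell]\times[c',d']\bigr)$, so $T'$ is constant on each such brick pair; clipping pointwise to $[0,1]$ (which only decreases the $L^1$-distance to $W$) and symmetrizing again if necessary, we obtain an $\ell$-strip layered orderon $W^{\calR_\ell}$ with $\|W - W^{\calR_\ell}\|_1 \le \eta$.

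Since $\eta$ was arbitrary, choosing $\eta=\eta(\ell)\to 0$ slowly enough while letting $\ell\to\infty$ gives $\|W - W^{\calR_\ell}\|_1 \le o_\ell(1)$. The only subtle point is handling symmetry: symmetrizing $T$ in Step 1 is harmless, and in Step 2 the operations $R \mapsto \hat R$ commute with swapping arguments, so the symmetry is preserved. The extra freedom in the definition of a layered orderon to depend on the sign of $x-y$ within a strip is not needed here (the diagonal $\{x=y\}$ in each strip is a null set), but it is automatically respected. Thus the main obstacle is really bookkeeping, not a genuine difficulty; the construction is a direct measure-theoretic approximation followed by a rounding to the strip grid.
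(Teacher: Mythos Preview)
Your proposal is correct and follows essentially the same approach as the paper, which simply refers back to the first two paragraphs of the proof of Lemma~\ref{lem:naive_block_orderon_far}: approximate in $L^1$ by a symmetric finite sum of indicators of axis-aligned rectangles (via Theorem~2.4(ii) of~\cite{PrincetonLectures}), then round the first-coordinate endpoints to the grid $\{j/\ell\}$ at $L^1$-cost $O_{T}(1/\ell)$. Your write-up in fact supplies more detail than the paper does for this step.
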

The proof is identical to the first two paragraphs in the proof of Lemma~\ref{lem:naive_block_orderon_far}, except that the functions we wish to approximate belong to $L^1[([0,1]^2)^2]$ (which is isomorphic, in relation to these arguments, to $L^1[[0,1]^4]$), rather than $L^1[[0,1]^2]$. For this lemma we do not have to sub-partition the rectangles of the form $([(j-1)/\ell,j/\ell]\times [(i-1)/\ell,i/\ell])\times ([(j-1)/\ell,j/\ell]\times [(i'-1)/\ell,i'/\ell])$.
%Let us denote by $U_{\calR_\ell}$ the orderon resulting by applying Lemma \ref{lem:step_approx_orderon} to $U$, where $\calR_\ell$ denotes the partition of $[0,1]^2$ into the steps of $U_{\calR_\ell}$.

Generally, in this section it will be convenient for us to work with measure-preserving bijections that ``preserve strips'', defined as follows.
\begin{definition}[strip-preserving bijections]
Let $\phi \colon [0,1]^2 \to [0,1]^2$ be a measure-preserving bijection and let $\calI_{\ell} = \{I_1^{\ell}, \ldots, I_{\ell}^{\ell}\}$ be as in Definition~\ref{def:layered_strip_orderon}. We say that $\phi$ is \emph{$\ell$-strip-preserving} (or \emph{$\ell$-block-preserving}) if for any $i \in [\ell]$ and $x \in I^{\ell}_i$, it holds that $\phi(x) \in I^{\ell}_i$.
\end{definition}

The next lemma states that any small-shift measure-preserving bijection can be approximated in an $L^1$-sense by bijections that preserve strips.
\begin{lemma}
	\label{lem:surgery}
For any $\ell \in \mathbb{N}$ and measure-preserving bijection $\phi \colon [0,1]^2 \to [0,1]^2$ with $\shift{\phi} \leq 1 / 4\ell$, there exists a measure-preserving bijection $\phi' \colon [0,1]^2 \to [0,1]^2$ that is also $\ell$-strip-preserving, so that $\|W^\phi - W^{\phi'}\|_1 \leq 4 \ell \cdot \shift{\phi}$ for any orderon $W \in \calW$. Additionally, we can require $\phi'$ to satisfy $\shift{\phi'} \leq \shift{\phi}$.
\end{lemma}
\begin{proof}
The proof makes use of several fundamental arguments in measure theory. 
For any $i \in [\ell]$ let $A_i = \{x \in I^{\ell}_i : \phi(x) \notin I^{\ell}_i\}$ and $B_i = \{x \in I^{\ell}_i : \phi^{-1}(x) \notin I^{\ell}_i\}$. 
As $\phi$ is measure preserving, we know that $A_i$ and $B_i$ have the same (Lebesgue) measure. 
Now, it is well known (e.g., \cite[Theorem 3.6]{burk1997lebesgue}) that any Lebesgue measurable set is a disjoint union of a Borel measurable set with a (Lebesgue measurable) zero-measure set. 
Write $A_i = A'_i \cup A^N_i$ and $B_i = B'_i \cup B^N_i$, where $A'_i, B'_i$ are Borel measurable and $A^N_i, B^N_i$ are zero-measure sets. 
From \cite{Nishiura98}, it follows that there exists a measure-preserving bijection $\phi'_i \colon A'_i \to B'_i$ for any $i \in [\ell]$. 

At this point, it might be tempting to try picking $\phi'$ as follows: $\phi'(x) = \phi(x)$ for any $x \in I^{\ell}_i \setminus A_i$; $\phi'(x) = \phi'_i(x)$ if $x \in A'_i$; and picking a bijection $\phi^N \colon A^N_i \to B^N_i$ so that $\phi'(x) = \phi^N(x)$ for any $x \in A^N_i$. However, such a bijection need not exist: it might be the case that $A^N_i$ and $B^N_i$ do not have the same cardinality, in the set theory sense, and in this case such a bijection cannot exist. To accommodate this, we can do the following simple ``rewiring'': pick a subset $C_i \subseteq I^\ell_i \setminus A_i$ which has measure zero and (set theoretic) cardinality $2^{\aleph_0}$ (which is the same as that of $\mathbb{R}$), and set $\phi''_i$ to be an arbitrary bijection between $A^N_i \cup C_i$ and $B^N_i \cup \phi(C_i)$. Note that such a $\phi''_i$ exists (both of these sets have the same cardinality) and is trivially measure preserving, since the Lebesgue measure is complete and both the domain and range of $\phi''_i$ are zero-measure sets. Consequently, $\phi'$ defined by $\phi'(x) = \phi(x)$ for $x \in I^\ell_i \setminus (A_i \cup C_i)$, $\phi'(x) = \phi'_i(x)$ for $x \in A'_i$, and $\phi'(x) = \phi''_i(x)$ for $x \in A^N_i \cup C_i$, is a measure preserving bijection as desired.

It remains to show that $\|W^\phi - W^{\phi'}\|_1 \leq \eps$ for any orderon $W \in \calW$. Write $\delta = \eps / 4\ell$ and suppose that $\shift{\phi} \leq \delta$. It follows that $\lambda(A_i) \leq 2\delta$ for any $i \in [\ell]$ (also recall that $\lambda(C_i) = 0$). As $W^{\phi}(x,y) \neq W^{\phi'}(x, y)$ may hold only if $\{x,y\}$ intersects $\bigcup_{i=1}^{\ell} (A_i \cup C_i)$, we conclude that $\|W^\phi - W^{\phi'}\|_1 \leq 2 \cdot 2\delta \cdot \ell = 4 \delta \ell = \eps$ for any $W \in \calW$.

To make $\phi'$ satisfy $\shift{\phi'} \leq \shift{\phi}$, we can decompose, for any $i \in \ell$, $A_i = A^-_i \cup A^+_i$ where $A^-_i = \{x \in I^{\ell}_i : \phi(x) \in I^{\ell}_{i-1}\}$ and $A^+_i = \{x \in I^{\ell}_i : \phi(x) \in I^{\ell}_{i+1}\}$ (also setting $A^-_1 = A^+_\ell = \emptyset$), and similarly for $B_i$. It follows that $\lambda(A^-_i) = \lambda(B^-_i)$ and $\lambda(A^+_i) = \lambda(B^+_i)$ for any $i \in [\ell]$, and the rest of the proof is analogous to the above.
\end{proof}

The following is our main technical lemma, about the possibility to ``pixelize'' any orderon $U$.
\begin{lemma}[pixelization lemma]\label{lem:unifomDecision}
	For any orderon $U$, and any $\eps > 0$, there exists $\ell = \ell(U, \eps)$ and a layered $\ell$-strip orderon $U' \in \barHU$ with $\ell$ steps, so that $d_1(U, U') \leq \eps$.

	%	Assume that $W$ is a $2^\ell$-strip layered orderon and let $\calP$ be the partition into its steps. In addition, let $U\in\barH$ such that $d_1(W,U)=\eps$. Then, there exists $U'\in\barH$ which is a result of a uniform decision and $d_1(W,U')\le \eps + O(1/2^\ell)$.
\end{lemma}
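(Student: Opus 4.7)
The plan is to construct $U'$ by a layered approximation that respects the $\{0,1\}$-support structure of $U$, so that the flexibility of $\barH$ (Lemma~\ref{lem:U-flexing}) will yield $U' \in \barH$. First, I would apply Lusin's theorem to fix a closed set $F \subseteq ([0,1]^2)^2$ with $\lambda(F^c) \le \eta$ on which $U$ is continuous, for a small $\eta = \eta(\eps) > 0$. Setting $K_0 = F \cap \supp_0(U)$ and $K_1 = F \cap \supp_1(U)$ gives two compact and disjoint subsets separated by some distance $d > 0$, and the uniform continuity of $U|_F$ yields $\sigma > 0$ such that the oscillation of $U$ on any diameter-$\sigma$ set contained in $F$ is at most $\eps/4$.

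Next, I would combine Lemma~\ref{lem:step_approx_orderon} with this setup to obtain a layered $\ell$-strip partition $\calR$ whose cells have diameter at most $\min(\sigma, d/2)$, taking $\ell$ large enough to accommodate both the strip width and sufficiently many horizontal bands within each strip. Define $U'$ cell-by-cell as follows: on any cell intersecting $\supp_0(U)$, set $U' = 0$; on every remaining cell intersecting $\supp_1(U)$, set $U' = 1$; on every other cell, set $U'$ to the average of $U$ on the cell, which lies in $(0,1)$ since the cell avoids both supports. Because no cell has diameter exceeding $d/2$, a single cell cannot meet both $K_0$ and $K_1$; any ``conflict'' in which a cell intersects both $\supp_0(U)$ and $\supp_1(U)$ must involve the residual sets $\supp_0(U) \setminus K_0$ or $\supp_1(U) \setminus K_1$, each of measure at most $\eta$.

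For the closure property, I would verify that $\supp_\alpha(U) \subseteq \supp_\alpha(U')$ up to a null set for $\alpha = 0,1$, so that Lemma~\ref{lem:U-flexing} applies to give $U' \in \barH$. To bound the $L^1$-error, cells contained in $F$ contribute at most the oscillation $\eps/4$ times their total measure, while cells meeting $F^c$ contribute at most $\lambda(F^c)$ plus a small boundary correction; choosing $\eta$ small enough relative to $\eps$ yields $\|U - U'\|_1 \le \eps$.

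The principal obstacle will be arranging the inclusion $\supp_\alpha(U) \subseteq \supp_\alpha(U')$ to hold a.e., rather than merely up to a small-measure discrepancy. Since the residual supports $\supp_\alpha(U) \setminus K_\alpha$ need not align with any layered cellular structure, some cells may inevitably contain points from both $\supp_0(U)$ and $\supp_1(U)$, and there a single choice of $U' \in \{0,1\}$ unavoidably violates flexibility on the opposite support. I expect to resolve this either by iteratively refining the Lusin decomposition to absorb the residuals into enlarged compact sets whose cellular covers never collide, or alternatively by first replacing $U$ with an auxiliary $\tilde U \in \barH$ that is $L^1$-close to $U$ and whose $\{0,1\}$-supports coincide with compact sets respected by the layered partition.
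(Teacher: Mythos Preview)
Your plan hinges on flexibility (Lemma~\ref{lem:U-flexing}): you want to build a layered strip orderon $U'$ with $\supp_\alpha(U)\subseteq\supp_\alpha(U')$ a.e.\ for $\alpha\in\{0,1\}$, which would force $U'\in\barH$. You yourself identify the obstacle, and it is fatal to this route. Since $U'$ is constant on each cell $P_i\times P_j$, the containment $\supp_0(U)\subseteq\supp_0(U')$ and $\supp_1(U)\subseteq\supp_1(U')$ requires that no cell carry positive measure from both $\supp_0(U)$ and $\supp_1(U)$. Lusin's theorem lets you separate the \emph{compact} parts $K_0,K_1$ by a fine enough grid, but the residuals $\supp_\alpha(U)\setminus K_\alpha$ have measure up to $\eta$, not zero; flexibility needs exact (a.e.) containment, and a positive-measure defect, however small, breaks the implication $U\in\barH\Rightarrow U'\in\barH$. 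Neither of your proposed fixes closes this: iterating Lusin still leaves, at every finite stage, a positive-measure residual that the fixed layered partition cannot sort; and ``first replace $U$ by some $\tilde U\in\barH$ whose $0/1$-supports are already cell-compatible'' is exactly the statement you are trying to prove.

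The paper's argument is structurally different and does not go through flexibility at all. It rounds $U$ to finitely many values, takes a layered strip partition $\calR$ approximating $U$ in $L^1$, and then \emph{samples} a random grid of points from $U$ (one $x$-coordinate per block, one $a$-coordinate per layer). The induced tensor of $U$-values is a ``configuration''; one such configuration is $L^1$-close to $U$ and occurs with positive probability. A Lebesgue-density lemma (Lemma~\ref{lem:measure_theoretic_dense_subset}) localizes the sampling to tiny subintervals where this configuration is overwhelmingly likely, and a multipartite Ramsey argument on an $r$-fold multigrid extracts a $k$-multigrid that is uniform even on within-block pairs. Because any finite sample from $U\in\barH$ lies in $\calH$ almost surely (Lemma~\ref{lem:conditions_closure}), this uniform multigrid contains no forbidden subgraph; the layered strip orderon $U'$ is then defined by \emph{copying} the multigrid values onto the cells of $\calR$, and the absence of forbidden subgraphs in the multigrid translates directly into $t(F,U')=0$ for all $F\notin\calH$. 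The point is that $U'$ is built from genuine sampled values of $U$, not from averages or support-preserving rules, so membership in $\barH$ is certified by the third condition of Lemma~\ref{lem:conditions_closure} rather than by flexibility.
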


%(with the full proof appearing in a later version of this paper; this is the only part in the proof of Lemma~\ref{lem:non-uniform-hereditary} that does not fully appear here). 
Note that the version of the pixelization lemma appearing in the introduction, Theorem \ref{thm:pixel}, follows from the above lemma in combination with Lemma \ref{lem:conditions_closure}. For the proof of the pixelization lemma we will need a couple of standard tools, a measure-theoretic lemma and a multipartite Ramsey-type lemma. These are given next.
\begin{lemma}
	\label{lem:measure_theoretic_dense_subset}
	Consider a probability space characterized by $s$ random variables $X_1,\ldots,X_s$, where $X_i$ is independently and uniformly chosen from the interval $[c_i,d_i]$. Suppose that $E\subseteq \prod_{t=1}^s [c_t,d_t]$ is a positive probability event (determined by the random variables $X_1,\ldots,X_s$), and let $\delta>0$. Then, there exist $[c'_t,d'_t]\subseteq [c_t,d_t]$ for which the conditional probability of $E$, where we constrain $X_t\in [c'_t,d'_t]$ for every $1\leq t\leq s$, is at least $1-\delta$.
\end{lemma}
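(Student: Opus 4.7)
The plan is to invoke Lebesgue's density theorem in $\mathbb{R}^s$. Recall that for any measurable set $E$ of positive Lebesgue measure, almost every point of $E$ is a \emph{density point}, meaning the relative mass of $E$ in increasingly small cubes centered at the point tends to $1$. (The standard version of this theorem is for balls, but the axis-aligned cube version follows immediately by comparing volumes, since cubes and balls of comparable radius have volumes comparable up to a dimension-dependent constant; alternatively, this is a direct instance of the Lebesgue differentiation theorem applied to $\mathbf{1}_E$ with respect to the basis of cubes, whose eccentricity is uniformly bounded.)

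First, I would identify a density point $x^* = (x_1^*, \ldots, x_s^*)$ of $E$ lying in the interior of $\prod_{t=1}^s [c_t, d_t]$. Such a point exists: the set of density points of $E$ has full measure within $E$ (hence positive total measure, by assumption on $E$), while the boundary of the product box has Lebesgue measure zero, so these two sets must have nonempty intersection.

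Next, I would choose $r > 0$ small enough so that the cube $C(x^*, r) \eqdef \prod_{t=1}^s [x_t^* - r, x_t^* + r]$ is entirely contained in $\prod_t [c_t, d_t]$, and moreover
\[
\frac{\lambda(E \cap C(x^*, r))}{\lambda(C(x^*, r))} \geq 1 - \delta.
\]
Such an $r$ exists by the definition of a density point applied to $x^*$. Setting $[c_t', d_t'] \eqdef [x_t^* - r, x_t^* + r]$ for each $t \in [s]$, the conditional probability of $E$ given that $X_t \in [c_t', d_t']$ for every $t$ equals precisely the density ratio displayed above (since the joint distribution of $(X_1, \ldots, X_s)$ conditioned on this event is uniform on $C(x^*, r)$), which is at least $1 - \delta$, as required.

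Overall, the proof is essentially a direct application of Lebesgue's density theorem, so I do not foresee any substantial technical obstacle. The one minor point requiring care is the use of axis-aligned cubes (rather than balls) in the density computation, which is a standard variant of the classical statement.
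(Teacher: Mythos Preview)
Your proof is correct. The paper actually states this lemma without proof, treating it as a standard measure-theoretic fact; in fact, the paper explicitly invokes Lebesgue's density theorem elsewhere (in the introduction, to rule out graphons as limits of the odd-clique sequence), so your approach via a density point of $E$ and a small axis-aligned cube around it is exactly the argument the authors have in mind. There is nothing to compare beyond this: your write-up simply fills in the details the paper omits.
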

\begin{proof}
	Note that the event $E$ can be equivalently represented using its indicator function, $\mathbbm{1}_E$, a measurable $0/1$-valued function whose domain is $\prod_{t=1}^s[c_t, d_t]$. Such a function can be approximated to within arbitrary small error by a sum of (positive density) disjoint $1$-valued step-functions, which are of the form $\mathbbm{1}_B$ where $B$ is an $s$-dimensional box. By an averaging argument, there exists one such $B = \prod_{t=1}^s[c'_t, d'_t]$ for which $|E \cap B| \geq (1-\delta)|B|$. This proves Lemma \ref{lem:measure_theoretic_dense_subset}.
\end{proof}
\begin{lemma}[Multipartite directed Ramsey lemma]
\label{lem:multipartite_directed_ramsey}
Let $a,b,c \in \mathbb{N}$ and let $G$ be an $a$-partite directed graph where each edge is colored by an element of $[c]$. There exists $N = N(a,b,c) \in \mathbb{N}$ so that if each part of $G$ contains at least $N$ vertices, then $G$ contains an $a$-partite subgraph $H$ with at least $b$ vertices in each part, so that for any pair of parts $(P, P')$ in $G$, all edges in $H$ from $P$ to $P'$ have the same color.
\end{lemma}
The proof of this lemma is standard in Ramsey theory and we sketch it concisely. In comparison, the combinatorial proof in \cite{ABF_FOCS17} requires a substantially more involved Ramsey-type lemma, where a set of ``undesirable'' edges is also provided, and we require the constructed uniform subgraph to have a ratio of undesirable edges that is not much larger than that of the original.
% ensuring that the number of ``undesirable'' edges in the constructed uniform multipartite subgraph is not much higher than in the graph as a whole.

\begin{proof}
Fix the number of colors $c$. The proof is by induction on $a$, where for the base case $a=1$ there is nothing to prove. Assume the statement's correctness for $a$ parts and all $b \in \mathbb{N}$; one can prove correctness for $a+1$ parts as follows: Denote by $A_1, \ldots, A_{a+1}$ the parts, and apply the lemma on $A_1, \ldots, A_a$ with parameters $a,b',c$ where $b' = b \cdot c^2$. This yields subsets $B_i \subseteq A_i$ of size $b'$ where the subgraph induced on $B_1 \cup \cdots \cup B_a$ satisfies the uniformity conditions of the lemma. Partition the vertices in $A_{a+1}$ according to the colors of their (in- and out-)edges with respect all vertices in $B_1, \ldots, B_a$, and take $B \subseteq A_{a+1}$ as the largest subset in this partition; the size of $A_{a+1}$ should be chosen so that $|B| \geq |A_{a+1}|/c^{2b'} \geq b$.
Next, note that for each $i \leq a$ and $v \in A_i$ there is a pair of colors $c_v,c'_v$ so that all incoming edges from $A_{a+1}$ to $v$ have color $c_v$, and all outgoing edges have color $c'_v$. From each $B_i$ we can pick a subset $C_i$ of $b$ vertices with the same $c_v, c'_v$. It follows that the induced subgraph over $C_1 \cup \cdots \cup C_a \cup B$ satisfies the requirements of the lemma.
\end{proof}

%We now turn to the proof of the main pixelization lemma.
%\begin{observation}
%For any $U \in \barH$ and $0 < \eps < 1/2$ there exists an $\eps$-untouching $W \in \barH$ for which $d_1(U,W) \leq \eps$.
%\end{observation}
%\begin{proof}
%Set $W(u,v) = U(u,v)$ whenever $U(u,v) \in \{0,1\} \cup [\eps, 1-\eps]$.
%Otherwise, set $W(u,v) = \eps$ if $U(u,v) < \eps$, and $W(u,v) = 1-\eps$ if $U(u,v) > \eps$.
%By the flexibility of $\barH$ (see Lemma \ref{lem:U-flexing}), we get that $W \in \barH$, and clearly $d_1(U,W) \leq \eps$.
%\end{proof}
%
%The main result that we aim to prove is as follows.
%\begin{lemma}
%	\label{lem:uniform_decision_untouching}
%For any $\eps$-untouching orderon $U \in \barH$ there exists an $\eps$-untouching $\ell$-strip layered orderon $U'$, for any $\ell \geq \ell_0(U, \eps, \barH)$, so that $d_1(U, U') \leq 3 \eps$.
%\end{lemma}
%Note that this lemma immediately implies Lemma \ref{lem:unifomDecision}.
%We now provide a sketch of the proof of Lemma \ref{lem:uniform_decision_untouching}.

%We now provide a detailed sketch of the proof of Lemma~\ref{lem:unifomDecision}. 
\begin{proofof}{Lemma~\ref{lem:unifomDecision}}
It will be convenient for us to work with orderons that have only a finite number of possible values (possibly dependent on $\eps$) in their range. Picking $\delta = \eps/10$, this can be done in an $L^1$-efficient manner by rounding the value of any $U(u,v)$, as long as $0 < U(u,v) < 1$, to the closest multiple of $\delta$ that is strictly between zero or one. Note that, by Lemma~\ref{lem:U-flexing}, $U \in \barHU$ after the rounding since it belonged to $\barHU$ before the rounding. Thus, from now on we assume that $U$ is a rounded orderon.
% For convenience, we write $\calH = \calH_U$.

First, we use Lemma~\ref{lem:step_approx_orderon} to $\delta$-approximate $U$ in $L^1$ by an $\ell$-layered strip orderon $Z$ with $\ell$ steps (for some $\ell\in\mathbb{N}$), whose set of parts is denoted by $\calR$. In particular, the expected value of $|U((x,a), (y,b)) - Z((x,a), (y,b))|$ over all tuples $(x,a,y,b) \in [0,1]^4$ is bounded by $\delta$.

Recall that each part in the underlying partition of $Z$ is a rectangle of the form $[x_i,x_{i+1}] \times [a_j,a_{j+1}]$. For ease of discussion, we say that this part is the intersection of the \emph{strip} $B_i = [x_i,x_{i+1}]$ and the \emph{layer} $L_i = [a_j,a_{j+1}]$. Denote the set of strip (from first to last in terms of order) by $\calB = \{B_1, \ldots, B_\ell\}$ and the set of layers by $\calL = \{L_1, \ldots, L_\ell\}$. Consider a random variable $X$ that does the following for any $i \in [\ell]$: it picks one uniform index $y_i$ from the strip $B_i$, then picks one uniform index $b_{ij}$ from each layer $L_j$, and returns the point (``vertex'') $g_{ij} = U(y_i, b_{ij})$.
This will form an $\ell \times \ell$ grid $\calG = \{g_{ij}\}_{i,j \in [\ell]}$. We will be interested in the random model $\calG(U, \ell, \ell)$ that picks such a grid randomly, and returns the values of $U$ induced on the elements of this grid, but \emph{without} using values that correspond to elements from the same strip; this object is an ordered $(\ell \times \ell) \times (\ell \times \ell)$ tensor of values with (usually) ``asterisks'' within strips. We call such an object a \emph{configuration}. The expected $L_1$-distance of a configuration generated this way from $Z$ is at most $\delta$, and we conclude that with probability at least $1/2$, the distance is at most $2\delta$. We call a configuration \emph{good} if it satisfies the latter condition on the distance from $Z$.
Note that at least one of the good configurations has a positive probability to occur (since $U$ is rounded, meaning that there is a finite number of possible configurations), and we henceforth fix this choice of configuration $\calC$.

%It is instructive to consider for now the case where $\calH$ is defined by a \emph{finite} family of forbidden ordered subgraphs $\calF$. A well known embedding technique by Alon and Shapira~\cite{AlonShapira08} (see \cite{ABF_FOCS17} for the ordered variant) is used to adapt the proof to the case where $\calH$ is infinite; our use of this techniques is standard and similar to these previous works, and so we only briefly mention the implementation details in the end of this proof.
%Thus, for now let us suppose that $\calH$ is defined by a finite family of forbidden ordered subgraphs $\calF$, and let $k$ be the size of the largest subgraph in $\calF$. 
Now let $\calF$ be the (possibly infinite) set of forbidden subgraphs defining $\calH_U$, and let $k$ be the size of the largest graph in the finite set $\calF_{\ell,\ell}$ that we obtain from $\calF$ using Lemma \ref{lem:infinite_finite}.
Also take an integer $r$ that is large enough as a function of $k, \eps, \ell$. Specifically, we take $r = N(a,b',c)$ as promised in Lemma \ref{lem:multipartite_directed_ramsey}, where $a = \ell^2$ is the total number of layers in all strips, $b'=k$, and $c = \Theta(1/\eps)$ is the number of possible values in our rounded orderon $U$.

%To handle the case where the forbidden family $\calF$ is infinite in proofs of this type, we leverage a by-now standard embedding technique by Alon and Shapira~\cite{AlonShapira08} (see \cite{ABF_FOCS17} for the ordered variant). For the purpose of the discussion, an ordered graph $G$ is said to be embeddable in another ordered graph $H$, whose edges are colored in three colors: black, white, and grey, if there is a mapping $\phi$ from the vertices of $G$ to the vertices of $H$ so that (i)  
%\begin{claim}
%Let $K$ be a complete ordered graph whose vertices are colored
%\end{claim}

%For the purpose of this proof, suppose that $\calH$ is defined by a finite family of forbidden ordered subgraphs $\calF$, and let $k$ be the size of the largest subgraph in $\calF$. The generalization to any hereditary property $\calH$ can be done by calculating an effective $k$ through an Alon-Shapira type function of the property, $b$, $\ell$, and $\eps$, see~\cite{AlonShapira08}. Also take an integer $r$ that is large enough as a function of $k, \eps, b, l$.

%Since we have with probability at least $1/2>0$ a ``good configuration'' of $U$, there is at least one such configuration that in itself occurs with positive probability. 
We apply Lemma~\ref{lem:measure_theoretic_dense_subset} over the event of obtaining $\calC$ in the above process, and take $\delta = r^{-\ell^2} / 2$ %(\onote{Need to verify choice of $\delta$}) 
%sufficiently small (as a function of $k, \eps, b, l, r$ to be explained next) 
in the statement of the lemma, to obtain $[y_i,z_i] \subset [x_i,x_{i+1}]$ and $[c_{ij},d_{ij}] \subset [a_j,a_{j+1}]$ for every $i$ and $j$.
%Next step: In particular, this means that if we choose $X_i^(q)$ uniformly from $[y_i,z_i]$ and $A_ij^(q)$ uniformly from $[c_ij,d_ij]$, all independently, for $1 \leq 1 \leq r$, then by a union bound, with probability at least $1/2$, for every choice of $q_i$ and $q_ij$, the configuration $W((X_i^{(q_i)},A_{ij}^{(q_{ij})}),(X_{i'}^{(q_{i'})},A_{i'j'}^{(q_{i'j'})}))$ is identical to our selected close configuration.
For our purposes, we will need an ``$r$-multigrid'' $\calM_r$, that is, take $r$ possible options for each choice of an $x$ from $[y_i,z_i]$ and each choice of an $a$ from $[c_{ij},d_{ij}]$, all independently. Given the choice of $\delta$, a union bound (over all choices of pairs $(x,a)$, one from each strip-layer pair -- there are $r$ choices of $(x,a)$ in each strip-layer pair, and $\ell^2$ strip-layer pairs in total\footnote{We note that this union bound is somewhat wasteful and that a more efficient union bound would try to cover the $r$-multigrid with as few disjoint grids as possible. In any case, we do not try to optimize these quantitative aspects, as the results in this paper and more generally in works relying on analytic limit objects make use of the compactness of the relevant space, and thus they cannot inherently yield quantitative bounds.}) gives that with probability at least 1/2, for {\em any} possible choice of one of the $r$ options for each of the random variables, the resulting grid will have the configuration $\calC$.

%That is, we wish to take a layered strip orderon with $b'$ blocks and $l'$ layers, where $b$ divides $b'$, $l$ divides $l'$, and the parts refine $\calR$ (the parts of $Z$) that is very close (as a function of $r$ and the other parameters) in $L^1$-distance to $U$, again applying Lemma~\ref{lem:step_approx_orderon}. %This might require us to reorder the elements of $U$ in a measure preserving manner within the variability coordinate (i.e., with shift zero), which does not matter for our purposes.
%
%Our closeness requirement is as follows: we need that a positive fraction of the $b \times l$ subgrids of the $b' \times l'$ full grid of blocks and strips satisfy the following: If we pick a $b \times l$ grid $\calX$, and take  $r$ indices from each relevant block and $r$ indices from each relevant layer to get a $rb \times rl$ grid of values $\calX$ then the following holds: when picking one index from each chosen set of $r$  block-elements and one index from each set of $r$ chosen layer elements similarly, with probability at least $1/2$ the obtained grid equals $\calG'$.
%In other words, we are considering subgrids where it is very likely to encounter the grid of values $\calG'$, so much that even when we take a substantially larger (but still ``fixed'') set of elements from each block and layer, still all $b \times l$ subgrids are equal to $\calG'$.

Our choice of $r$ ensures the existence of a $k$-multigrid $\calM'_k$ that satisfies desired uniformity requirements in $\calM_r$. Specifically, in our application of Lemma~\ref{lem:multipartite_directed_ramsey}, the edge directions are chosen according to the ordering: The direction of an edge between $(y,b)$ and $(y',b')$ is from the former to the latter if $y < y'$, and from the latter to the former otherwise (for $y=y'$ we may choose the direction arbitrarily). 
Thus, for any such $r$-multigrid $\calM_r$, we can take from it a $k$-multigrid $\calM'_k$ where we also consider the values of $U$ inside the same strip, so that the values (i.e., the edge colors in the Ramsey structure) between pairs of elements $(y,b), (y',b')$ from the same block will depend only on the pair of layers that they lie in, and whether $y < y'$ or $y>y'$. We avoid assigning values to edges between points that have the same first coordinate, i.e., when $y=y'$; these will not be needed for the analysis. %the case $y=y'$, because this corresponds to a probability 0 event when sampling a multigrid. \onote{I think we should clarify what we mean by a probability $0$ event - indeed, the way the multigrid is constructed inherently has points with $y = y'$, but when actually sampling a collection of random, they are unlikely to satisfy such a property.}

If we look at a subset of grid points where no $x$ appears more than once, then its distribution is a conditioning of a uniformly random point sample from $[0,1]^2$ over a positive probability event.
Since the above $r$-multigrid $\calM_r$ will appear with positive probability, so will one of the options for a $k$-multigrid $\calM'_k$ that is uniform inside strips appear with positive probability.

Now, suppose to the contrary that the $k$-multigrid $\calM'_k$ contains a subgraph $F\in\calF_{\ell,\ell}$ (where we are allowed to take elements from the same strip or layer, but not elements with the same first coordinate). %from the \emph{forbidden} family $\calF$ corresponding to the hereditary property $\calH = \calH_U$.
On one hand, Lemma \ref{lem:conditions_closure} implies that if $F\in\calF_{\ell,\ell}\subseteq\calF$ then $t(F,U) = 0$. On the other hand, since the $k$-multigrid $\calM'_k$ appears with positive probability, we also have $t(F, U) > 0$. This is a contradiction, implying that $\calM'_k$ contains no $F \in \calF_{\ell,\ell}$.

Thus, we can make $U$ ``imitate'' the $k$-multigrid $\calM'_k$ by setting $U(u, v)$, for each pair $u = (y, b)$ and $v = (y', b')$ that lie in strips $b_1, b_2$ and layers $l_1, l_2$ of the original partitioning $\calR$, as follows. If $b_1 < b_2$ or $b_1 > b_2$, we set $U(u,v)$ to the unique value of the multigrid between $(b_1,l_1)$ and $(b_2, l_2)$.  
Otherwise, we distinguish between the case where $y < y'$ and the case where $y > y'$ (again, the case where $y = y'$ corresponds to a zero-measure set and here the values can be set arbitrarily). If $y < y'$, we set $U(u,v)$ to be the unique value of the multigrid that lies between layers $l_1$ and $l_2$ within strip $b_1$ corresponding to the case where $y < y'$. Otherwise, we set $U(u,v)$ to be the unique value of the multigrid, again between layers $l_1$ and $l_2$ within strip $b_1$, but here we choose the value corresponding to the case where $y > y'$.

By our choice of the configuration $\calC$ (as defined in the beginning of the proof), the $L^1$-cost of changing $U$ according to this policy is $3\delta$, including a term of $2 \delta$ in order to imitate $\calC$ between pairs of points in different strips, and an additional cost of at most $\delta$ to edit the values for pairs of points within the same strip. Clearly, the ``imitation'' creates a layered $(\ell,\ell)$-strip orderon $U'$ satisfying the following: If $t(G,U') > 0$, then necessarily $G$ appears in the $k$-multigrid $\calM'_k$. This implies that $G$ does not contain any of the forbidden subgraphs $F \in \calF_{\ell,\ell}$, which by Lemma \ref{lem:infinite_finite} implies that $G$ does not contain any of the forbidden subgraphs $F\in\calF$, and so $G \in \calH_U$. From Lemma \ref{lem:conditions_closure}, we conclude that $U' \in \barHU$, completing the proof.
\end{proofof}

%
%
%It follows that the grid structure $Y$ has positive probability to be picked as a random $k^2 \times k^2$ subgrid of a random grid $\calG(U, L', R')$. In particular, there
%
%
%
%
%
%
%Consider sampling a point $v_i$ from each member $P_i$ of the underlying partition $\calP$ of $Z$, looking up the values $U(v_i,v_j)$, setting $w(i,j)=U(v_i,v_j)$ if it is in $\{0,1\}$, and setting $w(i,j)=*$ if $0<U(v_i,v_j)<1$.

%To explain the role of this theorem, first consider the following experiment: Consider sampling a point $v_i$ from each member $P_i$ of the partition $\calP$, looking up the values $U(v_i,v_j)$, setting $w(i,j)=U(v_i,v_j)$ if it is in $\{0,1\}$, and setting $w(i,j)=*$ if $0<U(v_i,v_j)<1$.
%
%With positive probability this would have most of the required properties of the uniform decision, but unfortunately it would fail to account for sampling graphs where some strips contain more than a single vertex.
%
%The actual proof requires subdividing the strips into finer sub-strips before sampling, and then choosing from them through use of a variant of the above theorem.
%To derive Lemma \ref{lem:unifomDecision}, we apply Theorem \ref{thm:qukramsey} with $\Sigma = \{0,1\}$, $k = 2^\ell$ (each strip constitutes a part), $t$ being the number of steps of $W$  within each strip, and $\alpha = 1/2^\ell$.

For the rest of the discussion, we will need several notions. Recall that $\calI_{\ell}$ is the partitioning of $[0,1]^2$ into equally-sized strips of the form $[(j-1)/\ell, j/\ell] \times [0,1]$. The first notion, that of \emph{clones}, refers to pairs of partitions that refine $\calI_{\ell}$ in a similar way. The next two, \emph{decision functions} and \emph{decisiveness}, are related to structural changes that we want to impose on our orderons. 
\begin{definition}[clone]
	Given a partition $\calP = \{P_1, \ldots, P_{|\calP|}\}$ which refines $\calI_{\ell}$, we say that a partition $\calP' = \{P'_1, \ldots, P'_{|\calP|}\}$, also refining $\calI_{\ell}$, is a \emph{clone} of $\calP$ if for any $i \in [|\calP|]$, $P_i$ and $P'_i$ are contained in the same $I^{\ell}_j \in \calI_{\ell}$.
\end{definition}
Note that being a clone (for fixed $\ell$) is an equivalence relation: it is clearly reflexive, symmetric, and transitive.
\begin{definition} [decision function, implementation] 
	A \emph{decision function} with parameters $k, \ell \in \mathbb{N}$, where $\ell$ divides $k$, is a function $w\colon[k]^2\to \{0,1,\ast\}$ satisfying $w(i,j) = w(j,i)$ for any pair $(i,j) \in [k^2]$ with $\lfloor \ell(i-1) / k\rfloor \neq \lfloor \ell(j-1) / k \rfloor$. (For pairs $(i,j)$ where there is an equality in the last expression, it may hold that $w(i,j) \neq w(j,i)$.) 
	
	Let $W\in{\calW}$ be an orderon, and let $\calP=\{P_1,\ldots,P_{k}\}$ be a partition of $[0,1]^2$ which clones the stepping of a layered $\ell$-strip orderon.\footnote{In particular, $\calP$ has exactly $k/\ell$ parts in each $I^{\ell}_j \in \calI_{\ell}$: these are precisely the parts $P_i$ where $\lfloor \ell(i-1) / k\rfloor = j$.} 
	An \emph{implementation} of the decision function $w$ on $W$ with respect to $\calP$ is an orderon denoted $W_{\Leftarrow w_\calP}$ and defined as follows. For every $(u,v)\in([0,1]^2)^2$ such that $\pi_1(u)<\pi_1(v)$ and $(u,v)\in P_i\times P_j$, \begin{enumerate}
		\item If $w(i,j)\neq\ast$, we have $W_{\Leftarrow w_\calP}(u,v)=w(i,j)$.
		\item If $w(i,j)=\ast$, we have $W_{\Leftarrow w_\calP}(u,v)=W(u,v)$.
	\end{enumerate}
When $\pi_1(u) = \pi_1(v)$, the value of $W_{\Leftarrow w_\calP}(u,v)$ can be set arbitrarily.
\end{definition}

%Next, we shall show that implementing $w_{\calR_\ell^\phi}$ directly on the the trimmed object  $W_{\ell\setminus\eta}$ results in a good outcome, as described below.
\begin{definition}[decisiveness]
Fix a partition $\calP = \{P_1, \ldots, P_{|\calP|}\}$ which clones the stepping of a layered strip orderon, a decision function $w:[|\calP|]^2 \to \{0,1,\ast\}$, and an orderon $W$. We say that $w$ is \emph{decisive} with respect to $\calP$ and $W$ if for any $(i,j) \in [|\calP|]^2$ where $W$ induced on $\{(u,v) \in P_i \times P_j : \pi_1(u) < \pi_1(v) \}$ is almost everywhere zero, it holds that  $w(i,j)=0$, and if it is almost everywhere one when induced on this set, then $w(i,j)=1$.
\end{definition}

\begin{lemma}\label{lem:UD:W'-in-barH} Let $W\in\calW$ be an $\ell$-strip layered orderon and let $\calP$ be its stepping. Let $W'\in\calW$ be another orderon, and let $\calP'$ be a clone of $\calP$. If the decision function $w$ is decisive with respect to $\calP$ and $W$, and if $W_{\Leftarrow w_\calP}\in\barH$, then $W'_{\Leftarrow w_{\calP'}}\in\barH$.
\end{lemma}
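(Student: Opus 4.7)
The plan is to use the characterization of $\barH$ in Lemma~\ref{lem:conditions_closure}: it suffices to show $t(F, W'_{\Leftarrow w_{\calP'}}) = 0$ for every simple ordered graph $F \notin \calH$. I argue by contrapositive, assuming $t(F, W'_{\Leftarrow w_{\calP'}}) > 0$ and aiming to derive $t(F, W_{\Leftarrow w_\calP}) > 0$, which would contradict $W_{\Leftarrow w_\calP} \in \barH$. Conceptually, the clone relation preserves the strip-level skeleton shared by $\calP$ and $\calP'$, while the decisiveness of $w$ forces $W$ on every \emph{undecided} block to take a strictly interior value in $(0,1)$; together this will let me show that any part-assignment ``copy of $F$'' witnessed by $W'_{\Leftarrow w_{\calP'}}$ is also witnessed by $W_{\Leftarrow w_\calP}$.

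\textbf{Key step.} Setting $k=|V(F)|$ and $N=|\calP|$, for any $\vec{a}=(a_1,\dots,a_k) \in [N]^k$ let $R'_{\vec{a}} = \{(v_1,\dots,v_k) \in \Omega_o : v_i \in P'_{a_i}\}$. Then
\[
t(F, W'_{\Leftarrow w_{\calP'}}) = k! \sum_{\vec{a} \in [N]^k} \int_{R'_{\vec{a}}} \prod_{(i,j) \in E_<} W'_{\Leftarrow w_{\calP'}}(v_i,v_j) \prod_{(i,j) \in \overline{E}_<} \left(1 - W'_{\Leftarrow w_{\calP'}}(v_i,v_j)\right) dv.
\]
If the left-hand side is positive then some summand on the right is, so I fix such an $\vec{a}^*$. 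This forces two conditions: (a) $\lambda(R'_{\vec{a}^*}) > 0$, so in particular the sequence of strips of $\calI_\ell$ containing $P'_{a^*_1}, \dots, P'_{a^*_k}$ is non-decreasing; and (b) for every \emph{decided} pair $(i,j)$, meaning $w(a^*_i, a^*_j) \neq *$, we necessarily have $w(a^*_i,a^*_j)=1$ when $(i,j)\in E_<$ and $w(a^*_i,a^*_j)=0$ when $(i,j)\in\overline{E}_<$, as otherwise that factor would vanish identically on $R'_{\vec{a}^*}$ and kill the integral.

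\textbf{Closing the argument.} I will then examine the analogous $\vec{a}^*$-term of $t(F, W_{\Leftarrow w_\calP})$, namely $k! \int_{R_{\vec{a}^*}} (\cdots) dv$ with $R_{\vec{a}^*} = \{v \in \Omega_o : v_i \in P_{a^*_i}\}$. Since $\calP'$ is a clone of $\calP$, corresponding parts lie in the same strip of $\calI_\ell$, so by (a) the strip sequence of $P_{a^*_1},\dots,P_{a^*_k}$ is also non-decreasing, and under the WLOG reduction that every part of $\calP$ has positive measure (null parts may be discarded as they affect neither $W_{\Leftarrow w_\calP}$ nor the decisiveness condition) I obtain $\lambda(R_{\vec{a}^*}) > 0$. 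Moreover, because $W$ is an $\ell$-strip layered orderon with stepping $\calP$, it is constant on each set $\{(u,v) \in P_{a^*_i} \times P_{a^*_j} : \pi_1(u) < \pi_1(v)\}$, say of value $\omega_{a^*_i,a^*_j}$, and by the decisiveness of $w$ every undecided pair has $\omega_{a^*_i,a^*_j} \in (0,1)$ strictly. Combined with (b), the integrand on $R_{\vec{a}^*}$ is the single positive constant
\[
C = \prod_{\substack{(i,j)\in E_< \\ w(a^*_i,a^*_j)=*}} \omega_{a^*_i,a^*_j} \cdot \prod_{\substack{(i,j) \in \overline{E}_< \\ w(a^*_i,a^*_j)=*}} \left(1 - \omega_{a^*_i,a^*_j}\right) > 0,
\]
yielding $t(F, W_{\Leftarrow w_\calP}) \geq k! \cdot C \cdot \lambda(R_{\vec{a}^*}) > 0$ and the required contradiction. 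The main subtlety I expect to handle carefully is the measure bookkeeping between $\calP$ and $\calP'$, which I resolve by the WLOG reduction above together with the observation that the clone relation synchronizes the two partitions at the strip level, so that any index $\vec{a}^*$ active for $\calP'$ is also active for $\calP$.
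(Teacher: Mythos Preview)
Your proof is correct and follows essentially the same approach as the paper's: both use Lemma~\ref{lem:conditions_closure}, argue by contrapositive, decompose the density integral over part-tuples $(i_1,\dots,i_k)\in[|\calP|]^k$, fix a witnessing tuple, and then use the clone relation to transfer positive measure from $R'_{\vec a^*}$ to $R_{\vec a^*}$ together with decisiveness to force the integrand to be strictly positive on the $W$-side. Your write-up is in fact more explicit than the paper's about why the integrand equals the positive constant $C$ (separating decided from undecided pairs and invoking decisiveness to get $\omega_{a^*_i,a^*_j}\in(0,1)$), whereas the paper compresses this into a single sentence; the only cosmetic point is that your ``discard null parts'' WLOG is really the (implicit, and standard) assumption that the stepping $\calP$ of a layered strip orderon consists of positive-measure rectangles, which is exactly what the paper hides behind ``this follows from the structure of layered strip orderons.''
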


\begin{proof}
By Lemma~\ref{lem:conditions_closure}, it suffices to show that for any fixed $k \in \mathbb{N}$ and any ordered graph $F$ on $k$ vertices for which $t(F,W) = 0$, it holds that $t(F, W'_{\Leftarrow w_{\calP'}}) = 0$.
Suppose towards a contradiction that there exists $F = (V, E)$ on $k$ vertices for which this is not the case. Then there exists a tuple $(i_1, i_2, \ldots, i_k) \in [|\calP|]^k$ (possibly with repetitions) so that
\[ \intop_{v\in P'_{i_1} \times P'_{i_2} \times \cdots \times P'_{i_k}}\left(\prod_{{(i,j)\in E}} W'_{\Leftarrow w_{\calP'}}(v_i,v_j)\cdot \prod_{{(i,j)\in \overline{E}}} \left(1-W'_{\Leftarrow w_{\calP'}}(v_i,v_j)\right)\cdot \prod_{{i<j}}\indi_{v_i\le v_j}\right) dv \;>\; 0. \]
We show that in this case,
\[ \intop_{v\in P_{i_1} \times P_{i_2} \times \cdots \times P_{i_k}}\left(\prod_{{(i,j)\in E}} W_{\Leftarrow w_\calP}(v_i,v_j)\cdot \prod_{{(i,j)\in \overline{E}}} \left(1-W_{\Leftarrow w_\calP}(v_i,v_j)\right)\cdot \prod_{{i<j}}\indi_{v_i\le v_j}\right) dv \;>\; 0, \]
which means that $t(F, W_{\Leftarrow w_\calP}) > 0$, leading  to a contradiction.

Indeed, for a $k$-tuple of subsets $(Q_1, \ldots, Q_k)$, define
$$B_{Q_1 \times \cdots \times Q_k} \eqdef \{(x_1, \ldots, x_k) \in Q_1 \times Q_2 \times \cdots \times Q_k \mid x_1 < \cdots < x_k \}. $$
The crucial observation is that, since $B_{P'_{i_1}, \ldots, P'_{i_k}}$ has positive measure, so does $B_{P_{i_1}, \ldots, P_{i_k}}$. This follows from the structure of layered strip orderons. Since the decision function is decisive, we know that if $W_{\Leftarrow w_\calP}\in\barH$ restricted to $P_{i} \times P_{i'}$ equals zero or one, then the decision function would force all values of $W'_{\Leftarrow w_{\calP'}}$ restricted to  $P'_{i} \times P'_{i'}$ to equal zero or one, respectively. As the value of the first integral above is positive, the expression inside this integral is positive for a positive-measure subset of $B_{P'_{i_1}, \ldots, P'_{i_k}}$, and so the expression inside the second integral is positive over $B_{P_{i_1}, \ldots, P_{i_k}}$. 
\end{proof}

The next lemma states that implementing the same decision function on different orderons with similar structural parameters induces a similar ``$L^1$-cost''.

\begin{lemma}\label{lem:UD:W'-W-not-far}For every $\ell, k \in \mathbb{N}$ and $\eps>0$ the following holds.
	Let $W,W'\in{\calW}$ be two orderons, let $\calP = (P_1, \ldots, P_k),\calP' = (P'_1, \ldots, P'_k)$ be two partitions of $[0,1]^2$ which clone the stepping of some layered $\ell$-strip orderon, satisfying $\lambda(P_i) = \lambda(P'_i)$ for any $i \in [k]$. Finally, for a decision function $w$, let $N(w) \subseteq [k]^2$ denote the set of all pairs $(i,j)$ where $w(i,j) \neq \ast$. Then
	\begin{align*} 
	\left|d_1(W_{\Leftarrow w_\calP},W)-d_1(W'_{\Leftarrow w_{\calP'}},W')\right| \leq \sum_{(i,j) \in N(w)} \bigg|&\int_{v_1,v_2\in P_i\times P_j}W(v_1,v_2)dv_1dv_2 \\
	-&\int_{v_1,v_2\in P'_i\times P'_j}W'(v_1,v_2)dv_1dv_2\bigg|.
	\end{align*}
%	  \begin{enumerate}
%		\item For each $i\in[k]$, $|\lambda(P_i)-\lambda(P'_i)|\le \eps / 3k^2$.
%		\item For all $i,j\in[k]^2$,
%	\end{enumerate}
	%Finally, let $w$ be a decision function. Then, 
\end{lemma}
\begin{proof}
The difference (in absolute value) between the $L^1$-cost of applying the decision $w(i,j)$ to $W$ restricted to $P_i \times P_j$, as compared to applying $w(i,j)$ to $W'$ restricted to $P'_i \times P'_j$, is zero if $w(i,j) = *$. If $w(i,j) = 0$, this difference equals
\begin{equation}
\label{eq:dif_apply}
\left|\int_{v_1,v_2\in P_i\times P_j}W(v_1,v_2)dv_1dv_2-\int_{v_1,v_2\in P'_i\times P'_j}W'(v_1,v_2)dv_1dv_2\right|
\end{equation}
Finally, if $w(i, j) = 1$, the difference has the same form as in \eqref{eq:dif_apply} except that $W$ and $W'$ are replaced with $1-W$ and $1-W'$, which yields (since $|P_i| = |P'_i|$ and $|P_j| = |P'_j|$) the excat same value as in \eqref{eq:dif_apply}. The proof follows by summing over all pairs $(i,j) \in [k]^2$.
%Write $\delta = \eps / 3k^2$.
%For any pair $(i,j) \in [k]^2$, we show that the $L^1$-cost of applying the decision $w(i,j)$ to $W$ restricted to $P_i \times P_j$ is at most an additive difference of $3\delta$ away from the cost of applying $w(i,j)$ to $W'$.
%
% Indeed, if $w(i,j) = \ast$ then the cost difference is zero. If $w(i,j) = 0$, the $L^1$-cost for $W$ is $\int_{v_1,v_2\in P_i\times P_j}W(v_1,v_2)dv_1dv_2$ and for $W'$ it is $\int_{v_1,v_2\in P'_i\times P'_j}W'(v_1,v_2)dv_1dv_2$, which by our assumption differ by at most $\delta$. Similarly, for $w(i,j) = 1$ the costs are $\int_{v_1,v_2\in P_i\times P_j}\left(1 - W(v_1,v_2)\right)dv_1dv_2$ and $\int_{v_1,v_2\in P'_i\times P'_j}\left(1-W'(v_1,v_2)\right)dv_1dv_2$, and they differ by at most $3 \delta$. Summing over all pairs $(i,j) \in [k]^2$, the proof follows.
\end{proof}

We now explain how to complete the proof of Lemma~\ref{lem:HU_removal}.

\begin{proofof}{Lemma~\ref{lem:HU_removal}}
Fix $U \in \barH$ and $\eps > 0$. We need to show that if an orderon $W$ is $\delta$-close in cut-shift distance to $U$, for sufficiently small $\delta(\eps)$, then $d_1(W, \barH) \leq \eps$. By Lemma~\ref{lem:unifomDecision}, there exists an ${\ell}$-strip layered orderon $U' \in \barH$ for some sufficiently large $\ell = \ell(U, \barH, \eps)$ where $d_1(U', U) \leq \eps/4$. Clearly, for any measure-preserving bijection $\phi$, we have $d_1((U')^{\phi}, U^\phi) \leq \eps/4$.
Let $\calR_\ell = (R_1, \ldots, R_{k})$ be the partitioning of $U'$ into steps. We pick $\delta = \eta > 0$ small enough as a function of $\barH, \eps, U, k$ (specifically, along the proof, there are several statements that hold ``for small enough $\eta$''; we pick $\eta$ that satisfies all of these requirements).

Consider first the case that $W = U^{\phi}$ where $\shift{\phi} \leq \eta$. 
By Lemma \ref{lem:surgery}, there exists a measure-preserving and $\ell$-strip-preserving bijection $\phi'$ with $\shift{\phi'} \leq \eta$ so that $d_1(W, U^{\phi'}) \leq \eps/4$ and $d_1((U')^{\phi}, (U')^{\phi'}) \leq \eps/4$. Pick $U'' = (U')^{\phi'}$ and note that 
\begin{align}
\label{eq:dist_from_barh}
d_1\left(U'', W\right) = d_1\left(\left(U'\right)^{\phi'}, U^\phi\right) \leq d_1\left(\left(U'\right)^{\phi'}, \left(U'\right)^{\phi}\right) + d_1\left(\left(U'\right)^{\phi}, U^{\phi}\right) \leq \frac{\eps}{4} + \frac{\eps}{4} = \frac{\eps}{2}.
\end{align}

Now, for each $i \in [k]$ let $Q_i = \{\phi'^{-1}(x) : x \in R_i\}$ be the inverse of $R_i$ according to $\phi'$, and note that $R_i$ and $Q_i$ are fully contained in the same strip $I^\ell_j$ (as $\phi'$ is $\ell$-strip-preserving) and $\lambda(R_i) = \lambda(Q_i)$ (as $\phi'$ is measure-preserving). Let $\calQ_\ell = \left\{Q_1, \ldots, Q_{k}\right\}$ denote the collection of inverse parts; this collection is a clone of $\calR_\ell$. 
Take the decision function $w \colon [k]^2 \to \{0,1,\ast\}$ which satisfies the following for any $(i,j) \in [k]^2$: if  $U'$ induced on $R_i \times R_j$ is either identically equal to zero or to one then set $w(i,j)$ to the same value, and otherwise set $w(i,j) = \ast$. It follows that $w$ is decisive with respect to $\calR_\ell$ and $U'$. Hence, Lemma~\ref{lem:UD:W'-in-barH} implies that $U''_{\Leftarrow w_{\calQ_{\ell}}} \in \barH$, and more generally, $W'_{\Leftarrow w_{\calQ_{\ell}}} \in \barH$ for any orderon $W' \in \calW$. 

Next, observe that $U''_{\Leftarrow w_{\calQ_{\ell}}} = U''$. Indeed, for any $(i,j) \in [k]^2$ for which $w(i,j) = 0$, $U'$ induced on $R_i \times R_j$ is identically zero, and so $U'' = (U')^{\phi'}$ induced on $Q_i \times Q_j$ is also identically zero. That is, applying the decision function $w$ on $U''$ between $Q_i \times Q_j$ leaves it unchanged. The same is true for pairs $Q_i \times Q_j$ where $w(i,j) = 1$. We conclude that $U'' \in \barH$, meaning (by \eqref{eq:dist_from_barh}) that $d_1(W, \barH) \leq \eps/2$, thus settling the special case where $W = U^\phi$ for an orderon $U \in \barH$.

For the general case, suppose that $W'$ is $\eta$-close in cut norm to the orderon $W = U^{\phi}$ considered above, that is, $\|W'-W\|_{\square} \leq \eta$. By definition of the cut norm, we know that for any $(i,j) \in [k]^2$, $$\bigg|\int_{Q_i \times Q_j} W(v_1, v_2)dv_1 dv_2 - \int_{Q_i \times Q_j} W'(v_1, v_2)dv_1 dv_2 \bigg| \leq \eta \leq \frac{\varepsilon}{2k^2}$$
provided that $\eta$ is small enough.
Apply Lemma~\ref{lem:UD:W'-W-not-far} with $\calP = \calP' = \calQ_{\ell}$ to conclude that $$d_1(W', W'_{\Leftarrow w_{\calQ_{\ell}}}) \leq d_1(W, W_{\Leftarrow w_{\calQ_{\ell}}}) + k^2 \cdot \frac{\eps}{2k^2} \leq \frac{\eps}{2} + \frac{\eps}{2} = \eps.$$ However, as mentioned above, we know that $W'_{\Leftarrow w_{\calQ_{\ell}}} \in \barH$. This concludes the proof.
%
%
%Then there exists some orderon $Z$ so that $d_1(W', Z) \leq \eps/4$ and $\|Z - (U')^{\phi} \|_{\square} = \eta$. Consider the $(\ell, \eta)$-trim $Z''$ of $Z$,  and note again that $d_1(Z'', Z) \leq \eps/25|\calR_{\ell}|^2$ for small enough $\eta$. Thus,
%$$
%\|Z'' - U''\|_{\square} \leq d_1(Z'', Z) + \|Z - (U')^\phi\|_{\square} + d_1((U')^{\phi}, U'') \leq \varepsilon/12 |\calR_\ell|^2
%$$
%for $\eta$ small enough. It follows by Lemma~\ref{lem:UD:W'-in-barH} that $$
%
%
%Again from Lemma~\ref{lem:UD:W'-in-barH}, since $w$ is decisive w.r.t.~$\calR_\ell$ and $U'$, we know that $W'_{\Leftarrow w_{\calQ_{\ell}}} \in \barH$. On the other hand, the cut-norm of $a $
%
%
% here, we can write $W = W' + U^\phi$, where $\shift{\phi} \leq \eta$ and $\|W'\|_{\square} \leq \eta$. By the previous paragraph, $d_1(U^{\phi}, \barH) \leq 3\eps/4$. Furthermore, $\|W'\|_1 \leq \eps/4$ (this follows easily, e.g., from Lemma~\ref{lem:UD:W'-W-not-far}). It follows that $d_1(W, \barH) \leq 3\eps/4 + \eps/4 = \eps$, as needed.
\end{proofof}

\begin{flushleft}
		\bibliographystyle{alpha}
		\bibliography{Levi}
\end{flushleft}
\end{document}